\documentclass{amsart}
\usepackage[hyphens]{url}
\usepackage{mathtools, microtype, mathrsfs, xfrac, hyperref, amssymb, enumerate, xspace, stmaryrd}
\usepackage{bbm}
\usepackage[utf8]{inputenc}
\usepackage{tikz-cd}
\usepackage{pdflscape}
\usepackage{bm}

\usepackage{todonotes}

\numberwithin{equation}{section}

\numberwithin{subsection}{section}

\allowdisplaybreaks[1]

\AtBeginDocument{%
   \def\MR#1{}
}

\newtheorem*{namedtheorem}{\theoremname}
\newcommand{\theoremname}{testing}

\newtheorem{theorem}{Theorem}[section]
\newtheorem{proposition}[theorem]{Proposition}
\newtheorem{corollary}[theorem]{Corollary}
\newtheorem{lemma}[theorem]{Lemma}
\newtheorem*{theorem*}{Theorem}

\theoremstyle{definition}
\newtheorem{definition}[theorem]{Definition}
\newtheorem{hypothesis}[theorem]{Hypothesis}

\newtheorem{example}[theorem]{Example}

\newtheorem{remark}[theorem]{Remark}

\newtheorem*{question*}{Question}

\theoremstyle{remark}

\renewcommand{\mathcal}{\mathscr}

\newcommand\cA{\mathcal{A}} \newcommand\cB{\mathcal{B}}
\newcommand\cC{\mathcal{C}} 
 
\newcommand\cG{\mathcal{G}} \newcommand\cH{\mathcal{H}}
 
 \newcommand\cL{\mathcal{L}}
 \newcommand\cN{\mathcal{N}}
\newcommand\cO{\mathcal{O}} \newcommand\cP{\mathcal{P}}
 
\newcommand\cS{\mathcal{S}} 
 \newcommand\cV{\mathcal{V}}
 \newcommand\cX{\mathcal{X}}
\newcommand\cY{\mathcal{Y}} 

\renewcommand\AA{\mathbb{A}} 
\newcommand\CC{\mathbb{C}} 
 \newcommand\FF{\mathbb{F}}
\newcommand\GG{\mathbb{G}}

 \newcommand\PP{\mathbb{P}}
\newcommand\QQ{\mathbb{Q}} \newcommand\RR{\mathbb{R}}

 \newcommand\ZZ{\mathbb{Z}}

 \newcommand\bB{\mathbf{B}}

 \newcommand\bL{\mathbf{L}}
 
 \newcommand\bP{\mathbf{P}}

 \newcommand\bX{\mathbf{X}}

\newcommand\rC{\mathrm{C}}

\newcommand\rI{\mathrm{I}} 
 
 \newcommand\rN{\mathrm{N}}
 \newcommand\rP{\mathrm{P}}
 \newcommand\rR{\mathrm{R}}

\newcommand\fG{\mathfrak{G}} 
\newcommand\fH{\mathfrak{H}}

\newcommand\fM{\mathfrak{M}} 
\newcommand\fN{\mathfrak{N}}

\newcommand\fQ{\mathfrak{Q}}

\newcommand\arr{\ifinner\to\else\longrightarrow\fi}

\newcommand\arrto{\ifinner\mapsto\else\longmapsto\fi}

\renewcommand\H{\operatorname{H}}

\def\displaytimes_#1{\mathrel{\mathop{\times}\limits_{#1}}}

\def\displayotimes_#1{\mathrel{\mathop{\bigotimes}\limits_{#1}}}

\renewcommand\hom{\operatorname{Hom}}

\newcommand\Id{\operatorname{Id}}

\newcommand\aut{\operatorname{Aut}}

\newcommand\spec{\operatorname{Spec}}

\newcommand{\underisom}{\mathop{\underline{\mathrm{Isom}}}\nolimits}

\newcommand{\underaut}{\mathop{\underline{\mathrm{Aut}}}\nolimits}

\newlength{\ignora}

\newcommand{\GL}{\mathrm{GL}}
\newcommand{\SL}{\mathrm{SL}}
\newcommand{\PGL}{\mathrm{PGL}}

\newcommand{\br}{\operatorname{Br}}
\newcommand{\gal}{\operatorname{Gal}}

\DeclareFontFamily{U}{mathx}{\hyphenchar\font45}
\DeclareFontShape{U}{mathx}{m}{n}{
      <5> <6> <7> <8> <9> <10>
      <10.95> <12> <14.4> <17.28> <20.74> <24.88>
      mathx10
      }{}
\DeclareSymbolFont{mathx}{U}{mathx}{m}{n}
\DeclareFontSubstitution{U}{mathx}{m}{n}
\DeclareMathAccent{\widecheck}{0}{mathx}{"71}
\DeclareMathAccent{\wideparen}{0}{mathx}{"75}

\renewcommand{\epsilon}{\varepsilon}

\newcommand{\cha}{\operatorname{char}}

\newcommand{\diag}{\operatorname{diag}}
\newcommand{\fKl}{\mathfrak{Kl}}
\newcommand{\tr}{\operatorname{Tr}}

\author{Giulio Bresciani}
\address[G. Bresciani]{Dipartimento di Matematica, Università di Pisa, Italy}
\email{giulio.bresciani1@unipi.it}

\author{Tianzhi Yang}
\address[T. Yang]{Scuola Normale Superiore di Pisa, Italy}
\email{tianzhi.yang@sns.it}

\begin{document}

\title[Neutral representations in dim. $\le 3$ and fields of moduli]{Neutral representations in dimension $\le 3$ and fields of moduli}

\begin{abstract}
	A representation $V$ of an algebraic group $G$ induces a vector bundle $[V/G] \to BG$. The representation $V$ of $G$ is \emph{neutral} if, for every twisted form $\mathcal{V} \to \mathcal{G}$ of $[V/G] \to BG$ over a field $k$, we have $\mathcal{G}(k) \neq \emptyset$.
	
	Twisted forms of representations arise in many ways, for instance as cohomology of families of varieties on residual gerbes of moduli spaces, and from quotient singularities. Moreover, every Tannakian category is the category of vector bundles on some gerbe. Because of this, studying neutral representations yields numerous applications, especially to problems about fields of moduli.

	The present article has three main results. First, we completely classify neutral, faithful representations of finite groups in dimension $\le 3$. Second, we give a very general, computation-friendly result for proving that representations of finite abelian groups are neutral, in arbitrary dimensions. Third, we develop the abstract concept of the normalizer $\mathcal{G} \to \mathcal{N} \to \mathcal{H}$ of a morphism of gerbes $\mathcal{G} \to \mathcal{H}$ on an arbitrary site (twisted representations correspond to morphisms of gerbes $\mathcal{G} \to B\mathrm{GL}_{n}$), and show that the normalizer $\mathcal{N}$ only depends on the geometric type of $\mathcal{G} \to \mathcal{H}$.
\end{abstract}


\maketitle

\setcounter{tocdepth}{1}
\tableofcontents

\section{Introduction}

Fix a perfect field $k_{0}$ with algebraic closure $K$. Every field is implicitly assumed to be an extension of $k_{0}$.

Given a variety $X$ over $K$, possibly with some additional structure $\xi$ such as a marked point, a subvariety $Y \subset X$, a group structure on $X$, an endomorphism $X \to X$, and so forth. It is a natural question to ask on which subfields of $K$ the pair $(X, \xi)$ is defined. Clearly, the field generated by the coefficients of a fixed set of equations is sufficient, but perhaps we can find a better set of equations.

There is a natural candidate for a minimal field of definition, which is called the \emph{field of moduli} $k_{(X,\xi)}$, which is contained in every field of definition. It can be constructed either through Galois theory  or moduli theory. The question then becomes: can we define $(X,\xi)$ over its field of moduli $k_{(X,\xi)}$?

Among the first ones studying this question there are A. Weil \cite{weil}, T. Matsusaka \cite{matsusaka} and G. Shimura \cite{shimura-automorphic}, and over the years it has received considerable attention. Recently, the first author, in joint work with A. Vistoli, developed a new method for studying this problem based on the theory of \emph{gerbes} \cite{bresciani-vistoli}, which led to numerous new applications, such as the resolution of a conjecture by J. Doyle and J. Silverman \cite{bresciani-bound} and the proof of the fact that a smooth, complex plane curve $C$ of odd degree can be defined by a polynomial with real coefficients if and only if $C \simeq \bar{C}$ \cite{bresciani-plane} (no elementary proof is known, and there are counterexamples in even degree).

Given $(X,\xi)$ as above, if $\aut(X,\xi)$ is finite of order prime with $\cha k_{0}$ there is a gerbe $\cG_{(X,\xi)}$ over $k_{(X,\xi)}$ which classifies twisted forms of $(X,\xi)$ \cite{bresciani-vistoli}. In particular, $(X,\xi)$ is defined over $k_{(X,\xi)}$ if and only if $\cG_{(X,\xi)}$ is \emph{neutral}, which means that $\cG_{(X,\xi)}(k_{(X,\xi)}) \neq \emptyset$.

Because of this, it becomes important to know how to prove that a gerbe is neutral. In joint work with A. Vistoli \cite{bresciani-vistoli-yang}, we observed that the existence of certain vector bundles on a gerbe force it to be neutral. The main result of the present paper is the classification of vector bundles of rank $\le 3$ with this property: this automatically produces many new examples of varieties defined over the field of moduli.

\subsection{Twisted forms of representations}

Given an algebraic group $G$ over a field $k$ with a representation $V$, the quotient stack $[V/G]$ defines a vector bundle on the classifying stack $BG$. On the other hand, if $\cV \to BG$ is a vector bundle, the restriction $\cV_{\tau}$ to the tautological section $\tau : \spec k \to BG$ defines a representation of $G$, and the two constructions are inverses of each other.

A twist of the representation $V$ is a twisted form of the morphism $[V/G] \to BG$, i.e. a vector bundle $\cV \to \cG$ whose base change to $\bar{k}$ is isomorphic to $[V/G] \to BG$. Twisted forms of $BG$ are gerbes, hence twisted representations are essentially vector bundles on gerbes.

A representation $V$ of $G$ is \emph{neutral} if, for every twisted form $\cV \to \cG$, we have $\cG(k) \neq \emptyset$. Equivalently, $V$ is neutral if every twist of $V$ is a representation of a twist of $G$.

We are interested in studying which representations are neutral. Contrary to what one might expect, it turns out that the large majority of faithful representations are neutral, and non-neutral ones are the exception. 

In \cite{bresciani-vistoli-yang}, we introduced some techniques for studying this problem for very simple groups (finite and diagonalizable), with no restriction on the dimension of the representation. In the present paper, we go in an orthogonal direction: no restrictions on the group, but strict restrictions on the dimension. We fix a dimension $d \le 3$, and study every faithful representation of every finite group in dimension $d$. This forced us to find techniques which are more refined than the ones in \cite{bresciani-vistoli-yang}; nonetheless, the techniques work in arbitrary dimensions.

\subsection{Applications}\label{sect:applications}

Understanding which representations are neutral automatically leads to several applications. Here are some examples.

\begin{example}\label{example:cohomology}
	Let $(X, \xi)$ be as above, with $X$ proper. Assume that the automorphism group $\aut(X, \xi)$ of the pair $(X, \xi)$ is finite of order prime with $\cha k_{0}$. Let us show that, if the representation of $\aut(X, \xi)$ on $\H^{p}(X, \cO_{X})$ is neutral for some $p$, the pair $(X,\xi)$ is defined over the field of moduli.
	
	Twisted forms of the pair $(X,\xi)$ are parametrized by a morphism $f:\cX_{(X,\xi)} \to \cG_{(X,\xi)}$ where $\cG_{(X,\xi)}$ is a gerbe defined over the field of moduli $k_{(X,\xi)}$ \cite[Propositions 3.9, 3.10, and section 5.1]{bresciani-vistoli}. The higher direct images $R^{p}f_{*}\cO_{\cX}$ define twisted forms on $\cG_{(X,\xi)}$ of $\H^{p}(X, \cO_{X})$ as a representation of $\aut(X, \xi)$. Since this is neutral, we get that $\cG(k_{(X,\xi)}) \neq \emptyset$.
	
	For instance, in \cite{bresciani-vistoli-yang} this idea is used to prove the following: if $C$ is a smooth, proper algebraic curve with $|\aut(C)| = p$ prime, and $p$ does not divide the difference between the genus of $C$ and that of $C/\aut(C)$, then $C$ is defined over its field of moduli. This is done by proving that $\H^{1}(C, \cO_{C})$ is a neutral representation of $\aut(C)$.
\end{example}
	
\begin{example}\label{example:tangent}
	Let $(X,\xi)$ be as above, and assume that a marked point $x \in X(\bar{k})$ is part of the additional structure $\xi$, e.g. if $\xi$ is a group structure we can choose $x$ as the origin. In particular, $\aut(X, \xi)$ fixes $x$, and hence acts on its tangent space $T$. If the action of $\aut(X, \xi)$ on $T$ is neutral, the pair $(X, \xi)$ is defined over the field of moduli.
	
	In fact, since the marked point $x \in X(\bar{k})$ is part of the additional structure, it induces a section $\cG_{(X,\xi)} \to \cX_{(X,\xi)}$ of the universal family. The normal bundle $\cN$ of this section is a twisted form of $T$ as a representation of $\aut(X, \xi)$, hence $\cG_{(X,\xi)}(k_{(X,\xi)}) \neq \emptyset$ if the representation is neutral.
\end{example}

\begin{example}
	Let $S$ be a variety with quotient singularities over $k$. Every rational point $s \in S(k)$ defines a twisted form $\cV \to \cG$ of the natural representation of the local fundamental group of the singularity \cite[\S 6.4]{bresciani-vistoli}. 
	
	If the representation is neutral, then $s$ lifts to a rational point of a resolution of singularities \cite[Proposition 6.5]{bresciani-vistoli}. This has again consequences for field of moduli problems \cite[Theorem 5.4]{bresciani-vistoli}, and the Lang--Nishimura theorem holds for rational maps $S \dashrightarrow X$ to a proper scheme $X$ even though $s$ is not smooth.
\end{example}

More generally, arithmetic aspects of gerbes and stacks have recently been under the spotlight \cite{ellenberg-satriano-zureick, bragg-lieblich, loughran-santens, loughran-sankaran}.

We also mention a Tannakian interpretation of twisted representations. Given an affine gerbe $\cG$ over $k$, the tensor category $\operatorname{Rep}_{k}(\cG)$ of vector bundles on $\cG$ is a Tannakian category over $k$. On the other hand, a Tannakian category $\cC$ defines an affine gerbe $\operatorname{Fib}(\cC)$ of fibre functors. These constructions are inverses to each other \cite[1.12]{deligne}, and neutral Tannakian categories correspond to neutral gerbes.

Because of this, understanding which representations are not neutral is equivalent to understanding the objects of non-neutral Tannakian categories. Given the widespread use of Tannakian categories, this is a very basic question which has not been addressed much.

\subsection{Arithmetic of quotient singularities}

A point $s$ of a variety $S$ is a tame quotient singularity if, étale locally around $s$, the variety $S$ is a quotient of a smooth variety by a finite group of order prime with $\cha k_{0}$.

Following \cite{bresciani-vistoli}, a tame quotient singularity $(S,s)$ is of type $\rR$ if and only if, for every twisted form $(S',s')$ over a field $k$, the $k$-point $s'$ lifts to a resolution of singularities (these always exist for tame quotient singularities). The arithmetic of quotient singularities is the study of which quotient singularities are of type $\rR$. Again, this concept has applications to the study of fields of moduli.

In dimension $2$, tame quotient singularities of type $\rR$ are completely classified \cite{bresciani-sing}. 

If $(S,s)$ is a singularity with local fundamental group $G$ and associated representation $V$, then $V$ is neutral if and only if $(S,s)$ is of type $\rR$ \cite[Theorem 3.2]{bresciani-vistoli-yang}. One result of this paper is the classification of faithful, neutral representations in dimension $3$ and characteristic $0$: as a corollary, we get the classification of tame quotient singularities of type $\rR$ in dimension $3$ and characteristic $0$.

\subsection{Neutral representations in dimension $\le 3$}

The first result of this paper is a complete classification of faithful, neutral representations of finite groups in dimension $\le 3$ in characteristic $0$.

Let us introduce some notation. Write $\diag(a_{1}, \dots, a_{n})$ for the diagonal $n \times n$ matrix with eigenvalues $a_{1}, \dots, a_{n}$. 

Given $\underline{n} = (n_{1}, \dots, n_{s}) \in \ZZ^{s}_{>0}$, $\underline{m} = (m_{1}, \dots, m_{s}) \in \ZZ^{s}$ vectors of integers with $n_{i} > 0$ and $n = n_{1} + \dots + n_{s}$, define $\SL_{\underline{n}}^{\underline{m}}$ as the subgroup of $\GL_{n}$ of block matrices, with blocks $M_{1}, \dots, M_{s}$ of dimensions $n_{1}, \dots, n_{s}$, such that $\det M_{1}^{m_{1}} \cdot \dots \cdot M_{s}^{m_{s}} = 1$.

In dimension $1$, it is relatively easy (though by no means obvious) to show that every faithful representation of a finite group is neutral, see Proposition~\ref{prop:gl1}.

In dimension $2$, tame quotient singularities of type $\rR$ were classified in \cite{bresciani-sing}. This means that faithful representations associated with singularities, i.e. those without pseudo-reflections, were already classified. We complete the analysis by classifying all faithful representations.

\begin{theorem}\label{thm:main1}
	Let $G \subseteq \GL_{2}(K)$ be a finite group, $\bar{G}$ its image in $\PGL_{2}(K)$. If $\cha k_{0} = 2$, then $G$ is neutral. If $\cha k_{0} \neq 2$, the following are equivalent.
	\begin{enumerate}
		\item The representation of $G$ on $\AA^{2}$ is not neutral.
		\item $G$ is conjugate to
		\[\left<  
			\left( \begin{array}{cc}
			\zeta_{2m} 	& 	0  			\\
			0 			&	\zeta_{2m} 	\\
			\end{array} \right),~
			\left( \begin{array}{cc}
			\zeta_{2n}^{-1} 	& 	0  			\\
			0 					&	\zeta_{2n} 	\\
			\end{array} \right)
			\right>\]
			for some $m, n \ge 1$ prime with $\cha k_{0}$, which is the inverse image in $\SL_{2}^{(m)}$ of
			\[\left< \diag(1, \zeta_{n}) \right>.\]
	\end{enumerate}
\end{theorem}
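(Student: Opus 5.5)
Here is the plan I would follow. First, reduce neutrality of a faithful representation $V$ of $G$ to a statement about the normalizer: a twisted form $\cV \to \cG$ of $[V/G] \to BG$ is the same as a morphism of gerbes $\cG \to B\GL_{2}$ of geometric type $G \subseteq \GL_{2}$. By the abstract normalizer result announced in the abstract, such morphisms factor as $\cG \to \cN \to B\GL_{2}$. Here $\cN$ is a gerbe that depends only on the geometric type, with band controlled by $N = N_{\GL_{2}}(G)$. Hence $V$ is neutral if and only if every $k$-form obtained this way, i.e.\ every class in $\H^{1}(k, N/G)$ lifted along $N \to N/G$, yields a neutral gerbe. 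I would therefore study the extension $1 \to G \to N \to N/G \to 1$ and show that the relevant coboundary obstructions vanish except in the case (2).

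For $\cha k_{0} = 2$, or more generally when $|G|$ is odd, I would argue as follows. The center $Z = G \cap K^{*}$ and the image $\bar{G} \subseteq \PGL_{2}$ are controlled by the classification of finite subgroups of $\PGL_{2}$: cyclic, dihedral, $A_{4}$, $S_{4}$, $A_{5}$ (in characteristic $2$ with tame order, only odd order or the allowed tame cases occur). Passing to $\bar{\cG}$, a twisted form of $B\bar{G}$, I would use that a conic/$\PP^{1}$-bundle with a $\bar{G}$-structure gives a Brauer class. If $\bar{G}$ is noncyclic, it has fixed-point data on $\PP^{1}$, namely orbits of odd length or of coprime lengths. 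This forces the Brauer–Severi variety to be split, so $\cG \to B\GL_{2}$ lifts compatibly and $\bar{\cG}$ has rational points over which a $\GL_{1}$-gerbe with a $Z$-reduction remains. By the dimension-one result, Proposition~\ref{prop:gl1}, this $Z$-gerbe is neutral.

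The heart of the argument is the case where $\bar{G}$ is cyclic, so that $G$ is abelian, diagonalizable to $\langle \zeta_{2m}\Id, \diag(a,b)\rangle$. Here I would apply the abelian criterion, the paper's second main result, in the form of a computation of $\H^{2}$-obstructions for diagonalizable groups. The normalizer contains the swap of coordinates exactly when the eigenvalue pattern is symmetric up to scalars. In that situation a twist by a quadratic extension $L/k$ exchanging the eigenlines produces a gerbe whose neutrality amounts to $\zeta$-norm conditions. I would check that an obstruction survives precisely when $G$ is the preimage in $\SL_{2}^{(m)}$ of $\langle \diag(1,\zeta_{n})\rangle$. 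That is, the determinant condition $\det^{m}=1$ ties the scalar part to the diagonal part so that the Brauer class $(a, \zeta)$-type cannot be killed by $Z$, and an explicit non-neutral form can be built over a suitable field such as $k=\QQ(\zeta)(t)$ using a cyclic algebra. The converse (2)$\Rightarrow$(1) is this construction. For (1)$\Rightarrow$(2), I would show that every other abelian pattern is neutral by the abelian criterion, and that the nonabelian $\bar{G}$ are neutral by the previous paragraph. The two-dimensional singularity classification of \cite{bresciani-sing} would let me cross-check the reflection-free cases.

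The main obstacle I expect is the abelian case with $\bar{G}$ cyclic and the swap in the normalizer. There one must compute the obstruction exactly and show non-neutrality by an explicit twisted form; this is the step where the constraints on $m,n$ and $\cha k_{0}\neq 2$ enter.
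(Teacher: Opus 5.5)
Your treatment of noncyclic tame $\bar G$ (a rational point on the quotient conic, then a faithful line bundle on the fibre of $\cG\to\bar{\cG}$ over it) is a workable variant of Proposition~\ref{prop:rhoneut-neut}. The implication $(1)\Rightarrow(2)$, however, has real holes.

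\textbf{The scalar case is not covered.} You claim every abelian pattern outside (2) is neutral ``by the abelian criterion''. If $\bar G$ is trivial, then $G=\langle\zeta_{c}\Id\rangle$ has a single $2$-dimensional eigenspace, so Theorem~\ref{thm:main3} does not apply. Also $\bar G$ is not $\rho$-neutral (non-split conics), so your noncyclic argument does not apply either. For $c$ odd you have no argument.

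\textbf{The missing idea is the parity step.} The coarse space of $\PP(\cV)$ is a conic, so $\cG(k')\neq\emptyset$ for some $[k':k]\le 2$ by Lemma~\ref{lem:compression-neutral}. Since $G$ is abelian, Lemmas~\ref{lem:index-primes} and~\ref{lem:index-neutral} then force $|G|$ to be even whenever $\cG$ is not neutral. This step:
\begin{itemize}
\item justifies your unexplained normalization of the scalar part as $\zeta_{2m}\Id$;
\item settles the scalar case;
\item rules out non-neutral forms in characteristic $2$ once $\bar G$ is known to be cyclic of odd order, because there $G\cap K^{*}$ has odd order.
\end{itemize}

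\textbf{Wild subgroups are omitted.} Your noncyclic argument only sees the tame list (cyclic, dihedral, $A_4$, $S_4$, $A_5$). When $\cha k_0=p>0$ there are wild finite subgroups of $\PGL_2(K)$, such as $\PSL_2(\FF_q)$ or groups of translations. In characteristic $2$ every noncyclic one is wild, since it has even order. What you need is that every finite subgroup of $\PGL_2(K)$ other than a cyclic one of order prime to $\cha k_0$ is $\rho$-neutral (Theorem~\ref{thm:pgl2}).

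\textbf{The central computation is not done.} ``I would check that an obstruction survives precisely when\ldots'' is the whole content of the implication. You must use that the swap normalizes $G$ to get $\lambda^{2}\zeta_{n}\in\langle\zeta_{m}\rangle$, i.e.\ $\lambda=\zeta_{2n}^{-1}\zeta_{2m}^{a}$. Then split into $n$ odd and $n$ even, and for $n$ even with $a$ odd exhibit a permutation basis of $X(\GG_{m}^{2}/G)$.

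\textbf{The converse $(2)\Rightarrow(1)$ is only asserted.} You give no construction, and the proposed field does not work in general. $\QQ(\zeta)(t)$ need not contain $k_0$. If $k_0$ is algebraically closed, $k_0(t)$ has cohomological dimension $1$, so every gerbe banded by a finite abelian group of order prime to $\cha k_0$ is neutral over it; one needs a field such as $K(s,t)$. Moreover, for $n=1$ there are no eigenlines to exchange.

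The paper needs no ad hoc construction. The listed $G$ is the full preimage of $\bar G$ in $\SL_{2}^{(m)}$, which surjects onto $\PGL_{2}$, so $G$ is saturated. Hence every $\bar{j}$-gerbe comes from a $j$-gerbe (Lemma~\ref{lem:saturated}). Proposition~\ref{prop:saturated-neut} then reduces non-neutrality of $G$ to the non-$\rho$-neutrality of cyclic subgroups of $\PGL_2(K)$ of order prime to $\cha k_0$. That is Theorem~\ref{thm:pgl2}, proved by a root-stack construction on a non-split conic.

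Your own route can also be completed, through Lemma~\ref{lem:not-neutral}:
\begin{itemize}
\item For $n\ge 2$, one has $X(\GG_{m}^{2}/G)=\ZZ(m,m)\oplus\ZZ(-n,n)\cong\ZZ\oplus\ZZ^{-}$. Twisting by a quadratic $L/k$ turns $\GG_{m}^{2}/G$ into $\GG_{m}\times R^{1}_{L/k}\GG_{m}$, whose $\H^{1}$ is $k^{*}/\rN_{L/k}(L^{*})$, while $\H^{1}$ of the twisted $\GG_m^2$ vanishes.
\item For $n=1$, take the gerbe of liftings to $\SL_{2}^{(m)}$ of a non-split quaternion algebra.
\end{itemize}
Neither of these is carried out in your proposal.
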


In dimension $3$, the result is completely new. We assume $\cha k_{0} = 0$ in order to have a better grasp on the finite subgroups of $\GL_{3}(K)$, but the classification can certainly be given in positive characteristic, too.

\begin{theorem}\label{thm:main2}
	Assume $\cha k_{0} = 0$, and let $G \subseteq \GL_{3}(K)$ be a finite subgroup with image $\bar{G} \subseteq \PGL_{3}(K)$. The following are equivalent.
	\begin{enumerate}
		\item The representation of $G$ on $\AA^{3}$ is not neutral.
		\item $G$ is conjugate to one of the following groups.
			\begin{itemize}
				\item Given a positive integer $c$, the inverse image in $\SL_{3}^{(c)}$ of
				\[\left< \diag(\zeta_{a},1,1), \diag(1,\zeta_{a},1), \diag(\zeta_{an},\zeta_{an}^{d},1) \right> \subseteq \PGL_{3}(K).\]
				for some positive integers $a,d,n$ with $d^{2} - d + 1 \cong 0 \pmod{n}$.

				\item 
				
				Given integers $c_{1},c_{2}$ with $2c_{1} + c_{2} \neq 0$, the inverse image in $\SL_{(2,1)}^{(c_{1},c_{2})}$ of
				 \[\left< \diag(\zeta_{2m}, \zeta_{2m}, 1), \diag(\zeta_{2n}, \zeta_{2n}^{-1}, 1)\right> \subseteq \PGL_{3}(K)\]
				for some positive integers $m, n$.
				
				\item Given a positive integer $c$, the inverse image in $\SL_{3}^{(c)}$ of $H_{2}$.
				
				\item If $\zeta_{3} \notin k_{0}$, given a positive integer $c$, the inverse image in $\SL_{3}^{(c)}$ of $H_{3}$.
			\end{itemize}
	\end{enumerate}
\end{theorem}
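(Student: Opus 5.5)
Our plan is to reduce neutrality of $G \subseteq \GL_{3}(K)$ to a problem about the image $\bar{G} \subseteq \PGL_{3}(K)$ together with the determinant-type character data, using the normalizer machinery announced in the abstract. A twisted form $\cV \to \cG$ of $[\AA^{3}/G] \to BG$ is a morphism of gerbes $\cG \to B\GL_{3}$. Its normalizer $\cG \to \cN \to B\GL_{3}$ depends only on the geometric type, so it is a twisted form of $B N_{\GL_{3}}(G)$. Hence $G$ is neutral if and only if every $k$-point of $\cN$ that is, every twisted form of the normalizer data, lifts to $\cG$. First, following \cite{bresciani-vistoli-yang} and Proposition~\ref{prop:gl1}, we would show that scalars and determinants can be stripped off. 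Write $G$ as the inverse image of $\bar{G}$ in a group of the shape $\SL_{\underline{n}}^{\underline{m}}$, where the block decomposition is the coarsest one stable under $N(G)$. The obstruction to neutrality then lives in a Brauer-type class on the projective gerbe attached to $\bar{G}$, twisted by the scalar part. This explains why the answer is stated as ``inverse image in $\SL^{(c)}$'' of a subgroup of $\PGL_{3}$.

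Second, we would split by the structure of $\bar{G}$ using the classification of finite subgroups of $\PGL_{3}(K)$ in characteristic $0$. There are four kinds: the reducible ones (fixing a point or a line), the imprimitive monomial ones (diagonal group extended by a subgroup of $S_{3}$), the primitive ones (the $A_{5}$, $A_{6}$, $\mathrm{PSL}_{2}(7)$ types), and the Hessian groups $H_{2}, H_{3}, \dots$.

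\begin{itemize}
	\item \emph{Primitive non-Hessian.} The invariant theory gives $k$-rational invariant curves or points, or odd-degree arguments, and these force neutrality.
	\item \emph{Reducible with a fixed point and a fixed line.} The decomposition $V = W \oplus L$ is canonical. We would reduce to Theorem~\ref{thm:main1} applied to $W$, tracking the combined determinant character with weights $(c_{1},c_{2})$. This produces the second family, with the condition $2c_{1}+c_{2}\neq 0$ coming from requiring that the relevant character be nontrivial on the centre.
	\item \emph{Diagonal/monomial case.} We would use the paper's computation-friendly criterion for abelian groups. The non-neutral examples arise precisely when $\bar{G}$ is normalized by a $3$-cycle acting compatibly. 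The $3$-cycle condition $\diag(\zeta_{an},\zeta_{an}^{d},1) \mapsto$ itself up to the diagonal torus translates to $d^{2}-d+1 \equiv 0 \pmod n$, giving the first family.
\end{itemize}

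For each candidate family we would also construct explicit non-neutral twists. The natural tool is a cyclic algebra of degree $3$ (or a quaternion algebra in the $(2,1)$ case) over a suitable field such as $k_{0}(s,t)$, whose Brauer class obstructs the gerbe. For the Hessian groups, $H_{2}$ and $H_{3}$ are the normalizers of the Heisenberg group of order $27$ modulo centre. They are non-neutral via the Severi–Brauer surface twist. For $H_{3}$, the extra order-$3$ element acts by a Galois-nontrivial character when $\zeta_{3}\notin k_{0}$, which explains that hypothesis. The larger Hessian groups contain an element forcing a rational point, for instance through an invariant of degree prime to $3$.

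The main obstacle will be the monomial/diagonal case. There, both neutrality for all the remaining $\bar{G}$ and the exact arithmetic condition $d^{2}-d+1\equiv 0 \pmod n$ must come out of a careful analysis of the normalizer's action on character groups. We would first treat $\bar{G}$ abelian via the abelian criterion. We would then handle extensions by $S_{3}$-subgroups by descending along the normalizer gerbe, checking case by case that the obstruction class vanishes.
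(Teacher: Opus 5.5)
Your outline has a genuine gap at its very first step, and the rest inherits it. You propose to ``strip off scalars and determinants'' by writing $G$ as the inverse image of $\bar{G}$ in some $\SL_{\underline{n}}^{\underline{m}}$. A general finite $G$ is not of this form. For instance, $G=\langle \zeta_{3c}\Id, M_{0}, M_{1}, \zeta_{2^{e}}M_{2}\rangle$ with $e\neq 0$, $2^{e}\nmid c$ has $\bar{G}=H_{2}$, but it does not contain $M_{2}\in\SL_{3}$, and it is in fact neutral. Proving that \emph{non-neutrality} forces $G$ to be such an inverse image (i.e.\ that $G$ is saturated) is the whole content of $(1)\Rightarrow(3)$, so you are assuming the conclusion.

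Moreover, the bridge between $\GL_{3}$ and $\PGL_{3}$ is not a ``Brauer-type class on the projective gerbe''. It is $\rho$-neutrality: whether the projective bundle $\cP\to\bar{\cG}$ has a rational point, equivalently whether the compression $\bP$ has a liftable point. The paper needs both directions:
\begin{itemize}
\item Proposition~\ref{prop:rhoneut-neut}: $\bar{G}$ $\rho$-neutral implies $G$ neutral, via the blow-up and Lang--Nishimura argument of Lemma~\ref{lem:compression-neutral}.
\item Proposition~\ref{prop:saturated-neut}: $G$ saturated and neutral implies $\bar{G}$ $\rho$-neutral. This is also what replaces your ad hoc cyclic-algebra constructions for $(3)\Rightarrow(1)$.
\end{itemize}
Neutrality of the projective gerbe is the wrong invariant. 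The trivial group and the groups of type $(e)$ are neutral in $\PGL_{3}$, yet their inverse images in $\SL_{3}^{(c)}$ (e.g.\ scalar $\mu_{3c}$) are not neutral. Likewise, your claim that ``rational invariant points force neutrality'' in the primitive case only holds for liftable points.

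In the case analysis the decisive arguments are absent.
\begin{itemize}
\item \emph{Diagonal $\bar{G}$.} Your $3$-cycle condition only recovers $d^{2}-d+1\equiv 0 \pmod{n}$, a statement about $\bar{G}$ already contained in the $\PGL_{3}$ classification. What must be proved is $3\mid c$ and $\alpha=\beta=0$. The paper gets this in four steps: $\cG$ has index $3$ (from the distinguished degree-$3$ cycle of axes); the normalizer is transitive on the axes; one reduces to $3$-groups; and whenever $(\alpha,\beta)\neq(0,0)$ there are explicit $C_{3}$-permutation sublattices of $X(\GG_{m}^{3}/G)$ of index prime to $3$, so Theorem~\ref{thm:diag-neutral+} contradicts index $3$.
\item \emph{Type $(b)$.} The paper handles this the same way, with index $2$ and $C_{2}$. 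Your reduction to Theorem~\ref{thm:main1} on $W$ does not apply directly, since $G$ need not act faithfully on $W$.
\item \emph{Hessian groups.} Your discussion here contains errors. $H_{2}$ and $H_{3}$ are not normalizers of the Heisenberg group modulo its centre; that normalizer is $H_{5}$. The dependence on $\zeta_{3}$ for $H_{3}$ does not come from an order-$3$ element. It comes from the twisted Klein group $\fH_{4}/\fH_{2}=\fKl$, for which $\H^{1}(k,\fKl)\to\H^{1}(k,C_{2})$ is the norm $E^{*}/E^{*2}\to k^{*}/k^{*2}$ with $E=k[s]/(s^{2}+s+1)$.
\item \emph{Non-saturated lifts of $H_{2}$, $H_{3}$.} These are neutral only through nontrivial inputs you do not supply. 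First, $G\cap\SL_{3}=\tilde{H}_{1}$ is neutral (Proposition~\ref{prop:27}, via $\fQ_{8}\rtimes\mu_{9}\to\fQ_{8}\rtimes\mu_{3}$), combined with Lemma~\ref{lem:sl-neutral}. Second, when $2^{e-1}\mid c$, one needs a gate $\Pi$ with $\Pi/\fG\simeq\fKl$ and the surjectivity of $\H^{1}(k,\mu_{2^{e}}\times\fQ_{8})\to\H^{1}(k,\fKl)$. That surjectivity rests on the explicit conic computation of Proposition~\ref{prop:q8}.
\end{itemize}
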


Using Examples~\ref{example:cohomology} and \ref{example:tangent}, Theorems~\ref{thm:main1} and \ref{thm:main2} automatically give new examples of varieties defined over the field of moduli.

As can be seen from these theorems, non-neutral finite subgroups of $\GL_{n}(K)$ are a very small fraction of all finite subgroups, at least in low dimension.

This means that, if $\cG$ is a non-neutral gerbe, there are strong constraints on which vector bundles can appear on $\cG$. Equivalently, we get strong constraints on the objects of a non-neutral Tannakian category.

The classification of $2$-dimensional singularities of type $\rR$ was the key ingredient for studying fields of moduli of plane curves in \cite{bresciani-plane}. An immediate consequence of Theorem~\ref{thm:main2} is the classification of $3$-dimensional singularities of type $\rR$, which can be applied to study fields of moduli of curves and surfaces embedded in $\PP^{3}$, or more generally $3$-dimensional varieties.

\subsection{Diagonal subgroups with $1$-dimensional eigenspaces}

When studying these problems, a counter-intuitive phenomenon emerges. The more complex a subgroup $G \subseteq \GL_{n}(K)$, the easier it is to prove neutrality. For instance, notice that almost all non-neutral subgroups in dimension $n \le 3$ have abelian image in $\PGL_{n}$, even though non-abelian finite subgroups of $\PGL_{n}$ are abundant for $n = 2, 3$. The reason is that the complexity of the group can actually be leveraged to apply the methods introduced in \cite{bresciani-vistoli}.

Because of this, it is very important to know how to address ``simpler'' groups.

We have found a way to tackle finite, diagonal subgroups of $\GL_{n}(K)$ with $1$-dimensional eigenspaces, different from the one given in \cite{bresciani-vistoli-yang} (which was fine for most diagonal subgroups of $\GL_{3}$, but not all of them). In particular, this new method turned out to be strong enough to address all such subgroups of $\GL_{n}(K)$ for $n \le 3$, but it gives results in arbitrary dimensions.

\begin{theorem}\label{thm:main3}
	Let $G \subseteq \GL_{n}(K)$ be a finite, diagonal subgroup with $1$-dimensional eigenspaces. Let the normalizer be $\GG_{m}^{n} \rtimes S$, where $S \subseteq S_{n}$ is a group of permutation matrices acting on the eigenspaces. 
	
	Consider the action of $S$ on the character group $X(\GG_{m}^{n}/G) \subseteq X(\GG_{m}^{n}) = \ZZ^{n}$. If $X(\GG_{m}^{n}/G)$ is a direct summand of a permutation $S$-module, then $G$ is neutral.
\end{theorem}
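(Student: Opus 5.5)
Given a twisted form $\cV \to \cG$ of $[\AA^{n}/G] \to BG$ over a field $k$, the plan is to produce a rational point of $\cG$ by passing through the normalizer. The morphism $\cG \to B\GL_{n}$ classifying $\cV$ factors through a twisted form $\cN$ of $B(N)$, where $N = \GG_{m}^{n} \rtimes S$ is the normalizer of $G$ in $\GL_{n}$. This uses the normalizer construction $\cG \to \cN \to \cH$ announced in the abstract: $\cN$ depends only on the geometric type, so geometrically it is $BN$. The map $\cG \to \cN$ is then geometrically $BG \to BN$, and its fibre is $N/G = (\GG_{m}^{n}/G) \rtimes S$.

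First I will exploit the extension $1 \to \GG_{m}^{n} \to N \to S \to 1$. The composite $\cN \to BS$-twisted form gives a gerbe $\cS$ with band $S$. Since $S$ acts by permuting the eigenspaces, which are the lines of $\cV$, I expect $\cS$ to be neutral, so it has a point. Concretely, the eigenspace decomposition of $\cV_{\bar k}$ descends to a Galois-stable set of $n$ lines, that is, a rank-$n$ étale algebra $E/k$ together with a line bundle on $\spec E$. Choosing such a point reduces the problem to a twisted torus gerbe. Over that point, the fibre of $\cN$ is a gerbe banded by a $k$-form $T$ of $\GG_{m}^{n}$. Because the $S$-action permutes coordinates, $T$ is a quasi-split torus $R_{E/k}\GG_{m}$ (or a product of such), and $H^{1}(k,T)=0$ by Hilbert 90. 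Hence this gerbe is neutral, and we obtain a point $p \in \cN(k)$.

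Next, the fibre of $\cG \to \cN$ over $p$ is a torsor under a twisted form $Q$ of $\GG_{m}^{n}/G$, possibly up to a finite piece I must control. Its character group is $X(\GG_{m}^{n}/G)$, with Galois acting through a homomorphism $\gal \to S$. The hypothesis says this lattice is a direct summand of a permutation $S$-module, hence of a permutation Galois module. So $Q$ is a direct factor of a quasi-trivial torus $\prod R_{k_i/k}\GG_m$, and Hilbert 90 with Shapiro's lemma gives $H^{1}(k,Q)=0$. The torsor is therefore trivial, it has a $k$-point, and its image in $\cG$ is the desired point.

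The main obstacle is making the first steps rigorous. I need the normalizer gerbe $\cN$ and the identification of $\cG \to \cN$ with a $Q$-torsor, where $Q$ is a form of $N/G$ restricted over $p$. I also need the twisting data over $p$ to be governed by the $S$-action rather than by some arbitrary Galois action. One point is subtle: the fibre is a torsor under $N/G$ twisted by $p$, not just under its toral part. My first attempt would be to choose $p$ cleverly, via the point of $\cS$ coming from the eigenline decomposition, so that the $S$-component is already trivialised. The residual fibre should then be a torsor under the torus form alone, and I would check this using the splitting $N = \GG_m^n \rtimes S$ by permutation matrices.
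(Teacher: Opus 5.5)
Your overall strategy is the paper's, phrased geometrically rather than cohomologically. The paper writes the $j$-gerbe as a class $\phi \in \H^{1}(k, (\GG_{m}^{n}/\fG) \rtimes S)$ via Theorem~\ref{thm:coh-crit}. It lets $\tau$ be the image of $\phi$ in $\H^{1}(k,S)$ and $\psi$ the image of $\tau$ under the permutation-matrix section. By twisting, the fibre of $\H^{1}(k, (\GG_{m}^{n}/\fG) \rtimes S) \to \H^{1}(k,S)$ over $\tau$ is the image of $\H^{1}(k, {}_{\tau}(\GG_{m}^{n}/\fG)) = 0$. Hence $\phi = \psi$, which lifts to $\H^{1}(k, \GG_{m}^{n} \rtimes S)$. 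In your language, the point $p$ chosen over the eigenline torsor is this $\psi$, and the fibre over $p$ measures the difference between $\phi$ and $\psi$. Your final step is correct: a direct factor of a quasi-trivial torus has trivial $\H^{1}$ by Hilbert 90 and Shapiro.

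The step that fails is the construction of $p \in \cN(k)$. A gerbe banded by a quasi-split torus $T = \prod_{i} R_{k_{i}/k}\GG_{m}$ is classified by $\H^{2}(k,T) = \bigoplus_{i} \br(k_{i})$, not by $\H^{1}(k,T)$. So Hilbert 90 says nothing about its neutrality, and ``I expect $\cS$ to be neutral'' is not an argument.

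What you are missing is that nothing needs proving here. Since $G$ is diagonal it is Galois-stable (Lemma~\ref{lem:diag-hyp}), so $G = \fG(K)$ for a $k_{0}$-subgroup scheme $\fG$ whose normalizer $\fN = \GG_{m}^{n} \rtimes S$ is split over $k_{0}$. Because $\cG \to B\GL_{n}$ is locally equivalent to $B\fG \to B\GL_{n}$, its normalizer gerbe is $B\fN$ itself (Proposition~\ref{prop:gate-norm}), not merely a twisted form of it. Then $\cS = BS$, the point of $\cS$ through which $\cG \to \cS$ factors (Lemma~\ref{lem:trivial-fact}) is $\tau$, and $p$ is the image of $\tau$ under $BS \to B\fN$.

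This also settles your worry about the twisting data. $\GG_{m}^{n}/\fG$ is a split torus over $k_{0}$, so over $p$ Galois acts on $X(\GG_{m}^{n}/G)$ exactly through $\tau$ and the permutation action of $S$. With this repair, your fibre over $p$ is a torsor under ${}_{\tau}(\GG_{m}^{n}/\fG)$ alone, and your argument coincides with the paper's.
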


We also have a more flexible version of this result, where the hypothesis is only required for a subgroup of $S$ and a submodule of $X(\GG_{m}^{n}/G)$, see Theorem~\ref{thm:diag-neutral+}.

\subsection{The normalizer of a morphism of gerbes}

Finally, we have a new, general result about gerbes on sites, which is extremely useful for studying twisted representations and more generally twisted forms of ``things''. Given that this part is very different in tone from the rest of the paper, it is contained in the appendix.

We say that two faithful morphisms of gerbes $\cG \to \cH$, $\cG' \to \cH$ on a site $\cC$ with a terminal object $T$ are locally equivalent if there exists a covering $\{U_{i} \to T\}$ and isomorphisms $\cG_{U_{i}} \to \cG'_{U_{i}}$ making the obvious diagram $2$-commutative. There is a more general definition which does not require a terminal object, see Definition~\ref{def:loc-eq}. We may think of two locally equivalent morphisms as twisted forms of each other.

\begin{theorem}\label{thm:main4}
	Let $f : \cG \to \cH$ be a faithful morphism of gerbes on a small site $\cC$. There exists a gerbe $\cN$ and a factorization
	\[\cG \xrightarrow{\phi} \cN \xrightarrow{\psi} \cH\]
	of $f$ called the \emph{normalizer} of $f$, having the following properties.
	
	\begin{enumerate}
		\item	$\phi$, $\psi$ are faithful.
		\item	For every $S \in \cC$ and every object $s: S \to \cG$, the group sheaf $\underaut_{\cN}(\phi \circ s)$ is the normalizer of $\underaut_{\cG}(s)$ in $\underaut_{\cH}(\psi \circ \phi \circ s)$.
		\item	The normalizer is unique in the following sense. If $\cG \to \cN' \to \cH$ is another factorization having properties $(1)$ and $(2)$, there exists an isomorphism $\cN \to \cN'$ making the diagram
	\[\begin{tikzcd}[row sep = tiny]
								&	\cN \ar[dr] \ar[dd]		&				\\
		\cG \ar[ur]\ar[dr]		&							&	\cH		\\
								&	\cN' \ar[ur]
	\end{tikzcd}\]
	$2$-commutative.
		\item  If $\cG \to \cH$, $\cG' \to \cH$ are locally equivalent, the normalizers of $\cG \to \cH$ and $\cG' \to \cH$ coincide. More precisely, if $\cG \to \cN \to \cH$ is the normalizer of $\cG \to \cH$, there exists a faithful morphism $\cG' \to \cN$ such that the composition $\cG' \to \cN \to \cH$ is the normalizer of $\cG' \to \cH$.
	\end{enumerate}
\end{theorem}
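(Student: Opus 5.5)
We construct $\cN$ directly as a stack of ``local normalizer data'' and then check the four properties, working throughout with gerbes on the small site $\cC$ as stacks in groupoids. The guiding picture is this: locally, $f$ looks like $B\underaut_{\cG}(s) \to B\underaut_{\cH}(fs)$, and the normalizer should look like $B N$, where $N$ is the normalizer of $\underaut_{\cG}(s)$ in $\underaut_{\cH}(fs)$. Globally, $\cN$ should classify objects of $\cH$ equipped with a ``conjugacy class of lifts to $\cG$ up to normalizer''.

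\textbf{Construction.} For $U \in \cC$, let $\cN(U)$ be the groupoid of pairs $(h, \Sigma)$. Here $h \in \cH(U)$, and $\Sigma$ is a subsheaf of the sheaf on $\cC/U$ sending $V \mapsto \{(g, \alpha) : g \in \cG(V),\ \alpha : f(g) \xrightarrow{\sim} h|_{V}\}/\!\cong$. We require $\Sigma$ to be locally nonempty and to be a single orbit for the ``local equivalence'' relation: two lifts $(g,\alpha)$, $(g',\alpha')$ lie in $\Sigma$ if and only if, locally, there is an isomorphism $\beta : g \to g'$ with $\alpha' \circ f(\beta) \circ \alpha^{-1}$ arbitrary. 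Equivalently, $\Sigma$ is the orbit under $\underaut(h)$ acting by postcomposition. Since $f$ is faithful, $\Sigma$ is a torsor-like object identifying the subgroup $\underaut_{\cG}(g) \subseteq \underaut(h)$ up to conjugation. The cleaner way to state this is that $\Sigma$ is an orbit of the subgroup $\{\gamma \in \underaut(h): \gamma \alpha f(\underaut g)\alpha^{-1}\gamma^{-1} = \alpha f(\underaut g)\alpha^{-1}\}$. Morphisms $(h,\Sigma) \to (h',\Sigma')$ are isomorphisms $h \to h'$ carrying $\Sigma$ to $\Sigma'$. The maps are $\phi(g) = (f(g), \text{orbit of } (g,\mathrm{id}))$ and $\psi(h,\Sigma) = h$. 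Descent for $\cH$ and for sheaves shows that $\cN$ is a stack. It is a gerbe because it is locally nonempty (via $\phi$) and locally connected: for two objects, first connect the underlying $h$'s locally, then adjust by an element of $\underaut(h)$ carrying one lift to the other, which exists locally because $\cG$ is a gerbe.

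\textbf{Properties (1) and (2).} $\psi$ is faithful by construction. $\phi$ is faithful because $\psi\circ\phi = f$ is. For (2), $\underaut_{\cN}(\phi s)$ is the subsheaf of $\underaut_{\cH}(fs)$ preserving the orbit of $(s,\mathrm{id})$. A section $\gamma$ preserves it if and only if $(s,\gamma)$ is locally isomorphic to $(s,\mathrm{id})$. This holds if and only if $\gamma$ conjugates $f(\underaut s)$ to itself, because lifts of the form $(s,\gamma)$ are identified exactly when they induce the same subgroup. Getting this identification right is the main obstacle, and we expect to need faithfulness of $f$ here. To keep the bookkeeping honest, we will instead define $\Sigma$ directly as the sheaf of subgroups $\alpha f(\underaut g)\alpha^{-1} \subseteq \underaut(h)$, i.e. a locally-conjugate-to-standard subgroup sheaf. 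With this definition, (2) becomes the tautology that the stabilizer of a subgroup under conjugation is its normalizer.

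\textbf{Uniqueness (3) and (4).} Given $\cG \to \cN' \to \cH$ satisfying (1) and (2), we map an object $n'$ of $\cN'$ to $(\psi'(n'), \underaut_{\cN'}\text{-determined subgroup})$. Concretely, locally $n' \cong \phi'(g)$, and we take the image of $\underaut(g)$. This is well defined since changing $g$ within $\cN'$-isomorphisms moves the subgroup by elements of $\underaut(n')$, which normalize it by (2). The resulting map is fully faithful by (2) and faithfulness of $\psi'$, and it is essentially surjective because both sides are gerbes; hence it is an isomorphism with the required $2$-commutativity. For (4), the subgroup-sheaf description shows that $\cN$ depends only on the local type of $f$. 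Given local equivalences $\cG_{U_i}\cong\cG'_{U_i}$ over $\cH$, the objects $(f'(g'), f'(\underaut g'))$ define local morphisms $\cG'_{U_i}\to\cN$. These agree with those coming from $\cG$ via the equivalences, and they glue canonically because the target data (a subgroup of $\underaut(h)$) has no further automorphisms. The composite $\cG'\to\cN\to\cH$ then satisfies (1) and (2), so by (3) it is the normalizer of $\cG'\to\cH$.
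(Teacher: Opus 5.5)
Your final construction is correct, but it takes a genuinely different route from the paper's.

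\textbf{The paper's route.} The paper fixes the pre-type $\underaut_{\cG}\colon \cG \to \rP_{\cH}$ and forms a prestack $\cN_{0}$. Its objects are \emph{the same as those of $\cG$}. Its morphisms $x \Rightarrow y$ are the isomorphisms $\alpha\colon f(x) \Rightarrow f(y)$ in $\cH$ with $\alpha^{-1}\underaut_{\cG}(y)\alpha = \underaut_{\cG}(x)$. Then $\cN$ is defined as the stackification of $\cN_{0}$, which is where smallness of $\cC$ enters. Uniqueness comes from extending $\cN_{0} \to \cN'$ over the stackification. Property (4) is formal: locally equivalent morphisms define the same type (Lemma~\ref{lem:equiv-type}), and Theorem~\ref{thm:type-normalizer} factors every functor of that type.

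\textbf{Your route.} You describe $\cN$ intrinsically, as the stack of pairs $(h,A)$ with $A \subseteq \underaut_{\cH}(h)$ a subgroup sheaf locally of the form $\alpha f(\underaut_{\cG}(g))\alpha^{-1}$. In the paper's language this is essentially the substack of $\rP_{\cH}$ consisting of parts of the inertia locally isomorphic to those coming from $\cG$. What this buys:
\begin{itemize}
\item Subsheaves glue, so this is already a stack and no stackification is needed.
\item Property (2) becomes the tautology that the stabilizer of a subgroup under conjugation is its normalizer.
\item Objects of $\cN$ acquire a concrete meaning.
\end{itemize}
What the paper's formulation buys is generality at no extra cost. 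The same recipe applies verbatim to any pre-type $\cS \to \rP_{\cH}$, for instance the covering samples of Definition~\ref{def:covering-sample} used in Lemma~\ref{lem:distinguished}, or arbitrary subsets as in Example~\ref{example:normalizers-over-fields}. Your construction would extend too, using subsheaves of sets locally conjugate to the given ones, but that would have to be redone.

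\textbf{Three points to tidy up.}

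First, discard the initial `orbit of lifts' version of $\Sigma$. As literally stated — one orbit of $\underaut(h)$ acting by postcomposition, with $\alpha'\circ f(\beta)\circ\alpha^{-1}$ `arbitrary' — it is locally the whole fiber of $f$ over $h$. Its stabilizer is then all of $\underaut_{\cH}(h)$, not the normalizer. Only the subgroup-sheaf definition you finally adopt (equivalently, orbits under the normalizer) works.

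Second, in (3) you need one more step. Property (2) only controls \emph{automorphism} sheaves. So both the well-definedness of the subgroup attached to $n' \in \cN'$ and full faithfulness on $\underisom$ sheaves require the following reduction. Locally choose an isomorphism $\beta\colon g_{1} \to g_{2}$ in $\cG$. Compare the given isomorphism with $\phi'(\beta)$, resp.\ $f(\beta)$. This reduces to automorphisms of $\phi'(g_{1})$, resp.\ $f(g_{1})$, where (2) applies. This is exactly the point the paper flags as `more subtle', and it should be written out.

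Third, in (4) no gluing is needed, and no terminal object should be assumed. The formula $g' \mapsto (f'(g'), f'(\underaut_{\cG'}(g')))$ is global. The only thing to check is that this subgroup is locally conjugate to one coming from $\cG$, and that is literally Definition~\ref{def:loc-eq}.
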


In the case of twisted representations, we let $\cC$ be the fppf site on $k_{0}$, $\cH = B\GL_{n}$, $\cG' = BG$ where $G \subseteq \GL_{n}$ is a linear subgroup. Then $\cG \to B\GL_{n}$ defines a vector bundle $\cV \to \cG$ which is a twisted form of the standard representation of $G \subseteq \GL_{n}$ on $\AA^{n}$. The normalizer $BG \to \cN \to B\GL_{n}$ is $\cN = BN$, where $N$ is the normalizer of $G$ in $\GL_{n}$. 

The theorem then implies that every twisted form $\cG \to B\GL_{n}$ of $BG \to B\GL_{n}$ factorizes through $BN$. This is particularly helpful when $N$ is small. For instance, if $N = G$, the theorem implies that every twisted form of $BG \to B\GL_{n}$ is trivial. While $G = N$ cannot happen if $G$ is finite, we show that $N/G \simeq \GG_{m}$ is sufficient (see Corollary~\ref{cor:special}), and this happens for many finite subgroups of $\GL_{n}$.

The result goes well beyond twisted representations, though. If $H$ is an algebraic group, $G \subseteq H$ is a subgroup with normalizer $N$ and $N/G$ is special, we get that there are no non-trivial twisted forms of $BG \to BH$. For instance, there are no non-trivial twisted forms $\cG \to BS_{3}$ of $BC_{2} \to BS_{3}$.

In a similar vein, one can study twisted projective representations of a group, i.e. twisted forms of $BG \to B\PGL_{n}$ for a subgroup $G \subseteq \PGL_{n}$. This will give results about twisted forms of various kinds of objects we can define in a projective space, for instance hypersurfaces.

Finally, let us mention that our result is even more general than stated above, and it allows to construct gerbes corresponding to normalizers of arbitrary subsets of a group, see Theorem~\ref{thm:type-normalizer} and Example~\ref{example:normalizers-over-fields}. For instance, we can construct gerbes corresponding to centralizers, since a centralizer is the normalizer of a single element.

\subsection{Structure of the paper}

In section \ref{sect:previous}, we recall some previous results which are used throughout the paper. Most importantly, we recall the Lang--Nishimura theorem for stacks from \cite{bresciani-vistoli-valuative}.

Section \ref{sect:neutral} lays down the general theory of twisted representations, putting it in a more general context. Given an algebraic group $\Gamma$ over $k_{0}$ with a finite subgroup $G \subseteq \Gamma(K)$ of the geometric points, we study twisted forms of $BG \to B\Gamma_{K}$, i.e. morphisms $\cG \to B\Gamma$ with $\cG$ a gerbe. The main examples are $\Gamma = \GL_{n}$ and $\Gamma = \PGL_{n}$. Every twisted representation defines a twisted projective representation by passing to the projectivization of the vector bundle, we study the interactions between the two.

In section \ref{sect:gates}, we show that it is often possible to replace $\Gamma$ with a smaller, more manageable group. Loosely speaking, a subgroup $\Pi \subset \Gamma$ is called a gate for $G$ if every twisted form of $BG \to B\Gamma_{K}$ factorizes through $B\Pi$. It turns out that gates much smaller than $\Gamma$ can often be found, thus making things considerably simpler. We also use this concept to prove Theorem~\ref{thm:main3}.

In section \ref{sect:dim2}, we classify finite, neutral subgroups of $\PGL_{1}$, $\GL_{1}$, $\PGL_{2}$, $\GL_{2}$, $\PGL_{3}$. This is mostly a re-organization of previous works by the first author, though considerable additions are made for $\GL_{2}$ and $\PGL_{3}$.

In section \ref{sect:gl3}, we classify finite, neutral subgroups of $\GL_{3}$.

In the appendix, we prove Theorem~\ref{thm:main4}, and more generally give a framework for constructing sub-gerbes of a given gerbe on a site.

\subsection{Acknowledgements}

We would like to thank A. Vistoli for countless discussions about the topics of the paper.

\subsection{AI disclosure}

ChatGPT 5.6 was used to review the paper. Furthermore, Proposition~\ref{prop:q8} was obtained through back and forth collaboration between humans and ChatGPT. 

\subsection{Notations and conventions}

We work over a perfect field $k_{0}$ with algebraic closure $K$. Whenever we have a field $k$, it is equipped with a fixed embedding $k_{0} \subseteq k$. Typically, $k_{0}$ is either $\QQ$ or $\FF_{p}$, which means that the only role of $k_{0}$ is to fix the characteristic. However, results can change if $k_{0}$ is larger. In particular, some results in dimension $3$ change when $k_{0}$ contains a primitive $3$rd root of $1$.

For every integer $n \ge 1$ prime with $\cha k_{0}$, choose $\zeta_{n} \in K$ a primitive $n$-th root of unity with $\zeta_{mn}^{m} = \zeta_{n}$.

\section{Previous results}\label{sect:previous}

In this section, we summarize previous results from \cite{bresciani-vistoli-valuative, bresciani-plane}, which will be used extensively in the paper.

We start with the Lang--Nishimura theorem for stacks, which is probably the most important tool for studying these kinds of problems.

\begin{theorem}[{Lang--Nishimura for stacks \cite[Theorem 4.1]{bresciani-vistoli-valuative}}]\label{thm:LN}
	Let $X \dashrightarrow Y$ be a rational map of algebraic stacks over a scheme $S$, with $X$ locally noetherian and integral and $Y \to S$ tame and proper. If $s \in S$ lifts to a regular $k(s)$-point of $X$, the same holds for $Y$.
\end{theorem}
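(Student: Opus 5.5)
The plan is to follow the classical proof of Lang--Nishimura by induction on dimension, using blow-ups and discrete valuation rings. The one stacky input is a valuative criterion for tame proper stacks that works up to totally ramified extensions. I would localize at $s$ throughout, so I may assume $S$ is local with closed point $s$, and replace $k(s)$ by $k$.

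\textbf{Step 1: reduce to a single DVR.} Let $x\colon \spec k \to X$ be the regular $k$-point. Choosing a smooth atlas around $x$ and using that regularity is smooth-local, I get a regular local ring $A$ of dimension $d$ with residue field $k$. Its spectrum maps to $X$, sending the generic point into the dense open domain of the rational map and the closed point to $x$ over $s$. Blow up the closed point of $\spec A$. The generic point of the exceptional divisor $\PP^{d-1}_k$ gives a DVR $R$ with the following properties:
\begin{itemize}
\item its fraction field maps to the domain of definition of $X \dashrightarrow Y$;
\item its residue field is $k(t_1,\dots,t_{d-1})$, purely transcendental over $k$;
\item its closed point lies over $s$.
\end{itemize}
So I obtain a morphism $\spec \operatorname{Frac}(R) \to Y$ over $S$.

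\textbf{Step 2: extend over the DVR (main obstacle).} I need the valuative criterion for tame proper stacks in a precise form. Given a DVR $R$ over $S$ and a $\operatorname{Frac}(R)$-point of $Y$, it should extend to an $R'$-point, where $R' = R[\pi^{1/n}]$ is a totally ramified extension. Such an $R'$ has the same residue field as $R$. Equivalently, the point extends over the root stack $\sqrt[n]{(\spec R, s)}$, and tameness is what makes the inertia at the closed point act through a linearly reductive $\mu_n$-type group, so that a root of order $n$ suffices. To prove this I would argue as follows:
\begin{itemize}
\item take the closure of the generic point in $Y_R$, or in a finite cover;
\item use properness of the coarse moduli space $Y \to M$, which exists by Keel--Mori and is proper, to extend to $M$;
\item use tameness: $Y$ is étale locally $[U/\Delta]$ over $M$ with $\Delta$ linearly reductive;
\item lift after a finite extension and check that the residue field extension can be made trivial.
\end{itemize}
This last check is the delicate point. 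The unramified part of the extension must be absorbed by a torsor under the residual gerbe's band, and this uses linear reductivity, i.e. that $\Delta$ is an extension of a tame finite étale group by a diagonalizable group. The closed point of $\spec R'$ then gives a $k(t_1,\dots,t_{d-1})$-point of $Y$ over $s$.

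\textbf{Step 3: descend from $k(t_1,\dots,t_{d-1})$ to $k$.} It remains to show that a $k(t)$-point of $Y$ over $s$ yields a $k$-point, and then induct on the number of variables. Apply Step 2 to the DVR $k[t]_{(t)}$, whose residue field is $k$. A $k(t)$-point of $Y$ extends to $k[t^{1/n}]_{(t^{1/n})}$ with the same residue field $k$, and its closed point is a $k$-point of $Y$ over $s$. Iterating $d-1$ times produces a $k(s)$-point of $Y$. Regularity of this point is not an issue, since a point $\spec k(s) \to Y$ of a field is what is required here.
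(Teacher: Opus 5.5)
The paper gives no proof of this statement. It quotes \cite[Theorem 4.1]{bresciani-vistoli-valuative}, and it separately uses the codimension-one input \cite[Corollary 3.2]{bresciani-vistoli-valuative} in the proof of Lemma~\ref{lem:compression-neutral}. Your architecture is the natural one: a valuative criterion for tame proper stacks that costs no residue field extension, fed into the classical blow-up argument. Organizing the induction as one valuation with residue field $k(t_{1},\dots,t_{d-1})$, followed by specializations along $k(t_{1},\dots,t_{i-1})[t_{i}]_{(t_{i})}$, is a correct variant that needs only the pointwise criterion. For $X$ a scheme, Steps 1 and 3 are complete once Step 2 is known.

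\textbf{Step 2 is asserted, not proved.} This is the genuine gap, and Step 2 is the entire stacky content of the theorem. Properness of $M$ and the local presentation $[U/\Delta]$ only give the usual valuative criterion for stacks: an extension over some finite extension $R''$ of $R$ whose residue field may be strictly larger than $k$. That $R''$ can be taken to be $R[\pi^{1/n}]$ is precisely what must be shown. The phrase ``the unramified part is absorbed by a torsor under the band'' gives no reason for it. What you need is that the specialization of the $K$-point contains a $k$-rational point of $Y$, and a tame band forces no such vanishing by itself, since non-neutral tame gerbes over $k$ exist. The $K$-point has to be used essentially. Also, $[U/\Delta]$ exists only étale locally on $M$, so passing to it can already enlarge the residue field, and you need a way back down. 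One mechanism that works:
\begin{enumerate}
\item Show that an extension $\sqrt[n]{\spec R} \to Y$ of the $K$-point is unique up to unique $2$-isomorphism, using separatedness of $Y$ and normality of the root stack.
\item Build it after a suitable étale local base change of $R$, where $Y$ becomes $[U/\Delta]$, and descend it back to $\sqrt[n]{\spec R}$ by uniqueness.
\item After that base change, extend the $\Delta$-torsor over the fraction field to the root stack. This is where linear reductivity really enters: the tame étale part is split by $\pi^{1/n}$, and the multiplicative part extends by Kummer theory.
\item The equivariant map to $U$ then extends because $U$ is finite over the base.
\end{enumerate}

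\textbf{A second, smaller gap is in Step 1 when $X$ is a genuine stack.} A smooth atlas need not have a $k$-rational point over $x$. Take $X = B_{k}G$ with atlas the trivial torsor $\spec k \to B_{k}G$, and let $x$ be a non-trivial torsor $T$. The fiber of the atlas over $x$ is $T$, which has no $k$-point. In general you only get a regular local ring whose residue field is a finite separable extension $k'/k$, and your argument then produces only a $k'$-point of $Y$. There are two ways to repair this.
\begin{itemize}
\item Exhibit a chart through which $x$ lifts rationally. For quotient stacks $[V/\GL_{n}]$ this follows from Hilbert 90, but in general it needs proof.
\item Do the first blow-up on the stack itself. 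Blow up the closure $Z$ of $x$, which is regular at $x$ of some codimension $c$, and restrict the rational map to the exceptional divisor $E$ by the codimension-one statement. Then pull it back along $E \times_{Z} \spec k \simeq \PP^{c-1}_{k} \to E$, which sends the generic point to the generic point of $E$. This reduces you to the scheme case.
\end{itemize}
In every application in this paper $X$ is a scheme, so this second issue does not affect them.
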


In characteristic $0$, the fact that $Y \to S$ is tame simply means that $Y$ is Deligne--Mumford with finite inertia.

We will use this theorem mainly in the case in which $Y$ is a (not necessarily proper) tame stack which is generically a scheme over a field $k$, $S$ is the coarse moduli space of $Y$, $X \to S$ is a resolution of singularities, and $X \to S \dashrightarrow Y$ is the birational inverse of the projection map $Y \to S$, which is proper even if $Y$ is not proper over $k$. In this case, the Lang--Nishimura theorem says that any rational point of $S$ which lifts to $X$ lifts to $Y$ as well.

\begin{definition}
	We say that a vector bundle $\cV$ over a finite gerbe $\cG$ is \emph{faithful} if the corresponding representations of the automorphism groups of geometric points of $\cG$ are faithful. Equivalently, $\cV$ is faithful if it is generically a scheme.
\end{definition}

The following lemma follows essentially from \cite{bresciani-vistoli-yang}, though it was not written down explicitly.

\begin{lemma}\label{lem:line-neut}
	Let $\cG$ be a gerbe with a faithful line bundle $\cL \to \cG$. Then $\cG$ is neutral.
\end{lemma}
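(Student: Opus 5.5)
Assume $\cG$ is a gerbe over a field $k$ and $\cL \to \cG$ is a faithful line bundle. The plan is to make the total space of $\cL$, with the zero section removed, into a scheme with a free action of a torus, and then to produce a rational point on it. Let $\cL^{\times} \subseteq \cL$ be the complement of the zero section. Over $K$, the gerbe $\cG$ is $BG$ for $G = \underaut(\xi)$ with $\xi \in \cG(K)$, and $\cL_{K} = [\AA^{1}/G]$ for a character $\chi : G \to \GG_{m}$. Faithfulness means $\chi$ is injective, so $G$ is identified with a finite subgroup $\mu_{n} \subseteq \GG_{m}$. It follows that $\cL^{\times}_{K} = [\GG_{m}/\mu_{n}] \simeq \GG_{m}$, because $\mu_{n}$ acts freely on $\GG_{m}$. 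Being a scheme descends, since representability of $\cL^{\times} \to \spec k$ can be checked after base change to $K$ (fpqc descent for affine morphisms). Hence $\cL^{\times}$ is a $k$-scheme $X$ which is a twisted form of $\GG_{m}$.

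Next we use the $\GG_{m}$-action on $\cL$ by fibrewise scalar multiplication. This action preserves $\cL^{\times}$, so $\GG_{m}$ acts on $X$. Over $K$ this is the action of $\GG_{m}$ on $[\GG_{m}/\mu_{n}] \simeq \GG_{m}$ through the $n$-th power map, which is transitive. So $X$ is a homogeneous space for $\GG_{m}$ over $k$, with stabilizer $\mu_{n}$ at geometric points. Since $\GG_{m}$ is commutative, the stabilizer is a well-defined subgroup $\mu_{n} \subseteq \GG_{m}$ over $k$. Therefore $X$ is a torsor under $\GG_{m}/\mu_{n} \simeq \GG_{m}$. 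By Hilbert 90, $X$ is trivial, and in particular $X(k) \neq \emptyset$.

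Finally, the projection $X = \cL^{\times} \to \cG$ sends a $k$-point of $X$ to an object of $\cG(k)$, so $\cG$ is neutral.

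The main obstacle is the middle step: making sure $X$ really is a torsor under $\GG_{m}/\mu_{n}$ over $k$, rather than only a twisted form of $\GG_{m}$ with some abstract action. The key point is that the action is defined over $k$ because scalar multiplication on a line bundle is canonical. After base change to $K$, the action map $\GG_{m} \times X \to X \times X$ factors through an isomorphism $(\GG_{m}/\mu_{n}) \times X \to X \times X$; since this property descends, $X$ is a torsor over $k$.

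An alternative route would use Lang--Nishimura (Theorem~\ref{thm:LN}). The coarse space of $\cL$ is $\AA^{1}_{k}$, and $\cL$ is generically a scheme, so the rational point $1 \in \AA^{1}(k)$, which lies over the open locus where $\cL$ is a scheme, lifts to $\cL$. Its image in $\cG$ is then a $k$-point. I would keep the torsor argument as the primary proof, since it is self-contained.
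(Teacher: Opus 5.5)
Your proof is correct. It follows a different, more geometric route than the paper's.

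\textbf{The paper's route.} The paper argues with gerbe cohomology. The faithful morphism $\cG \to B\GG_m$ forces $\cG$ to be banded either by $\GG_m$, in which case $\cG \simeq B\GG_m$, or by $\mu_n$. In the $\mu_n$ case, $[\cG] \in \H^2(k,\mu_n)$ maps to $0$ in $\H^2(k,\GG_m)$. Since Hilbert 90 gives $\H^1(k,\GG_m/\mu_n)=\H^1(k,\GG_m)=0$, the map $\H^2(k,\mu_n)\to\H^2(k,\GG_m)$ is injective, so $[\cG]=0$.

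\textbf{Your route.} You construct the missing $\H^1$-class explicitly. Your $X=\cL^\times=\cG\times_{B\GG_m}\spec k$ is exactly the $\GG_m/\mu_n$-torsor whose coboundary in
$\H^1(k,\GG_m/\mu_n)\to\H^2(k,\mu_n)\to\H^2(k,\GG_m)$
is $[\cG]$. You then apply Hilbert 90 to $X$ directly.

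\textbf{Comparison.} The essential input is the same. Your version has three advantages:
\begin{itemize}
\item it avoids the classification of banded gerbes by $\H^2$;
\item it avoids having to identify the image of $[\cG]$ in $\H^2(k,\GG_m)$;
\item it treats the $\GG_m$-banded case uniformly (then $X=\spec k$), and it produces an explicit point, in fact showing $\cG\simeq[X/\GG_m]\simeq B\mu_n$.
\end{itemize}
The cost is the descent checks: that $X$ is a scheme, and that $(\GG_m/\mu_n)\times X\to X\times X$ is an isomorphism. You handled these correctly. The paper's argument is two lines once Giraud's formalism is taken for granted.

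\textbf{Two minor remarks.} First, write $\bar k$ rather than $K$: in the paper $K$ is the algebraic closure of $k_0$, not of $k$. Second, your alternative route is not an independent proof as written. The claim that the coarse space of $\cL$ is $\AA^1_k$ is unjustified, and it carries essentially the same content as the triviality of $X$. Moreover, Lang--Nishimura is superfluous there: the point $1$ lies in the open locus where $\cL$ is already a scheme and the coarse map is an isomorphism.
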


\begin{proof}
	This is analogous to the proof of \cite[Lemma 7.1]{bresciani-vistoli-yang}. The faithful line bundle $\cL$ defines a faithful morphism $\cG \to B\GG_{m}$. In particular, we get that $\cG$ is abelian banded either by $\GG_{m}$ or by $\mu_{n}$ for some $n$. In the first case, $\cG \to B\GG_{m}$ is an isomorphism and $\cG$ is neutral. In the second case, $\cG$ corresponds to a class $[\cG] \in \H^{2}(k, \mu_{n})$ which maps to $0$ in $\H^{2}(k, \GG_{m})$. By Hilbert's theorem 90, $\H^{1}(k, \GG_{m})$ is trivial, hence $\H^{2}(k, \mu_{n}) \to \H^{2}(k, \GG_{m})$ is injective, $[\cG] = 0$ and $\cG(k) \neq \emptyset$.
\end{proof}

Recall that the index of a gerbe $\cG$ over $k$ is the greatest common divisor of the degrees of extensions $k'/k$ with $\cG(k') \neq \emptyset$.

\begin{lemma}[{\cite[Lemma A.3]{bresciani-plane}}]\label{lem:index-primes}
	Let $\cG$ be a finite étale gerbe over $k$, write $G$ for the automorphism group of a geometric point. The prime factors of the index of $\cG$ divide $|G|$.
\end{lemma}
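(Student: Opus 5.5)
The plan is to use a restriction--corestriction argument in nonabelian form: show that the $\ell$-adic valuation of the index vanishes for every prime $\ell \nmid |G|$. Since the index is the gcd of the degrees $[k':k]$ over all extensions $k'/k$ with $\cG(k') \neq \emptyset$, it is enough, for each such prime $\ell$, to produce one extension $k'/k$ of degree prime to $\ell$ with $\cG(k') \neq \emptyset$.

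First I would reduce to a finite Galois situation. Since $\cG$ is a finite étale gerbe over $k$, it becomes neutral over some finite separable extension $L/k$, which we may take to be Galois with group $\Gamma$. Let $P \subseteq \Gamma$ be an $\ell$-Sylow subgroup and let $k' = L^{P}$, so that $[k':k]$ is prime to $\ell$. It then suffices to show that $\cG_{k'}$ is neutral. Replacing $k$ by $k'$, we may therefore assume that $\cG$ is split by a Galois extension $L/k$ whose group $P$ is an $\ell$-group.

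Next comes the key step. The gerbe is neutral over $L$, so choose $\xi \in \cG(L)$ and set $G_{L} = \underaut(\xi)$, a finite étale group scheme over $L$ whose geometric fibre is $G$. For each $\sigma \in P$ there is an isomorphism $\sigma^{*}\xi \simeq \xi$ over $L$, and the set of such isomorphisms is a torsor under $G(L)$ (after enlarging $L$ so that $G_{L}$ is constant, while keeping the Galois group an $\ell$-group; alternatively, one can work directly with the groupoid of descent data). Collecting all pairs $(\sigma, \varphi)$ with $\varphi : \sigma^{*}\xi \to \xi$ gives an extension
\[1 \arr G(L) \arr E \arr P \arr 1,\]
and $\cG$ is neutral exactly when this extension admits a splitting compatible with the descent structure, i.e. a section $P \to E$ satisfying the cocycle condition. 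A splitting of this extension is precisely such a descent datum for some object, because descent along $L/k$ is effective for the stack $\cG$. Since $|G(L)|$ divides $|G|$, which is prime to $\ell$, while $|P|$ is a power of $\ell$, the Schur--Zassenhaus theorem shows that the extension splits. This gives a descent datum on $\xi$, hence an object of $\cG(k)$.

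The main obstacle I expect is making $G_{L}$ constant without destroying the $\ell$-group property. The fix is to avoid enlarging $L$ at all: define $E$ as the set of pairs $(\sigma, \varphi)$ with $\sigma \in \gal(L^{s}/k)$ factoring through a large enough finite Galois quotient, and apply Schur--Zassenhaus to the corresponding profinite (or finite-level) extension after taking an $\ell$-Sylow of the relevant finite Galois group at the very beginning. In practice, I would pick $L$ large enough from the start that $\xi$ is defined over $L$ and all of its automorphisms are defined over $L$, and only then take the $\ell$-Sylow fixed field. Over $k' = L^{P}$, the object $\xi$ still has all automorphisms defined over $L$, so the argument above applies verbatim.
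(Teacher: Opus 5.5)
Your strategy is sound: choose a Galois splitting field, pass to the fixed field of an $\ell$-Sylow $P$, split the extension $1 \to G \to E \to P \to 1$ by Schur--Zassenhaus, and descend. The paper quotes this lemma without proof, and this is the natural argument. However, one step is asserted without justification and is false as stated.

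\textbf{The gap.} For $\sigma \in P$ you claim there is an isomorphism $\sigma^{*}\xi \simeq \xi$ over $L$. In a gerbe, two objects over $L$ are only \emph{locally} isomorphic. The sheaf $\underisom(\sigma^{*}\xi,\xi)$ is an $\underaut(\xi)$-torsor over $L$, and it need not have an $L$-point. Neither making $\underaut(\xi)$ constant nor your final paragraph (requiring only that $\xi$ and its automorphisms be defined over $L$) guarantees one.

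For example, take $\cG = B(\ZZ/2)$ over $\QQ$, $L = \QQ(i)$, and $\xi$ the torsor of square roots of $1+2i$. The automorphism group is constant. But $\sigma^{*}\xi \not\simeq \xi$ over $L$, because $(1+2i)(1-2i) = 5$ is not a square in $\QQ(i)$. Without these isomorphisms $E \to P$ need not be surjective, and Schur--Zassenhaus has nothing to act on.

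\textbf{The repair.} It is easy and compatible with your ordering of choices:
\begin{enumerate}
\item First fix $\xi$ over some finite Galois $L_{0}/k$.
\item Then take $L \supseteq L_{0}$, finite Galois over $k$, splitting $\underaut(\xi)$ and the finitely many finite étale torsors $\underisom(\tau^{*}\xi,\xi)$ for $\tau \in \gal(L_{0}/k)$.
\item For every $\sigma \in \gal(L/k)$ one has $\sigma^{*}(\xi_{L}) \cong ((\sigma|_{L_{0}})^{*}\xi)_{L}$. So the required isomorphisms exist over $L$, and this survives replacing $k$ by $L^{P}$ afterwards.
\end{enumerate}
Now $E \to P$ is surjective with kernel $\aut_{L}(\xi) \cong G$. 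A homomorphic section is exactly a descent datum on $\xi$ for $L/L^{P}$, and effectiveness of Galois descent for the stack $\cG$ gives $\cG(L^{P}) \neq \emptyset$.

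\textbf{A smaller point.} You only need, and only have, the implication ``$E$ splits $\Rightarrow$ $\cG$ neutral''. Your ``exactly when'' is false for a badly chosen $\xi$. For instance, take $\cG = B(\ZZ/2)$ over $\QQ$, $L = \QQ(\sqrt{5})$, and $\xi$ the torsor $\QQ(\zeta_{5})/L$: then $E \cong C_{4}$ does not split, although $\cG$ is neutral.

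Alternatively, you can phrase everything through the profinite extension $1 \to G \to \pi \to \gal(k^{s}/k) \to 1$ attached to $\cG$, and apply Schur--Zassenhaus over a pro-$\ell$ Sylow. This avoids choosing $L$ altogether.
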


\begin{lemma}[{\cite[Corollary A.6]{bresciani-plane}}]\label{lem:index-neutral}
	Let $\cG$ be a finite étale gerbe over $k$, write $G$ for the automorphism group of a geometric point. Assume either that
	\begin{itemize}
		\item the Sylow subgroups of $G$ are abelian, or
		\item $[G,G]$ is abelian and intersects trivially the center of $G$.
	\end{itemize}
	The index of $\cG$ is $1$ if and only if $\cG$ is neutral.
\end{lemma}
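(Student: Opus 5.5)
The implication ``neutral $\Rightarrow$ index $1$'' is immediate, since $\cG(k) \neq \emptyset$ and the trivial extension has degree $1$. For the converse I would first dispose of the abelian-banded case by a restriction--corestriction argument. If $\cG$ is banded by a finite abelian étale group $A$, it is classified by $[\cG] \in \H^{2}(k, A)$. For every extension $k'/k$ with $\cG(k') \neq \emptyset$, the class $\operatorname{res}_{k'/k}[\cG]$ vanishes. Hence $[k':k]\cdot[\cG] = \operatorname{cor}\circ\operatorname{res}[\cG] = 0$. Taking a $\ZZ$-linear combination of these degrees equal to the index $1$ gives $[\cG] = 0$, so $\cG$ is neutral.

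In the first case (abelian Sylow subgroups) I would work one prime at a time. By Lemma~\ref{lem:index-primes} only primes $p$ dividing $|G|$ matter.
\begin{enumerate}
\item Fix $p$ and a Sylow $p$-subgroup $P \subseteq G$. Consider the stack $\cG_{P} \to \cG$ of objects of $\cG$ together with a Sylow $p$-subgroup of their automorphism group. Its rigidification is a finite étale $k$-scheme of degree $[G : N_{G}(P)]$, which is prime to $p$. It therefore has a point over some extension of degree prime to $p$, giving a subgerbe banded by $N_{G}(P)$.
\item Since $P$ is abelian and normal in $N_{G}(P)$, the image of the $p$-part of the class lives in an abelian $H^{2}$ with coefficients in a twist of $P$. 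I would use the index-$1$ hypothesis, together with the corestriction trick above, to conclude that the $p$-primary obstruction vanishes.
\item Gluing over all primes dividing $|G|$, I would deduce a $k$-point of $\cG$. To do this, I would show that the set of $p$-primary obstructions controls neutrality, for instance via a transfer/Schur--Zassenhaus argument, which is valid because the Sylow subgroups are abelian.
\end{enumerate}

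In the second case I would use the abelianization. Let $\cG \to \cG^{ab}$ be the rigidification by $[G,G]$; this is a gerbe banded by the abelian group $G/[G,G]$. It inherits index $1$, so it is neutral by the abelian case. The fibre of $\cG$ over a $k$-point of $\cG^{ab}$ is a gerbe $\cF$ banded by the abelian group $[G,G]$.

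The delicate point is that $\cF$ may not have index $1$. A point of $\cG$ over $k'$ maps to some $k'$-point of $\cG^{ab}$, which may differ from the chosen one by a twist, namely a class in $\H^{1}(k', \cdot)$. I would use the hypothesis $[G,G] \cap Z(G) = 1$ here. It should imply that conjugation gives an embedding $[G,G] \hookrightarrow \operatorname{Aut}$ with trivial kernel on the center, which makes the fibres over different $k$-points of $\cG^{ab}$ comparable. Then one can choose the $k$-point of $\cG^{ab}$ so that the fibre has index $1$, and apply the abelian case again.

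I expect the main obstacle to be step (3) of the first case: assembling the prime-by-prime vanishing into an honest $k$-point for a non-abelian band. The same issue appears in controlling the choice of base point in the second case. I would first try to phrase both through nonabelian $\H^{2}$ with the band $G$, reducing via the Sylow normalizer to abelian coefficients where corestriction is available.
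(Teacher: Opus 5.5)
The paper does not prove this lemma itself; it is quoted from \cite{bresciani-plane}, so there is no in-paper argument to compare with. Every application in the paper (Theorem~\ref{thm:gl2} and the type $(a)$, $(b)$, $(e)$ cases of \S\ref{sect:gl3}) has $G$ abelian, indeed diagonal, and for that case your restriction--corestriction argument is complete. For the lemma as stated, however, both nonabelian cases have genuine gaps. They sit exactly at the points you flag as obstacles.

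\textbf{First case (abelian Sylow subgroups).} Steps (1)--(2) do not reduce the problem, and step (1) is misstated.

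\begin{itemize}
\item The stack $\cG_P$ of pairs $(x,Q)$ is already a gerbe over $k$ with automorphism groups $N_G(P)$, by Sylow conjugacy. The finite étale map of degree $[G:N_G(P)]$ is $\cG_P\to\cG$; there is no rigidification onto a $k$-scheme.
\item An $N_G(P)$-gerbe has no class in an abelian $\H^{2}$ with coefficients in $P$. A subgerbe with automorphism group $P$ is the fibre over a $k$-point of $\cG_P$ rigidified by $P$, whose band has order prime to $p$. Such points are only guaranteed over the fixed field $k_p$ of a pro-$p$ Sylow subgroup of $\gal(k^{s}/k)$.
\item But index $1$ already gives $\cG(k_p)\neq\emptyset$ directly. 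So there is no ``$p$-primary obstruction'' to kill.
\end{itemize}

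The whole content of the lemma is therefore your step (3): passing from ``neutral over every $k_p$'' to ``neutral over $k$'' for a nonabelian band. Corestriction is not available there, and ``transfer/Schur--Zassenhaus'' does not supply an argument.

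One route that works at least for solvable $G$:
\begin{itemize}
\item For abelian $P$, Burnside's fusion theorem and the focal subgroup theorem give $P=C_P(N_G(P))\times(P\cap[G,G])$.
\item For a pro-$p$ group acting on $G$, one can choose $P$ stable under the action. Then $C_P(N_G(P))$ is an equivariant section of $G\to(G^{ab})_p$.
\item Hence, using $\cG(k_p)\neq\emptyset$, every $k_p$-point of $\cG^{ab}$ lifts to $\cG$. So every fibre of $\cG\to\cG^{ab}$ over a $k$-point has index $1$.
\item You can then induct on $|G|$ through $[G,G]$.
\end{itemize}
Perfect $G$, such as $A_5$, still needs a separate argument.

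\textbf{Second case ($[G,G]$ abelian, meeting $Z(G)$ trivially).} The appeal to an ``embedding $[G,G]\hookrightarrow\aut$ with trivial kernel on the center'' does not produce the base point you need. The missing idea is a self-normalizing complement.

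\begin{itemize}
\item Put $A=[G,G]$ and $Q=G/A$. Then $A^{Q}=A\cap Z(G)=1$.
\item Fix a prime $\ell$. Since $Q$ is abelian, $C_{A_\ell}(Q_{\ell'})$ is $Q$-stable. It has no nonzero $Q_\ell$-fixed points, since those would be $Q$-fixed, so it vanishes.
\item Hochschild--Serre through the coprime $Q_{\ell'}$ then gives $\H^{i}(Q,A_\ell)=0$ for all $i$, hence $\H^{i}(Q,A)=0$.
\item So $A$ has a complement $H\cong G^{ab}$, all complements are conjugate, and $N_G(H)=H\cdot C_A(H)=H\cdot(A\cap Z(G))=H$.
\end{itemize}

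Consequently the stack of pairs $(x,H_x)$, with $H_x$ a complement of the derived subgroup of $\underaut(x)$, is a gerbe with automorphism group $H$. It maps isomorphically onto $\cG^{ab}$, which gives a section $\cG^{ab}\to\cG$. Thus every fibre $\mathcal{F}$ has a $k$-point, and $\cG$ is neutral as soon as $\cG^{ab}$ is. No choice of base point is needed.
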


An \emph{affine gerbe} is a fpqc gerbe over a field $k$ with affine diagonal and possessing an affine chart.

Recall that a morphism of gerbes $f:\Gamma \to \Delta$ is locally full if, for every section $s:S \to \Gamma$, the induced homomorphism of group sheaves $\underaut_{\Gamma}(s) \to \underaut_{\Delta}(f \circ s)$ is sheaf-theoretically surjective \cite[Definition 3.4, Proposition 3.10]{borne-vistoli}.

Every homomorphism of groups factorizes in a surjective one and an injective one. Analogously, any morphism of affine gerbes factorizes in a locally full one and a faithful one. This factorization is called \emph{canonical factorization}.

\begin{proposition}[{\cite[Proposition 3.9]{borne-vistoli}}]\label{prop:can-fact}
	Let $f: \Gamma \to \Delta$ be a morphism of affine gerbes. There is a factorization $\Gamma \to \Delta' \to \Delta$ with $\Delta'$ an affine gerbe, called canonical factorization, such that $\Gamma \to \Delta'$ is locally full, and $\Delta' \to \Delta$ is faithful. If $\Gamma \to \Delta'' \to \Delta$ is another such factorization, there exists an equivalence $\Delta' \to \Delta''$ making the obvious diagram $2$-commutative.
\end{proposition}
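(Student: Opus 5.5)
The plan is to build $\Delta'$ as the ``image'' of $f$, defined directly on hom-sheaves and then stackified, exactly in analogy with the image of a group homomorphism.

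\textbf{Construction.} First I would define a fibered category $\Delta'_{0}$ over the fpqc site of $k$. Its objects over $S$ are the objects of $\Gamma(S)$. For two objects $s,t$, the morphisms are the sections of the image subsheaf
\[I(s,t) := \operatorname{im}\bigl(\underisom_{\Gamma}(s,t) \to \underisom_{\Delta}(f s, f t)\bigr),\]
where the image is taken sheaf-theoretically. Composition is induced from $\Delta$, and it preserves images because composition in $\Gamma$ maps to composition in $\Delta$. I would then let $\Delta'$ be the stackification of $\Delta'_{0}$. The map $\Gamma \to \Delta'$ is the identity on objects and the tautological map on morphisms, and $\Delta' \to \Delta$ is induced by $f$ on objects and by the inclusion $I(s,t) \subseteq \underisom_{\Delta}(fs,ft)$ on morphisms.

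\textbf{Properties.} Faithfulness of $\Delta' \to \Delta$ and local fullness of $\Gamma \to \Delta'$ hold by construction, and both survive stackification, since they are local conditions on automorphism sheaves. Moreover, $\Delta'$ is a gerbe: it is locally nonempty and locally connected because $\Gamma$ is.

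\textbf{Affineness.} For affineness I would pick an extension $k'/k$ and an object $\xi \in \Gamma(k')$, and set $G = \underaut_{\Gamma}(\xi)$, an affine group scheme. The homomorphism $G \to \underaut_{\Delta}(f\xi)$ of affine group schemes over $k'$ factors as a faithfully flat quotient $G \to H$ followed by a closed immersion $H \hookrightarrow \underaut_{\Delta}(f\xi)$. This is the standard theorem on images of homomorphisms of affine group schemes over a field, and in particular $H$ is the fpqc sheaf image. Hence $\Delta'_{k'} \simeq BH$ is affine, and $\Delta'$ has an affine chart and affine diagonal. I expect this step to be the main technical obstacle. One must check that the sheaf image agrees with the scheme-theoretic image. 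This is exactly where fpqc (rather than fppf) and the affine hypotheses are used, since $G \to H$ is faithfully flat but not necessarily of finite presentation.

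\textbf{Uniqueness.} Let $\Gamma \xrightarrow{g} \Delta'' \xrightarrow{h} \Delta$ be another such factorization. Because $h$ is faithful, $\underisom_{\Delta''}(gs,gt) \to \underisom_{\Delta}(fs,ft)$ is a monomorphism. Because $g$ is locally full, $\underisom_{\Gamma}(s,t)$ surjects onto $\underisom_{\Delta''}(gs,gt)$; the same argument applies to Isom sheaves by working locally, where the objects $s$ and $t$ become isomorphic. Together these identify $\underisom_{\Delta''}(gs,gt)$ with $I(s,t)$ inside $\underisom_\Delta(fs,ft)$. This gives a fully faithful morphism $\Delta'_{0} \to \Delta''$ compatible with the maps to $\Delta$, which extends to $\Delta' \to \Delta''$ by the universal property of stackification. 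The extension is still fully faithful. It is also locally essentially surjective, since $\Delta''$ is a gerbe and every object of it is locally isomorphic to one in the image of $g$. Therefore it is an equivalence, and the required $2$-commutativity holds by construction.
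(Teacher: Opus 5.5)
The paper gives no proof of this proposition and simply quotes it from Borne--Vistoli. Your argument is correct and is the standard construction: sheaf image of the Isom sheaves, then stackification, then affineness from the factorization of $\underaut_{\Gamma}(\xi) \to \underaut_{\Delta}(f\xi)$ into a faithfully flat quotient followed by a closed immersion, and finally uniqueness by identifying $\underisom_{\Delta''}(gs,gt)$ with the same image sheaf.

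The one step to tighten is the stackification. The big fpqc site of $k$ is not small; the paper's own appendix assumes a small site precisely so that it can stackify. So you should either justify that the stackification exists here, or avoid it. To avoid it, define $\Delta'$ directly as the stack of torsors under the image groupoid of $\spec k' \times_{\Gamma} \spec k' \to \spec k' \times_{\Delta} \spec k'$. Your affineness argument plus fpqc descent of closed subschemes shows this image is a closed subgroupoid, and this stack of torsors needs no stackification.
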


\section{Neutral and $\rho$-neutral subgroups}\label{sect:neutral}

Fix an affine group scheme $\Gamma$ of finite type over $k_{0}$. The main examples are $\Gamma = \GL_{n}$ or $\Gamma = \PGL_{n}$.

Now fix $j:G \hookrightarrow \Gamma(K)$ a finite subgroup. Loosely speaking, we want to define a $j$-gerbe as a gerbe $\cG$ over an extension $k$ of $k_{0}$ such that the image in $\Gamma(\bar{k})$ of the automorphism groups of geometric points of $\cG$ is conjugate to $G$, but we have a problem. In order to do this, we need to choose an embedding $K \subseteq \bar{k}$, and different embeddings lead to different $j$-gerbes. 

The definition works if the $\gal(K/k_{0})$-conjugates of $G$ are $\Gamma(K)$-conjugates of $G$ as well. The following lemma shows that we can always reduce to this situation, without losing generality.

\begin{lemma}\label{lem:minimal-field}
	Let $j: G \hookrightarrow \Gamma(K)$ be a finite subgroup, and let $k_{1}$ be the field fixed by the elements $\sigma \in \gal(K/k_{0})$ such that $\sigma^{*}G$ is $\Gamma(K)$-conjugate to $G$.
	
	Let $\cG$ be a gerbe over a field $k$ with a morphism $\cG \to B\Gamma$. Let $p \in \cG(\bar{k})$ be a geometric point and $\alpha$ an isomorphism between the image of $p$ in $B\Gamma$ and the tautological section. Assume that there is an embedding $K \subseteq \bar{k}$ such that $\alpha \underaut_{\cG}(p) \alpha^{-1}$ is $\Gamma(\bar{k})$-conjugate to $G \subseteq \Gamma(K) \subseteq \Gamma(\bar{k})$.
	
	Then $k_{1} \subseteq k$.
\end{lemma}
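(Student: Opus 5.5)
The plan is a Galois-descent argument: the conjugacy class of $\alpha\underaut_{\cG}(p)\alpha^{-1}$ in $\Gamma(\bar k)$ does not depend on the choices of $p$ and $\alpha$, and it is invariant under $\gal(\bar k/k)$. From this invariance we deduce that every $\tau \in \gal(\bar k/k)$ restricts to an element of $\gal(K/k_{1})$. Since $k_{0}$ is perfect, $K$ is the algebraic closure of $k_{0}$ inside $\bar k$. We work with the restriction map $\gal(\bar k/k)\to\gal(K/k\cap K)$, which is surjective. Hence it suffices to show that every $\tau\in\gal(\bar k/k)$ satisfies $(\tau|_{K})^{*}G \sim G$ in $\Gamma(K)$. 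Then $\gal(K/k\cap K)\subseteq\gal(K/k_{1})$, so $k_{1}\subseteq k\cap K\subseteq k$.

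\textbf{Step 1 (independence of choices).} Write $H_{p,\alpha}=\alpha\underaut_{\cG}(p)\alpha^{-1}\subseteq\Gamma(\bar k)$. Changing $\alpha$ by an automorphism of the tautological section over $\bar k$, i.e.\ an element $\gamma\in\Gamma(\bar k)$, conjugates $H$ by $\gamma$. Since $\cG$ is a gerbe, any two geometric points over $\bar k$ are isomorphic, because $\bar k$ is algebraically closed and gerbes are locally nonempty and connected. An isomorphism $p\simeq p'$ transports automorphism groups and modifies $\alpha$, so the conjugacy class $[H]$ is well defined.

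\textbf{Step 2 (Galois invariance).} Let $\tau\in\gal(\bar k/k)$. Since $\cG$ and the morphism $\cG\to B\Gamma$ are defined over $k$, pulling back along $\spec\tau$ gives a new geometric point $\tau^{*}p$ and an isomorphism $\tau^{*}\alpha$. The tautological section of $B\Gamma$ is defined over $k_{0}$, so it is $\tau$-invariant. The resulting group is $H_{\tau^{*}p,\tau^{*}\alpha}=\tau(H_{p,\alpha})$, where $\tau$ acts on $\Gamma(\bar k)$ through the $k_{0}$-structure of $\Gamma$. By Step 1, $\tau(H)$ is $\Gamma(\bar k)$-conjugate to $H$. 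Since $H\sim G$, we get that $\tau(G)=(\tau|_{K})^{*}G$ is $\Gamma(\bar k)$-conjugate to $G$.

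\textbf{Step 3 (descending the conjugacy to $K$).} This is the main obstacle: we know $g\,\sigma^{*}G\,g^{-1}=G$ with $g\in\Gamma(\bar k)$, but the definition of $k_{1}$ requires $g\in\Gamma(K)$. The plan is to use the transporter scheme. Let $T=\{g: g\,\sigma^{*}G\,g^{-1}=G\}\subseteq\Gamma_{K}$. It is a closed subscheme of finite type over $K$, defined by finitely many polynomial conditions: $g\,h\,g^{-1}\in G$ for each $h\in\sigma^{*}G$, and $G$ is finite. Since $T(\bar k)\neq\emptyset$, $T$ is nonempty. By the Nullstellensatz, $T(K)\neq\emptyset$ because $K$ is algebraically closed. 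So $\sigma^{*}G$ is $\Gamma(K)$-conjugate to $G$, hence $\sigma=\tau|_{K}$ fixes $k_{1}$, and this concludes the proof.
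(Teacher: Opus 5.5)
Your proposal is correct and follows essentially the same route as the paper. You use the gerbe property to compare $p$ with its Galois conjugate, deduce that $\sigma^{*}G$ and $G$ are $\Gamma(\bar{k})$-conjugate, and then descend the conjugating element to $\Gamma(K)$ via the transporter variety and the Nullstellensatz. Your Steps 1--2 only make explicit the change-of-$\alpha$ bookkeeping that the paper compresses into one line. The one cosmetic point is that when $k$ is imperfect you should read $\gal(\bar{k}/k)$ as $\aut(\bar{k}/k) = \gal(\bar{k}/k^{p})$, as the paper does; this changes nothing in your argument.
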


\begin{proof}
	We have an action of $\aut(\bar{k}/k) = \gal(\bar{k}/k^{p})$ on $K$ through the embedding $K \subseteq \bar{k}$, where $k^{p} \subseteq \bar{k}$ is the perfect closure. Since $k_{0}$ is perfect and $k_{1}/k_{0}$ is separable, it is enough to show that $k_{1}$ is fixed by $\aut(\bar{k}/k)$. Fix $\sigma \in \aut(\bar{k}/k)$, the Galois conjugate $\sigma^{*}G$ is well defined. 
	
	Any two geometric points of $\cG$ are isomorphic, because $\cG$ is a gerbe. Fix an isomorphism $\beta: \sigma^{*}p \to p$ in $\cG$, by abuse of notation call $\beta$ the induced isomorphism in $B\Gamma$ as well. We have that $\alpha\beta\underaut_{\cG}(\sigma^{*}p)\beta^{-1}\alpha^{-1} = \alpha \underaut_{\cG}(p) \alpha^{-1}$ is $\Gamma(\bar{k})$-conjugate to $G$ by hypothesis. However, $\alpha \circ \beta$ is an isomorphism from $\sigma^{*}p$ to the tautological section, so the hypothesis also implies that $\alpha\beta\underaut_{\cG}(\sigma^{*}p)\beta^{-1}\alpha^{-1}$ is $\Gamma(\bar{k})$-conjugate to $\sigma^{*}G$.
	
	In particular, $G$ and $\sigma^{*}G$ are $\Gamma(\bar{k})$-conjugate. Let $Z \subseteq \Gamma_{K}$ be the subvariety of points mapping $G$ to $\sigma^{*}G$ by conjugation. Since $Z(\bar{k}) \neq \emptyset$, then $Z$ is non-empty and we have $Z(K) \neq \emptyset$ as well, i.e. $G$ is $\Gamma(K)$-conjugate to $\sigma^{*}G$. Equivalently, $\sigma \in \gal(\bar{k}/k)$ maps to an element of $\gal(K/k_{1})$. Since this is true for every $\sigma \in \gal(\bar{k}/k)$, we get that $k_{1} \subseteq k$.
\end{proof}

\begin{hypothesis}\label{hyp:quasi-gal-invariant}
	From now on, we always make the blanket assumption that the $\gal(K/k_{0})$-conjugates of $G \subseteq \Gamma(K)$ are $\Gamma(K)$-conjugates of $G$ as well. If this is not the case, we replace $k_{0}$ with the subfield of $K$ fixed by the elements $\sigma \in \gal(K/k_{0})$ such that $\sigma^{*}G$ is $\Gamma(K)$-conjugate to $G$. Thanks to Lemma~\ref{lem:minimal-field}, by doing so we do not lose any generality.
\end{hypothesis}

\begin{definition}\label{def:ggerbe}
	Let $j:G \hookrightarrow \Gamma(K)$ be a finite subgroup. 
	
	A $j$-gerbe is the datum of a gerbe $\cG$ over a field $k$ and a faithful morphism $\cG \to B\Gamma$ such that the image in $\Gamma$ of the automorphism group of one (and hence all) geometric point of $\cG$ is $\Gamma$-conjugate to $G$.
\end{definition}

Two $j$-gerbes $\cG \to B\Gamma$, $\cG' \to B\Gamma$ are equivalent if there is an isomorphism $\cG \to \cG'$ making the obvious diagram $2$-commutative.

\subsection{Neutral subgroups}

\begin{definition}\label{def:neutral}
	A finite subgroup $j:G \hookrightarrow \Gamma(K)$ is \emph{neutral} if every $j$-gerbe is neutral.
\end{definition}

Being neutral is a property of the embedding $j$, and not of the abstract group $G$. Still, the embedding $j: G \hookrightarrow \Gamma(K)$ is always fixed, so by abuse of terminology we will often say that $G$ is neutral.

We are interested in classifying which finite subgroups of $\Gamma(K)$ are neutral. For $\PGL_{2}$, $\GL_{2}$, $\PGL_{3}$, this is essentially the content of \cite{bresciani-p1, bresciani-sing, bresciani-plane} respectively, even if it might be not be readily clear from titles. The main focus of the present article is the next step, i.e. the case $\Gamma = \GL_{3}$. 

Given the increased complexity of the case $\Gamma = \GL_{3}$, it is due time to establish a general theoretical framework for studying the problem. Constructing such a framework is the purpose of this section and of the next one.

Recall that an element $g \in \GL_{n}(K)$ is a pseudo-reflection if its order is finite and prime with $\cha k_{0}$, and exactly one eigen-value different from $1$. If $G \subset \GL_{n}(K)$ is finite of order prime with $\cha k_{0}$, the Chevalley--Shephard--Todd theorem states that $\AA^{n}/G$ is smooth if and only if $G$ is generated by pseudo-reflections.

In the context of the arithmetic of quotient singularities, Definition~\ref{def:neutral} is useful because of the following.

\begin{theorem}[{\cite[Theorem 3.2]{bresciani-vistoli-yang}}]\label{thm:neut-R}
	Let $G \subseteq \GL_{n}(K)$ be a finite subgroup of order prime with $\cha k_{0}$. If the singularity $(\AA^{n}/G, [0])$ is of type $\rR$, then $G$ is neutral. If $G$ has no pseudo-reflections, the converse holds.
\end{theorem}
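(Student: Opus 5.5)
The plan is to translate between twisted forms of the singularity and $j$-gerbes for $\Gamma = \GL_{n}$, and to use Theorem~\ref{thm:LN} (Lang--Nishimura for stacks) in both directions.

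\emph{Type $\rR$ implies neutral.} Let $\cG \to B\GL_{n}$ be a $j$-gerbe over $k$, and let $\cV \to \cG$ be the associated faithful vector bundle. Since $\cV$ is faithful, it is a smooth tame Deligne--Mumford stack which is generically a scheme. Let $\cV \to S$ be its coarse moduli space and $s \in S(k)$ the image of the zero section $\cG \to \cV$.
\begin{enumerate}
\item After base change to $\bar{k}$, $\cV$ becomes $[\AA^{n}/G]$. So $(S,s)$ is a twisted form of $(\AA^{n}/G,[0])$.
\item By the type $\rR$ hypothesis, $s$ lifts to a $k$-point of a resolution $X \to S$. Since $X$ is smooth, this is a regular point.
\item Consider the rational map $X \dashrightarrow \cV$, the birational inverse of the coarse map. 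The coarse map $\cV \to S$ is proper and tame. Theorem~\ref{thm:LN} then gives a $k$-point of $\cV$ lying over $s$.
\item The only geometric point of $[\AA^{n}/G]$ over $[0]$ is the origin. Hence the reduced fibre of $\cV$ over $s$ is the residual gerbe $\cG$, and the $k$-point lies in $\cG$. Thus $\cG(k) \neq \emptyset$.
\end{enumerate}

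\emph{Neutral implies type $\rR$, when $G$ has no pseudo-reflections.} Let $(S',s')$ be a twisted form over $k$.
\begin{enumerate}
\item If $G$ has no pseudo-reflections, $[\AA^{n}/G] \to \AA^{n}/G$ is the canonical smooth stack of the quotient singularity. It is characterized intrinsically as the smooth tame stack with coarse space $\AA^{n}/G$ that is an isomorphism over the smooth locus in codimension one.
\item By this uniqueness, the construction commutes with étale base change and with Galois descent. So it descends to a smooth tame stack $\cY \to S'$ near $s'$.
\item The residual gerbe $\cG$ of $\cY$ over $s'$ carries its normal bundle. Geometrically this is $BG$ with the representation $V = \AA^{n}$, so it is a $j$-gerbe.
\item Neutrality gives $\cG(k) \neq \emptyset$. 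This produces a $k$-point of the smooth, hence regular, stack $\cY$ over $s'$.
\item Take the rational map $\cY \dashrightarrow X$ to a resolution $X \to S'$. Since $X \to S'$ is proper and $X$ is a scheme, Theorem~\ref{thm:LN} shows that $s'$ lifts to $X(k)$.
\end{enumerate}

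The main obstacle is step~1--3 of the converse: that twisted forms of the singularity correspond to $j$-gerbes with the correct representation. This uses the canonical stack and its compatibility with descent, and it is exactly where the absence of pseudo-reflections is needed. With pseudo-reflections, $[\AA^{n}/G]$ is not canonical, since its coarse space is smooth in codimension one. I would first try to establish this via the universal property of the canonical stack, namely that every smooth tame stack with the same coarse space which is an isomorphism in codimension one maps uniquely to it. Then I would check that the normal bundle of the residual gerbe recovers $V$ geometrically.
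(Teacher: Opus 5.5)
Your argument is correct and is the expected one. The paper does not reprove this result (it quotes \cite[Theorem 3.2]{bresciani-vistoli-yang}), but the ingredients it points to are exactly yours: the coarse space of $\cV$ as a twisted form of $(\AA^{n}/G,[0])$ together with Lang--Nishimura (as in the proof of Lemma~\ref{lem:compression-neutral}) for one direction, and for the converse the residual gerbe of the canonical stack with its normal bundle, i.e.\ \cite[\S 6.4 and Proposition 6.5]{bresciani-vistoli}, which the paper cites in its quotient-singularity example.

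Two cosmetic remarks. First, in the first direction a $k$-point of $\cV$ gives a $k$-point of $\cG$ directly via the bundle projection $\cV \to \cG$, so you do not need the residual gerbe there. Second, pseudo-reflections matter because they make $[\AA^{n}/G] \to \AA^{n}/G$ fail to be an isomorphism in codimension one; your stated reason is not the right one, since $\AA^{n}/G$ is normal and hence always smooth in codimension one. As a result, twisted forms of the singularity only see the small quotient of $G$ by its reflection subgroup.
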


\subsection{$\rho$-neutral subgroups}\label{sect:rhoneutral}

Now assume that there is a faithful action
\[\rho: \Gamma \times X \to X\]
where $X$ is a smooth, geometrically connected algebraic space over $k_{0}$. If $\Gamma = \GL_{n}$, we consider the standard action on $\AA^{n}$, whereas for $\PGL_{n}$ we consider the action on $\PP^{n-1}$.

There is a universal family
\[\cX = [X/\Gamma] \to B\Gamma\]
of copies of $X$ on $B\Gamma$, and for every $j$-gerbe $\cG \to B\Gamma$ we have an induced family $\cX_{\cG} \to \cG$.

\begin{remark}
	If $\Gamma = \GL_{n}$, the datum of a $j$-gerbe is equivalent to a rank $n$ vector bundle $\cV \to \cG$ where $\cG$ is a gerbe such that, for a geometric point $p$ of $\cG$, the representation of $\aut_{\cG}(p)$ on $\cV_{p}$ is equivalent to the representation $G \subseteq \GL_{n}(K)$. 

	Similarly, if $\Gamma = \PGL_{n}$, the datum of a $j$-gerbe is equivalent to a dimension $n-1$ projective bundle $\cP \to \cG$ where $\cG$ is a gerbe such that, for a geometric point $p$ of $\cG$, the action of $\aut_{\cG}(p)$ on $\cP_{p}$ is equivalent to the one of $G$ on $\PP^{n-1}$. 
\end{remark}

The following strengthening of Definition~\ref{def:neutral} is extremely useful.

\begin{definition}
	A finite subgroup $G \subseteq \Gamma(K)$ is \emph{$\rho$-neutral} if, for every $j$-gerbe $\cG \to B\Gamma$ with universal family $\cX_{\cG} \to \cG$, we have $\cX_{\cG}(k) \neq \emptyset$.
\end{definition}

Clearly, $\rho$-neutral implies neutral, but the converse might fail. For instance, the trivial subgroup of $\{\Id\} \subseteq \PGL_{n}(K)$ is clearly neutral, but it is not $\rho$-neutral for $n \ge 2$ since a non-trivial Brauer--Severi variety $P$ over $k$ defines a $\{\Id\}$-gerbe $\spec k \to B\PGL_{n}$ with $P(k) = \emptyset$.

If $\Gamma = \GL_{n}$ and $X = \AA^{n}$, the universal family is a vector bundle, and a subgroup is neutral if and only if it is $\rho$-neutral, because by Hilbert's 90 theorem $\AA^{n}$ is the only vector bundle on $\spec k$.

The stack $\cX_{\cG}$ has finite inertia, hence it has a coarse moduli space $\bX_{\cG}$, which is called the \emph{compression} \cite[Definition 5.3]{bresciani-vistoli}. Since the action of $G$ is faithful, the natural morphism $\cX_{\cG} \to \bX_{\cG}$ is birational.

If $G$ has order prime with $\cha k_{0}$, $\cG$ is tame. Since $\cX_{\cG} \to \cG$ is representable, then $\cX_{\cG}$ is tame as well. Hence, by the Lang--Nishimura theorem for stacks, $\cX_{\cG}$ has a rational point if and only if $\bX_{\cG}$ has a rational point which lifts to a resolution of singularities, i.e. it is \emph{liftable} in the sense of \cite[Definition 6.6]{bresciani-vistoli}. 

\begin{definition}\label{def:liftable}
	Let $S$ be a locally noetherian, integral scheme, and $s \in S$ a point. We say that $s$ is \emph{liftable} if there exists a locally noetherian, integral scheme $S'$ with a dominant morphism $S' \to S$ such that $s$ lifts to a regular point $k(s)$-point of $S'$.
\end{definition}

If $S$ has a resolution of singularities $\tilde{S} \to S$, for instance if it has tame quotient singularities, by the Lang--Nishimura theorem $s \in S$ is liftable if and only if $s$ lifts to a $k(s)$-rational point of $\tilde{S}$. Because of this, Definition~\ref{def:liftable} generalizes the one given in \cite[Definition 6.6]{bresciani-vistoli}.

The following Lemma~\ref{lem:compression-neutral} generalizes methods about singularities from \cite[\S 6.4]{bresciani-vistoli} to the context of representations.

\begin{lemma}\label{lem:compression-neutral}
	Let $j:G \hookrightarrow \GL_{n}(K)$ be a finite group, $\cV \to \cG$ a $j$-gerbe. Write $\PP(\cV) \to \cG$ for the corresponding projective bundle, and $\bP$ for its coarse moduli space. 
	\begin{itemize}
		\item If $k$ is finite, $\cG(k) \neq \emptyset$, $\cV(k) \neq \emptyset$, $\PP(\cV)(k) \neq \emptyset$, $\bP(k) \neq \emptyset$.
	
		\item If $k$ is infinite, the following are equivalent.
	
		\begin{enumerate}
			\item $\cG(k) \neq \emptyset$.
			\item $\cV(k) \neq \emptyset$.
			\item $\PP(\cV)(k) \neq \emptyset$.
			\item $\bP$ has a dense set of smooth rational points.
		\end{enumerate}
		If furthermore $|G|$ is prime with $\cha k_{0}$, these are equivalent to		
		\begin{enumerate}
			\setcounter{enumi}{4}
			\item $\bP$ has a liftable rational point. 
		\end{enumerate}
	\end{itemize}
\end{lemma}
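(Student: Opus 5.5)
The plan is to treat the finite and infinite cases separately. In both, the core is a chain of easy implications through pullback along sections, plus one harder step: lifting rational points of $\bP$ back to $\PP(\cV)$.

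\emph{Finite $k$.} Since $G$ is finite, $\cG$ is a finite gerbe over the finite field $k$, and such gerbes are neutral (Lang's theorem). Choose $s \in \cG(k)$. The pullback $s^{*}\cV$ is a rank $n$ vector bundle on $\spec k$, hence $\AA^{n}_{k}$ by Hilbert 90. This gives points of $\cV(k)$, for instance the zero section. A nonzero vector gives a point of $\PP(s^{*}\cV)(k) \subseteq \PP(\cV)(k)$, and its image lies in $\bP(k)$.

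\emph{Infinite $k$: the equivalence of $(1)$, $(2)$, $(3)$.} For $(1)\Rightarrow(2)$ use the zero section. For $(1)\Rightarrow(3)$ argue as above: $s^{*}\cV \simeq \AA^{n}_{k}$, so $\PP(s^{*}\cV) \simeq \PP^{n-1}_{k}$ has points. The implications $(2)\Rightarrow(1)$ and $(3)\Rightarrow(1)$ follow by composing with the projections to $\cG$.

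\emph{Infinite $k$: $(1)\Rightarrow(4)$.} Let $\bar G \subseteq \PGL_{n}(K)$ be the image of $G$ and $\mu = G \cap K^{*}$ its scalar part. Consider the composite $\PP^{n-1}_{k} \simeq \PP(s^{*}\cV) \to \PP(\cV) \to \bP$. Geometrically this is the quotient map $\PP^{n-1} \to \PP^{n-1}/\bar G$, so it is finite and surjective. Let $W \subseteq \PP^{n-1}_{k}$ be the open locus where $\bar G$ acts freely; over its image $U \subseteq \bP$ the quotient is étale, so $U$ is smooth. Since $k$ is infinite, $W(k)$ is dense, and its image is a dense set of smooth rational points of $\bP$.

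\emph{Infinite $k$: $(4)\Rightarrow(3)$.} Pick a rational point $x \in U(k)$, which exists by density. Over $U$, the stack $\PP(\cV)$ is a gerbe banded by the scalar group $\mu$, since geometric stabilizers are exactly the scalars in $G$. The residual gerbe $\cG_{x}$ over $k$ carries the tautological line bundle $\cO(-1)$, on which the scalars $\mu$ act faithfully by multiplication. By Lemma~\ref{lem:line-neut}, $\cG_{x}$ is neutral, giving a point of $\PP(\cV)(k)$. The main thing to verify carefully is this identification of the stabilizers over the free locus with $\mu$, acting faithfully on the fibre of $\cO(-1)$.

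\emph{Infinite $k$, $|G|$ prime to $\cha k_{0}$: equivalence with $(5)$.} For $(4)\Rightarrow(5)$, smooth rational points are trivially liftable. For $(5)\Rightarrow(3)$, which I expect to be the main obstacle, Theorem~\ref{thm:LN} cannot be applied directly to $\PP(\cV)\to\bP$, because this map is only generically a $\mu$-gerbe, not birational. I would instead pass to the rigidification $\PP(\cV)\!\!\fatslash\mu$. It is tame, proper over $\bP$, and generically a scheme, hence birational to $\bP$. Then apply Theorem~\ref{thm:LN} with $Y=\PP(\cV)\!\!\fatslash\mu$, $S=\bP$, and $X$ a resolution of $\bP$, which exists since $\bP$ has tame quotient singularities. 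This lifts the liftable point to a $k$-point $y$ of the rigidification. The fibre of $\PP(\cV) \to \PP(\cV)\!\!\fatslash\mu$ over $y$ is a $\mu$-gerbe over $k$ with faithful line bundle $\cO(-1)$, so it is neutral by Lemma~\ref{lem:line-neut}, yielding $\PP(\cV)(k)\neq\emptyset$.
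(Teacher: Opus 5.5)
Your proof is correct, but for the hard directions it takes a different route from the paper.

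\textbf{The paper's route.} The paper treats $(4)\Rightarrow(1)$ and $(5)\Rightarrow(1)$ together. It blows up $\cV$ along the zero section, obtaining a stack $\cB$ that is generically a scheme (because $G$ acts faithfully on $\AA^{n}$) and contains $\PP(\cV)$ as the exceptional divisor. It then restricts the birational inverse $\bB \dashrightarrow \cB$ of the coarse space map to this divisor via \cite[Corollary 3.2]{bresciani-vistoli-valuative}, after checking tameness at the generic point of $\PP(\cV)$. This gives a rational map $\bP \dashrightarrow \PP(\cV) \to \cG$, to which density or Theorem~\ref{thm:LN} applies.

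\textbf{Your route.} You use that $\PP(\cV)$ is a gerbe banded by the scalars $\mu = G \cap K^{*}$ over the free locus. Globally, it is a $\mu$-gerbe over its rigidification by $\mu$, which is just the projective bundle $\cP \to \bar{\cG}$ over the induced $\bar{j}$-gerbe. You then neutralize these $\mu$-gerbes with Lemma~\ref{lem:line-neut} applied to $\cO(-1)$.

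\textbf{Comparison.} Your route avoids the valuative corollary and isolates a clean structural fact: every $k$-point of $\cP$ lifts to $\PP(\cV)$. So the lemma is really Lemma~\ref{lem:proj-compression-neutral} plus Lemma~\ref{lem:line-neut}. The paper's blow-up, in exchange, produces the rational map to $\cG$ once and handles $(4)$ and $(5)$ simultaneously without any rigidification. Your $(1)\Rightarrow(4)$ is essentially the paper's $(3)\Rightarrow(4)$. You get smoothness of $U$ from the étale torsor $W \to U$, where the paper uses compatibility of free quotients with base change.

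\textbf{Two small remarks.} First, Theorem~\ref{thm:LN} can be applied directly to $\PP(\cV) \to \bP$. Run your $(4)\Rightarrow(3)$ argument at the generic point of $\bP$: the generic fibre is a $\mu$-gerbe over $k(\bP)$ with faithful line bundle $\cO(-1)$, hence neutral. This yields a rational section $\bP \dashrightarrow \PP(\cV)$, which is exactly what the paper extracts from the blow-up, so the rigidification is optional.

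Second, in $(4)\Rightarrow(3)$ you should not define $U$ via the section $s$ from $(1)\Rightarrow(4)$, since that section is not available there. Define it intrinsically, e.g.\ as the non-empty open subset of $\bP$ over which the geometric stabilizers of $\PP(\cV)$ are exactly the scalars; it is defined over $k$ because it is Galois-stable.
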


\begin{proof}
	If $k$ is finite, then $\cG(k) \neq \emptyset$ by \cite[Lemma 13]{bresciani-p1}, $\cV(k) \neq \emptyset$ and $\PP(\cV)(k) \neq \emptyset$ because the fibers of $\cV \to \cG$, $\PP(\cV) \to \cG$ are isomorphic to $\AA^{n}$, $\PP^{n-1}$ respectively (recall that Brauer--Severi varieties over a finite field are trivial, because the Brauer group is trivial), and $\bP(k) \neq \emptyset$ because it is the coarse moduli space of $\PP(\cV)$. Assume that $k$ is infinite.

	$(1) \Rightarrow (2)$ follows by Hilbert's 90 theorem, because a vector bundle on $\spec k$ is just a $k$-vector space. 
	
	$(2) \Rightarrow (3)$ is obvious.
	
	For $(3) \Rightarrow (4)$, notice that a rational point in $\PP(\cV)$ allows us to give an isomorphism $\PP(\cV) \simeq [\PP^{n-1}/G']$ where $G'$ is a twisted form of $G$ over $k$. In particular, $\bP \simeq \PP^{n-1}/G' = \PP^{n-1}/\bar{G'}$, where $\bar{G'} \subset \PGL_{n}$ is the image of $G'$. Let $U \subset \PP^{n-1}$ be an open subset where the action of $\bar{G}'$ is free. In particular, $U/\bar{G'}$ is regular, and $U/\bar{G'}(k)$ is dense because $k$ is infinite. In order to conclude, it is enough to show that $U/\bar{G'}$ is smooth, and not only regular. This follows from the fact that $(U/\bar{G'})_{\bar{k}} = U_{\bar{k}}/\bar{G'}$ is regular.
	
	In general, passing to quotients by finite groups does not commute with base change in positive characteristic, but in our case it works because the action is free. There is a natural map $U_{\bar{k}}/\bar{G'} \to (U/\bar{G'})_{\bar{k}}$. Since the action of $\bar{G'}$ on $U$ is free, both the maps $U_{\bar{k}} \to U_{\bar{k}}/\bar{G'}$ and $U_{\bar{k}} \to (U/\bar{G'})_{\bar{k}}$ are finite étale of degree $|\bar{G'}|$, hence $U_{\bar{k}}/\bar{G'} \to (U/\bar{G'})_{\bar{k}}$ is finite étale of degree $1$, i.e. an isomorphism.
	
	$(4) \Rightarrow (5)$ is obvious.
	
	$(4) \Rightarrow (1)$, and $(5) \Rightarrow (1)$ if $|G|$ is prime with $\cha k_{0}$. Consider the blow-up $\cB$ of $\cV$ along the $0$-section $\cG \to \cV$, and let $\bB$ be its coarse moduli space. Notice that we have a natural codimension $1$ closed embedding $\PP(\cV)  \subseteq \cB$, and hence an induced morphism $\bP \to \bB$ which is generically a codimension $1$ embedding.
	
	Since $\cB \to \bB$ is birational, we have a rational map $\bB \dashrightarrow \cB$. Since $\cB$ is smooth, $\bB$ is normal, in particular $\bB$ is smooth at the generic point of $\bP$. Furthermore, notice that the generic stabilizer of $\PP(\cV)_{K} = [\PP^{n-1}/G]$ is $G \cap K^{*}$ which has order prime with $\cha k_{0}$, hence $\cB$ is tame at the generic point of $\PP(\cV) \subset \bB$.
	
	By \cite[Corollary 3.2]{bresciani-vistoli-valuative} applied at the generic point of $\PP(\cV) \subset \bB$, the rational map $\bB \dashrightarrow \cB$ induces a rational map $\bP \dashrightarrow \PP(\cV) \to \cG$. We conclude $\cG(k) \neq \emptyset$: this is obvious if $\bP$ has a dense set of rational points, and follows from Theorem~\ref{thm:LN} if $\bP$ has a liftable rational point and $|G|$ is prime with $\cha k_{0}$.
	
\end{proof}

We also state a version for projective bundles.

\begin{lemma}\label{lem:proj-compression-neutral}
	Let $j:G \hookrightarrow \PGL_{n}(K)$ be a finite group, $\cP \to \cG$ a $j$-gerbe. Write $\bP$ for the coarse moduli space of $\cP$. 
	\begin{itemize}
		\item If $k$ is finite, $\cG(k) \neq \emptyset$, $\cP(k) \neq \emptyset$, $\bP(k) \neq \emptyset$.
	
		\item If $k$ is infinite, the following are equivalent.
	
		\begin{enumerate}
			\item $\cP(k) \neq \emptyset$.
			\item $\bP$ has a dense set of smooth rational points.
		\end{enumerate}
		
		If furthermore $|G|$ is prime with $\cha k_{0}$, these are equivalent to		
		\begin{enumerate}
			\setcounter{enumi}{2}
			\item $\bP$ has a liftable rational point. 
		\end{enumerate}
	\end{itemize}
\end{lemma}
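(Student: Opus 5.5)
The plan is to mimic the proof of Lemma~\ref{lem:compression-neutral}, replacing the vector bundle $\cV$ by the projective bundle $\cP$ itself.

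\emph{Finite fields.} If $k$ is finite, \cite[Lemma 13]{bresciani-p1} gives $\cG(k) \neq \emptyset$. Pick $p \in \cG(k)$. The fibre $\cP_{p}$ is a Brauer--Severi variety over $k$. Since $\br(k) = 0$, it is isomorphic to $\PP^{n-1}_{k}$ and so has a rational point. This gives $\cP(k) \neq \emptyset$, and mapping to the coarse space gives $\bP(k) \neq \emptyset$.

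\emph{$(1) \Rightarrow (2)$.} Assume $k$ is infinite and take $x \in \cP(k)$. Its image $g \in \cG(k)$ trivializes the gerbe, so $\cG \simeq BG'$ for a twisted form $G'$ of $G$. The fibre $\cP_{g}$ is a Brauer--Severi variety, and it has a rational point because $x$ lies in it. Hence $\cP_{g} \simeq \PP^{n-1}_{k}$ with $G'$ acting through $\PGL_{n}$, so $\cP \simeq [\PP^{n-1}/G']$ and $\bP \simeq \PP^{n-1}/G'$. Since $G' \to \PGL_{n}$ is injective ($j$-gerbes are faithful), the action is generically free. Choose a dense open $U$ on which it is free. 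Then $U/G'$ is smooth, by the same étale base-change argument as in Lemma~\ref{lem:compression-neutral}. Its rational points are dense: $U(k)$ is dense in $\PP^{n-1}$ because $k$ is infinite, and its image is dense in $U/G'$.

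\emph{$(2) \Rightarrow (3)$.} This is immediate, since a smooth rational point lifts to the identity resolution.

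\emph{$(2) \Rightarrow (1)$ and $(3) \Rightarrow (1)$.} The morphism $\cP \to \bP$ is a coarse moduli map, and it is birational because the action is faithful. Hence there is a rational map $\bP \dashrightarrow \cP$. If $\bP$ has a dense set of smooth rational points, some such point lies in the open set where this map is defined and is an isomorphism onto the scheme locus; it therefore yields a point of $\cP(k)$. In the tame case ($|G|$ prime to $\cha k_{0}$), $\cP$ is tame, because it is representable over the tame gerbe $\cG$. The map $\cP \to \bP$ is proper and is a coarse space. We then apply Theorem~\ref{thm:LN} with $S = \bP$, $X$ a dominant integral scheme through which the liftable point lifts to a regular point (Definition~\ref{def:liftable}), and $Y = \cP$. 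This produces a $k$-point of $\cP$ over the given point.

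\emph{Main obstacle.} The step needing the most care is this last application of Theorem~\ref{thm:LN}. We must check that $\cP \to \bP$ is tame and proper, and that the rational map $X \dashrightarrow \cP$ exists. The map exists because $\cP \to \bP$ is birational and $X \to \bP$ is dominant. Unlike Lemma~\ref{lem:compression-neutral}, no blow-up or passage through the generic point of a divisor is needed here, because $\cP$ is itself generically a scheme. This is exactly why the statement involves $\cP(k)$ rather than $\cG(k)$: $\rho$-neutrality fails for the trivial group, so rational points of $\cG$ alone cannot suffice.
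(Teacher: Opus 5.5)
Your proposal is correct and follows the paper's proof essentially step by step. The finite-field case and $(1)\Rightarrow(2)$ are transplanted from Lemma~\ref{lem:compression-neutral}, $(2)\Rightarrow(1)$ uses birationality of $\cP \to \bP$, and $(3)\Rightarrow(1)$ is Theorem~\ref{thm:LN} applied to $\bP \dashrightarrow \cP$; you spell out this last step more explicitly by taking $X$ to be the dominating integral scheme from Definition~\ref{def:liftable}, which is the right way to read the paper's one-line argument.
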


\begin{proof}
	For $k$ finite, the proof is the same as in Lemma~\ref{lem:compression-neutral}. 
	
	$(1) \Rightarrow (2)$ is analogous to $(3) \Rightarrow (4)$ in Lemma~\ref{lem:compression-neutral}.
	
	$(2) \Rightarrow (1)$ follows from the fact that $\cP \to \bP$ is birational.
	
	$(2) \Rightarrow (3)$ is obvious.
	
	$(3) \Rightarrow (1)$ follows from Theorem~\ref{thm:LN} applied to $\bP \dashrightarrow \cP$.
\end{proof}

\begin{proposition}\label{prop:rhoneut-neut}
	Let $G \subseteq \GL_{n}(K)$ be a finite subgroup, with image $\bar{G} \subseteq \PGL_{n}(K)$. If $\bar{G}$ is $\rho$-neutral, then $G$ is neutral.
\end{proposition}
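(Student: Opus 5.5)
Let $j\colon G \hookrightarrow \GL_{n}(K)$ and let $\cV \to \cG$ be a $j$-gerbe over a field $k$; we must show $\cG(k) \neq \emptyset$. The plan is to pass to the projective bundle $\PP(\cV) \to \cG$, use $\rho$-neutrality of $\bar{G}$ to produce a rational point on $\PP(\cV)$, and then conclude with Lemma~\ref{lem:compression-neutral}, implication $(3) \Rightarrow (1)$. If $k$ is finite there is nothing to prove by the first part of Lemma~\ref{lem:compression-neutral}, so assume $k$ infinite.

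The composite $\cG \to B\GL_{n} \to B\PGL_{n}$ is in general not faithful, since the scalar subgroup $G \cap K^{*}$ maps trivially. We therefore take its canonical factorization (Proposition~\ref{prop:can-fact}),
\[\cG \to \bar{\cG} \to B\PGL_{n},\]
with $\cG \to \bar{\cG}$ locally full and $\bar{\cG} \to B\PGL_{n}$ faithful. On geometric points, the automorphism group of $\bar{\cG}$ is the image of $\aut_{\cG}(p) \simeq G$ in $\PGL_{n}$, which is conjugate to $\bar{G}$. Hence $\bar{\cG} \to B\PGL_{n}$ is a $\bar{j}$-gerbe for $\bar{j}\colon \bar{G} \hookrightarrow \PGL_{n}(K)$. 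One should also check that Hypothesis~\ref{hyp:quasi-gal-invariant} for $G$ implies it for $\bar{G}$: Galois conjugates of $G$ are $\GL_{n}(K)$-conjugates, so the same holds for the images in $\PGL_{n}$.

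Next, the universal $\PP^{n-1}$-bundle pulled back along $\cG \to B\PGL_{n}$ is $\PP(\cV)$, and it is also the pullback along $\cG \to \bar{\cG}$ of the bundle $\cP := \cX_{\bar{\cG}} \to \bar{\cG}$. So there is a $2$-cartesian square
\[\PP(\cV) = \cP \times_{\bar{\cG}} \cG.\]
By $\rho$-neutrality of $\bar{G}$ we have $\cP(k) \neq \emptyset$; fix a point $x \in \cP(k)$ with image $y \in \bar{\cG}(k)$.

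The main obstacle is lifting $x$ to $\PP(\cV)(k)$, which amounts to lifting $y$ to $\cG(k)$. The fibre of $\cG \to \bar{\cG}$ over $y$ is a gerbe banded by (a form of) the scalar subgroup $\mu = G \cap K^{*} \simeq \mu_{m}$, and a priori it may be non-neutral. To handle this, I would use the pullback $y^{*}\cV$, a vector bundle on the fibre gerbe $\cG_{y}$ on which $\mu_{m}$ acts by scalars, i.e.\ faithfully by a character. Its determinant, or more directly the tautological line subbundle given by the point $x$, is a line bundle on $\cG_{y}$ on which the band $\mu_{m}$ acts through the identity character. This line bundle is therefore faithful, so Lemma~\ref{lem:line-neut} shows $\cG_{y}$ is neutral. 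Concretely, $x$ corresponds to a line $L \subseteq \cV$ over $\cG_{y}$, and $L$ is a faithful line bundle on the $\mu_{m}$-gerbe $\cG_{y}$.

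This gives a $k$-point of $\cG_{y}$, hence of $\cG$ lying over $y$. Since $\PP(\cV)$ is the fibre product, the pair of this point and $x$ gives a point of $\PP(\cV)(k)$, and therefore $\cG(k) \neq \emptyset$.
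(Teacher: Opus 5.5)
Your proof is correct, and it takes a genuinely different route from the paper's.

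\textbf{The paper's route.} After the same canonical factorization, the paper never looks at the fibre of $\cG \to \bar{\cG}$. It observes that $\cP$ and $\PP(\cV)$ have the same coarse moduli space $\bP$. From $\cP(k) \neq \emptyset$ it gets a dense set of smooth rational points on $\bP$ via Lemma~\ref{lem:proj-compression-neutral}. It then invokes the implication $(4) \Rightarrow (1)$ of Lemma~\ref{lem:compression-neutral}. That implication is proved using the blow-up of the zero section of $\cV$, tameness at the generic point, and \cite[Corollary 3.2]{bresciani-vistoli-valuative}, which is why the paper first reduces to $k$ infinite.

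\textbf{Your route.} You work directly with stacks. The fibre $\cG_{y}$ of the locally full morphism $\cG \to \bar{\cG}$ over $y$ is a gerbe (this uses exactly the local fullness), and its automorphism groups are the scalar subgroup $G \cap K^{*} \simeq \mu_{m}$. The point $x$ gives a section of $\PP(\cV|_{\cG_{y}}) \simeq \cP_{y} \times_{k} \cG_{y}$. That section is a line subbundle $L \subseteq \cV|_{\cG_{y}}$ on which $\mu_{m}$ acts through the identity character. Lemma~\ref{lem:line-neut} (Kummer plus Hilbert 90) then neutralizes $\cG_{y}$.

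\textbf{Comparison.} Your argument is more elementary and self-contained. It needs no coarse spaces, no finite/infinite case split, and no valuative or Lang--Nishimura input. It also proves something sharper: every $k$-point of $\cP$ lifts to a $k$-point of $\PP(\cV)$. The paper's proof is shorter given the compression lemmas it has already built. It also fits the viewpoint, reused in Proposition~\ref{prop:saturated-neut} and Section~\ref{sect:gl3}, that neutrality can be read off from rational points on the compression $\bP$.

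\textbf{Two small corrections.} First, the determinant of $\cV|_{\cG_{y}}$ does not work as an alternative. The group $\mu_{m}$ acts on it via $\lambda \mapsto \lambda^{n}$, which is not faithful when $\gcd(m,n) \neq 1$. It is the line $L$ cut out by $x$ that makes the argument work. Second, your final step is redundant. Once $\cG_{y}(k) \neq \emptyset$, its image already gives a point of $\cG(k)$, so you need not pass back through $\PP(\cV)(k)$ and Lemma~\ref{lem:compression-neutral}.
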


\begin{proof}
	Let $\cG \to B\GL_{n}$ be a $j$-gerbe, $\cV \to \cG$ the corresponding vector bundle. We want to prove that $\cG(k) \neq \emptyset$. By \cite[Lemma 13]{bresciani-p1}, we may assume that $k$ is infinite. Let $\cG \to \bar{\cG} \to B\PGL_{n}$ be the canonical factorization of the composition $\cG \to B\GL_{n} \to B\PGL_{n}$. 
	
	We have that $\bar{\cG} \to \PGL_{n}$ is a $\bar{j}$-gerbe. Hence, if $\cP \to \bar{\cG}$ is the associated projective bundle, we have $\cP(k) \neq \emptyset$. By Lemma~\ref{lem:proj-compression-neutral}, we get that the coarse moduli space $\bP$ of $\cP$ has a dense set of smooth $k$-rational points. Since $\bP$ coincides with the coarse moduli space of $\PP(\cV)$ as well, we conclude that $\cG(k) \neq \emptyset$ by Lemma~\ref{lem:compression-neutral}.
\end{proof}

\section{Gates}\label{sect:gates}

Let us give a definition.

\begin{definition}
	A subgroup $\Pi \subseteq \Gamma$ defined over $k_{0}$ is a \emph{gate} for $G$ if $G \subseteq \Pi(K)$ and, for every $j$-gerbe $\cG \to B\Gamma$ over an extension $k/k_{0}$, there exists a factorization $\cG \to B\Pi \to B\Gamma$ where $\cG \to B\Pi$ is a $j'$-gerbe with respect to the embedding $j': G \hookrightarrow \Pi(K)$.
\end{definition}

If $\Pi \subseteq \Gamma$ is a gate for $G$, in order to understand whether $G \subseteq \Gamma(K)$ is neutral or not we can effectively replace $\Gamma$ with $\Pi$. If $\Pi$ is much smaller than $\Gamma$, this will make our task much easier.

Of course, the usefulness of gates depends on whether small gates exist often enough. As we shall see, this actually happens.

Notice that two subgroups of $\Pi(K)$ which are conjugate in $\Gamma(K)$ are not guaranteed to be conjugate in $\Pi(K)$ as well, so the existence of the factorization $\cG \to B\Pi \to B\Gamma$ is not sufficient. We need to require that $\cG \to B\Pi$ is a $j'$-gerbe with respect to $j':G \hookrightarrow \Pi(K)$.

\begin{lemma}
	Let $\Pi \subseteq \Gamma$ be a subgroup scheme with $G \subseteq \Pi (K)$. Assume that the normalizer $N \subseteq \Gamma(K)$ of $G$ in $\Gamma(K)$ maps surjectively on the set of left cosets $\Gamma/\Pi (K)$.
	
	For every algebraically closed field $\Omega \supseteq K$, if $G' \subseteq \Pi (\Omega)$ is $\Gamma(\Omega)$-conjugate to $G$, then it is $\Pi (\Omega)$-conjugate to $G$ as well.
\end{lemma}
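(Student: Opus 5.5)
We argue directly with cosets. Suppose $G' = \gamma G \gamma^{-1}$ with $\gamma \in \Gamma(\Omega)$ and $G' \subseteq \Pi(\Omega)$. We want an element $\pi \in \Pi(\Omega)$ with $G' = \pi G \pi^{-1}$. The natural candidate is $\pi = \gamma n$ for some $n$ in the normalizer of $G$: any such product still conjugates $G$ to $G'$. So it suffices to find $n$ normalizing $G$ with $\gamma n \in \Pi(\Omega)$, that is, with $n$ in the same left coset as $\gamma^{-1}$ modulo $\Pi$. Concretely, we need $\gamma^{-1}\Pi(\Omega) = n\Pi(\Omega)$, so that $\gamma n \in \Pi(\Omega)$.

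The hypothesis only gives surjectivity of $N \to (\Gamma/\Pi)(K)$ over $K$. We therefore first reduce from $\Omega$ to $K$. Consider the closed subscheme $Z \subseteq \Gamma_{K}$ of elements $g$ with $g^{-1}G g \subseteq \Pi$. This is the intersection over $h \in G$ of the preimages of $\Pi$ under the maps $g \mapsto g^{-1}hg$, which are defined over $K$ since $G \subseteq \Gamma(K)$.

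The point $\gamma^{-1}$ lies in $Z(\Omega)$, since $\gamma G\gamma^{-1} = G' \subseteq \Pi(\Omega)$. Moreover $Z$ is stable under right multiplication by $\Pi$. Hence $Z$ is a union of fibres of $\Gamma \to \Gamma/\Pi$, and its image $W \subseteq \Gamma/\Pi$ is a closed subscheme defined over $K$ with $Z = q^{-1}(W)$. Here we use that $q : \Gamma \to \Gamma/\Pi$ is faithfully flat, so the descent is fppf.

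The heart of the argument is to show that every $\Omega$-point of $W$ lies in the image of $N$, even though $N$ is a priori only a set of $K$-points. Every $K$-point $x$ of $W$ is the image of some $n \in N$ by hypothesis. The fibre $q^{-1}(x) = n\Pi_{K}$ consists of elements conjugating $G$ into $\Pi$; it contains $n$, which conjugates $G$ onto $G$.

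We then argue that $W$ is finite, indeed a finite union of points, and that all its $\Omega$-points are $K$-points. The plan is to consider the morphism $Z \to \{\text{subgroups}\}$ given by $g \mapsto g^{-1}Gg$. Since $G$ is finite, the set of subgroups of $\Pi$ that are $\Gamma$-conjugate to $G$ is controlled: the orbit map $\Gamma \to \Gamma\cdot G$ identifies $\Gamma/N_{\Gamma}(G)$, with $N_{\Gamma}(G)$ the scheme-theoretic normalizer, with the conjugacy orbit. The $K$-points $n\Pi$ with $n \in N$ exhaust $W(K)$ by hypothesis.

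The key claim is that the image of $N_{\Gamma}(G)$ in $\Gamma/\Pi$ is a constructible set defined over $K$ containing all of $W(K)$. By the Nullstellensatz, since $K$ is algebraically closed, $K$-points are dense in every constructible set over $K$. It follows that the image of $N_{\Gamma}(G)$ equals $W$ at the level of points, and hence $\gamma^{-1}\Pi(\Omega)$, an $\Omega$-point of $W$, comes from some $n \in N_{\Gamma}(G)(\Omega)$. Then $\pi = \gamma n \in \Pi(\Omega)$ satisfies $\pi G \pi^{-1} = \gamma G\gamma^{-1} = G'$, which is the required conjugacy.

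The main obstacle is this density step: the hypothesis is stated only on $K$-points, and must be transferred to $\Omega$-points via constructibility of images and the density of $K$-points, applied to $W \smallsetminus \operatorname{im}(N_{\Gamma}(G))$, which is a constructible set with no $K$-points and hence empty.
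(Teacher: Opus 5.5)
Your argument is correct and is essentially the paper's proof. Chevalley constructibility of the image of the normalizer in $\Gamma/\Pi$, together with density of $K$-points, upgrades the surjectivity hypothesis from $K$-points to $\Omega$-points, and then $\pi=\gamma n$ does the job.

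Two remarks. First, the aside that $W$ is finite with all its $\Omega$-points defined over $K$ is false and should be deleted. Under the hypothesis, every coset $g\Pi(K)=n\Pi(K)$ lies in $W$, so $W$ is all of $\Gamma/\Pi$; for instance $W=\GG_{m}$ when $\Pi=\SL_{n}^{(c)}\subseteq\GL_{n}$. Your final density step never uses this aside, so the proof survives its removal. Second, $Z$ and $W$ are superfluous. The paper runs the same density argument on all of $\Gamma/\Pi$, which shows that every $\Gamma(\Omega)$-conjugate of $G$ is a $\Pi(\Omega)$-conjugate without ever using $G'\subseteq\Pi(\Omega)$.
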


\begin{proof}
	Up to extending $k_{0}$, we may assume $k_{0} = K$ is algebraically closed. We may then regard $G, N$ as the geometric points of group schemes $\fG, \fN$ of finite type over $K$. The fact that $\fN(K) \to \Gamma/\Pi (K)$ is surjective then implies that $\fN(\Omega) \to \Gamma/\Pi (\Omega)$ is surjective as well. 
	
	Let $x \in \Gamma(\Omega)$ be any element, consider the conjugate $x G x^{-1}$. Choose $y$ any element of $\fN(\Omega)$ such that $y^{-1}\Pi(\Omega) = x^{-1}\Pi(\Omega)$. Then $x G x^{-1} = x y^{-1} G y x^{-1}$, and $x y^{-1} \in \Pi (\Omega)$ because $x y^{-1}\Pi(\Omega) = x x^{-1}\Pi(\Omega) = \Pi(\Omega)$.
\end{proof}

\begin{corollary}\label{cor:easy-gate}
	Let $\Pi \subseteq \Gamma$ be a subgroup scheme with $G \subseteq \Pi (K)$. Assume that the normalizer $N \subseteq \Gamma(K)$ of $G$ in $\Gamma$ maps surjectively on the left cosets $\Gamma/\Pi (K)$.
	
	If every $j$-gerbe factorizes through $B\Pi$, then $\Pi$ is a gate for $G$ as well. 
\end{corollary}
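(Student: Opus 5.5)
Given a $j$-gerbe $\cG \to B\Gamma$ over $k$, the hypothesis provides a factorization $\cG \xrightarrow{g} B\Pi \to B\Gamma$. It remains to check that $g$ is a $j'$-gerbe for $j': G \hookrightarrow \Pi(K)$. There are two things to verify: that $g$ is faithful, and that the image of the automorphism group of a geometric point is $\Pi$-conjugate (not merely $\Gamma$-conjugate) to $G$. The preceding lemma handles the second point.

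For faithfulness, I use that $B\Pi \to B\Gamma$ is faithful, since $\Pi \subseteq \Gamma$ is a subgroup scheme. The composite $\cG \to B\Pi \to B\Gamma$ is faithful because $\cG \to B\Gamma$ is a $j$-gerbe. Hence the map on automorphism sheaves $\underaut_{\cG}(s) \to \underaut_{B\Pi}(g\circ s)$ is injective, because its composite with the injection into $\underaut_{B\Gamma}$ is injective. So $g$ is faithful.

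For the conjugacy class, fix a geometric point $p \in \cG(\Omega)$, with $\Omega = \bar{k}$ and a chosen embedding $K \subseteq \Omega$. The image $g(p)$ is a $\Pi$-torsor over $\Omega$, hence trivial, so I choose an isomorphism $\alpha'$ from $g(p)$ to the tautological section of $B\Pi$. This identifies $\underaut_{\cG}(p)$ with a subgroup $G' \subseteq \Pi(\Omega)$. Pushing $\alpha'$ forward to $B\Gamma$ gives an isomorphism with the tautological section there. Since $\cG \to B\Gamma$ is a $j$-gerbe, the image there is $\Gamma(\Omega)$-conjugate to $G$; a different choice of trivialization only changes this by $\Gamma(\Omega)$-conjugation, so the conclusion is independent of choices. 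Thus $G' \subseteq \Pi(\Omega)$ is $\Gamma(\Omega)$-conjugate to $G$. The preceding lemma, whose hypothesis that $N$ surjects onto $\Gamma/\Pi(K)$ is exactly our hypothesis, then gives that $G'$ is $\Pi(\Omega)$-conjugate to $G$. Hence $g$ is a $j'$-gerbe and $\Pi$ is a gate.

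The only mildly delicate point is the bookkeeping of trivializations over $\Omega$ rather than $K$. One has to make sure that "conjugate to $G$" means conjugate inside $\Gamma(\Omega)$ via $K \subseteq \Omega$, and then apply the lemma exactly in that form. Since the lemma is already stated for arbitrary algebraically closed $\Omega \supseteq K$, this step is routine.
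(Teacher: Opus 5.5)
Your proof is correct and is essentially the paper's argument: the paper records this as an immediate consequence of the preceding lemma. As you do, faithfulness of $\cG \to B\Pi$ comes from faithfulness of the composite to $B\Gamma$, and the preceding lemma, applied with $\Omega = \bar{k}$, upgrades $\Gamma(\Omega)$-conjugacy of the automorphism image to $\Pi(\Omega)$-conjugacy.
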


\begin{lemma}\label{lem:gate-special}
	Let $\Pi \subseteq \Gamma$ be a normal subgroup scheme with $\Gamma/\Pi$ special and $G \subseteq \Pi(K)$. Assume that the normalizer of $G$ maps surjectively on $\Gamma/\Pi(K)$. Then $\Pi$ is a gate for $G$.
\end{lemma}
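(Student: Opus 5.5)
By Corollary~\ref{cor:easy-gate}, the hypothesis that the normalizer of $G$ maps surjectively onto $\Gamma/\Pi(K)$ reduces the statement to a factorization claim. It suffices to show that every $j$-gerbe $\cG \to B\Gamma$ over a field $k$ factors through $B\Pi \to B\Gamma$. The conjugacy condition needed for $\cG \to B\Pi$ to be a $j'$-gerbe then comes for free from that Corollary.

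The plan for the factorization is as follows.

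\textbf{Step 1.} Since $\Pi$ is normal, $Q = \Gamma/\Pi$ is an affine algebraic group, and the projection $\Gamma \to Q$ induces $B\Gamma \to BQ$. This morphism sits in a $2$-cartesian square
\[\begin{tikzcd}
B\Pi \ar[r] \ar[d] & B\Gamma \ar[d] \\
\spec k_{0} \ar[r] & BQ.
\end{tikzcd}\]
So a morphism $\cG \to B\Gamma$ lifts to $B\Pi$ exactly when the composite $\cG \to B\Gamma \to BQ$ is $2$-isomorphic to the composite $\cG \to \spec k \to BQ$ given by the trivial torsor. Equivalently, the pulled-back $Q$-torsor $T \to \cG$ must be trivial.

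\textbf{Step 2.} Show that $T$ descends to $k$. Let $p$ be a geometric point of $\cG$. Its automorphism group acts on the fiber $T_{p}$ through the image of $G$ in $Q(\bar{k})$, and this image is trivial because $G \subseteq \Pi(K)$. Hence the automorphism groups of $\cG$ act trivially on $T$, and $\cG \to BQ$ factors through the rigidification. This rigidification is $\spec k$, since $\cG$ is a gerbe over $k$; formally, the morphism $\cG \to BQ$ kills the band, and BQ-valued maps from a gerbe with trivially acting inertia descend to its coarse space $\spec k$. We thus get a $Q$-torsor $T_{0}$ over $\spec k$ with $T \simeq T_{0}\times_{k}\cG$.

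\textbf{Step 3.} Since $Q$ is special, $T_{0}$ is trivial over the field $k$. Therefore $T$ is trivial, and the lift $\cG \to B\Pi$ exists by Step 1.

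\textbf{Faithfulness.} The resulting $\cG \to B\Pi$ is faithful, because composing with the faithful morphism $B\Pi \to B\Gamma$ recovers $\cG \to B\Gamma$, which is faithful. Finally, the image of the automorphism group of a geometric point is $\Gamma$-conjugate to $G$ and lies in $\Pi$. By the Lemma preceding Corollary~\ref{cor:easy-gate}, it is therefore $\Pi$-conjugate to $G$, so $\cG \to B\Pi$ is a $j'$-gerbe.

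\textbf{Main obstacle.} The delicate point is Step 2: making rigorous that a morphism from a gerbe to $BQ$ whose induced maps on automorphism groups are trivial factors through $\spec k$. I would prove this by fpqc descent. Choose a cover $\spec k' \to \cG$; the torsor on $\spec k'$ comes with descent data for $\spec k'\times_{\cG}\spec k' \to \spec k'\times_{k}\spec k'$. The former is an $\underaut$-torsor over the latter, and the triviality of the action shows that the descent datum is pulled back from the latter, so it descends to $k$.
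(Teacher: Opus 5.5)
Your proof is correct and follows the paper's argument: you reduce via Corollary~\ref{cor:easy-gate}, use the $2$-cartesian square exhibiting $B\Pi$ as the fibre of $B\Gamma \to B(\Gamma/\Pi)$ over the tautological section, show that $\cG \to B(\Gamma/\Pi)$ factors through $\spec k$ because the automorphism groups map trivially, and conclude because $\Gamma/\Pi$ is special. The only difference is in Step 2: the paper obtains that factorization from Lemma~\ref{lem:trivial-fact}, which uses the canonical factorization of Proposition~\ref{prop:can-fact}, whereas you sketch a direct descent argument; your separate faithfulness and $j'$-gerbe check is already contained in Corollary~\ref{cor:easy-gate}.
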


\begin{proof}
	By Corollary~\ref{cor:easy-gate}, we only need to check that every $j$-gerbe factorizes through $B\Pi$.

	Let $\cG \to B\Gamma$ be a $j$-gerbe. We have a cartesian diagram
	\[\begin{tikzcd}
		B\Pi \rar \dar			&	\spec k \dar	\\
		B\Gamma \rar			&	B\Gamma/\Pi
	\end{tikzcd}\]
	where $\spec k \to B\Gamma/\Pi$ is the tautological section.
	
	By the following Lemma~\ref{lem:trivial-fact}, we have a factorization $\cG \to \spec k \to B\Gamma/\Pi$. Since $\Gamma/\Pi$ is special, the new section $\spec k \to B\Gamma/\Pi$ is isomorphic to the tautological one, hence we get the desired morphism $\cG \to B\Pi$ by the diagram above.
\end{proof}

\begin{lemma}\label{lem:trivial-fact}
	Let $\Gamma \to \Delta$ be a morphism of affine gerbes over a field $k$. Fix $\gamma \in \Gamma(k')$ for some extension $k'/k$. If $\underaut_{\Gamma}(\gamma) \to \underaut_{\Delta}(\gamma)$ is trivial, there exists a factorization $\Gamma \to \spec k \to \Delta$.
\end{lemma}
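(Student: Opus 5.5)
The plan is to build the factorization as the \emph{image} of $\Gamma\to\Delta$, show that this image is a trivial gerbe, and then use the universal property of the image.

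First, take the canonical factorization $\Gamma \to \Delta' \to \Delta$ of Proposition~\ref{prop:can-fact}. Here $\Gamma \to \Delta'$ is locally full and $\Delta' \to \Delta$ is faithful. Write $\delta'$ for the image of $\gamma$ in $\Delta'(k')$. By local fullness, $\underaut_{\Gamma}(\gamma) \to \underaut_{\Delta'}(\delta')$ is an epimorphism of fppf sheaves over $k'$. By faithfulness, $\underaut_{\Delta'}(\delta') \to \underaut_{\Delta}(\delta')$ is a monomorphism. The composite $\underaut_{\Gamma}(\gamma) \to \underaut_{\Delta}(\delta')$ is trivial by hypothesis. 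A sheaf-theoretic epimorphism followed by a monomorphism can be trivial only if the middle sheaf is trivial, so $\underaut_{\Delta'}(\delta')$ is the trivial group sheaf.

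Second, I show that $\Delta'$ is isomorphic to $\spec k$. Since $\Delta'$ is a gerbe, any two objects over a common base are fppf-locally isomorphic. Every object of $\Delta'$ is therefore locally isomorphic to a pullback of $\delta'$, so all automorphism sheaves are trivial. This means $\Delta'$ is fibered in setoids, i.e.\ it is equivalent to a sheaf. That sheaf is locally nonempty, and any two sections are locally isomorphic. A sheaf with these properties is isomorphic to the final object, so $\Delta' \simeq \spec k$.

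Third, I compose to get $\Gamma \to \Delta' \simeq \spec k \to \Delta$, which is the required factorization; 2-commutativity is inherited from the canonical factorization. I expect the main obstacle to be the descent bookkeeping. The vanishing of $\underaut_{\Delta'}(\delta')$ is established only over $k'$ and at the single object $\delta'$. To spread it to all objects, I will use the gerbe condition as above. Alternatively, I could note that the band of $\Delta'$ is trivial after base change to $k'$, and base change along $k'/k$ is fpqc, so it descends.

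If one prefers to avoid Proposition~\ref{prop:can-fact}, there is a direct route. The trivial map $\underaut_{\Gamma}(\gamma)\to\underaut_{\Delta}(\gamma)$ gives descent data for the object $\gamma_{\Delta}$ along $\spec k' \to \spec k$, and this yields an object of $\Delta(k)$. One then checks that $\Gamma \to \Delta$ factors through it, using that $\Gamma_{k'}$ is equivalent to $B\underaut_{\Gamma}(\gamma)$. I would still use the canonical factorization first, since it makes the argument cleanest.
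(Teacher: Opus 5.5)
Your proposal is correct and follows the paper's proof. The paper also takes the canonical factorization $\Gamma \to \Delta' \to \Delta$, uses the triviality of $\underaut_{\Gamma}(\gamma)\to\underaut_{\Delta}(\gamma)$ to get $\Delta'_{k'} \simeq \spec k'$, and descends along $k'/k$ to conclude $\Delta' \simeq \spec k$; your epi--mono argument and your (fpqc) descent remark just spell out details the paper leaves implicit.
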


\begin{proof}
	Let $\Gamma \to \Delta' \to \Delta$ be the canonical factorization. Since $\underaut_{\Gamma}(\gamma) \to \underaut_{\Delta}(\gamma)$ is trivial, then $\Delta'_{k'} = \spec k'$, which in turn implies $\Delta' = \spec k$.
\end{proof}

Given a positive integer $c$, write $\SL_{n}^{(c)} \subseteq \GL_{n}$ for the subgroup of matrices $M$ such that $(\det M)^{c} = 1$. 

\begin{corollary}\label{cor:slnm}
	If $\Gamma = \GL_{n}$ and $G \subseteq \SL_{n}^{(c)}(K)$, then $\SL_{n}^{(c)}$ is a gate for $G$.
\end{corollary}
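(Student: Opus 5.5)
We deduce Corollary~\ref{cor:slnm} directly from Lemma~\ref{lem:gate-special}, taking $\Gamma = \GL_{n}$ and $\Pi = \SL_{n}^{(c)}$. There are three hypotheses to check.

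First, $\Pi$ is normal in $\GL_{n}$. It is the kernel of the homomorphism $\GL_{n} \to \GG_{m}$ given by $M \mapsto (\det M)^{c}$.

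Second, the quotient $\GL_{n}/\SL_{n}^{(c)}$ is special. The map $M \mapsto (\det M)^{c}$ is a composition of two fppf-surjective homomorphisms: $\det : \GL_{n} \to \GG_{m}$ and the $c$-th power map $\GG_{m} \to \GG_{m}$. The $c$-th power map is fppf-surjective even when $c$ is divisible by $\cha k_{0}$, since it is flat and finite. Hence $M \mapsto (\det M)^{c}$ identifies $\GL_{n}/\SL_{n}^{(c)}$ with $\GG_{m}$. By Hilbert's theorem 90, $\GG_{m}$ is special. The containment $G \subseteq \SL_{n}^{(c)}(K)$ is the hypothesis of the corollary.

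Third, the normalizer of $G$ in $\GL_{n}(K)$ must surject onto $(\GL_{n}/\SL_{n}^{(c)})(K) = K^{*}$. The scalar matrices $\lambda \Id$ are central, so they lie in the normalizer. Their image is $\det(\lambda\Id)^{c} = \lambda^{nc}$. Since $K$ is algebraically closed, every element of $K^{*}$ has an $nc$-th root, so the image is all of $K^{*}$. This holds even when $\cha k_{0}$ divides $nc$, because on $K$-points the $p$-th power map of an algebraically closed field is bijective. Lemma~\ref{lem:gate-special} then says that $\SL_{n}^{(c)}$ is a gate for $G$.

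The one point that needs care is the identification of the quotient $\GL_{n}/\SL_{n}^{(c)}$ with $\GG_{m}$ as group schemes, rather than only on $K$-points. Specialness, and the cartesian square used in the proof of Lemma~\ref{lem:gate-special}, concern the fppf quotient, so we need the homomorphism $\GL_{n} \to \GG_{m}$ to be faithfully flat with kernel exactly $\SL_{n}^{(c)}$. The kernel statement holds by definition. Faithful flatness holds because $\det$ is smooth and surjective and the $c$-th power map on $\GG_{m}$ is finite flat. Everything else is a direct check.
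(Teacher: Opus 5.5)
Your proposal is correct and follows the paper's proof: apply Lemma~\ref{lem:gate-special} with $\Pi = \SL_{n}^{(c)}$, using that $\GL_{n}/\SL_{n}^{(c)} \simeq \GG_{m}$ is special and that the scalar matrices lie in the normalizer of $G$ and surject onto $\GG_{m}(K)$. Your extra remarks, on faithful flatness of $M \mapsto (\det M)^{c}$ when $\cha k_{0} \mid c$ and on surjectivity of $\lambda \mapsto \lambda^{nc}$ on $K^{*}$, spell out details the paper leaves implicit.
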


\begin{proof}
	Apply Lemma~\ref{lem:gate-special} with $\Gamma = \GL_{n}$, $\Pi = \SL_{n}^{(c)}$. Notice that $\GL_{n}/\SL_{n}^{(c)} \simeq \GG_{m}$ is special. The multiples of the identity are in the normalizer of $G$, and map surjectively on $\GL_{n}/\SL_{n}^{(c)}(K)$.
\end{proof}

\subsection{Saturated subgroups}

Fix a quotient $\pi : \Gamma \to \bar{\Gamma}$ of $\Gamma$. The main example is $\Gamma = \GL_{n}$ and $\bar{\Gamma} = \PGL_{n}$. If $G \subseteq \Gamma(K)$ is a finite subgroup, denote by $\bar{G} = \pi(G)$ its image in $\bar{\Gamma}$.

Given a $j$-gerbe $\cG \to B\Gamma$, taking the canonical factorization of $\cG \to B\bar{\Gamma}$ we get a $2$-commutative diagram
\[\begin{tikzcd}
	\cG \rar \dar	&	B\Gamma\dar		\\
	\bar{\cG} \rar	&	B\bar{\Gamma}
\end{tikzcd}\]
where $\cG \to \bar{\cG}$ is locally full and $\bar{\cG} \to B\bar{\Gamma}$ is faithful. In particular, $\bar{\cG} \to B\bar{\Gamma}$ is a $\bar{j}$-gerbe. Furthermore, the factorization $\cG \to \bar{\cG} \to B\bar{\Gamma}$ is unique up to equivalence.

We say that $\bar{\cG}$ is the \emph{induced} $\bar{j}$-gerbe.

\begin{definition}\label{def:saturated}
	The group $G$ is \emph{saturated} if there exists a subgroup scheme $\Pi \subseteq \Gamma$ such that $\bar{\Pi} = \pi(\Pi)$ is a gate for $\bar{G}$ and
	\[\pi^{-1}(\bar{G}) \cap \Pi_{K} = G\]
	as group schemes over $K$.
\end{definition}

\begin{lemma}\label{lem:saturated}
	If $G$ is saturated, every $\bar{j}$-gerbe has the form $\bar{\cG}$ for some $j$-gerbe $\cG$.
\end{lemma}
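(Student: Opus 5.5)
Let $\bar{\cG} \to B\bar{\Gamma}$ be a $\bar{j}$-gerbe over $k$. Since $\bar{\Pi}$ is a gate for $\bar{G}$, we first factor it as $\bar{\cG} \to B\bar{\Pi} \to B\bar{\Gamma}$, where $\bar{\cG} \to B\bar{\Pi}$ is a $\bar{j}'$-gerbe for the embedding $\bar{G} \subseteq \bar{\Pi}(K)$. We then define
\[\cG = \bar{\cG} \times_{B\bar{\Pi}} B\Pi,\]
using the morphism $B\Pi \to B\bar{\Pi}$ induced by $\pi|_{\Pi} : \Pi \to \bar{\Pi}$. The candidate $j$-gerbe is the composite $\cG \to B\Pi \to B\Gamma$, together with the projection $\cG \to \bar{\cG}$.

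First we check that $\cG$ is a gerbe with the right automorphism groups. Pass to $\bar{k}$ and choose a geometric point of $\bar{\cG}$ whose image in $B\bar{\Pi}$ is identified with the tautological section, with automorphism group $\bar{G}$. This is possible after $\bar{\Pi}(\bar{k})$-conjugation, because $\bar{\cG} \to B\bar{\Pi}$ is a $\bar{j}'$-gerbe. Then $\bar{\cG}_{\bar{k}} \simeq B\bar{G}$, and the fibre product becomes $B\bar{G} \times_{B\bar{\Pi}} B\Pi$. This is the quotient stack $[(\bar{\Pi}\times_{\bar{\Pi}} \cdots)]$, more concretely $[\bar{\Pi}/\Pi \,/\, \bar{G}]$-type data. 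Since $\Pi \to \bar{\Pi}$ is surjective (as $\bar{\Pi} = \pi(\Pi)$), it is equivalent to $B(\pi|_{\Pi}^{-1}(\bar{G})) = B(\pi^{-1}(\bar{G}) \cap \Pi_{K})$. By the saturation hypothesis this is $BG$, and the map to $B\Pi \to B\Gamma$ is the one induced by the inclusion $G \subseteq \Gamma(K)$. Hence $\cG$ is a gerbe, since it is a gerbe after base change to $\bar{k}$ and is nonempty over a field extension. The morphism $\cG \to B\Gamma$ is faithful, because $G \to \Gamma$ is injective, and the image of geometric automorphism groups is conjugate to $G$. So $\cG \to B\Gamma$ is a $j$-gerbe.

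Next we identify $\bar{\cG}$ with the induced $\bar{j}$-gerbe. The composite $\cG \to B\Gamma \to B\bar{\Gamma}$ factors as $\cG \to \bar{\cG} \to B\bar{\Gamma}$, using the $2$-commutativity of $B\Pi \to B\bar{\Pi}$ with $B\Gamma \to B\bar{\Gamma}$. Here $\bar{\cG} \to B\bar{\Gamma}$ is faithful. The map $\cG \to \bar{\cG}$ is locally full, since on geometric automorphism groups it is the surjection $G \to \bar{G}$ given by the restriction of $\pi$. By the uniqueness part of Proposition~\ref{prop:can-fact}, this is the canonical factorization, so $\bar{\cG}$ is the $\bar{j}$-gerbe induced by $\cG$.

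The main obstacle is the geometric computation identifying $\cG_{\bar{k}}$ with $BG$. It requires surjectivity of $\Pi \to \bar{\Pi}$ as a map of fppf sheaves, so that the fibre product is connected, that is, a gerbe rather than a disjoint union. It also requires the scheme-theoretic equality $\pi^{-1}(\bar{G}) \cap \Pi_{K} = G$, which is exactly what ensures that the stabilizer is the reduced finite group $G$ and not a larger or non-reduced group scheme. A minor point is that the $\Pi(\bar{k})$-conjugacy class (not just the $\Gamma$-conjugacy class) is the correct one. This holds because we used the gate property, which gives conjugacy inside $\bar{\Pi}$, and lifted it through $\Pi \to \bar{\Pi}$.
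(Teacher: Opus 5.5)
Your proof is correct and follows the paper's own argument: use the gate property to replace $\Gamma$ by $\Pi$, form the $2$-fibre product of $\bar{\cG}$ with $B\Pi$ over $B\bar{\Pi}$, observe that surjectivity of $\Pi \to \bar{\Pi}$ makes it a $\pi^{-1}(\bar{G}) \cap \Pi$-gerbe and hence, by saturation, a $j$-gerbe, and conclude by the uniqueness of the canonical factorization. Your sentence describing the fibre product as a quotient stack is garbled and should be rewritten: the clean statement is that $\bar{G}\times\Pi$ acts transitively on $\bar{\Pi}$ by $(h,p)\cdot x = h x \pi(p)^{-1}$, so $B\bar{G}\times_{B\bar{\Pi}}B\Pi \simeq [\bar{\Pi}/(\bar{G}\times\Pi)] \simeq B(\pi|_{\Pi}^{-1}(\bar{G}))$, the last group being the stabilizer of the identity.
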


\begin{proof}
	Let $\Pi \subseteq \Gamma$ be as in Definition~\ref{def:saturated}. Up to replacing $\Gamma$ with $\Pi$, we may assume that $\pi^{-1}(\bar{G}) = G$.
	
	Given a $\bar{j}$-gerbe $\cH \to B\bar{\Gamma}$, define $\cG$ by the $2$-cartesian diagram
	\[\begin{tikzcd}
		\cG \rar \dar	&	B\Gamma \dar	\\
		\cH \rar		&	B\bar{\Gamma}
	\end{tikzcd}\]
	Since $\pi : \Gamma \to \bar{\Gamma}$ is surjective, then $\cG$ is a $\pi^{-1}(\bar{G})$-gerbe, and hence a $j$-gerbe since $\pi^{-1}(\bar{G}) = G$. The uniqueness part of Proposition~\ref{prop:can-fact} implies that $\cH \to B\bar{\Gamma}$ is equivalent to $\bar{\cG} \to B\bar{\Gamma}$.
\end{proof}

\begin{corollary}
	If $G \subseteq \Gamma(K)$ is neutral and saturated, $\bar{G} \subseteq \bar{\Gamma}(K)$ is neutral as well.
\end{corollary}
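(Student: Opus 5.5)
The plan is to combine Lemma~\ref{lem:saturated} with the relation between a $j$-gerbe and its induced $\bar{j}$-gerbe. Let $\cH \to B\bar{\Gamma}$ be an arbitrary $\bar{j}$-gerbe over a field $k$; we must show $\cH(k) \neq \emptyset$. Since $G$ is saturated, Lemma~\ref{lem:saturated} gives a $j$-gerbe $\cG \to B\Gamma$ whose induced $\bar{j}$-gerbe $\bar{\cG} \to B\bar{\Gamma}$ is equivalent to $\cH \to B\bar{\Gamma}$.

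Next we use that $G$ is neutral: the $j$-gerbe $\cG$ has a $k$-point $s \in \cG(k)$. The canonical factorization gives a morphism $\cG \to \bar{\cG}$, so the composite of $s$ with this morphism is a $k$-point of $\bar{\cG}$. Transporting it along the equivalence $\bar{\cG} \simeq \cH$ yields a point of $\cH(k)$, so $\cH$ is neutral. Since $\cH$ was arbitrary, $\bar{G}$ is neutral.

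Two points need care. First, the $j$-gerbe from Lemma~\ref{lem:saturated} must live over the same field $k$ as $\cH$. This holds because the construction there is a $2$-fibre product over $\cH$, hence over $k$. Second, the hypotheses of Lemma~\ref{lem:saturated} are exactly saturation, so nothing further is needed. The only possible subtlety is the blanket Hypothesis~\ref{hyp:quasi-gal-invariant} for $\bar{G}$ versus $G$. But neutrality is tested only over fields where $\bar{j}$-gerbes exist, and the constructed $\cG$ is a genuine $j$-gerbe over $k$, so I expect no real obstacle; the argument is essentially formal.
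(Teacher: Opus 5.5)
Your proposal is correct and is exactly the argument the paper leaves implicit when it states the corollary right after Lemma~\ref{lem:saturated}: every $\bar{j}$-gerbe $\cH$ over $k$ is equivalent to the induced gerbe $\bar{\cG}$ of some $j$-gerbe $\cG$ over the same field $k$, and a $k$-point of $\cG$ (which exists because $G$ is neutral) pushes forward along $\cG \to \bar{\cG} \simeq \cH$. Your worry about Hypothesis~\ref{hyp:quasi-gal-invariant} is harmless: if $G$ satisfies it over $k_{0}$, then so does $\bar{G} = \pi(G)$.
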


As a consequence, we obtain the following strengthening of Proposition~\ref{prop:rhoneut-neut} for saturated subgroups.

\begin{proposition}\label{prop:saturated-neut}
	Let $G \subseteq \GL_{n}(K)$ be a finite subgroup with image $\bar{G} \subseteq \PGL_{n}(K)$, assume that $G$ is saturated. Then $G$ is neutral if and only if $\bar{G}$ is $\rho$-neutral.
\end{proposition}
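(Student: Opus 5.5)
One direction is already available: if $\bar{G}$ is $\rho$-neutral, then $G$ is neutral by Proposition~\ref{prop:rhoneut-neut}. This uses no saturation. So the work is in the converse. Assume $G$ is neutral and saturated, and let $\cP \to \cH$ be an arbitrary $\bar{j}$-gerbe over a field $k$. We must show that $\cP(k) \neq \emptyset$.

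The plan is to lift $\cH$ to a $j$-gerbe and then transfer rational points through the common coarse moduli space. By Lemma~\ref{lem:saturated}, saturation gives a $j$-gerbe $\cG \to B\GL_{n}$ whose induced $\bar{j}$-gerbe $\bar{\cG}$ is equivalent to $\cH \to B\PGL_{n}$. Let $\cV \to \cG$ be the associated vector bundle. Since $G$ is neutral, $\cG(k) \neq \emptyset$. If $k$ is finite, Lemma~\ref{lem:proj-compression-neutral} already gives $\cP(k) \neq \emptyset$, so we may assume $k$ is infinite. Then Lemma~\ref{lem:compression-neutral}, via $(1) \Rightarrow (4)$, shows that the coarse moduli space $\bP$ of $\PP(\cV)$ has a dense set of smooth rational points.

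The remaining step is to identify $\bP$ with the coarse moduli space of $\cP$. The composition $\cG \to B\GL_{n} \to B\PGL_{n}$ factors through $\bar{\cG} \simeq \cH$, so $\PP(\cV)$ is the pullback of $\cP$ along the locally full morphism $\cG \to \cH$. Over $\bar{k}$ this is the map $[\PP^{n-1}/G] \to [\PP^{n-1}/\bar{G}]$, which induces an isomorphism on coarse spaces because $G$ acts through $\bar{G}$. This is exactly the identification already used in the proof of Proposition~\ref{prop:rhoneut-neut}. Since forming coarse spaces of tame stacks commutes with flat base change, the identification descends to $k$.

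With $\bP$ identified as the coarse space of $\cP$, Lemma~\ref{lem:proj-compression-neutral}, via $(2) \Rightarrow (1)$, gives $\cP(k) \neq \emptyset$. Hence $\bar{G}$ is $\rho$-neutral. The main obstacle is making the coarse space comparison rigorous over $k$, especially in positive characteristic where coarse spaces need not commute with base change. I would handle this by working on the dense open locus where $\bar{G}$ acts freely, as in the proof of Lemma~\ref{lem:compression-neutral}. Smooth rational points are dense there, and that suffices for condition (2).
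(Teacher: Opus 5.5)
Your argument is correct and shares the paper's skeleton: lift an arbitrary $\bar{j}$-gerbe $\cH$ to a $j$-gerbe $\cG$ via Lemma~\ref{lem:saturated}, then use neutrality of $G$. The difference is only in the last step, where the paper does not pass to coarse spaces at all. You already noted that $\PP(\cV) \simeq \cP \times_{\cH} \cG$, so there is a morphism $\PP(\cV) \to \cP$. A $k$-point of $\cG$ gives a $k$-point of $\PP(\cV)$, since the fibre is the projectivization of an $n$-dimensional $k$-vector space, and its image is a $k$-point of $\cP$. That single observation replaces your finite/infinite case split, the identification of the two coarse spaces, and the appeal to birationality of $\cP \to \bP$ in Lemma~\ref{lem:proj-compression-neutral}.

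What your detour buys is symmetry with the proof of Proposition~\ref{prop:rhoneut-neut}. Because $\PP(\cV)$ and $\cP$ share the coarse space $\bP$, the two compression lemmas show that, for infinite $k$, $\cG(k) \neq \emptyset$ if and only if $\cP(k) \neq \emptyset$, for any $j$-gerbe and its induced $\bar{j}$-gerbe.

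Two small corrections to your coarse-space step. First, formation of coarse moduli spaces commutes with flat base change for any stack with finite inertia (Keel--Mori), so the qualifier ``tame'' is unnecessary. This matters because the proposition does not assume $|G|$ is prime to the characteristic. Second, $\bar{k}/k$ is flat, so your worry about positive characteristic does not arise and the fallback via the free locus is not needed.
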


\begin{proof}
	The ``if'' part is Proposition~\ref{prop:rhoneut-neut}. Assume that $G$ is neutral, let us prove that $\bar{G}$ is $\rho$-neutral. By Lemma~\ref{lem:saturated}, every $\bar{j}$-gerbe has the form $\bar{\cG}$ for some $j$-gerbe $\cG$. Let $\cV \to \cG$ be the corresponding vector bundle, and $\cP \to \bar{\cG}$ the projective bundle, there is a natural map $\PP(\cV) \to \cP$. Since $G$ is neutral, $\cG(k) \neq \emptyset$ and hence $\PP(\cV)(k) \neq \emptyset$ by Lemma~\ref{lem:compression-neutral}. We conclude that $\cP(k) \neq \emptyset$, too.
\end{proof}

\subsection{Normal subgroups}\label{sect:norm-gates}

Recall that we are assuming Hypothesis~\ref{hyp:quasi-gal-invariant}, i.e. that the $\gal(K/k_{0})$-conjugates of $G$ are $\Gamma(K)$-conjugates of $G$ as well. Studying $j$-gerbes becomes easier if we assume more strongly that $\gal(K/k_{0})$ maps $G$ to itself.

\begin{hypothesis}\label{hyp:gal-invariant}
	The subgroup $G \subseteq \Gamma(K)$ is $\gal(K/k_{0})$-invariant. Equivalently, there is a finite étale group scheme $\fG$ over $k_{0}$ with an embedding $\fG \subseteq \Gamma$ such that $G = \fG(K) \subseteq \Gamma(K)$.
\end{hypothesis}

\begin{lemma}\label{lem:diag-hyp}
	If $\Gamma = \GL_{n}$ and $G$ is diagonal, then Hypothesis~\ref{hyp:gal-invariant} is satisfied.
	
	As a consequence, if $G$ is abelian of order prime with $\cha k_{0}$ a conjugate of $G$ satisfies Hypothesis~\ref{hyp:gal-invariant}. 
\end{lemma}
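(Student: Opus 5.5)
The first statement should follow from how $\gal(K/k_{0})$ acts on diagonal matrices. A diagonal $G \subseteq \GL_{n}(K)$ consists of matrices $\diag(a_{1},\dots,a_{n})$ whose entries are roots of unity. Since $G$ is finite, each entry has finite order. In characteristic $p$ that order is automatically prime to $p$, because the $p$-power torsion of $K^{*}$ is trivial. So $G$ is a finite subgroup of $\mu_{N}^{n} \subseteq \GG_{m}^{n}(K)$, where $N$ is its exponent and is prime with $\cha k_{0}$.

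The plan is to use the split torus $\GG_{m}^{n} \subseteq \GL_{n}$, which is defined over $k_{0}$. An element $\sigma \in \gal(K/k_{0})$ acts on $\mu_{N}(K)$ as a power map $\zeta \mapsto \zeta^{\chi(\sigma)}$, with $\chi(\sigma) \in (\ZZ/N)^{*}$ the cyclotomic character. It therefore acts on $\mu_{N}^{n}(K)$ as the map $g \mapsto g^{\chi(\sigma)}$ on diagonal matrices. That map sends any subgroup to itself: it is contained in the subgroup because it is a power map, and equality holds because $\chi(\sigma)$ is invertible modulo $N$, so the map is a bijection. Hence $\sigma(G)=G$.

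For the finite étale group scheme $\fG$ demanded by Hypothesis~\ref{hyp:gal-invariant}, I would use Galois descent. The constant subgroup $G \subseteq \mu_{N,K}^{n}$ is stable under $\gal(K/k_{0})$. Since $k_{0}$ is perfect, $K/k_{0}$ is Galois, so $G$ descends to a closed subgroup scheme $\fG \subseteq \mu_{N}^{n} \subseteq \GG_{m}^{n} \subseteq \GL_{n}$ over $k_{0}$. It is finite étale because $N$ is prime with $\cha k_{0}$.

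The second statement is a reduction to the first. Let $G$ be abelian of order prime with $\cha k_{0}$. Then every element $g \in G$ satisfies $g^{|G|}=1$, and $x^{|G|}-1$ is separable over $K$, so every element is semisimple. The elements of $G$ are commuting semisimple matrices, so they are simultaneously diagonalizable over the algebraically closed field $K$. That gives $h \in \GL_{n}(K)$ with $hGh^{-1}$ diagonal, and the first part applies to $hGh^{-1}$.

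I expect no real obstacle. The one point to handle carefully is the positive-characteristic case: finite order alone already forces the diagonal entries to be roots of unity of order prime to $p$, which is what makes $\fG$ étale.
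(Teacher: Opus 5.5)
Your proposal is correct and follows essentially the paper's argument: $\sigma \in \gal(K/k_{0})$ acts on a diagonal matrix of roots of unity as a power map $g \mapsto g^{d}$ with $d$ prime to the relevant order, and this map sends $G$ bijectively onto itself. You also spell out two steps the paper leaves implicit: the Galois descent to a finite étale $\fG \subseteq \mu_{N}^{n}$, and the simultaneous diagonalization behind the second claim.
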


\begin{proof}
	If $G$ is diagonal, there exists an integer $m$ such that $G$ is generated by elements of the form $g = \diag(\zeta_{m}^{a_{1}}, \dots, \zeta_{m}^{a_{n}})$. Choose $\sigma \in \gal(K/k_{0})$, then $\sigma(\zeta_{m}) = \zeta_{m}^{d}$ for some $d$ prime with $m$, and $\sigma(g) = g^{d}$. Since $\gcd(d,m) = 1$, then $g$ and $g^{d}$ generate the same group, and hence $\sigma(G) = G$.
\end{proof}

For the rest of \S\ref{sect:norm-gates}, we assume Hypothesis~\ref{hyp:gal-invariant}, but not in the rest of the paper.

If $\fG$ is normal in $\Gamma$, then our problem can be studied with cohomological methods. Obviously, $\fG$ is not normal most of the time. However, it turns out that the normalizer of $\fG$ is a gate. 

\begin{proposition}\label{prop:gate-norm}
	Let $\fG \subseteq \Gamma$ be a finite subgroup scheme. The normalizer $\fN$ is a gate for $j: G = \fG(K) \hookrightarrow \Gamma$.
\end{proposition}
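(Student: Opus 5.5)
We have to show two things. First, every $j$-gerbe $\cG \to B\Gamma$ factors through $B\fN$. Second, the resulting $\cG \to B\fN$ is a $j'$-gerbe for $j' : G \hookrightarrow \fN(K)$. The natural tool is the normalizer of a morphism of gerbes, Theorem~\ref{thm:main4}, applied on the fppf site of $k_{0}$. Alternatively, there is a direct argument with the quotient $\Gamma/\fN$; we use it for the factorization.

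\textbf{Step 1: factorization.} We have $B\fN \simeq [(\Gamma/\fN)/\Gamma]$, so a factorization $\cG \to B\fN \to B\Gamma$ is the same as a section of the induced fibration $\cY := \cG \times_{B\Gamma} B\fN \to \cG$. This is a twisted form of $\Gamma/\fN$, the variety of conjugates of $\fG$.

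Concretely, let $E \to \cG$ be the $\Gamma$-torsor pulled back from $B\Gamma$. On the twisted group scheme $\underaut(E)$ we have a canonical twisted form of $\fG$, namely the image of the inertia of $\cG$; this is a subgroup scheme $\mathcal{I} \subseteq E \times^{\Gamma} \Gamma^{\mathrm{conj}}$. Since $\cG \to B\Gamma$ is faithful with fibrewise image conjugate to $G$, it is a flat family of finite subgroup schemes, each geometrically conjugate to $\fG$. Hence $\mathcal{I}$ defines a point of the twisted form $E \times^{\Gamma} (\Gamma/\fN)$ of the scheme of conjugates of $\fG$, i.e. a section $\cG \to \cY$.

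The main thing to check is that the identification of $\Gamma/\fN$ with the scheme of subgroup schemes conjugate to $\fG$ is representable and functorial. Here we use Hypothesis~\ref{hyp:gal-invariant}, which guarantees that $\fG$ is defined over $k_{0}$, so that this description makes sense over $k_{0}$. We also need that conjugacy of $\mathcal{I}$ to $\fG$ holds fppf-locally, and not just on geometric points. Since $\fG$ is finite étale, a rigidity/deformation argument should reduce this to geometric points, where it is exactly the $j$-gerbe hypothesis.

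\textbf{Step 2: the $j'$-gerbe condition.} By construction, at a geometric point $p$ the section identifies $\underaut(p)$ with a subgroup of $\fN$ which is the conjugate subgroup corresponding to the chosen point of $\Gamma/\fN$. The section picks the coset $x\fN$ with $\underaut(p) = x G x^{-1}$, and via the trivialization this becomes $G$ itself inside $\fN$. So the image is $\fN(\bar k)$-conjugate to $G$, indeed equal up to the chosen trivialization. Faithfulness of $\cG \to B\fN$ follows from faithfulness of the composite $\cG \to B\Gamma$.

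\textbf{Main obstacle.} The delicate step is Step 1, namely making the section canonical and stack-theoretic. In particular it must be compatible with the $2$-morphisms, so that it descends along $\cG$ and does not merely exist on an atlas. I would handle this by working on an fppf atlas $S \to \cG$ and showing that the map $S \to E\times^{\Gamma}(\Gamma/\fN)$ is invariant under $S\times_{\cG}S$. This invariance should hold because the inertia subgroup is preserved by its own conjugation. Alternatively, one can quote Theorem~\ref{thm:main4}(4), since $\cG\to B\Gamma$ is locally equivalent to $B\fG \to B\Gamma$, whose normalizer is $B\fN$.
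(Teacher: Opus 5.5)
Your proof is correct, but its main route differs from the paper's. The paper's proof is exactly your closing ``alternatively''. A $j$-gerbe is locally equivalent to $B\fG \to B\Gamma$ (Definition~\ref{def:loc-eq}, Lemma~\ref{lem:easy-loc-eq}), and the normalizer of that morphism is $B\fN$. Theorem~\ref{thm:gerbe-normalizer}(4) then yields the factorization $\cG \to B\fN \to B\Gamma$. The paper leaves the $j'$-condition implicit; it can be read off from the description of isomorphisms in the normalizer in Theorem~\ref{thm:type-normalizer}(2). There, a geometric point $p$ of $\cG$ and the tautological point of $B\fG$ are joined in $\cN$ by an isomorphism carrying $\underaut_{\cG}(p)$ onto $G$.

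Your route is more concrete. You identify the fppf quotient $\Gamma/\fN$ with the sheaf of subgroup schemes of $\Gamma$ fppf-locally conjugate to $\fG$; this is tautological, since $\fN$ is the stabilizer of $\fG$ under conjugation. Then $E \times^{\Gamma}(\Gamma/\fN)$ is the sheaf of such subgroups of $\underaut(E)$, and the inertia is a canonical section of it, i.e. a reduction of structure group of $E$ to $\fN$.

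What your route buys:
\begin{itemize}
\item It is self-contained and avoids the appendix.
\item It makes the $j'$-condition transparent, as in your Step~2.
\item The section is defined intrinsically as a point of a sheaf over $\cG$, so the descent issue you call the main obstacle disappears. The required compatibility is just that an isomorphism in $\cG$ conjugates inertia onto inertia.
\end{itemize}
What the paper's route buys is generality. It works on an arbitrary site and for normalizers of arbitrary subsets. It also covers the case where $G$ is Galois-invariant only up to conjugacy, in which the normalizer is a gerbe $\cN$ not necessarily of the form $B\fN$. Your route needs $\fG$ to be defined over $k_{0}$ just to write down $\Gamma/\fN$.

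One correction: the fppf-local conjugacy of the inertia to $\fG$ needs no rigidity or deformation argument. Both routes use it, since the paper needs it for the local equivalence. It follows directly from the gerbe property:
\begin{enumerate}
\item Pick $s \in \cG(k')$ for a finite extension $k'/k$ (Remark~\ref{rmk:samples-over-fields}), and trivialize $E_{s}$ after a further finite extension.
\item The transporter from $\fG$ to $\underaut_{\cG}(s) \subseteq \Gamma$ is a finite type scheme with a $\bar k$-point, by the $j$-gerbe hypothesis. Hence it has a point over a finite extension.
\item Every object of $\cG$ over any $S$ is fppf-locally isomorphic to the pullback of $s$, so the conjugacy propagates to all of $\cG$.
\end{enumerate}
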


\begin{proof}
	Every $j$-gerbe is locally equivalent to $B\fG \to B\Gamma$, see Definition~\ref{def:loc-eq} and Lemma~\ref{lem:easy-loc-eq}. The statement then follows from Theorem~\ref{thm:gerbe-normalizer}.
\end{proof}

\begin{remark}
	Thanks to Proposition~\ref{prop:gate-norm}, we can always reduce to the case in which $\fG$ is normal in $\Gamma$. In this case, the hypothesis of Corollary~\ref{cor:easy-gate} is automatically satisfied, so that finding gates is easier.
\end{remark}

Let $\fG \subseteq \Pi \subseteq \Gamma$ be a subgroup normalizing $\fG$, write $\fQ = \Pi/\fG$. Recall that the isomorphism classes of $B \fQ(k)$ are in natural bijection with the elements of $\H^{1}(k, \fQ)$, i.e. $\fQ$-torsors. Given a section $\tau: \spec k \to B\fQ$, the $2$-cartesian diagram

\[\begin{tikzcd}
	\cG_{\tau}	\rar \dar	&	\spec k	\dar	\\
	B\Pi \rar				&	B\fQ
\end{tikzcd}\]
defines a $j$-gerbe with the composition $\cG_{\tau} \to B\Pi \to B\Gamma$, and $\cG_{\tau}(k) \neq \emptyset$ if and only if $\tau$ lifts to $B\Pi$. As an immediate consequence, we obtain the following.

\begin{lemma}\label{lem:not-neutral}
	Let $\fG \subseteq \Pi \subseteq \Gamma$ be a subgroup normalizing $\fG$, write $\fQ = \Pi/\fG$. If
	\[\H^{1}(k, \Pi) \to \H^{1}(k, \fQ)\]
	is not surjective for some extension $k$ of $k_{0}$, then $G$ is not neutral.
\end{lemma}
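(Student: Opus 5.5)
Pick a class $[Q] \in \H^{1}(k, \fQ)$ that is not in the image of $\H^{1}(k, \Pi) \to \H^{1}(k, \fQ)$, and let $\tau : \spec k \to B\fQ$ be the corresponding section. The plan is to show that the $2$-fibre product $\cG_{\tau} = \spec k \times_{B\fQ} B\Pi$, with the composition $\cG_{\tau} \to B\Pi \to B\Gamma$, is a $j$-gerbe over $k$ with no $k$-point. Since neutrality of $G$ requires every $j$-gerbe over every extension of $k_{0}$ to be neutral, this shows that $G$ is not neutral.

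\textbf{Step 1: $\cG_{\tau}$ is a gerbe.} Over $\bar{k}$ the torsor $\tau$ becomes trivial, so $(\cG_{\tau})_{\bar{k}} \simeq \spec \bar{k} \times_{B\fQ} B\Pi$ with the tautological section. Because $\Pi \to \fQ$ is fppf surjective with kernel $\fG$, this fibre product is $B\fG_{\bar{k}}$. Hence $\cG_{\tau}$ is fppf locally $B\fG$, so it is a gerbe banded fppf locally by $\fG$.

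\textbf{Step 2: $\cG_{\tau} \to B\Gamma$ is a $j$-gerbe.} The map $\cG_{\tau} \to B\Pi$ is a base change of $\spec k \to B\fQ$, which is representable, so it is faithful. The map $B\Pi \to B\Gamma$ is faithful because $\Pi \subseteq \Gamma$. At a geometric point, the automorphism group of $\cG_{\tau}$ is identified with $\fG(\bar{k}) = G \subseteq \Pi(\bar{k}) \subseteq \Gamma(\bar{k})$, up to the conjugation coming from the chosen trivializations. So the image in $\Gamma$ is $\Gamma$-conjugate to $G$, as Definition~\ref{def:ggerbe} requires.

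\textbf{Step 3: $\cG_{\tau}(k) = \emptyset$.} By the $2$-cartesian property, a $k$-point of $\cG_{\tau}$ is a $\Pi$-torsor $P$ over $k$ together with an isomorphism $P/\fG = P \times^{\Pi} \fQ \simeq Q$. In other words, $[Q]$ lies in the image of $\H^{1}(k, \Pi) \to \H^{1}(k, \fQ)$, which contradicts the choice of $[Q]$.

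The main point needing care is Step 2: making the identification of the band with $G$ precise, and checking conjugacy inside $\Gamma(\bar{k})$ rather than only abstract isomorphism. The rest is formal manipulation of $2$-fibre products, and the paper already notes that $\cG_{\tau}(k) \neq \emptyset$ exactly when $\tau$ lifts to $B\Pi$.
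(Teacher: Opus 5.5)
Your proof is correct and is exactly the paper's argument: just before the lemma, the paper observes that $\cG_{\tau} = \spec k \times_{B\fQ} B\Pi$, with the composition $\cG_{\tau} \to B\Pi \to B\Gamma$, is a $j$-gerbe satisfying $\cG_{\tau}(k) \neq \emptyset$ if and only if $\tau$ lifts to $B\Pi$, and it deduces the lemma immediately by choosing $\tau$ outside the image. Your Steps 1--3 spell out these two facts. One cosmetic remark: in Step 1 the fppf-local triviality should come from trivializing $\tau$ over the fppf cover given by the torsor itself, since $\spec \bar{k} \to \spec k$ is not an fppf cover.
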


\begin{lemma}\label{lem:gerbe-H1}
	Assume that $\fG$ is normal in $\Gamma$. For every extension $k/k_{0}$, the mapping $\tau \mapsto \cG_{\tau}$ defines a bijection between $\H^{1}(k, \Gamma/\fG)$ and $j$-gerbes up to equivalence.
	
	Neutral $j$-gerbes correspond to elements of $\H^{1}(k, \Gamma/\fG)$ lifting to $\H^{1}(k, \Gamma)$.
\end{lemma}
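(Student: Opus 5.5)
We have to show two things: every $j$-gerbe is equivalent to some $\cG_{\tau}$, and $\cG_{\tau}\simeq\cG_{\tau'}$ over $B\Gamma$ exactly when $\tau\simeq\tau'$. Since $\fG$ is normal in $\Gamma$, we take $\Pi=\Gamma$ and $\fQ=\Gamma/\fG$ in the construction preceding Lemma~\ref{lem:not-neutral}. That construction already attaches to each $\tau\in\H^{1}(k,\fQ)$ the $j$-gerbe $\cG_{\tau}=\spec k\times_{B\fQ}B\Gamma$. Its geometric automorphism groups are the kernel $\fG$ of $\Gamma\to\fQ$, so $\cG_{\tau}\to B\Gamma$ is faithful with image $G$.

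For surjectivity, let $f:\cG\to B\Gamma$ be a $j$-gerbe and consider the composition $\cG\to B\Gamma\to B\fQ$.
\begin{itemize}
\item Pick a geometric point $p$ of $\cG$. Then $\underaut_{\cG}(p)$ maps isomorphically onto a subgroup $\Gamma$-conjugate to $G$.
\item Because $\fG$ is normal, every conjugate of $G$ is $G$ itself, so this image is exactly $\fG$.
\item Hence $\underaut_{\cG}(p)\to\underaut_{B\fQ}(p)$ is trivial.
\end{itemize}
Lemma~\ref{lem:trivial-fact} then factors $\cG\to B\fQ$ through some section $\tau:\spec k\to B\fQ$, together with a $2$-isomorphism. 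By the universal property of the $2$-fibre product this gives a morphism $\cG\to\cG_{\tau}$ over $B\Gamma$. It is a morphism of gerbes over $k$ that induces an isomorphism on automorphism groups of geometric points, since both map isomorphically onto $\fG$. A morphism of gerbes that is an isomorphism on inertia is an equivalence, so $\cG\simeq\cG_{\tau}$.

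For injectivity, suppose $\phi:\cG_{\tau}\to\cG_{\tau'}$ is an equivalence over $B\Gamma$. Composing with the projections to $\spec k$ gives $2$-isomorphic maps $\cG_{\tau}\to B\fQ$, namely $\tau\circ\pi$ and $\tau'\circ\pi'\circ\phi$. The main obstacle is to descend this $2$-isomorphism from the gerbe to $\spec k$. I would argue as follows.
\begin{itemize}
\item The $2$-isomorphism is a section over $\cG_{\tau}$ of the sheaf $\underisom(\tau,\tau')$, pulled back from $\spec k$.
\item It must be compatible with the automorphisms of objects of $\cG_{\tau}$.
\item Those automorphisms act trivially on $\underisom(\tau,\tau')$, because they lie in $\fG$ and map trivially to $\fQ$.
\item Since $\cG_{\tau}\to\spec k$ is an fppf gerbe, sections of a sheaf pulled back from $\spec k$ on which the automorphisms act trivially descend to $\spec k$.
\end{itemize}
This yields an isomorphism $\tau\simeq\tau'$ over $k$. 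Alternatively, the equivalence of $2$-fibre products together with the uniqueness of the canonical factorization in Proposition~\ref{prop:can-fact} should give the same conclusion.

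For the last statement, recall from the construction that $\cG_{\tau}(k)\neq\emptyset$ if and only if $\tau$ lifts to a section $\spec k\to B\Gamma$. That is exactly the condition that $\tau$ lies in the image of $\H^{1}(k,\Gamma)\to\H^{1}(k,\Gamma/\fG)$.
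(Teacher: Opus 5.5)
Your proof is correct and follows the paper's route: Lemma~\ref{lem:trivial-fact} produces $\tau$, the induced morphism $\cG \to \cG_{\tau}$ is an equivalence because it is an isomorphism on geometric automorphism groups, and the neutrality statement is read off from the fibre-product description of $\cG_{\tau}$. For injectivity, the paper uses exactly the alternative you mention, namely the uniqueness of the canonical factorization in Proposition~\ref{prop:can-fact} applied to $\cG_{\tau} \to \spec k \to B\fQ$; your direct descent of the $2$-isomorphism along $\cG_{\tau} \to \spec k$ also works, since morphisms from a gerbe over $k$ to a sheaf on $k$ correspond to $k$-points of that sheaf.
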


\begin{proof}
	Write $\fQ = \Gamma/\fG$. Let us construct a map from equivalence classes of $G$ gerbes to $\H^{1}(k, \fQ)$. Let $\cG \to B\Gamma$ be a $j$-gerbe over $k$, with $k$ an extension of $k_{0}$.
	
	By Lemma~\ref{lem:trivial-fact}, we get a factorization $\cG \to \spec k \to B\fQ$ of $\cG \to B\Gamma \to B\fQ$. The section $\tau:\spec k \to B\fQ$ is the desired element of $\H^{1}(k, \fQ)$. By definition of $\cG_{\tau}$, we have an induced morphism $\cG \to \cG_{\tau}$ which becomes an isomorphism after base change to $\bar{k}$. It follows that $\cG \to \cG_{\tau}$ is an isomorphism as well.
	
	On the other hand, let $\tau \in \H^{1}(k, \fQ)$ be a cohomology class, $\cG_{\tau} \to B\Gamma$ the associated $j$-gerbe, and $\tau' \in \H^{1}(k, \fQ)$ the cohomology class of $\cG_{\tau}$. The two compositions $\cG_{\tau} \to \spec k \xrightarrow{\tau} \to B\fQ$, $\cG_{\tau} \to \spec k \xrightarrow{\tau'} \to B\fQ$ both coincide with $\cG_{\tau} \to B\Gamma \to B\fQ$, hence $\tau = \tau'$ by the uniqueness part of Proposition~\ref{prop:can-fact}.
	
	The second statement is obvious.
\end{proof}

As a direct consequence, we obtain the following.

\begin{proposition}\label{prop:ggerbe}
	Let $\fG \subseteq \Pi \subseteq \Gamma$ be subgroups, with $\fG$ normal in $\Gamma$. The following are equivalent.
	
	\begin{itemize}
		\item $\Pi$ is a gate for $\fG$.
		\item Every $j$-gerbe has the form $\cG_{\tau} \to \Pi \to \Gamma$ for some $\tau \in \H^{1}(k, \Pi/\fG)$.
		\item The map
				\[\H^{1}(k, \Pi/\fG) \to \H^{1}(k, \Gamma/\fG)\]
				is surjective for every extension $k/k_{0}$.
	\end{itemize}
	In this case, a class in $\H^{1}(k, \Pi/\fG)$ defines a neutral $j$-gerbe if and only if it lifts to $\H^{1}(k, \Pi)$.
\end{proposition}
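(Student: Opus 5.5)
The plan is to derive this from Lemma~\ref{lem:gerbe-H1}, applied once to $\Gamma$ and once to $\Pi$, together with the functoriality of the construction $\tau \mapsto \cG_{\tau}$. Since $\fG$ is normal in $\Gamma$, it is also normal in $\Pi$. So Lemma~\ref{lem:gerbe-H1} gives two bijections. The first is between $\H^{1}(k, \Gamma/\fG)$ and equivalence classes of $j$-gerbes $\cG \to B\Gamma$. The second is between $\H^{1}(k, \Pi/\fG)$ and equivalence classes of $j'$-gerbes $\cG \to B\Pi$.

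The first step is a compatibility check. Let $\tau \in \H^{1}(k, \Pi/\fG)$ have image $\tau' \in \H^{1}(k, \Gamma/\fG)$. We want the composite $\cG_{\tau} \to B\Pi \to B\Gamma$ to be equivalent, as a $j$-gerbe, to $\cG_{\tau'} \to B\Gamma$. We have a morphism of $2$-cartesian squares: the square defining $\cG_{\tau}$ maps to the square defining $\cG_{\tau'}$, via $B\Pi \to B\Gamma$ and $B(\Pi/\fG) \to B(\Gamma/\fG)$. This produces a morphism $\cG_{\tau} \to \cG_{\tau'}$ over $B\Gamma$. Both gerbes are banded by $\fG$, so this morphism is an isomorphism after base change to $\bar{k}$, hence an isomorphism. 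Here one uses that the fiber of $B\Pi \to B(\Pi/\fG)$ and the fiber of $B\Gamma \to B(\Gamma/\fG)$ are both $B\fG$. I expect this identification of fibers to be the only point that needs some care.

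The equivalences then follow in turn.

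\textbf{(1)$\Leftrightarrow$(2).} Assume (1). Then every $j$-gerbe factors through a $j'$-gerbe $\cG \to B\Pi$. By Lemma~\ref{lem:gerbe-H1} for $\Pi$, that $j'$-gerbe is $\cG_{\tau}$ for some $\tau$, which gives (2). Conversely, $\cG_{\tau} \to B\Pi$ is a $j'$-gerbe, so (2) exhibits the factorization required for $\Pi$ to be a gate.

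\textbf{(2)$\Leftrightarrow$(3).} By the compatibility step, the $j$-gerbe $\cG_{\tau} \to \Pi \to \Gamma$ is the $j$-gerbe classified by the image of $\tau$. Condition (2) says that every $j$-gerbe arises this way, i.e.\ every class in $\H^{1}(k, \Gamma/\fG)$ is such an image. Via the bijection for $\Gamma$, this is exactly surjectivity of $\H^{1}(k, \Pi/\fG) \to \H^{1}(k, \Gamma/\fG)$ for every $k$.

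\textbf{Final claim.} By the remark preceding Lemma~\ref{lem:not-neutral}, $\cG_{\tau}(k) \neq \emptyset$ if and only if $\tau$ lifts to $B\Pi$, that is, to $\H^{1}(k, \Pi)$. Neutrality is a property of the gerbe $\cG_{\tau}$ alone, so it does not depend on whether we view it over $B\Pi$ or over $B\Gamma$. This gives the last statement.
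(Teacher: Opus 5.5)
Your proof is correct and follows the paper's route. The paper states the proposition as a direct consequence of Lemma~\ref{lem:gerbe-H1} (applied to $\Gamma$ and to $\Pi$) and of the remark that $\cG_{\tau}(k) \neq \emptyset$ if and only if $\tau$ lifts to $B\Pi$; it leaves implicit the compatibility you check, namely that $\cG_{\tau} \to B\Pi \to B\Gamma$ is the $j$-gerbe classified by the image of $\tau$, and your argument for it, by base change to $\bar{k}$ as in the proof of Lemma~\ref{lem:gerbe-H1}, is fine.
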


The following theorem is a direct consequence of Proposition~\ref{prop:ggerbe}.

\begin{theorem}\label{thm:coh-crit}
	Let $\Pi$ be a gate for $\fG$, with $\fG \subseteq \Pi$ normal, and write $\fQ = \Pi/\fG$. The subgroup $G = \fG(K) \subseteq \Gamma(K)$ is neutral if and only if
	\[\H^{1}(k, \Pi) \to \H^{1}(k, \fQ)\]
	is surjective for every extension $k$ of $k_{0}$.
\end{theorem}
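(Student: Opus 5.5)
The plan is to reduce to Proposition~\ref{prop:ggerbe}, applied with $\Pi$ in place of $\Gamma$, and then transport the result back to $\Gamma$ using the gate property.

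First, since $\Pi$ is a gate for $\fG$, every $j$-gerbe $\cG \to B\Gamma$ over an extension $k/k_{0}$ factors as $\cG \to B\Pi \to B\Gamma$, where $\cG \to B\Pi$ is a $j'$-gerbe for $j' : G \hookrightarrow \Pi(K)$. Conversely, composing any $j'$-gerbe with $B\Pi \to B\Gamma$ gives a $j$-gerbe: the composition is faithful because $\Pi \subseteq \Gamma$ is a subgroup, and the image of automorphism groups is conjugate to $G$ in $\Pi$, hence in $\Gamma$. Neutrality is a property of $\cG$ alone, so $G \subseteq \Gamma(K)$ is neutral if and only if $G \subseteq \Pi(K)$ is neutral. (Hypothesis~\ref{hyp:gal-invariant} is in force, so $\fG$ is defined over $k_{0}$ and the notions make sense.)

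Next, work inside $\Pi$, where $\fG$ is normal. Lemma~\ref{lem:gerbe-H1}, applied with $\Gamma$ replaced by $\Pi$, says that $\tau \mapsto \cG_{\tau}$ is a bijection between $\H^{1}(k, \fQ)$ and $j'$-gerbes over $k$ up to equivalence. It also says that $\cG_{\tau}$ is neutral exactly when $\tau$ lifts to $\H^{1}(k, \Pi)$; this is the observation that $\cG_{\tau}(k) \neq \emptyset$ if and only if $\tau$ lifts along $B\Pi \to B\fQ$. Hence every $j'$-gerbe over every $k$ is neutral if and only if $\H^{1}(k, \Pi) \to \H^{1}(k, \fQ)$ is surjective for every $k$.

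Combining the two steps gives the equivalence. For the ``only if'' direction, it is actually enough to use Lemma~\ref{lem:not-neutral}: a class $\tau$ not in the image gives $\cG_{\tau} \to B\Pi \to B\Gamma$, which is a non-neutral $j$-gerbe. For the ``if'' direction, take any $j$-gerbe, factor it through $B\Pi$ by the gate property, identify the factor with some $\cG_{\tau}$, and use surjectivity to lift $\tau$.

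The only delicate step is the identification of an arbitrary $j'$-gerbe with a $\cG_{\tau}$. This relies on Lemma~\ref{lem:trivial-fact} applied to $\cG \to B\Pi \to B\fQ$: the automorphism groups map into $\fG$, so their image in $\fQ$ is trivial. That lemma is already available, so no real obstacle is expected.
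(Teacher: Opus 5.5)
Your proof is correct and follows the paper, which obtains the theorem as a direct consequence of Proposition~\ref{prop:ggerbe}, itself built on Lemma~\ref{lem:gerbe-H1}. Your first step, using the gate property to pass from $j$-gerbes over $B\Gamma$ to $j'$-gerbes over $B\Pi$, is exactly the reduction the paper leaves implicit, and it is needed because that proposition assumes $\fG$ normal in the ambient group rather than only in $\Pi$.
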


As a direct consequence of Theorem~\ref{thm:coh-crit} and Proposition~\ref{prop:gate-norm}, we obtain the following variant of \cite[Theorem 4]{bresciani-structure}.

\begin{theorem}\label{thm:norm}
	Let $\fN \subseteq \Gamma$ be the normalizer of $\fG$, and $\fQ = \fN/\fG$.
	
	The subgroup $G = \fG(K) \subseteq \Gamma(K)$ is neutral if and only if
	\[\H^{1}(k, \fN) \to \H^{1}(k, \fQ)\]
	is surjective for every extension $k$ of $k_{0}$.
\end{theorem}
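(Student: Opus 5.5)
The plan is to derive Theorem~\ref{thm:norm} by combining Proposition~\ref{prop:gate-norm} with Theorem~\ref{thm:coh-crit}, taking $\Pi = \fN$.

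\emph{Step 1.} By Proposition~\ref{prop:gate-norm}, the normalizer $\fN$ of $\fG$ in $\Gamma$ is a gate for $j: G = \fG(K) \hookrightarrow \Gamma(K)$. By definition of the normalizer, $\fG \subseteq \fN$ is a normal subgroup scheme, so the hypotheses of Theorem~\ref{thm:coh-crit} hold with $\Pi = \fN$ and $\fQ = \fN/\fG$.

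\emph{Step 2.} Theorem~\ref{thm:coh-crit} then says that $G$ is neutral if and only if $\H^{1}(k,\fN) \to \H^{1}(k,\fQ)$ is surjective for every extension $k/k_{0}$. This is exactly the statement. Two small technical points should be checked:
\begin{itemize}
	\item The normalizer $\fN$ is a closed subgroup scheme of $\Gamma$ defined over $k_{0}$. This uses Hypothesis~\ref{hyp:gal-invariant}, which guarantees that $\fG$ itself is defined over $k_{0}$.
	\item The quotient $\fN/\fG$ exists as an affine group scheme. This holds because $\fG$ is finite and normal in $\fN$.
\end{itemize}

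\emph{Unwinding the directions.} If I were not allowed to quote Theorem~\ref{thm:coh-crit} as a black box, I would argue each implication directly.
\begin{itemize}
	\item \emph{Surjectivity implies neutrality.} Every $j$-gerbe factors through $B\fN$ as an $\fN$-relative $j'$-gerbe, because $\fN$ is a gate. Applying Lemma~\ref{lem:gerbe-H1} with $\Gamma$ replaced by $\fN$ (where $\fG$ is normal), such a gerbe is $\cG_{\tau}$ for some $\tau \in \H^{1}(k,\fQ)$. It is neutral precisely when $\tau$ lifts to $\H^{1}(k,\fN)$.
	\item \emph{Neutrality implies surjectivity.} This is Lemma~\ref{lem:not-neutral} with $\Pi = \fN$: any $\tau$ that does not lift gives a non-neutral $j$-gerbe $\cG_{\tau} \to B\fN \to B\Gamma$.
\end{itemize}

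\emph{Main obstacle.} All the real content lies in Proposition~\ref{prop:gate-norm}, which rests on the normalizer-of-gerbes construction of Theorem~\ref{thm:main4} in the appendix. Since that proposition is already stated, the only delicate point here is a subtlety in using the gate: the factorization through $B\fN$ must be a $j'$-gerbe for the embedding $G \hookrightarrow \fN(K)$, not merely some factorization. That is, the automorphism groups must be $\fN(K)$-conjugate to $G$, not just $\Gamma(K)$-conjugate. The definition of a gate provides this directly, so Lemma~\ref{lem:gerbe-H1} applies to $\fN$ without further argument.
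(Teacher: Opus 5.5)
Your proposal is correct and matches the paper's argument: the paper deduces Theorem~\ref{thm:norm} directly from Theorem~\ref{thm:coh-crit} with $\Pi = \fN$, using Proposition~\ref{prop:gate-norm} to know that the normalizer is a gate. Your explicit treatment of the two directions via Lemma~\ref{lem:gerbe-H1} and Lemma~\ref{lem:not-neutral} simply spells out what Theorem~\ref{thm:coh-crit} already packages.
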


\begin{corollary}\label{cor:special}
	If $\fN/\fG$ is special, then $G$ is neutral.
\end{corollary}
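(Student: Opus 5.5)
By Theorem~\ref{thm:norm}, it suffices to show that for every extension $k$ of $k_{0}$ the map
\[\H^{1}(k, \fN) \to \H^{1}(k, \fN/\fG)\]
is surjective, where $\fN$ is the normalizer of $\fG$ in $\Gamma$ and $\fQ = \fN/\fG$.

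The key input is the definition of special: $\fQ$ is special if and only if $\H^{1}(k, \fQ)$ is trivial for every extension $k/k_{0}$. So the plan is simply to observe that the target $\H^{1}(k, \fQ)$ is a pointed set consisting of one element, namely the class of the trivial $\fQ$-torsor.

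Since $\H^{1}(k, \fN)$ is pointed and always contains the trivial class, and the map sends the trivial $\fN$-torsor to the trivial $\fQ$-torsor, the map hits the unique element of the target. Hence it is surjective for every $k$, and Theorem~\ref{thm:norm} gives that $G$ is neutral.

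There is no real obstacle. The only point worth checking is that Hypothesis~\ref{hyp:gal-invariant}, which is in force in \S\ref{sect:norm-gates}, is exactly what makes $\fG$, $\fN$ and $\fQ$ group schemes over $k_{0}$, so that Theorem~\ref{thm:norm} applies verbatim.

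Equivalently, one can argue through gerbes: by Proposition~\ref{prop:gate-norm} every $j$-gerbe factors through $B\fN$, and by Lemma~\ref{lem:gerbe-H1} applied to $\fG \subseteq \fN$ it has the form $\cG_{\tau}$ for some $\tau \in \H^{1}(k,\fQ)$. Since $\fQ$ is special, $\tau$ is trivial, so it lifts to $B\fN$, and therefore $\cG_{\tau}(k) \neq \emptyset$.
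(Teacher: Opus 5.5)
Your proposal is correct and is exactly the paper's argument: the corollary is stated as an immediate consequence of Theorem~\ref{thm:norm}, since specialness of $\fN/\fG$ makes $\H^{1}(k, \fN/\fG)$ a single point for every extension $k/k_{0}$, so the map from $\H^{1}(k, \fN)$ is surjective. Your gerbe-theoretic variant via Proposition~\ref{prop:gate-norm} and Lemma~\ref{lem:gerbe-H1} (with $\Gamma$ replaced by $\fN$) is the same argument with the proof of Theorem~\ref{thm:norm} unpacked.
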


\subsection{Distinguished subvarieties}

To prove that $G$ is $\rho$-neutral, we have to find rational points on $\cX_{\cG}$ for every $j$-gerbe $\cG \to B\Gamma$. More generally, it is useful to know ways to construct substacks of $\cX_{\cG}$.

Methods for doing this were given in \cite[\S 7]{bresciani-structure}. In this section, we generalize these methods and translate them to our setting.

\begin{definition}
	The \emph{extended normalizer} $\tilde{N}_{G}$ of $G$ is its normalizer in the semi-direct product $\Gamma_{K} \rtimes \gal(K/k_{0})$. If $\cha k_{0} = 0$, we may conflate it with the normalizer in $\Gamma(K) \rtimes \gal(K/k_{0})$.
	
	Notice that, thanks to Hypothesis~\ref{hyp:quasi-gal-invariant}, the homomorphism $\tilde{N}_{G} \to \gal(K/k_{0})$ is surjective. In particular, $\tilde{N}_{G}$ is an extension of $\gal(K/k_{0})$ by the normalizer $N_{G}$ of $G$ in $\Gamma_{K}$.
\end{definition}

\begin{example}\label{example:semilinear}
	If $\Gamma = \GL_{n}$, then $\GL_{n}(K) \rtimes \gal(K/k_{0})$ is the group of bijective maps $f: K^{n} \to K^{n}$ such that there exists $\sigma \in \gal(K/k_{0})$ with
	\begin{itemize}
		\item $f(v + w) = f(v) + f(w)$,
		\item $f(\lambda v) = \sigma(\lambda) f(v)$.
	\end{itemize}
	We call these \emph{semi-linear} automorphisms of $K^{n}$.
\end{example}

\begin{remark}\label{rmk:norm-hyp}
	If the standard Galois action on $\Gamma(K)$ maps $G$ to itself, we have $\tilde{N}_{G} = N_{G} \rtimes \gal(K/k_{0})$.
\end{remark}

As in \S\ref{sect:rhoneutral}, assume we have an action $\rho: \Gamma \times X \to X$ defined over $k_{0}$. The action $\rho$ induces an extended action
\[\tilde{\rho}: \left( \Gamma_{K} \rtimes \gal(K/k_{0}) \right) \times X_{K} \to X_{K}.\]
For every $j$-gerbe $\cG \to B\Gamma$, we have an associated family $\cX_{\cG} \to \cG$ which is the pullback of $\cX = [X/\Gamma] \to B\Gamma$.

\begin{definition}
	A subvariety $Y \subseteq X_{K}$ is \emph{distinguished} if $\tilde{N}_{G}$ maps $Y$ to itself.
\end{definition}

\begin{lemma}\label{lem:distinguished}
	Let $Y \subseteq X_{K}$ be a distinguished subvariety.

	For every $j$-gerbe $\cG \to B\Gamma$, we have that $Y$ descends to a subfamily $\cY \subseteq \cX_{\cG}$ of the associated family.
	
	This means that, for every geometric point $p:\spec \Omega \to \cG$ and every isomorphism between $p$ and the tautological section of $B\Gamma$ such that $\aut_{\cG}(p)$ maps to $G \subseteq \Gamma(\Omega)$, the fiber of $\cY \subseteq \cX_{\cG} \to \cG$ is $Y_{\Omega} \subseteq X_{\Omega}$.
\end{lemma}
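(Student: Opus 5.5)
We prove Lemma~\ref{lem:distinguished} by fppf (or étale) descent along a geometric point, using the extended normalizer to produce the descent datum. First fix $p:\spec \Omega \to \cG$ with $\Omega = \bar{k}$ containing $K$, together with an isomorphism $\alpha$ between the image of $p$ in $B\Gamma$ and the tautological section such that $\alpha\aut_{\cG}(p)\alpha^{-1} = G$. Through $\alpha$, the fiber $(\cX_{\cG})_{p}$ is identified with $X_{\Omega}$, and we declare $\cY_{p} := Y_{\Omega} \subseteq X_{\Omega}$. Since $\cX_{\cG} \to \cG$ is representable and $p$ is an fpqc cover of $\cG$ (as $\spec\Omega \to \spec k$ is, and $\cG$ is a gerbe), closed substacks of $\cX_{\cG}$ correspond to closed subschemes of $X_{\Omega}$ which are compatible with the descent datum on $\spec\Omega \times_{\cG} \spec\Omega$. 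So the task is to check this compatibility.

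The groupoid $\spec\Omega\times_{\cG}\spec\Omega$ is $\underisom_{\cG}(\mathrm{pr}_{1}^{*}p,\mathrm{pr}_{2}^{*}p)$ over $\spec(\Omega\otimes_{k}\Omega)$. I will check invariance on geometric points of this scheme, which suffices for closed subschemes (reducedness is not an issue: compare $\mathrm{pr}_{1}^{*}Y$ and $\mathrm{pr}_{2}^{*}Y$ inside $X$ over the reduced cover, or more cleanly work with $\spec\Omega\otimes\Omega$ after noting that $\cG$ is finite étale-ish so the groupoid is reduced when $|G|$ is étale; in general I would argue via the ind-étale structure over $k^{p}$ as in Lemma~\ref{lem:minimal-field}). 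A geometric point of the groupoid is a pair of $k$-embeddings $\sigma_{1},\sigma_{2}:\Omega \to \Omega'$ and an isomorphism $\beta:\sigma_{1}^{*}p \to \sigma_{2}^{*}p$. Transporting via $\alpha$, the element $g = \sigma_{2}^{*}\alpha \circ \beta \circ \sigma_{1}^{*}\alpha^{-1} \in \Gamma(\Omega')$ conjugates $\sigma_{1}^{*}G$ to $\sigma_{2}^{*}G$, since $\beta$ conjugates the automorphism groups. The induced map on fibers $X_{\Omega'} \to X_{\Omega'}$ is the action of $g$, and we must show that it maps $\sigma_{1}^{*}Y$ to $\sigma_{2}^{*}Y$.

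The key step is reducing this to the hypothesis on $\tilde{N}_{G}$. Restricting $\sigma_{i}$ to $K$ gives elements of $\gal$ relative to an embedding $K\subseteq\Omega'$; after composing with $\sigma_{2}^{-1}$ (extended to an automorphism of $\Omega'$ over $k_{0}$) we reduce to $\sigma_{2}=\mathrm{id}$ and $\sigma := \sigma_{1}$, so $g\,\sigma^{*}G\,g^{-1}=G$, i.e.\ the semilinear element $(g,\sigma)$ normalizes $G$ in $\Gamma(\Omega')\rtimes\aut(\Omega'/k_{0})$. The main obstacle is that $g$ lives over $\Omega'$, not $K$, whereas distinguishedness is stated for $\tilde{N}_{G}\subseteq\Gamma_{K}\rtimes\gal(K/k_{0})$. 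To handle this, I would use the argument from Lemma~\ref{lem:minimal-field}: by Hypothesis~\ref{hyp:quasi-gal-invariant} pick $h\in\Gamma(K)$ with $(h,\sigma|_{K})\in\tilde{N}_{G}$; then $gh^{-1}$ lies in the normalizer scheme $N_{G}$ of $G$, a group scheme over $K$, evaluated at $\Omega'$. The condition ``maps $Y$ to itself'' defines a closed subscheme of $N_{G}$ containing all $K$-points; since $K$ is algebraically closed and $N_{G}$ is of finite type, its $K$-points are dense in the reduced scheme, so all $\Omega'$-points of $N_{G,\mathrm{red}}$ preserve $Y_{\Omega'}$ (in characteristic $0$ or for smooth $N_{G}$ this is all of them; I expect the nonreduced case to be the delicate point, interpreting $\tilde{N}_{G}$ scheme-theoretically as in its definition). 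Hence $g = (gh^{-1})h$ maps $\sigma^{*}Y$ to $Y$, giving the descent datum, and the cocycle condition is automatic since we are descending a subobject. The description of the fibers at arbitrary $(p,\alpha)$ then follows by the same computation.
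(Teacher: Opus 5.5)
Your argument is correct in substance, but it takes a different route from the paper's.

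\textbf{The paper's route.} The paper does not descend along a point of the individual gerbe $\cG$. It first uses the appendix (Lemmas~\ref{lem:gerbe-type}, \ref{lem:loc-type} and Theorem~\ref{thm:type-normalizer}) to build a single normalizer gerbe $\cN \to B\Gamma$ over $k_{0}$. Every $j$-gerbe, over every extension $k$, factors through it. The paper then descends $Y$ only once, along $\spec K \to \cN$. Since $\cN_{K} \simeq BN_{G}$, the groupoid $\spec K \times_{\cN} \spec K$ over $\spec(K \otimes_{k_{0}} K)$ is essentially $\tilde{N}_{G}$ itself. So distinguishedness is literally the descent condition, and $\cY$ is the pullback of $[Y/R] \subseteq \cX_{\cN}$.

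\textbf{Your route.} You descend on each $\cG$ separately. You therefore have to translate points $(\sigma_{1},\sigma_{2},\beta)$ of $\spec\Omega\times_{\cG}\spec\Omega$, which give elements $g \in \Gamma(\Omega')$ over large fields, back into $\tilde{N}_{G}$. Your device of choosing $(h,\tau)\in\tilde{N}_{G}$ via Hypothesis~\ref{hyp:quasi-gal-invariant} and noting $gh^{-1}\in N_{G}(\Omega')$ is the mechanism of Lemma~\ref{lem:minimal-field}, and it works.

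\textbf{Comparison.} Your version is more elementary and avoids types and stackification. The paper's gains uniformity. Because it works over the reduced ring $K\otimes_{k_{0}}K$ with $k_{0}$ perfect and a scheme-theoretic $\tilde{N}_{G}$, it also sidesteps both technical points you run into.

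\textbf{Non-reduced $N_{G}$.} This worry is unnecessary. Every $\Omega'$-point of $N_{G}$ factors through $N_{G,\mathrm{red}}$, since $\spec\Omega'$ is reduced, so your density argument already covers $gh^{-1}$. With the paper's scheme-theoretic $\tilde{N}_{G}$, even the density argument is superfluous.

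\textbf{Reducedness of the groupoid.} This does need a correct argument, and yours is not right as written. For $k$ imperfect and $\Omega=\bar{k}$, the ring $\Omega\otimes_{k}\Omega$ is non-reduced whatever $G$ is. Over a non-reduced base, two flat closed subschemes with equal geometric fibres can differ. The fix is to descend along a $k^{s}$-point instead:
\begin{itemize}
\item $\cG$ is étale, because its automorphism groups are forms of the constant group $G$, so $\cG(k^{s})\neq\emptyset$.
\item $K\subseteq k^{s}$, since $K$ is separable over the perfect field $k_{0}$.
\item $\spec k^{s}\times_{\cG}\spec k^{s}$ is étale over the reduced ring $k^{s}\otimes_{k}k^{s}$, hence reduced.
\item The two subschemes you compare are base changes of subschemes over a field, so locally their coordinate rings are free over that of the groupoid.
\end{itemize}
Equality on geometric fibres then gives equality. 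The fibre statement at an arbitrary $(p,\alpha)$ follows from the same normalizer computation, as you say.
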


\begin{proof}
	Let $k_{1}/k_{0}$ be a finite Galois extension such that $G \subseteq \Gamma(k_{1})$ and such that $Y$ descends to $Y_{1} \subseteq X_{k_{1}}$, write $N_{G,1}$ for the normalizer of $G$ in $\Gamma_{k_{1}}$. We may think of $\spec k_{1} \to \spec k_{0}$ as a cover in the fppf topology, and $G \subseteq \Gamma_{k_{1}}$. By Lemmas~\ref{lem:gerbe-type} and \ref{lem:loc-type}, we get a type $\Theta$ in the sense of Definition~\ref{def:type} such that $j$-gerbes coincide with morphisms $\cG \to B\Gamma$ of type $\Theta$, where $\cG$ is a gerbe.
	
	By Theorem~\ref{thm:type-normalizer}, we have a gerbe $\cN$ with a faithful morphism $\cN \to B\Gamma$ such that every $j$-gerbe factorizes through $\cN$. To prove the statement, it is sufficient to show that $Y$ descends to a subfamily of $\cX_{\cN}$.
	
	Consider the $j$-gerbe $BG \to B\Gamma$ over $K$, we have a factorization $BG \to \cN$ which identifies $\cN_{K}$ with $BN_{G}$ by Theorem~\ref{thm:type-normalizer}. Consider the groupoid $R = \spec K \times_{\cN} \spec K$ with the two natural projections $R \rightrightarrows \spec K$. The fact that $\tilde{N}_{G}$ maps $Y$ to itself then implies the same for $R$, hence $Y$ descends to a subfamily $[Y/R] \subseteq [X/R] = \cX_{\cN}$.
\end{proof}

\subsection{Abelian subgroups of $\GL_{n}$}

At least in low dimensions, when $G \subseteq \GL_{n}(K)$ is not abelian it is very often the case that $\bar{G}$ is $\rho$-neutral, so that $G$ is neutral by Proposition~\ref{prop:rhoneut-neut}. Because of this, most of our work is focused on finite, abelian subgroups of $G \subseteq \GL_{n}(K)$. Let also assume that $|G|$ is prime with $\cha k_{0}$, so that we may assume $G$ to be diagonal up to conjugation. Notice that in this case we may assume that $G = \fG(K)$ for some group scheme $\fG$ over $k_{0}$ thanks to Lemma~\ref{lem:diag-hyp}.

Among diagonal subgroups, the first class to consider is groups whose eigenspaces are all $1$-dimensional.

\begin{lemma}\label{lem:permutation}
	Let $G = \fG(K) \subseteq \GL_{n}(K)$ be a finite, diagonal subgroup with $1$-dimensional eigenspaces. The normalizer $\fN$ of $\fG$ has the form $\GG_{m}^{n} \rtimes S$, where $S \subseteq S_{n}$ is a group of permutation matrices which permutes the coordinates.
\end{lemma}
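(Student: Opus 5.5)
The plan is to first identify the normalizer on $K$-points and then upgrade to a statement about group schemes. Since $G$ is diagonal and all its eigenspaces are $1$-dimensional, the coordinate lines $Ke_{1}, \dots, Ke_{n}$ are exactly the simultaneous eigenspaces of $G$. More precisely, they are the isotypic components of $K^{n}$ as a representation of the finite abelian group $G$, which has order prime with $\cha k_{0}$. The $1$-dimensionality means that the $n$ characters $\chi_{i} : G \to K^{*}$, $\chi_{i}(g) = g_{ii}$, are pairwise distinct.

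First I treat a $K$-point $x \in \GL_{n}(K)$ normalizing $G$. Conjugation by $x$ induces an automorphism $\alpha$ of $G$, and $x$ maps the $\chi_{i}$-eigenspace of $G$ onto the $\chi_{i}\circ\alpha^{-1}$-eigenspace. Hence $x$ permutes the coordinate lines, so $x$ is a monomial matrix. It therefore factors as $x = d\,s$ with $d$ diagonal and $s$ a permutation matrix. Since $d \in \GG_{m}^{n}$ centralizes $G$, the permutation $s = d^{-1}x$ normalizes $G$. Let $S$ be the set of permutation matrices normalizing $G$; it is a subgroup of $S_{n}$, and concretely it is the set of permutations $\sigma$ such that $\diag(a_{\sigma^{-1}(1)}, \dots, a_{\sigma^{-1}(n)}) \in G$ for every $\diag(a_{1}, \dots, a_{n}) \in G$. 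This gives $N(K) = (K^{*})^{n} \rtimes S$. The product is semidirect because $\GG_{m}^{n}$ is normal in the monomial group and meets the permutation matrices trivially.

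Next I upgrade this to group schemes over $k_{0}$. The subgroup $\GG_{m}^{n} \rtimes S \subseteq \GL_{n}$ is defined over $k_{0}$, since permutation matrices are $k_{0}$-rational. It normalizes $\fG$ as a scheme, because it does so on $K$-points and $\fG$ is finite étale, using Lemma~\ref{lem:diag-hyp}. So $\GG_{m}^{n} \rtimes S \subseteq \fN$. For the reverse inclusion, I work over $\bar{k}$-points for all algebraically closed fields $\Omega \supseteq K$: the argument of the previous paragraph is valid verbatim over $\Omega$, because $\fG(\Omega) = G$. This gives equality on geometric points, so the two subgroups have the same underlying reduced closed subscheme.

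The main obstacle is reducedness: I must ensure that $\fN$ has no nilpotent structure beyond $\GG_{m}^{n}\rtimes S$. I will compare Lie algebras via the tangent space at the identity. The Lie algebra of the normalizer of the finite étale group $\fG$ consists of those $X \in \mathfrak{gl}_{n}$ with $gXg^{-1} - X \in \operatorname{Lie}(\fG) = 0$ for all $g$, that is, the centralizer of $G$ in $\mathfrak{gl}_{n}$. By the distinct-characters property, this centralizer is the diagonal matrices, which is $\operatorname{Lie}(\GG_{m}^{n})$. So $\fN$ is smooth, and hence $\fN = \GG_{m}^{n} \rtimes S$. If one allows $\cha k_{0}$ to divide $|G|$, the same computation still goes through because $\fG$ is étale, so I do not need to separate cases.
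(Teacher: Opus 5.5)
Your proof is correct and follows the same route as the paper. The normalizer permutes the $1$-dimensional eigenspaces, so it lies in the monomial matrices, giving $\fN \to S_{n}$ with kernel $\GG_{m}^{n}$; writing a normalizing element as $D \cdot P$ with $D$ diagonal (hence centralizing $G$) then shows that the permutation matrices in the image normalize $G$. Your additional step — matching geometric points and then using the Lie algebra (the centralizer of $G$ in $\mathfrak{gl}_{n}$ is the diagonal matrices) to show $\fN$ is smooth — is a valid way of making explicit the scheme-theoretic equality that the paper leaves implicit.
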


\begin{proof}
	The normalizer acts on the $n$ eigenspaces, giving a homomorphism $\fN \to S_{n}$ with image $S \subseteq S_{n}$ and kernel $\GG_{m}^{n}$. To conclude, it is enough to show that the permutation matrices corresponding to elements of $S$ are in $\fN$. This follows from the fact that any matrix which permutes the coordinates has the form $D \cdot P$, where $D$ is diagonal and $P$ is a permutation matrix.
\end{proof}

\begin{theorem}\label{thm:diag-neutral+}
	Let $G = \fG(K) \subseteq \GL_{n}(K)$ be a finite, diagonal subgroup with $1$-dimensional eigenspaces (in particular, $|G|$ is prime with $\cha k_{0}$). Let the normalizer be $\GG_{m}^{n} \rtimes S$, where $S \subseteq S_{n}$ is a group of permutation matrices. 
	
	Let $S' \subseteq S$ be a subgroup of index $\iota_{1}$, and consider the action of $S'$ on the character group $X(\GG_{m}^{n}/G) \subseteq X(\GG_{m}^{n}) = \ZZ^{n}$. Assume that $X(\GG_{m}^{n}/G)$ contains a submodule $N$ of index $\iota_{2}$ which is a direct summand of a permutation $S'$-module.
	
	The index of every $j$-gerbe divides $\iota_{1}\iota_{2}$.
\end{theorem}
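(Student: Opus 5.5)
The plan is to reduce the index statement to Galois cohomology, first with the twisted nonabelian $\H^{1}$ of the normalizer and then with tori. By Lemma~\ref{lem:diag-hyp} and Lemma~\ref{lem:permutation}, $G = \fG(K)$ satisfies Hypothesis~\ref{hyp:gal-invariant} and its normalizer is $\fN = T \rtimes S$ with $T = \GG_{m}^{n}$. Put $\fQ = \fN/\fG = (T/\fG) \rtimes S$. By Proposition~\ref{prop:gate-norm} and Proposition~\ref{prop:ggerbe}, every $j$-gerbe over $k$ is $\cG_{\tau}$ for some $\tau \in \H^{1}(k, \fQ)$. Moreover $\cG_{\tau}(k') \neq \emptyset$ if and only if $\tau_{k'}$ lifts to $\H^{1}(k', \fN)$. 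It therefore suffices to show the following: for any $\tau$ there are finite extensions $k''/k$, with $\gcd$ of degrees dividing $\iota_{1}\iota_{2}$, over which $\tau$ lifts to $\fN$.

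\emph{Step 1 (reducing $S$ to $S'$).} Let $\sigma \in \H^{1}(k, S)$ be the image of $\tau$, i.e.\ an $S$-torsor $P$. The quotient $P/S'$ is a finite étale $k$-scheme of degree $\iota_{1}$, so it has points over fields $k'_{i}$ with $\sum [k'_{i}:k] = \iota_{1}$. Hence $\gcd_{i}[k'_{i}:k]$ divides $\iota_{1}$. Over each such $k'$ the class $\sigma$ comes from $\H^{1}(k', S')$. Since $(T/\fG)\rtimes S' \to S'$ is split, $\tau_{k'}$ then comes from $\H^{1}(k', (T/\fG)\rtimes S')$; I will check this by comparing the fibers over $\sigma$. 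So I may assume $S = S'$.

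\emph{Step 2 (twisting).} Choose a cocycle representing $\sigma \in \H^{1}(k', S')$ and twist by it. The fiber of $\H^{1}(k', T\rtimes S') \to \H^{1}(k', S')$ over $\sigma$ is a quotient of $\H^{1}(k', {}_{\sigma}T)$, and similarly for $T/\fG$. Now ${}_{\sigma}T$ is the torus whose character module is $\ZZ^{n}$ with Galois acting through $\sigma$ and the permutation action of $S'$. This is a permutation module, so ${}_{\sigma}T$ is quasi-trivial and $\H^{1}(L, {}_{\sigma}T) = 0$ for every $L/k'$ by Shapiro and Hilbert 90. Consequently $\tau$ lifts over $L$ if and only if the class $\alpha \in \H^{1}(k', {}_{\sigma}(T/\fG))$ corresponding to $\tau$ in the twisted fiber becomes trivial over $L$. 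The bookkeeping of nonabelian twisting here, namely identifying fibers and the action of $H^{0}(k', {}_{\sigma}S')$, is the step I expect to be the most delicate. I would handle it with the standard Serre twisting formalism: the preimage of $\sigma$ in the twisted fiber is the neutral class.

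\emph{Step 3 (using $N$).} The torus $T'' = {}_{\sigma}(T/\fG)$ has character module $M = X(\GG_{m}^{n}/G)$ with the Galois action factoring through $S'$. The inclusion $N \subseteq M$ of index $\iota_{2}$ gives an isogeny $T'' \to T_{N}$ onto the torus with character module $N$. This isogeny has a finite kernel $F$ of order $\iota_{2}$. Since $N$ is a direct summand of a permutation $S'$-module, and hence of a permutation Galois module, $T_{N}$ is a direct factor of a quasi-trivial torus, so $\H^{1}(L, T_{N}) = 0$ for all $L$. By the long exact sequence, $\alpha$ comes from a class in $\H^{1}(k', F)$, i.e.\ an $F$-torsor, which is finite étale of degree $\iota_{2}$. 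This torsor has points over fields $k''_{j}$ whose degrees have $\gcd$ dividing $\iota_{2}$, and over each of them $\alpha$ dies. Combining with Step 1, the $\gcd$ of the degrees $[k'':k]$ with $\cG_{\tau}(k'') \neq \emptyset$ divides $\iota_{1}\iota_{2}$, which proves the theorem.
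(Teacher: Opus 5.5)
Your proposal is correct and follows essentially the same route as the paper: you pass to $\H^{1}(k,(\GG_{m}^{n}/\fG)\rtimes S)$ via the normalizer gate, reduce the structure group to $S'$ over the points of the degree-$\iota_{1}$ quotient $P/S'$, and twist so that the fiber is controlled by the torus with character module $X(\GG_{m}^{n}/G)$. You then kill the class through the kernel of the isogeny onto the torus with character module $N$, whose $\H^{1}$ vanishes. One small correction: that kernel $F$ need not be étale when $\cha k_{0}$ divides $\iota_{2}$, so $\H^{1}(k',F)$ must be taken as fppf cohomology; your gcd argument only needs an $F$-torsor to be a finite $k'$-scheme of degree $\iota_{2}$, so it still goes through, and it avoids the reduced-algebra degree issue the paper has to handle separately.
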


\begin{proof}
	Let $k/k_{0}$ be an extension and $\phi \in \H^{1}(k, (\GG_{m}^{n}/\fG) \rtimes S)$ a class. Denote by $\tau$ the image of $\phi$ in $\H^{1}(k, S)$, and by $\psi$ the image of $\tau$ in $\H^{1}(k, (\GG_{m}^{n}/\fG) \rtimes S)$. By Theorem~\ref{thm:coh-crit} it is enough to find a finite, reduced algebra $A/k$ with $[A:k] = \iota_{1}\iota_{2}$ and $\phi_{A} = \psi_{A}$, since $\psi$ clearly lifts to $\H^{1}(k, \GG_{m}^{n} \rtimes S)$ and thus corresponds to a neutral gerbe.
	
	Let $H \subseteq  \H^{1}(k, (\GG_{m}^{n}/\fG) \rtimes S)$ be the inverse image of $\tau$. By non-abelian cohomology \cite[\S 5.4]{serre}, there is a surjective map
	\[\H^{1}(k, T) \to H,\]
	where $T$ is the twist of $\GG_{m}^{n}/\fG$ by $\tau$. Since both $\phi$ and $\psi$ are elements of $H$, it is enough to show that every $T$-torsor over $k$ is trivialized by a finite, reduced algebra of degree exactly $\iota_{1}\iota_{2}$. In fact, if $\phi_{0},\psi_{0} \in \H^{1}(k, T)$ lift $\phi$, $\psi$, it is enough to trivialize the difference $\phi_{0} - \psi_{0} \in \H^{1}(k, T)$.
	
	The class $\tau$ corresponds to an $S$-torsor $\spec A_{0} \to \spec k$, where $A_{0}$ is an étale algebra of degree $|S|$. Let $A_{1} = A_{0}^{S'}$ be the sub-algebra of elements fixed by $S'$, by construction $A_{1}$ has degree $\iota_{1}$.
	
	Let $\spec k' \subseteq \spec A_{1}$ be a point of $\spec A_{1}$, then $\tau_{k'}$ lifts to a class $\tau' \in \H^{1}(k', S')$. Furthermore, $T_{k'}$ coincides with the twist of $\GG_{m}^{n}/\fG$ by $\tau'$. For any $T_{k'}$-torsor, it is enough to find a finite, reduced algebra over $k'$ of degree exactly $\iota_{2}$ which trivializes the torsor: by working at each point of $\spec A_{1}$ separatedly, we may then construct the desired algebra $A$ of degree $\iota_{2}$ over $A_{1}$, and degree $\iota_{1}\iota_{2}$ over $k$.
	
	Twisting by $\tau'$, the $S'$-submodule $N \subseteq X(\GG_{m}^{n}/G)$ induces a $\gal(k'^{s}/k')$-module $M \subseteq X(T_{k'})$, where $k'^{s}$ is a separable closure. Furthermore, the fact that $N$ is a direct summand of a permutation $S'$-module implies that $M$ is a direct summand of a permutation $\gal(k'^{s}/k')$-module. In fact, the action of $\gal(k'^{s}/k')$ on $X(T_{k'})$ is just the action of $S'$ on $X(\GG_{m}^{n}/G) = \ZZ^{n}$ composed with the homomorphism $\gal(k'^{s}/k') \to S'$ corresponding to $\tau'$.
	
	Let $P$ be the torus over $k'$ with character group $M$, we have a quotient map $T_{k'} \to P$ of degree $\iota_{2}$ and $\H^{1}(k', P)$ is trivial by \cite[Theorem 18]{huruguen} \cite{ct}. It follows that every $T_{k'}$-torsor is induced by a $\ker(T_{k'} \to P)$-torsor. We conclude because every $\ker(T_{k'} \to P)$-torsor $D$ is trivialized by a finite, reduced algebra of degree $\iota_{2}$ over $k'$. In fact, $\H^{0}(D, \cO_{D})^{\rm red}$ trivializes $D$, and we may take a direct sum of $\left(\iota_{2}/\deg \H^{0}(D, \cO_{D})^{\rm red}\right)$ copies of $\H^{0}(D, \cO_{D})^{\rm red}$. 
	
	In the above, we have used that $\deg \H^{0}(D, \cO_{D})^{\rm red}$ divides $\iota_{2} = \deg D$, which is not true for arbitrary finite algebras. However, it is true in this case. In fact, if $\ker(T_{k'} \to P) = E \times C$ with $E$ étale and $C$ connected, then $D \simeq D_{E} \otimes D_{C}$, where $D_{E}$ is étale and $D_{C}$ is local artinian, so
	\[\iota_{2} = \deg D = \deg D_{E} \cdot \deg D_{C} = \deg D^{\rm red} \cdot m,\]
	where $m$ is the length of $D_{C}$.
\end{proof}

In the particular case $S' = S$, by applying Lemma~\ref{lem:index-neutral} and Theorem~\ref{thm:diag-neutral+} we directly obtain the following.

\begin{theorem}\label{thm:diag-neutral}
	Let $G = \fG(K) \subseteq \GL_{n}(K)$ be a finite, diagonal subgroup with $1$-dimensional eigenspaces. Let the normalizer be $\GG_{m}^{n} \rtimes S$, where $S \subseteq S_{n}$ is a group of permutation matrices. 
	
	Consider the action of $S$ on the character group $X(\GG_{m}^{n}/G) \subseteq X(\GG_{m}^{n}) = \ZZ^{n}$. If $X(\GG_{m}^{n}/G)$ is a direct summand of a permutation $S$-module, then $G$ is neutral.
\end{theorem}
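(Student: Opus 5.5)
The plan is to specialize Theorem~\ref{thm:diag-neutral+} to $S' = S$ and $N = X(\GG_{m}^{n}/G)$. Then $\iota_{1} = \iota_{2} = 1$, so every $j$-gerbe $\cG$ has index $1$. It then remains to upgrade ``index $1$'' to ``neutral'', and for this I will use Lemma~\ref{lem:index-neutral}.

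First, the standing hypotheses must be in place. Since $G$ is diagonal, Lemma~\ref{lem:diag-hyp} gives Hypothesis~\ref{hyp:gal-invariant}, so $G = \fG(K)$ for a finite étale group scheme $\fG$ over $k_{0}$. By Lemma~\ref{lem:permutation}, the normalizer has the form $\GG_{m}^{n} \rtimes S$. The $1$-dimensional eigenspace hypothesis forces $|G|$ to be prime with $\cha k_{0}$, as noted in Theorem~\ref{thm:diag-neutral+}, so every $j$-gerbe is a finite étale gerbe with band $G$.

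Second, apply Theorem~\ref{thm:diag-neutral+}. The hypothesis that $X(\GG_{m}^{n}/G)$ is a direct summand of a permutation $S$-module is exactly the hypothesis of that theorem with $S' = S$ and $N = X(\GG_{m}^{n}/G)$. Hence the index of every $j$-gerbe divides $1$.

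Third, conclude with Lemma~\ref{lem:index-neutral}. The automorphism group of a geometric point of a $j$-gerbe is isomorphic to $G$, which is abelian, so its Sylow subgroups are abelian. The lemma then says that index $1$ is equivalent to neutrality, so every $j$-gerbe is neutral, and by Definition~\ref{def:neutral} $G$ is neutral.

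I do not expect a real obstacle, since all the substance sits in Theorem~\ref{thm:diag-neutral+}. The only point needing care is that Lemma~\ref{lem:index-neutral} applies to finite étale gerbes, and the tameness assumption on $|G|$ guarantees this.

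Alternatively, one could bypass the index entirely. With $S' = S$ and $\iota_{2} = 1$, the proof of Theorem~\ref{thm:diag-neutral+} shows directly that every class in $\H^{1}(k, (\GG_{m}^{n}/\fG) \rtimes S)$ equals a class lifting to $\H^{1}(k, \GG_{m}^{n} \rtimes S)$, because $\H^{1}$ of the twisted torus vanishes. Theorem~\ref{thm:norm} then gives neutrality.
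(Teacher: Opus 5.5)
Your proposal is correct and is exactly the paper's argument. The paper obtains this theorem by specializing Theorem~\ref{thm:diag-neutral+} to $S' = S$ and $N = X(\GG_{m}^{n}/G)$, so that $\iota_{1}\iota_{2} = 1$, and then invoking Lemma~\ref{lem:index-neutral}, using that $G$ is abelian. Your alternative via Theorem~\ref{thm:norm} and the vanishing of $\H^{1}$ of the twisted torus is also valid: it is just the proof of Theorem~\ref{thm:diag-neutral+} unwound in the case $\iota_{1}\iota_{2} = 1$.
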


\section{Neutral subgroups in dimension $\le 2$}\label{sect:dim2}

As we mentioned previously, neutral and $\rho$-neutral subgroups of $\PGL_{2}$, $\GL_{2}$, $\PGL_{3}$ have essentially been classified in \cite{bresciani-p1, bresciani-sing, bresciani-plane}. The point of view and the terminology was different, though, and the classification needs to be completed in some cases. For instance, for $\GL_{2}$ the paper \cite{bresciani-sing} only studies subgroups without pseudo-reflections, because the focus is on singularities rather than representations.

In this section, we summarize and supplement this classification.

\subsection{$\PGL_{1}$} For completeness, we start with $\PGL_{1}$, that is the trivial group. The only finite subgroup $G \subseteq \PGL_{1}(K)$ is the trivial one, every $j$-gerbe is trivial, hence $G$ is neutral. Furthermore, $\PP^{0}$ is just a point, every twisted form of $\PP^{0}$ is just a point as well, and hence $G$ is $\rho$-neutral.

Notice that the trivial subgroup $G$ of $\PGL_{n}(K)$ is in general not $\rho$-neutral. A $(n - 1)$-dimensional Brauer--Severi variety $P$ over $k$ defines a $j$-gerbe $\spec k \to B\PGL_{n}$ with associated family $P \to \spec k$. If $P(k) = \emptyset$, then the trivial subgroup of $\PGL_{n}$ is not $\rho$-neutral.

\subsection{$\GL_{1} = \GG_{m}$}

\begin{proposition}\label{prop:gl1}
	For every integer $n \ge 1$ prime with $\cha k_{0}$, the group $\mu_{n} \subseteq \GG_{m}(K)$ is neutral. Hence, every finite subgroup of $\GG_{m}(K)$ is neutral.
\end{proposition}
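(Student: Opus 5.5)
The plan is to apply the normalizer machinery of \S\ref{sect:norm-gates}. Since $\GG_{m}$ is commutative, $\mu_{n} \subseteq \GG_{m}$ is normal, with normalizer $\fN = \GG_{m}$. This subgroup is Galois-stable, so Hypothesis~\ref{hyp:gal-invariant} holds; alternatively, apply Lemma~\ref{lem:diag-hyp} with $n=1$. The quotient is $\fQ = \GG_{m}/\mu_{n}$, and the $n$-th power map identifies it with $\GG_{m}$. Because $\GG_{m}$ is special, Corollary~\ref{cor:special} immediately gives that $\mu_{n}$ is neutral.

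The same argument can be phrased through Theorem~\ref{thm:norm}. There we need that $\H^{1}(k, \GG_{m}) \to \H^{1}(k, \GG_{m}/\mu_{n}) = \H^{1}(k,\GG_{m})$ is surjective for every extension $k/k_{0}$. By Hilbert's Theorem 90 both sets are trivial, so surjectivity holds. A third, even more direct route is available. A $j$-gerbe is a gerbe $\cG$ together with a faithful line bundle, because the morphism $\cG \to B\GG_{m}$ is faithful by definition. Lemma~\ref{lem:line-neut} then applies verbatim.

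It remains to pass to arbitrary finite subgroups. Let $G \subseteq \GG_{m}(K) = K^{*}$ be finite. Then $G$ is cyclic, being a finite subgroup of the multiplicative group of a field. Every finite subgroup of $K^{*}$ consists of roots of unity of order prime to $\cha k_{0}$, since in characteristic $p$ there are no nontrivial $p$-th roots of unity. Hence $G = \mu_{n}(K)$, where $n = |G|$ is prime with $\cha k_{0}$, and the first part applies.

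The only real subtlety is checking that a $j$-gerbe is defined via a \emph{faithful} morphism to $B\Gamma$. This holds by Definition~\ref{def:ggerbe}, so the canonical factorization issues do not arise.
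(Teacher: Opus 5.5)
Your proposal is correct, and your third route is exactly the paper's proof. For $\Gamma = \GG_{m}$, a $j$-gerbe is a gerbe with a faithful line bundle, so Lemma~\ref{lem:line-neut} applies. The paper also mentions the alternative via Proposition~\ref{prop:rhoneut-neut}, since the image in $\PGL_{1}$ is trivial and $\PP^{0}$ is a point.

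Your main route through Corollary~\ref{cor:special} is a valid but heavier alternative. Formally it passes through Theorem~\ref{thm:norm} and Proposition~\ref{prop:gate-norm}, and hence through the appendix. Because $\mu_{n}$ is already normal in $\GG_{m}$, you could cite Lemma~\ref{lem:gerbe-H1} directly instead. That lemma classifies $j$-gerbes by $\H^{1}(k, \GG_{m}/\mu_{n}) \simeq \H^{1}(k, \GG_{m}) = 0$, so every $j$-gerbe is equivalent to the neutral one $B\mu_{n}$.

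Both arguments ultimately rest on Hilbert's Theorem 90, used in different places. Lemma~\ref{lem:line-neut} uses it as injectivity of $\H^{2}(k,\mu_{n}) \to \H^{2}(k,\GG_{m})$ on the band class. Your route uses it as triviality of $\H^{1}(k, \GG_{m}/\mu_{n})$.

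Your reduction of arbitrary finite subgroups of $K^{*}$ to $\mu_{n}(K)$ with $n$ prime to $\cha k_{0}$ is fine. It supplies the small justification the paper leaves implicit.
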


\begin{proof}
	This is a direct consequence of Lemma~\ref{lem:line-neut}, or alternatively Proposition~\ref{prop:rhoneut-neut}.
\end{proof}

\subsection{$\PGL_{2}$}

\begin{theorem}\label{thm:pgl2}
	\begin{itemize}
		\item A cyclic subgroup $C_{n} \subseteq \PGL_{2}(K)$ for $n$ even and prime with $\cha k_{0}$ is not neutral.
		\item A cyclic subgroup $C_{n} \subseteq \PGL_{2}(K)$ for $n$ odd and prime with $\cha k_{0}$ is neutral but not $\rho$-neutral.
		\item Every other finite subgroup of $\PGL_{2}(K)$ is $\rho$-neutral.
	\end{itemize}
	
	In particular, if $\cha k_{0} = 2$ every finite subgroup of $\PGL_{2}(K)$ is neutral, and every finite subgroup which is not cyclic of odd order is $\rho$-neutral as well. 
\end{theorem}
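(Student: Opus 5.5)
The plan is to go through the classification of finite subgroups of $\PGL_{2}(K)$. In tame characteristic these are cyclic $C_{n}$, dihedral $D_{n}$, the Klein group, $A_{4}$, $S_{4}$ and $A_{5}$. When $\cha k_{0} = p > 0$ there are in addition groups of the form $(\ZZ/p)^{r} \rtimes C_{m}$ and $\PGL_{2}(\FF_{q})$, $\PSL_{2}(\FF_{q})$. The cyclic groups are handled cohomologically with Theorem~\ref{thm:norm}, and all other groups geometrically with Lemma~\ref{lem:proj-compression-neutral}.

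\emph{Cyclic case.} Take $\fG = \mu_{n} \subseteq \GG_{m} \subseteq \PGL_{2}$ acting by $z \mapsto \zeta z$; this satisfies Hypothesis~\ref{hyp:gal-invariant}. For $n \ge 3$ the normalizer is $\fN = \GG_{m} \rtimes C_{2}$, with $C_{2}$ acting by $z \mapsto 1/z$, and $\fQ = \fN/\fG \simeq \GG_{m} \rtimes C_{2}$ via $t \mapsto t^{n}$. By Theorem~\ref{thm:norm} we must decide whether $\H^{1}(k,\fN) \to \H^{1}(k,\fQ)$ is surjective. Both sets fibre over $\H^{1}(k,C_{2})$, i.e.\ over quadratic étale algebras $L/k$. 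By twisting (\cite[\S 5.4]{serre}) the fibre over $L$ is a quotient of $\H^{1}(k, R^{1}_{L/k}\GG_{m}) = k^{*}/\rN(L^{*})$, and the map between fibres is induced by $x \mapsto x^{n}$. Since $k^{*}/\rN(L^{*})$ is $2$-torsion, this map is bijective for $n$ odd and trivial for $n$ even. Hence $n$ even is not neutral, taking any $k$ and $L$ with $\rN(L^{*}) \neq k^{*}$; the case $n = 2$ needs the larger normalizer $S_{4}$ but contains the same obstruction. The case $n$ odd is neutral. For $n$ odd the group is not $\rho$-neutral: take a pointless conic $C$ split by $L$ and twist, so that the fixed pair $\{0,\infty\}$ becomes a degree-$2$ point and $\cP(k) = \emptyset$.

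\emph{Non-cyclic case.} I would show $\bP(k) \neq \emptyset$ and conclude with Lemma~\ref{lem:proj-compression-neutral}, since $\bP$ is a normal, hence smooth, genus-$0$ curve, so one rational point gives $\bP \simeq \PP^{1}$. The key tool is a distinguished subvariety in the sense of Lemma~\ref{lem:distinguished}. If some $G$-orbit $O \subseteq \PP^{1}_{K}$ is the unique orbit of its size, then $\tilde{N}_{G}$ preserves it, so it descends to $\cY \subseteq \cP$. Its image in $\bP$ is a single point, which is therefore $k$-rational. Such orbits exist in every non-cyclic case:
\begin{itemize}
\item for $D_{n}$ with $n \ge 3$, the orbit of size $2$;
\item for $A_{4}$, the orbit of size $6$;
\item for $S_{4}$, the orbits of sizes $6$, $8$ and $12$;
\item for $A_{5}$, the orbits of sizes $12$, $20$ and $30$.
\end{itemize}
For the Klein group there are three orbits of size $2$, permuted by the normalizer $S_{4}$. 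Their union is distinguished and gives a $0$-cycle of degree $3$ on $\bP$, and a genus-$0$ curve with an odd-degree point has a rational point.

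\emph{Main obstacle.} In characteristic $p$ the wild groups $(\ZZ/p)^{r} \rtimes C_{m}$ and $\PGL_{2}(\FF_{q})$, $\PSL_{2}(\FF_{q})$ are the hard part, since Lemma~\ref{lem:proj-compression-neutral}(3) is unavailable. There I would use a distinguished orbit: for the Borel-type groups, the unique fixed point $\infty$; for the others, $\PP^{1}(\FF_{q})$. Its image should be a rational point of $\bP$ lying in the smooth locus, which gives a dense set of smooth rational points and so (2) of Lemma~\ref{lem:proj-compression-neutral}. Verifying this smoothness is the delicate step, since $\bP$ is only a normal curve and need not be smooth over $k$ in characteristic $p$. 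The final claim, that every finite subgroup is neutral when $\cha k_{0} = 2$, is then automatic: cyclic groups of even order are not coprime to $2$, so the only tame cyclic groups have odd order.
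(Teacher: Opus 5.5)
\textbf{The gap: the wild case.} The wild case ($p = \cha k_{0}$ dividing $|G|$) is left unproved; your proposal stops at ``should be'' and ``the delicate step''. This is not peripheral. The third bullet is claimed in every characteristic. In characteristic $2$, every non-cyclic subgroup, and also the unipotent $C_{2}$, is wild, so the final ``in particular'' depends on it. Your list of wild groups is also incomplete: it misses $D_{n}$ with $n \ge 5$ odd in characteristic $2$, and $A_{5} \subset \mathrm{PSL}_{2}(\FF_{9})$ in characteristic $3$.

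\textbf{How to close it.} The obstacle you name is not the real issue. The missing input is Keel--Mori: formation of the coarse space of a stack with finite inertia commutes with flat base change, tame or not.
\begin{itemize}
\item Applied to $\spec \bar{k} \to \spec k$, it gives $\bP_{\bar{k}} \simeq \PP^{1}_{\bar{k}}/G \simeq \PP^{1}_{\bar{k}}$. So $\bP$ is always a smooth conic.
\item Applied to the closed substack $\cY \subseteq \cP$ descending a distinguished single orbit $O$ (Lemma~\ref{lem:distinguished} needs no tameness), it gives a coarse space $Y$ of $\cY$ with $Y_{\bar{k}} \simeq O/G = \spec \bar{k}$. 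Hence $Y = \spec k$, and $\cY \to \bP$ factors through a $k$-rational point.
\item The same argument is what justifies ``a single point, which is therefore $k$-rational'' in your tame case when $k$ is imperfect. A one-point geometric image by itself only gives a purely inseparable residue field.
\item Then $\bP \simeq \PP^{1}$, and for $k$ infinite the implication $(2) \Rightarrow (1)$ of Lemma~\ref{lem:proj-compression-neutral} applies (finite $k$ is covered by the first bullet). That implication only uses that $\cP \to \bP$ is birational, not Theorem~\ref{thm:LN}, so it is valid for wild $G$.
\end{itemize}
Every wild group has a distinguished single orbit: $\{\infty\}$ for $V \rtimes C_{m}$ with $V \neq 1$; $\PP^{1}(\FF_{q})$ for $\mathrm{PSL}_{2}(\FF_{q})$ and $\PGL_{2}(\FF_{q})$; $\{0,\infty\}$ for $D_{n}$ in characteristic $2$; and the unique orbit of size $10$ for $A_{5}$ in characteristic $3$. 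With this input your strategy goes through. For comparison, the paper does not argue the non-cyclic case itself but quotes \cite[Proposition 28]{bresciani-p1}.

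\textbf{The cyclic case.} This part is correct and takes a different route from the paper. The paper relies on \cite[Lemma 30]{bresciani-p1} for the parity statement, and builds the counterexample from the root stack $\sqrt[n]{P,p}$ and its \'etale fundamental gerbe. You instead apply Theorem~\ref{thm:norm} to $\fN = \GG_{m} \rtimes C_{2}$ and reduce, fibre by fibre over $\H^{1}(k, C_{2})$, to $x \mapsto x^{n}$ on the $2$-torsion group $k^{*}/\rN(L^{*})$. This explains the dichotomy directly with the paper's own machinery. The trivial class is a single orbit for the $C_{2}$-action, because the twisted extension still splits, so a non-trivial class really is missed when $n$ is even.

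Two corrections are needed here.
\begin{itemize}
\item $\mu_{2}$ also has normalizer $\GG_{m} \rtimes C_{2}$, being the centralizer of $\diag(1,-1)$; it is the Klein group that $S_{4}$ normalizes. So $n = 2$ needs no special treatment.
\item For $n$ odd you still need to say why $\cP(k) = \emptyset$. Let $\fG' \subseteq \aut(C)$ be the twisted $\mu_{n}$ inside the norm-one torus stabilizing the degree-$2$ point. A $k$-point of $[C/\fG']$ is a $\fG'$-torsor of odd degree $n$ with an equivariant map to $C$. That would give a closed point of odd degree on the pointless conic $C$, which is impossible.
\end{itemize}
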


The proof is essentially given in \cite{bresciani-p1}, here we give additional details and construct counterexamples for $C_{n}$ for every $n$.

\begin{proof}
	Let $G \subseteq \PGL_{2}(K)$ be a finite subgroup, $\cG$ a $j$-gerbe, $\cP \to \cG$ the corresponding projective bundle, $\bP$ the coarse moduli space of $\cP$. Since we are in dimension $1$, $\bP$ is regular of genus $0$, and we have a rational map $\bP \dashrightarrow \cG$.
	
	If $\cP(k) = \emptyset$ and $\bP$ is regular, then $\bP(k) = \emptyset$ by Lemma~\ref{lem:proj-compression-neutral}, hence $G = C_{n}$ is cyclic of order $n$ prime with $\cha k_{0}$ by \cite[Proposition 28]{bresciani-p1}. If $\cG(k) = \emptyset$, then $n$ is even by \cite[Lemma 30]{bresciani-p1}. This shows that if $G$ is not $\rho$-neutral (resp. not neutral) then $G$ is cyclic of order prime with $\cha k_{0}$ (resp. cyclic of order prime with $\cha k_{0}$ and divisible by $2$).
	
	It remains to show that a cyclic subgroup $C_{n} \subset \PGL_{2}(K)$, $n$ prime with $\cha k_{0}$, is not $\rho$-neutral for $n$ odd, and not neutral for $n$ even (all such subgroups for fixed $n$ are conjugated, hence it is enough to work with one of them). Let $P$ be any Brauer--Severi variety of dimension $1$ over some field $k$ with $P(k) = \emptyset$, and $p \in P$ a point such that $[k(p):k] = 2$ and $k(p)$ separable (the point exists by \cite[Proposition 4.5.4]{gille-szamuely}). For instance, such a $P$ exists over $k = K(s,t)$ \cite[Chapter 1]{gille-szamuely}.
	
	Consider $p$ as a divisor of degree $2$ and let $\cP = \sqrt[n]{P, p}$ be the $n$-th root stack \cite[Appendix B]{abramovich-graber-vistoli}. We have $\cP_{k^{s}} \simeq \sqrt[n]{\PP^{1}, p + \bar{p}} = [\PP^{1}/\mu_{n}]$, whose étale fundamental group is $\mu_{n}$ since $n$ is prime with $\cha k_{0}$. Let $\cP \to \cG$ be the étale fundamental gerbe of $\cP$ \cite[\S 8]{borne-vistoli-nori}. Since the étale fundamental gerbe behaves well under separable base change \cite[Proposition A.18]{bresciani-anab}, the base change to the separable closure $k^{s}$ of $\cP \to \cG$ is $[\PP^{1}/\mu_{n}] \to B\mu_{n}$, hence $\cP \to \cG$ is a $1$-dimensional projective bundle with $\cP(k) = \emptyset$. This shows that $C_{n}$ is not $\rho$-neutral if $n$ is prime with $\cha k_{0}$. If $n$ is even as well, then $\cG(k) = \emptyset$ by \cite[Lemma 30]{bresciani-p1}, hence $C_{n}$ is not neutral for $n$ even and prime with $\cha k_{0}$.
\end{proof}

\subsection{$\GL_{2}$}

In \cite{bresciani-sing}, a classification is given for subgroups $G \subseteq \GL_{2}(K)$ without pseudo-reflections and order prime with $\cha k_{0}$. We now give a complete classification. The proof we give here is considerably easier than the one given in \cite{bresciani-sing}, thanks to the theoretical background developed in the previous sections.

\begin{theorem}\label{thm:gl2}
	Let $G \subseteq \GL_{2}(K)$ be a finite group, $\bar{G}$ its image in $\PGL_{2}(K)$. If $\cha k_{0} = 2$, then $G$ is neutral. If $\cha k_{0} \neq 2$, the following are equivalent.
	\begin{enumerate}
		\item $G$ is not neutral.
		\item $G$ is saturated and $\bar{G}$ is not $\rho$-neutral.
		\item $G$ is conjugate to
		\[\left<  
			\left( \begin{array}{cc}
			\zeta_{2m} 	& 	0  			\\
			0 			&	\zeta_{2m} 	\\
			\end{array} \right),~
			\left( \begin{array}{cc}
			\zeta_{2n}^{-1} 	& 	0  			\\
			0 					&	\zeta_{2n} 	\\
			\end{array} \right)
			\right>\]
			for some $m, n \ge 1$ prime with $\cha k_{0}$, which is the inverse image in $\SL_{2}^{(m)}$ of
			\[\left< \diag(1, \zeta_{n}) \right>.\]
	\end{enumerate}
\end{theorem}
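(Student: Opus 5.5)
The proof will go through the three implications $(3) \Rightarrow (2) \Rightarrow (1) \Rightarrow (3)$, organized by the type of $\bar{G}$ from Theorem~\ref{thm:pgl2}. The characteristic $2$ case comes first. By Theorem~\ref{thm:pgl2}, every finite subgroup $\bar{G} \subseteq \PGL_{2}(K)$ that is not cyclic of odd order prime with $\cha k_{0}$ is $\rho$-neutral, and then $G$ is neutral by Proposition~\ref{prop:rhoneut-neut}. Otherwise $\bar{G} = C_{n}$ with $n$ odd. Then $G$ is abelian of order prime with $2$: indeed $G \cap K^{*}$ is a finite subgroup of $K^{*}$, hence of odd order in characteristic $2$. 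So $G$ is diagonalizable. If $\bar{G}$ is trivial, $G$ consists of scalars and is neutral by Lemma~\ref{lem:line-neut}, since the determinant gives a faithful line bundle. If not, $G$ is diagonal with distinct characters on the two eigenlines, so Theorem~\ref{thm:diag-neutral} applies. The normalizer is $\GG_{m}^{2} \rtimes S$ with $S \subseteq S_{2}$, and I will check that $X(\GG_{m}^{2}/G)$ is a direct summand of a permutation module. A rank $2$ lattice with $S_{2}$ action is $\ZZ^{2}$ (permutation), $\ZZ \oplus \ZZ$ (trivial), $\ZZ \oplus \ZZ_{-}$, or $\ZZ_{-}^{2}$, and only the last two need ruling out. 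This case analysis also shows what happens in odd characteristic, which I reuse below.

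For $\cha k_{0} \neq 2$, the same argument shows that if $\bar{G}$ is $\rho$-neutral then $G$ is neutral, so we may assume $\bar{G} = C_{n}$. Then $G$ is diagonal, $G = \langle \zeta_{N}\Id, \diag(a,b)\rangle$. If $S$ is trivial (the eigenlines of $G$ cannot be swapped), the normalizer is a torus $\GG_{m}^{2}$, which is special, and $G$ is neutral by Corollary~\ref{cor:special}/Theorem~\ref{thm:diag-neutral}. So assume $S = S_{2}$, i.e. swapping the coordinates preserves $G$. The character lattice $X(\GG_{m}^{2}/G) \subseteq \ZZ^{2}$ is an $S_{2}$-stable sublattice of finite index. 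Such lattices are classified: they are either of the form $\ell\ZZ^{2}$-like (permutation), or spanned by $(a,a)$ and $(b,-b)$, or span of $(a,a),(b,-b),(c,c+\dots)$. The permutation-summand condition fails exactly when the lattice is $\ZZ(2m',\!0)$-type "decomposed" $\ZZ(1,1)\cdot\alpha \oplus \ZZ(1,-1)\cdot\beta$, i.e. isomorphic to $\ZZ \oplus \ZZ_{-}$. Translating back, the characters of $\GG_{m}^{2}/G$ are spanned by $(m,m)$ and $(n,-n)$, which happens iff $G = \ker$ of these, i.e. $G = \{(x,y): (xy)^{m} = 1, (x/y)^{n} = 1\}$. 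That is the group in $(3)$: the inverse image in $\SL_{2}^{(m)}$ of $\langle \diag(1,\zeta_{n})\rangle$. This proves $(1) \Rightarrow (3)$.

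For $(3) \Rightarrow (2)$, saturation is checked with $\Pi = \SL_{2}^{(m)}$. Its image $\bar{\Pi} = \PGL_{2}$ is trivially a gate, and by construction $\pi^{-1}(\bar{G}) \cap \SL_{2}^{(m)} = G$ scheme-theoretically, using that $m$ and $n$ are prime to the characteristic. Then $\bar{G} = C_{n'}$, and I must check that it is an even-order cyclic group (or at least not $\rho$-neutral). Here $\diag(\zeta_{2n}^{-1},\zeta_{2n})$ has image of order $n$ in $\PGL_{2}$ after accounting for $-\Id$, which lies in $G$ only when appropriate; the order of $\bar{G}$ is $n$, and Theorem~\ref{thm:pgl2} gives that $C_{n}$ is not $\rho$-neutral for every $n$ prime to the characteristic. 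Finally, $(2) \Rightarrow (1)$ is Proposition~\ref{prop:saturated-neut}.

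The main obstacle is the lattice bookkeeping in the $\ZZ \oplus \ZZ_{-}$ case. Its non-neutrality comes from $(2) \Rightarrow (1)$, but matching the exact parameters ($2m$, $2n$, and the role of $-\Id$) requires care. If the clean equivalence with a permutation summand fails, I would instead apply Theorem~\ref{thm:norm} directly, computing $\H^{1}(k, \GG_{m}^{2}\rtimes S_{2}) \to \H^{1}(k, (\GG_{m}^{2}/G)\rtimes S_{2})$ via norm tori of quadratic extensions.
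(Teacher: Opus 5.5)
Your argument is correct in substance, but it takes a different route from the paper's for $(1)\Rightarrow(3)$ and for characteristic $2$; your $(3)\Rightarrow(2)\Rightarrow(1)$ is the same as the paper's.

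\textbf{The paper's route.} It first uses the conic $\bP$ and Lemmas~\ref{lem:index-primes}, \ref{lem:index-neutral} to force $|G|$ even. It then normalizes the generators to $\zeta_m\Id$ and $\lambda\diag(1,\zeta_n)$, and uses the swap to get $\lambda=\zeta_{2n}^{-1}\zeta_{2m}^{a}$. Finally it splits by the parities of $n$ and $a$, producing a permutation basis of $X(\GG_m^2/G)$ only in the subcase $n$ even, $a$ odd. In characteristic $2$ it reruns the index argument to get $|G|$ even, contradicting that $\bar G$ and $G\cap K^{*}$ have odd order.

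\textbf{Your route.} You let the $C_2$-lattice $L=X(\GG_m^2/G)$ do all the work. Either $L=L^{+}\oplus L^{-}=\ZZ(p,p)\oplus\ZZ(q,-q)$, which is exactly the character lattice of the group in $(3)$ with $(m,n)=(p,q)$, or $L$ is a permutation module and Theorem~\ref{thm:diag-neutral} applies. This removes the index argument and all the parity casework. It also shows conceptually that non-neutrality corresponds to $L\cong\ZZ\oplus\ZZ_{-}$. The paper's route, in exchange, needs no lattice classification.

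\textbf{The dichotomy needs a proof.} Your list of lattice ``shapes'' only gestures at it, but it is short. For $u\in L$ write $u+\sigma u=\alpha(p,p)$ and $u-\sigma u=\beta(q,-q)$. If $\alpha$ is even, then $u-\tfrac{\alpha}{2}(p,p)\in L^{-}$, so $\beta$ is even; symmetrically, so $\alpha\equiv\beta \pmod 2$. Hence either $L=L^{+}\oplus L^{-}$, or $\tfrac12(p+q,p-q)\in L$. In the second case this vector and its swap form a permutation basis of $L$; this is the paper's basis in its bad subcase.

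\textbf{Two smaller omissions.}
\begin{enumerate}
\item In odd characteristic the case $\bar G$ trivial must be treated separately. There the normalizer is $\GL_2$ and there are no eigenlines to permute. For $G=\mu_N\Id$ with $N$ odd, the determinant is faithful, so Lemma~\ref{lem:line-neut} applies. For $N$ even, $G$ is the group in $(3)$ with $n=1$.
\item In characteristic $2$, the check you defer is just that $L^{+}\oplus L^{-}$ has even index $2pq$ in $\ZZ^{2}$, whereas $|G|$ is odd. So $L$ is always a permutation module there.
\end{enumerate}
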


\begin{proof}
	Assume first that $\cha k_{0} \neq 2$.
	
	$(3) \Rightarrow (2)$. We have that $G \subseteq \SL_{2}^{(m)}$ is the inverse image of $\bar{G}$ in $\SL_{2}^{(m)}$, so $G$ is saturated. Since $\bar{G}$ is cyclic of order prime with $\cha k_{0}$, it is not $\rho$-neutral by Theorem~\ref{thm:pgl2}.
	
	$(2) \Rightarrow (1)$. Follows from Proposition~\ref{prop:saturated-neut}.
	
	$(1) \Rightarrow (3)$. Since $G$ is not neutral, we may fix a non-neutral $j$-gerbe $\cV \to \cG$ over a field $k$. In particular, $k$ is infinite by Lemma~\ref{lem:compression-neutral}

	By Proposition~\ref{prop:rhoneut-neut} $\bar{G}$ is not $\rho$-neutral, hence it is cyclic of order prime with $\cha k_{0}$ by Theorem~\ref{thm:pgl2}. Since the kernel of $G \to \bar{G}$ is central, it follows that $G$ is abelian. Up to conjugation, we may then assume that
	\[\bar{G} = \left<
		\left( \begin{array}{cc}
		1 	& 	0  			\\
		0	&	\zeta_{n}	\\
		\end{array} \right)
		\right>
	\]
	for some $n$ prime with $\cha k_{0}$, and
	\[G = \left<
		x = \left( \begin{array}{cc}
		\zeta_{m} 	& 	0  			\\
		0 			&	\zeta_{m} 	\\
		\end{array} \right),
		y = \lambda \cdot \left( \begin{array}{cc}
		1 	& 	0  					\\
		0 	&	\zeta_{n}	 \\
		\end{array} \right)
		\right>\]
	where $m$ is the cardinality of the kernel of $G \to \bar{G}$, and $\lambda$ is a root of $1$. Clearly, $m$ must be prime with $\cha k_{0}$, because a finite subgroup of $K^{*} \simeq \ker(\GL_{2}(K) \to \PGL_{2}(K))$ has order prime with $\cha k_{0}$.
	
	Let us prove that $|G|$ is even. The coarse moduli space $\bP$ of $\PP(\cV)$ is a Brauer--Severi variety of dimension $1$, hence there exists $k'/k$ with $[k':k] \le 2$ such that $\bP_{k'} \simeq \PP^{1}_{k'}$, and $\cG(k') \neq \emptyset$ by Lemma~\ref{lem:compression-neutral}. Since $G$ is abelian, if $|G|$ is odd then $\cG$ is neutral by Lemmas~\ref{lem:index-primes} and \ref{lem:index-neutral}, giving a contradiction.
	
	Assume first that $n = 1$. Then $m = |G|$ is even, and $G$ has the desired form.
	
	Assume $n \neq 1$. By Lemma~\ref{lem:diag-hyp} and Remark~\ref{rmk:norm-hyp}, the extended normalizer $\tilde{N}_{G} \subseteq \GL_{2,K} \rtimes \gal(K/k_{0})$ is $N_{G} \rtimes \gal(K/k_{0})$, where $N_{G} \subseteq \GL_{2,K}$ is the normalizer.
	
	Notice that $N_{G}$ permutes the eigenspaces of $G$. Since $n \neq 1$, there are two $1$-dimensional eigenspaces. Hence, by Lemma~\ref{lem:permutation} there are two cases: either $N_{G} = \GG_{m}^{2}$, or $N_{G} = \GG_{m}^{2} \rtimes C_{2}$, where $C_{2}$ is the cyclic group generated by the matrix
		\[z = \left( \begin{array}{cc}
		0 	& 	1	\\
		1	&	0	 \\
		\end{array} \right).
		\]
	If $N_{G} = \GG_{m}^{2}$, then $G$ is neutral by Theorem~\ref{thm:diag-neutral}, giving a contradiction. Hence, we have $N_{G} = \GG_{m}^{2} \rtimes C_{2}$.
	
	Since $z$ normalizes $G$, then
	\[y \cdot z \cdot y \cdot z^{-1} = \lambda^{2} \cdot \zeta_{n} \Id\]
	is an element of $G$ mapping to the identity, hence $\lambda^{2} \cdot \zeta_{n} \in \left< \zeta_{m} \right>$ and $\lambda \cdot \zeta_{2n} \in \left< \zeta_{2m} \right>$. In particular, we can write
	\[\lambda = \zeta_{2n}^{-1} \cdot \zeta_{2m}^{a}\]
	for some $a$, that is
	
	\[G = \left<
		x = \left( \begin{array}{cc}
		\zeta_{m} 	& 	0  			\\
		0 			&	\zeta_{m} 	\\
		\end{array} \right),
		y = \zeta_{2m}^{a} \cdot \left( \begin{array}{cc}
		\zeta_{2n}^{-1} 	& 	0  					\\
		0 					&	\zeta_{2n}	 \\
		\end{array} \right)
		\right>.\]
	If $n$ is odd, $y^{2} \cdot x^{-a}$ maps to a generator of $\bar{G}$, hence we have
	\[G = \left<
		\left( \begin{array}{cc}
		\zeta_{m} 	& 	0  			\\
		0 			&	\zeta_{m} 	\\
		\end{array} \right),
		\left( \begin{array}{cc}
		\zeta_{n}^{-1} 	& 	0  					\\
		0 				&	\zeta_{n}	 \\
		\end{array} \right).
		\right>\]
	Since $|G|$ is even, then $m$ is even, hence $-\Id \in G$ and
	\[\left( \begin{array}{cc}
		\zeta_{2n}^{-1} 	& 	0  				\\
		0 					&	\zeta_{2n}	 	\\
	\end{array} \right) = 
	-\left( \begin{array}{cc}
		\zeta_{n}^{-1} 	& 	0  			\\
		0 				&	\zeta_{n}	\\
	\end{array} \right)^{\frac{n+1}{2}} \in G.\]
	This concludes the proof if $n$ is odd.
	
	Assume $n$ even. If $a$ is even as well, then $G$ clearly has the desired form. We are going to use Theorem~\ref{thm:diag-neutral} to show that, if $a$ is odd and $n$ is even, then $G$ is neutral, giving a contradiction and concluding the proof.
	
	The character group of $\GG_{m}^{2}/G$ inside of $X(\GG_{m}^{2}) = \ZZ^{2}$ is
	\[X(\GG_{m}^{2}/G) = \left\{(\alpha, \beta) \in \ZZ^{2} \mid \zeta_{m}^{\alpha + \beta} = \zeta_{2n}^{\beta - \alpha} \cdot \zeta_{2m}^{(\alpha + \beta)a} = 1 \right\}.\]
	Notice that $-\Id = y^{n} \cdot x^{-an/2} \in G$, hence $m$ is even. Since both $m$ and $n$ are even and $a$ is odd, it is immediate to check that the vectors
	\[\left( \frac{m+n}{2}, \frac{m-n}{2} \right), \left( \frac{m-n}{2}, \frac{m+n}{2} \right) \in \ZZ^{2}\]
	are well defined elements of $X(\GG_{m}^{2}/G) \subseteq \ZZ^{2}$; they generate a permutation $C_{2}$-submodule of $\ZZ^{2}$ of index
	\[\det
	\left( \begin{array}{cc}
		\frac{m+n}{2} 	& 	\frac{m-n}{2}  	\\
		\frac{m-n}{2}	&	\frac{m+n}{2}	\\
	\end{array} \right) =
	\frac{(m+n)^{2} - (m-n)^{2}}{4} = mn.\]
	
	Since $X(\GG_{m}^{2}/G)$ has index $|G| = mn$ as well, the elements above are a basis of $X(\GG_{m}^{2}/G)$, which is then a permutation $C_{2}$-module. By Theorem~\ref{thm:diag-neutral}, $G$ is neutral. This is in contradiction with our assumption that $G$ is not neutral, hence $a$ cannot be odd.
	
	This concludes the proof for $\cha k_{0} \neq 2$. Assume $\cha k_{0} = 2$, and fix a $j$-gerbe $\cV \to \cG$ over a field $k$. Let us show that, if $\cG(k) = \emptyset$, we get a contradiction.
	
	By repeating the proof of the implication $(1) \Rightarrow (3)$ above, we get that $\cG(k) = \emptyset$ implies that $|G|$ is even. By Proposition~\ref{prop:rhoneut-neut}, $\bar{G}$ is not $\rho$-neutral, hence it is cyclic of odd order by Theorem~\ref{thm:pgl2}. It follows that the kernel $G \cap K^{*}$ of $G \to \bar{G}$ has even order: this gives a contradiction, because every finite subgroup of $K^{*}$ has order prime with $\cha k_{0}$.
\end{proof}

In \cite[Theorem 4]{bresciani-sing}, it is proved that the singularities not of type $\rR$ are of the form $\AA^{2}/G$, with
\[G = \left< \diag(\zeta_{2n}, \zeta_{2n}^{d})\right>\]
with $d \cong \pm 1$ modulo each prime power dividing $2n$. The two points of view are reconciled by the following lemma.

\begin{lemma}\label{lem:equiv-rep}
	Let $G \subseteq \GG_{m}^{2}(K) \subseteq \GL_{2}(K)$ be a finite, diagonal group of order prime with $\cha k_{0}$. The following are equivalent.
	\begin{enumerate}
		\item $G = \left< \diag(\zeta_{2m}, \zeta_{2m}), 	\diag(\zeta_{2n}, \zeta_{2n}^{-1}) \right>$ for some positive integers $m, n$.
		\item $G = \left< \diag(\zeta_{a}, 1), \diag(1, \zeta_{a}), \diag(\zeta_{2an}, \zeta_{2an}^{d}) \right>$
		for some positive integers $a, d, n$, with $d \cong \pm 1$ modulo each prime power dividing $2n$.
	\end{enumerate}
\end{lemma}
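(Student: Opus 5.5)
The plan is to make both conditions local at each prime and then compare the $p$-primary parts directly. Since $G$ is a finite diagonal group of order prime with $\cha k_{0}$, I would identify $G$ with a finite subgroup of $(\QQ/\ZZ)^{2}$ via $\diag(\zeta_{N}^{u},\zeta_{N}^{v}) \mapsto (u/N, v/N)$. Equivalently, I would work with the lattice $L \subseteq \QQ^{2}$ containing $\ZZ^{2}$ with $G = L/\ZZ^{2}$. The group $G$ is the direct product of its $p$-primary parts $G_{p}$, and both (1) and (2) should be expressible as conditions on each $G_{p}$ separately.

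First I would show that (1) holds if and only if, for every prime $p$, $G_{p}$ is generated by a \emph{scalar} element $(s,s)$ and an \emph{antiscalar} element $(-t,t)$ of $p$-power order, with a parity condition at $p=2$. In the direction $(1)\Rightarrow$ local, I decompose $\zeta_{2m}$ and $\zeta_{2n}$ into prime-power components using the CRT. For the converse, I choose the scalar and antiscalar orders at each prime and glue them via the CRT into integers $2m$ and $2n$. The point to check is that every $2$-part can be written with the factor $2$ present; if the $2$-part is trivial, I take the $2$-part of $2m$ and $2n$ to be $2$. This gives the generator $(-1,-1)$, so the result must be made consistent with the odd part; this is one place to be careful.

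Similarly, (2) holds if and only if, for each $p$, $G_{p} = \mu_{p^{\alpha}}^{2} \cdot \langle (x, \pm x)\rangle$ for some element $x$ whose order is a power of $p$ that is at least $p^{\alpha}$. The sign must be constant on each prime, which is exactly the condition $d \equiv \pm 1$ modulo each prime power dividing $2n$. For this I take $a = \prod p^{\alpha_{p}}$ with $\mu_{a}^{2} \subseteq G$ maximal, and recover $n$ and $d$ by the CRT from the local orders and signs. The extra factor $2$ in $2an$ is handled in the same way as in the previous step.

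The core of the proof is then a local computation. Let $p$ be odd, and let $H$ be generated by a scalar of order $p^{u}$ and an antiscalar of order $p^{v}$. Since $2$ is invertible modulo $p^{u}$ and $p^{v}$, the group $H$ contains $\mu_{p^{\min(u,v)}}^{2}$, and modulo this it is cyclic generated by the longer of the two. Hence $H$ has the shape $\mu_{p^{\alpha}}^{2}\langle(x,\pm x)\rangle$, with sign $+$ if $u \ge v$ and sign $-$ if $u < v$. Conversely, $\mu_{p^{\alpha}}^{2}$ is generated by a scalar and an antiscalar of order $p^{\alpha}$, and adding $(x,\pm x)$ just lengthens one of them. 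So the two local shapes coincide for odd $p$.

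The main obstacle is $p=2$, where $(s,s)$ and $(-s,s)$ generate only an index-$2$ subgroup of $\mu_{2^{u}}^{2}$ rather than all of it. My plan is to enumerate the $2$-local shapes directly. In shape (1), with scalar $\zeta_{2^{u}}$ and antiscalar $\zeta_{2^{v}}$, the group contains $\mu_{2^{\min(u,v)-1}}^{2}$ together with $(x,\pm x)$ of order $2^{\max(u,v)}$. I then check that this is exactly shape (2) with $2^{\alpha} = 2^{\min(u,v)-1}$ and the extra generator of order $2^{\alpha}\cdot 2^{\beta+1}$. This is also precisely where the factor $2$ in $2an$ and in $2m$, $2n$ enters. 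The degenerate cases $u=v$, $\alpha=0$ and a trivial $2$-part need separate verification. Once both the odd-prime and $2$-primary matchings are established, I reassemble the local data by the CRT in both directions to conclude.
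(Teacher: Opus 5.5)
Your proposal is correct, and it takes a genuinely different route from the paper.

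\textbf{The paper's route.} The paper argues globally and never decomposes into $p$-primary parts.
\begin{itemize}
\item For $(1)\Rightarrow(2)$ it sets $a=\gcd(m,n)$, $m=am'$, $n=an'$, and uses a Bézout relation $1=xm'+yn'$ to define $d=yn'-xm'$. This $d$ satisfies $d\equiv 1 \pmod{2m'}$ and $d\equiv -1 \pmod{2n'}$. It then writes $\diag(\zeta_a,1)$, $\diag(1,\zeta_a)$ and $\diag(\zeta_{2am'n'},\zeta_{2am'n'}^{d})$ as explicit products of the two generators, and concludes by comparing orders ($2a^2m'n'=2mn$).
\item For $(2)\Rightarrow(1)$ it takes $2m'$ and $2n'$ to be the largest divisors of $2n$ on which $d\equiv 1$, resp.\ $d\equiv -1$. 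It checks $\gcd(m',n')=1$ and $n=m'n'$, exhibits a scalar and an antiscalar element in $G$, and counts orders again.
\end{itemize}
The containment-plus-order-count trick spares the paper any description of the subgroup structure, and it yields explicit formulas ($a=\gcd(m,n)$, new $n=m'n'$).

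\textbf{Your route.} The $p$-primary decomposition is longer: it needs case analysis at $p=2$ and CRT gluing in both directions. In exchange, it explains why the condition is ``$d\equiv\pm1$ modulo each prime power'' (the sign is a per-prime invariant), and it isolates the index-$2$ phenomenon at $p=2$ as the only real subtlety. Your local identifications check out: $\alpha_p=\min(u_p,v_p)$ for odd $p$ and $\alpha_2=\min(u_2,v_2)-1$. Your maximal $a$ therefore coincides with the paper's $\gcd(m,n)$.

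\textbf{One point to tidy.} Your remark about taking the $2$-parts of $2m,2n$ to be $2$ ``if the $2$-part is trivial'' should be replaced by a cleaner observation.
\begin{itemize}
\item Both (1) and (2) force $-\Id\in G$. In (1) this is the first generator raised to the $m$; in (2), $d$ is odd, so the third generator raised to $an$ is $-\Id$. Hence $G_2\neq 0$ is part of the local condition at $2$ on both sides, and if $G_2=0$ neither condition holds.
\item When $G_2=\{\pm\Id\}$, you take both $2$-parts equal to $2$.
\item There is no consistency issue with the odd part. Scalars and antiscalars each form a copy of $\QQ/\ZZ$, with a unique cyclic subgroup of each order, so the local generators glue uniquely.
\end{itemize}
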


\begin{proof}
	$(1) \Rightarrow (2)$. Write $a = \gcd(m,n)$, $m = am'$, $n = an'$. Since $\gcd(m', n') = 1$, we may write $1 = xm' + yn'$ for some $x, y \in \ZZ$.
	
	Let $d = yn' - xm'$. Notice that $d = 1 - 2xm' \cong 1 \pmod{2m'}$ and $d = 2yn' - 1 \cong - 1 \pmod{2n'}$. In $G$, we have the three elements
	\[\diag(\zeta_{2m}, \zeta_{2m})^{y} \cdot \diag(\zeta_{2n}, \zeta_{2n}^{-1})^{x} = \diag(\zeta_{2am'n'}^{yn' + xm'}, \zeta_{2am'n'}^{yn' - xm'}) = \]
	\[ = \diag(\zeta_{2am'n'}, \zeta_{2am'n'}^{d}),\]
	\[\diag(\zeta_{2m}, \zeta_{2m})^{m'} \cdot \diag(\zeta_{2n}, \zeta_{2n}^{-1})^{n'} = \diag(\zeta_{a}, 1),\]
	\[\diag(\zeta_{2m}, \zeta_{2m})^{m'} \cdot \diag(\zeta_{2n}, \zeta_{2n}^{-1})^{-n'} = \diag(1, \zeta_{a}).\]
	The group generated by these three elements has order $2a^{2}m'n'= 2mn = |G|$, hence they generate $G$.
	
	$(2) \Rightarrow (1)$. Notice that $d \cong 1 \cong -1 \pmod{2}$. Let $2m'$ be the largest divisor of $2n$ such that $d \cong 1 \pmod{2m'}$, and $2n'$ the largest divisor such that $d \cong -1 \pmod{2n'}$. The hypothesis on $d$ implies that $\gcd(m', n') = 1$ and $n = m'n'$. In $G$, we have elements
	\[\diag(\zeta_{2am'}, \zeta_{2am'}^{d}) = \diag(\zeta_{2am'}, \zeta_{2am'}^{1 + 2\alpha m'}) = \diag(\zeta_{2am'}, \zeta_{2am'}) \cdot \diag(1, \zeta_{a}^{\alpha})\]
	\[\diag(\zeta_{2an'}, \zeta_{2an'}^{d}) = \diag(\zeta_{2an'}, \zeta_{2an'}^{-1 + 2\beta n'}) = \diag(\zeta_{2an'}, \zeta_{2an'}^{-1}) \cdot \diag(1, \zeta_{a}^{\beta})\]
	for some $\alpha, \beta$. The group generated by $\diag(\zeta_{2am'}, \zeta_{2am'})$, $\diag(\zeta_{2an'}, \zeta_{2an'}^{-1})$ has order $2a^{2}m'n' = 2a^{2}n = |G|$, hence they generate $G$.
\end{proof}

\subsection{$\PGL_{3}$}

Neutral and $\rho$-neutral subgroups of $\PGL_{3}$ in characteristic $0$ were classified in \cite{bresciani-plane}, up to minor details. Here, we rephrase the classification in the new language, and give details for some neutral groups for which $\rho$-neutrality was not checked.

For the remainder of Sections~\ref{sect:dim2} and \ref{sect:gl3}, we assume that $\cha k_{0} = 0$, in order to have a better grasp on finite subgroups of $\GL_{3}(K)$.

\begin{definition}\label{def:hessian}
	Write
	\[ M_{0}=\left( \begin{array}{ccc}
	1 & 0 &	0 \\
	0 & \zeta_{3} & 0 \\
	0 & 0 & \zeta_{3}^{2} \\
	\end{array} \right),~
	M_{1}=\left( \begin{array}{ccc}
	0 & 0 & 1 \\
	1 & 0 & 0 \\
	0 & 1 & 0
	\end{array} \right),~
	M_{2}=-\left( \begin{array}{ccc}
	1 & 0 & 0 \\
	0 & 0 & 1 \\
	0 & 1 & 0
	\end{array} \right),
	\]
	
	\[
	M_{3}=\frac{1}{2\zeta_{3} + 1}\left( \begin{array}{ccc}
	1 & 1 & 1 \\
	1 & \zeta_{3} & \zeta_{3}^{2} \\
	1 & \zeta_{3}^{2} & \zeta_{3}
	\end{array} \right),{}~
	M_{4}=\frac{1}{2\zeta_{3} + 1}\left( \begin{array}{ccc}
	1 & 1 & \zeta_{3} \\
	1 & \zeta_{3} & 1 \\
	\zeta_{3}^{2} & \zeta_{3} & \zeta_{3}
	\end{array} \right),\]
	
	\[
	M_{5}=\zeta_{9}^{-1}\left( \begin{array}{ccc}
	1 & 0 &	0 \\
	0 & 1 & 0 \\
	0 & 0 & \zeta_{3} \\
	\end{array} \right).\]
	Let $H_{i}$ be the subgroup of $\PGL_{3}(K)$ generated by $M_{0}, \dots, M_{i}$.
\end{definition}

The matrices $M_{0}, \dots, M_{5}$ are normalized so that $M_{i} \in \SL_{3}$ for every $i$. This is not important as long as we work in $\PGL_{3}$, but it will be useful later when we pass to $\GL_{3}$.

\begin{definition}\label{def:type-x}
	Consider the following list of finite subgroups of $\PGL_{3}(K)$.
	\begin{enumerate}[(a)]
		\item The abelian group $C_{a} \times C_{an}$ generated by $\diag(\zeta_{a},1,1)$, $\diag(1,\zeta_{a},1)$, $\diag(\zeta_{an},\zeta_{an}^{d},1)$ for some positive integers $a,n,d$ satisfying $d^{2}-d+1\cong 0\pmod{n}$ and $3\mid an$.
		
		\item The abelian group of order $2mn$ generated by $\diag(\zeta_{2m}, \zeta_{2m}, 1)$, $\diag(\zeta_{2n}, \zeta_{2n}^{-1}, 1)$ for some positive integers $m,n$.
		
		\item The group $H_{2} \simeq C_{3}^{2}\rtimes C_{2}$ of order $18$.
		
		\item The group $H_{3} \simeq C_{3}^{2}\rtimes C_{4}$ of order $36$.
		
		\item The abelian group $C_{a} \times C_{an}$ generated by $\diag(\zeta_{a},1,1)$, $\diag(1,\zeta_{a},1)$, $\diag(\zeta_{an},\zeta_{an}^{d},1)$ for positive integers $a,n,d$ satisfying $d^{2}-d+1\cong 0\pmod{n}$ and $3 \nmid an$.
	\end{enumerate}
	
	We say that $G \subseteq \PGL_{3}(K)$ is of type $(x)$ if it is conjugate to one of the groups listed in $(x)$.
\end{definition}

In $\PGL_{3}$, a new phenomenon occurs: whether a group is neutral or not depends on $k_{0}$.

\begin{theorem}\label{thm:pgl3}
	Assume $\cha k_{0} = 0$, and let $G \subseteq \PGL_{3}(K)$ be a finite subgroup. 
	\begin{itemize}
		\item If $G$ is of type $(a)$, $(b)$, or $(c)$, it is not neutral, regardless of $k_{0}$.
		\item If $G$ is of type $(d)$ and $\zeta_{3} \in k_{0}$, then $G$ is $\rho$-neutral. If $\zeta_{3} \notin k_{0}$, then $G$ is not neutral.
		\item If $G$ is of type $(e)$, then $G$ is neutral and not $\rho$-neutral, regardless of $k_{0}$. 
		\item If $G$ is not of type $(a) - (e)$, then $G$ is $\rho$-neutral.
	\end{itemize}
	
\end{theorem}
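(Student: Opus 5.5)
The plan is to translate the classification of \cite{bresciani-plane} into the present language, using the compression $\bP$ and Lemma~\ref{lem:proj-compression-neutral}. In \cite{bresciani-plane} one studies, for a $j$-gerbe $\cP \to \cG$ with $G \subseteq \PGL_{3}(K)$, whether the coarse space $\bP$ (a surface with tame quotient singularities) has a liftable rational point. By Lemma~\ref{lem:proj-compression-neutral} this is equivalent to $\cP(k) \neq \emptyset$, i.e. to $\rho$-neutrality. So the first step is to recall from \cite{bresciani-plane} that, if $G$ is not of type $(a)$--$(e)$, then either a distinguished subvariety argument applies or $\bP$ always has a liftable point. The distinguished subvarieties are a fixed point, a fixed line, or an orbit of odd cardinality, and they descend by Lemma~\ref{lem:distinguished}. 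When such a subvariety exists, $\cP(k)\neq\emptyset$ follows either directly or via the $\PGL_{2}$ result, Theorem~\ref{thm:pgl2}, applied to the induced action on an invariant line or on the normal direction. This gives the last bullet. The same argument, with the extra input that $\zeta_{3}\in k_{0}$ makes the Hessian configuration of $H_{3}$ descend with enough rational structure, gives $\rho$-neutrality for type $(d)$ when $\zeta_{3}\in k_{0}$.

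For the non-neutrality statements I would exhibit explicit non-neutral $j$-gerbes, preferring the cohomological criterion of Lemma~\ref{lem:not-neutral}. For types $(a)$ and $(b)$, $G$ is diagonal in $\PGL_{3}$ with normalizer $T\rtimes S$, where $T$ is the maximal torus and $S\subseteq S_{3}$. The plan is to find a field $k$ and a class in $\H^{1}(k,(T/G)\rtimes S)$ that does not lift to $\H^{1}(k,T\rtimes S)$.

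For type $(a)$, take $S=C_{3}$ acting by the cyclic permutation. The condition $d^{2}-d+1\equiv 0 \pmod n$ is exactly what makes $G$ stable under this permutation. The obstruction then lives in a Brauer-type group $\H^{2}(k, G^{\tau})$ modulo its image from the torus. Concretely, I would take a cyclic cubic extension and a cubic algebra whose class obstructs, reducing to a computation with norm tori.

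For type $(b)$, the permutation group is $C_{2}$, and I would reduce to the $\GL_{2}$ non-neutral example of Theorem~\ref{thm:gl2} through the block embedding fixing the third coordinate. Type $(c)$ is obtained similarly by restricting to the $C_{2}$ part.

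For type $(d)$ with $\zeta_{3}\notin k_{0}$, the Galois action on $\zeta_{3}$ combines with the $C_{4}$ to produce the obstruction, and I would use the twisted Hessian example from \cite{bresciani-plane}. For type $(e)$, the condition $3\nmid an$ gives $\gcd(|G|,3)=1$. Since some cubic extension neutralizes $\cG$ (the $C_{3}$-torsor), Lemmas~\ref{lem:index-primes} and \ref{lem:index-neutral} give neutrality, while the type $(a)$-style construction still gives $\cP(k)=\emptyset$.

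The main obstacle is the non-neutrality construction for type $(a)$ (and $(d)$). Producing $\cP(k)=\emptyset$ is comparatively easy, but showing that the gerbe itself has no rational point requires a genuine nonvanishing computation in $\H^{2}$. I would first try to import directly the explicit counterexamples of \cite{bresciani-plane} over $K(s,t)$, checking that they are $j$-gerbes in the sense of Definition~\ref{def:ggerbe}.
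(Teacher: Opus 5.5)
The decisive gap is type $(d)$, in both directions. For $\zeta_{3} \in k_{0}$, the phrase ``the Hessian configuration descends with enough rational structure'' does not give an argument, and the obvious distinguished object does not help. The nine flexes form one $H_{3}$-orbit, and the stabilizer of the flex $[0:1:-1]$ is $\langle M_{3} \rangle \simeq C_{4}$, acting on the tangent plane as $\diag(\zeta_{4}, \zeta_{4}^{-1})$. So their image is a rational $A_{3}$ point of $\bP$, which is not of type $\rR$ and by itself gives no point of $\cP$. Indeed \cite{bresciani-plane} only obtained $\rho$-neutrality when $\zeta_{12} \in k_{0}$. The paper's proof is cohomological. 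The preimage $\tilde{H}_{3} \subseteq \SL_{3}$ is saturated with gate $\tilde{H}_{4}$, so by Proposition~\ref{prop:saturated-neut} and Theorem~\ref{thm:coh-crit} one needs $\H^{1}(k, \tilde{H}_{4}) \to \H^{1}(k, C_{2})$ to be surjective. Via $\fQ_{8} \subset \tilde{H}_{4}$, this rests on Proposition~\ref{prop:q8}: with $E = k[s]/(s^{2}+s+1)$, it identifies the connecting map $E^{*}/E^{*2} = \H^{1}(k, \fKl) \to \br(k)[2]$ of $1 \to C_{2} \to \fQ_{8} \to \fKl \to 1$ with the class of the conic $\tr_{E/k}(ew^{2}) + z^{2} = 0$. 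The rescaling trick of Corollary~\ref{cor:klq8} then makes this conic split. For $\zeta_{3} \notin k_{0}$, the example over $\RR$ from \cite{bresciani-plane} only covers $k_{0} \subseteq \RR$ (not, e.g., $k_{0} = \QQ(i)$), while the statement is for every such $k_{0}$. The paper takes $k = k_{0}(t)$, where norms from $k(\zeta_{3})$ have even $t$-adic valuation, so $[t]$ is not in the image of $\H^{1}(k, \fKl) \to \H^{1}(k, C_{2})$. It concludes by Theorem~\ref{thm:norm}, since $\fH_{4} \to \fH_{4}/\fH_{3}$ factors through $\fKl$.

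Two further steps fail as written. In the last bullet, \cite[Theorem 4.1]{bresciani-plane} excludes $G = H_{1}$, so the citation does not suffice; the paper adds Proposition~\ref{prop:27}, lifting along $\fQ_{8} \rtimes \mu_{9} \to \fQ_{8} \rtimes \mu_{3}$. Your own tool would actually close this case. The nine base points of the Hesse pencil are stable under the normalizer $H_{5}$ and under Galois, and $H_{1}$ acts on them simply transitively. By Lemma~\ref{lem:distinguished} they descend to a closed substack of $\cP$ that is geometrically $[Y/H_{1}] \simeq \spec \bar{k}$, hence a $k$-point of $\cP$. For type $(e)$, a ``type $(a)$-style construction'' cannot produce $\cP(k) = \emptyset$. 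Every type $(e)$ gerbe is neutral, so there is no non-liftable class in $\H^{1}(k, (T/G) \rtimes S)$ to exhibit; the obstruction comes from the scalars $\mu_{3} \subset \SL_{3}$. The paper uses saturation to reduce to non-neutrality of the preimage $\tilde{G} \subseteq \SL_{3}$. It twists the diagonal torus of $\GL_{3}$ by a non-trivial $C_{3}$-torsor to get $T$ with $\H^{1}(k, T) = 0$. Lemma~\ref{lem:perm} and \cite[Theorem 18]{huruguen} give $\H^{1}(k, T/\mu_{3}) \neq 0$ for suitable $k$. Since $3 \nmid |\fG'/\mu_{3}|$, this group embeds into $\H^{1}(k, T/\fG')$; a non-trivial Brauer--Severi surface handles $an = 1$. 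On the positive side, your reduction of type $(b)$ to Theorem~\ref{thm:gl2} via the block embedding $A \mapsto [\diag(A,1)]$ of $\GL_{2}$ into $\PGL_{3}$ is correct and avoids the citation. By contrast, ``restricting to the $C_{2}$ part'' for type $(c)$ is not an argument. For $(a)$ and $(c)$ you should simply cite \cite[\S 4.11, \S 4.13]{bresciani-plane}, as the paper does; those examples live over $K((s))((t))$, not $K(s,t)$.
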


\subsubsection{Cohomological computations for subquotients of $H_{4}$}\label{sect:h4-cohom}

Before starting the case-by-case analysis, let us establish some facts about the cohomology of some subquotients of $H_{4}$, these will be used later. 

Notice that $M_{0}, \dots, M_{4}$ are defined over $\QQ(\zeta_{3})$. Let us write the Galois conjugates of these matrices with respect to the only non-trivial automorphism of $\QQ(\zeta_{3})$ over $\QQ$.

\[\bar{M}_{0} = M_{0}^{-1},~~~\bar{M}_{1} = M_{1},~~~\bar{M}_{2} = M_{2},\]
\[\bar{M}_{3} = M_{3}^{-1},~~~\bar{M}_{4} = \zeta_{3}^{2}M_{4}M_{3}\]

In particular, we get that $H_{0}, \dots,  H_{4}$ are Galois invariant and define group schemes $\fH_{0}, \dots, \fH_{4} \subset \PGL_{3}$ over $\QQ$.

Furthermore, by direct computation we have $M_{2}^{2} = \Id$ and
\[M_{3}^{2} = (\zeta_{3}M_{1}M_{4})^{2} = (\zeta_{3}M_{1}M_{4}M_{3})^{2} = M_{2},\]
hence $Q_{8} = \langle M_{3}, \zeta_{3}M_{1}M_{4} \rangle$ is a Galois-invariant quaternion group defining a group scheme $\fQ_{8} \subset \SL_{3}$. Since the kernel of $\SL_{3} \to \PGL_{3}$ has order $3$, the composition $\fQ_{8} \subset \SL_{3} \to \PGL_{3}$ is injective and identifies $\fQ_{8}$ with a subgroup of $\fH_{4}$. The quotient of $\fQ_{8}$ by its center $\langle M_{2} \rangle$ is a twisted form $\fKl$ of the Klein group
\[1 \to C_{2} \to \fQ_{8} \to \fKl \to 1.\]

This twisted Klein group identifies with $\fH_{4}/\fH_{2}$ as well, hence we get a factorization
\[\fQ_{8} \subset \fH_{4} \to \fH_{4}/\fH_{2} = \fKl.\] 
The Galois action of $\gal(\QQ(\zeta_{3})/\QQ)$ on the twisted Klein group fixes the identity and an order $2$ element (the class of $M_{3}$), and swaps the other two elements. Hence, there is a short exact sequence
\[0 \to C_{2} \to \fKl \to C_{2} \to 0\]
where $C_{2} \subset \fKl$ is generated by $M_{3}$. 

\begin{lemma}\label{lem:newklein}
	Let $k$ be any field of characteristic $0$, and $E$ be the étale algebra $E = k[s]/(s^{2} + s + 1)$ ($E$ is a field if and only if $\zeta_{3} \notin k$).
	
	We have a natural isomorphism $\H^{1}(k, \fKl) = E^{*}/E^{*2}$, and the long exact sequence in cohomology
	\[\H^{1}(k, C_{2}) \to \H^{1}(k, \fKl) \to \H^{1}(k, C_{2})\]
	\[k^{*}/k^{*2} \to E^{*}/E^{*2} \to k^{*}/k^{*2}\]
	is induced by the inclusion $k^{*} \subset E^{*}$ on the left and by the norm $\rN_{E/k} : E^{*} \to k^{*}$ on the right.
\end{lemma}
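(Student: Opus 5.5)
The plan is to identify $\fKl$ with the Weil restriction $\rR_{E/k}\mu_{2}$ and then read everything off from Kummer theory for the torus $\rR_{E/k}\GG_{m}$. Over $\QQ$, hence over any $k$ of characteristic $0$, a finite étale group scheme is the same as a finite group with a continuous action of the absolute Galois group $\gal(\bar{k}/k)$. By the description above, $\fKl$ corresponds to $(\ZZ/2)^{2}$ on which $\gal(\bar{k}/k)$ acts through the character cutting out $E$: the Galois action factors through $\gal(k(\zeta_{3})/k)$ via $\gal(\QQ(\zeta_{3})/\QQ)$. This action fixes one non-trivial element, the class of $M_{3}$, and swaps the other two.

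This is exactly the induced module $\ZZ/2[\gal(E/k)]$, with basis $e_{1}, e_{2}$ swapped by the action. Here $e_{1}+e_{2}$ corresponds to $M_{3}$, which confirms the hypothesis $\zeta_{3} \notin k$ plays no role: when $E = k \times k$, the module is trivial. Hence $\fKl \simeq \rR_{E/k}\mu_{2}$, where the Weil restriction is taken for the étale algebra $E$. Under this identification:
\begin{itemize}
\item the subgroup $C_{2} \subset \fKl$ generated by $M_{3}$ is the diagonal $\mu_{2} \to \rR_{E/k}\mu_{2}$;
\item the quotient $\fKl \to C_{2}$ is the norm map $\rR_{E/k}\mu_{2} \to \mu_{2}$, i.e. $e_{1}, e_{2} \mapsto 1$.
\end{itemize}
I will double-check that the swapped and fixed elements match and that this is the unique quotient with kernel $\langle M_{3}\rangle$. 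This matching is the only step requiring care; everything after it is formal.

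Next, consider the Kummer sequence $1 \to \rR_{E/k}\mu_{2} \to \rR_{E/k}\GG_{m} \xrightarrow{(\cdot)^{2}} \rR_{E/k}\GG_{m} \to 1$. Taking cohomology, and using $\H^{1}(k, \rR_{E/k}\GG_{m}) = \H^{1}(E, \GG_{m}) = 0$ (Shapiro plus Hilbert 90, which also holds componentwise if $E$ is split), gives the natural isomorphism $\H^{1}(k, \fKl) = E^{*}/E^{*2}$. The same argument for $\mu_{2}\subset\GG_{m}$ gives $\H^{1}(k, C_{2}) = k^{*}/k^{*2}$.

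Finally, I will compare these Kummer sequences through morphisms of exact sequences. The diagonal inclusion $\GG_{m} \to \rR_{E/k}\GG_{m}$ is compatible with squaring and restricts to the diagonal $\mu_{2} \to \rR_{E/k}\mu_{2}$. The norm $\rN : \rR_{E/k}\GG_{m} \to \GG_{m}$ is also compatible with squaring and restricts to the norm on $\mu_{2}$. Naturality of the connecting homomorphism then shows that the induced maps on $\H^{1}$ are the inclusion $k^{*}/k^{*2} \to E^{*}/E^{*2}$ and the norm $\rN_{E/k} : E^{*}/E^{*2} \to k^{*}/k^{*2}$, as claimed.
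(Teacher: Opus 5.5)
Your proposal is correct and takes the same route as the paper. The paper's proof is a one-liner: it observes that $\fKl$ is the module induced from $C_{2}$ along $E/k$ (that is, $\rR_{E/k}\mu_{2}$) and cites Serre's treatment of induced modules and Shapiro's lemma. Your argument via the Weil-restricted Kummer sequence, together with naturality of the connecting maps for the diagonal and norm morphisms, spells out exactly that citation.
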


\begin{proof}
	The twisted Klein group is the module induced by $C_{2}$ along the extension $E/k$. The statement then follows from \cite[\S 2.5]{serre}.
\end{proof}

\begin{proposition}\label{prop:q8}
	Let $k$ be any field of characteristic $0$, $E = k[s]/(s^{2} + s + 1)$, $\H^{1}(k, \fKl) = E^{*}/E^{*2}$ as in Lemma~\ref{lem:newklein}. Write $\tr_{E/k}$ for the trace.
	
	Consider the $3$-dimensional $k$-vector space $E \oplus k$. If
	\[\delta: \H^{1}(k, \fKl) = E^{*}/E^{*2} \to \H^{2}(k, C_{2}) = \br(k)[2]\]
	is the connecting map in non-abelian cohomology corresponding to 
	\[1 \to C_{2} \to \fQ_{8} \to \fKl \to 1,\]
	then $\delta([e])$ is the Brauer class corresponding to the conic
	\[\tr_{E/k}(ew^2) + z^{2} = 0,\]
	in $\PP(E \oplus k)$ with coordinates $(w,z) \in E \oplus k$.
\end{proposition}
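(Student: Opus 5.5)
The plan is to realize the central extension $1 \to C_{2} \to \fQ_{8} \to \fKl \to 1$ inside a linear representation, so that $\delta$ becomes the obstruction class of a twisted conic. Let $W \subset K^{3}$ be the $(-1)$-eigenspace of the central element $M_{2}$. Since $M_{2} = -\left(\begin{smallmatrix}1&0&0\\0&0&1\\0&1&0\end{smallmatrix}\right)$, this is the $2$-dimensional subspace spanned by $e_{1}$ and $e_{2}+e_{3}$. The space $W$ is Galois stable and $\fQ_{8}$-stable, and $\fQ_{8}$ acts on it faithfully through $\SL(W)$ with center acting by $-1$. Hence we get a commutative diagram of extensions, mapping $1 \to C_{2} \to \fQ_{8} \to \fKl \to 1$ to $1 \to \mu_{2} \to \SL(W) \to \PGL(W) \to 1$, where the left vertical map is the identity $C_{2} = \mu_{2}$. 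By functoriality of connecting maps, $\delta([e])$ is the image in $\H^{2}(k,\mu_{2}) = \br(k)[2]$ of the class of the $\PGL(W)$-torsor induced from the $\fKl$-torsor $[e]$. That image is the Brauer class of the twist of $\PP(W) \simeq \PP^{1}$, which is a conic.

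Next I make this conic explicit through the Veronese embedding $\PP(W) \hookrightarrow \PP(\operatorname{Sym}^{2}W)$. The image is the zero locus of the discriminant form $q$, which $\SL(W)$ preserves. Since $\pm 1$ act trivially on $\operatorname{Sym}^{2}W$, the action of $\fQ_{8}$ factors through $\fKl$, and $\fKl$ acts on the $3$-dimensional space $\operatorname{Sym}^{2}W$ with eigenvalues $\pm1$. I check that $\operatorname{Sym}^{2}W$ decomposes as the sum of the three nontrivial characters of $\fKl$ (this is the standard fact for $Q_{8} \subset \SL_{2}$). In a basis of eigenvectors the form $q$ becomes diagonal, $a_{1}x_{1}^{2}+a_{2}x_{2}^{2}+a_{3}x_{3}^{2}$.

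Now bring in the Galois action, using the explicit conjugates $\bar{M}_{3} = M_{3}^{-1}$ and $\bar{M}_{4} = \zeta_{3}^{2}M_{4}M_{3}$. The character vanishing on the class of $M_{3}$ is Galois-fixed, and the other two are swapped. So, as a Galois-$\fKl$-module, $\operatorname{Sym}^{2}W \simeq E \oplus k$, with $\fKl = \operatorname{Res}_{E/k}\mu_{2}$ acting by multiplication on $E$ and through the norm on $k$. After rescaling the eigenvectors compatibly with Galois, the form should become $q = \tr_{E/k}(w^{2}) + c\,z^{2}$ for an explicit constant $c$ coming from the matrices.

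Twisting by the torsor $[e] \in E^{*}/E^{*2}$, which is computed by Kummer theory as in Lemma~\ref{lem:newklein}, replaces $w$ by $\sqrt{e}\,w$ and $z$ by $\sqrt{\rN(e)}\,z$. This yields the twisted conic $\tr_{E/k}(e w^{2}) + c\,\rN_{E/k}(e)\, z^{2} = 0$.

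I expect the main obstacle to be this final normalization step: showing that the conic obtained is isomorphic to $\tr_{E/k}(ew^{2}) + z^{2} = 0$, and not merely to a form differing by the norm term or a constant. I would first compute $c$ directly from the basis $e_{1}, e_{2}+e_{3}$ of $W$, hoping it is a square, or one that $\tr(\cdot)$ absorbs. Then I would try to remove the factor $\rN(e)$. One option is the substitution $w \mapsto \bar{e}^{-1}w$ combined with multiplying the whole form by a scalar; another is to use the fact that conics are determined by their quaternion class, reducing to a symbol identity such as $(a,b)$ with $\rN(e)$ absorbed. As a sanity check, I would verify the formula on the cases $e \in k^{*}$, where $\delta$ must factor through $k^{*}/k^{*2} \to \br(k)[2]$ via the pullback along $C_{2} \subset \fKl$, and $e$ a norm-one element. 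If the twist lands on the $\bar{e}$-version instead, I would fix this by reversing the identification $\H^{1}(k,\fKl) = E^{*}/E^{*2}$ through conjugation, which does not change the conic up to isomorphism.
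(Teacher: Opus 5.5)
You have the same structure as the paper's proof: restrict to the $(-1)$-eigenspace $W=\{y=z\}$ of $M_{2}$, compare $1\to C_{2}\to\fQ_{8}\to\fKl\to1$ with $1\to\mu_{2}\to\SL(W)\to\PGL(W)\to1$, and read $\delta([e])$ off as the twist of the Veronese conic in $\PP(\operatorname{Sym}^{2}W)$. Your handling of the factor $\rN_{E/k}(e)$ also works. Divide by $\rN_{E/k}(e)$, substitute $w\mapsto\bar e w$, then apply the conjugation of $E$. This shows the conics $\tr_{E/k}(ew^{2})+c\,\rN_{E/k}(e)z^{2}=0$ and $\tr_{E/k}(ew^{2})+cz^{2}=0$ are isomorphic.

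The gap is the normalization step you flag yourself, and it is the whole content of the proposition. The $k$-linear, $\fKl$-equivariant automorphisms of $E\oplus k$ are only the maps $(w,z)\mapsto(\mu w,\nu z)$ with $\mu\in E^{*}$ and $\nu\in k^{*}$. So Galois-compatible rescaling of eigenvectors only brings the invariant form to $\tr_{E/k}(\lambda w^{2})+cz^{2}$, where:
\begin{itemize}
\item $\lambda$ is determined only modulo $E^{*2}$;
\item $c$ is determined only modulo $k^{*2}$;
\item the pair $(\lambda,c)$ is determined only up to a common factor in $k^{*}$.
\end{itemize}
Your claim that rescaling reaches $\lambda=1$ is therefore unjustified, and $c$ is left undetermined. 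Your twisting argument then only yields the conic $\tr_{E/k}(\lambda e w^{2})+c\,\rN_{E/k}(e)z^{2}=0$.

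These constants cannot be absorbed by manipulating the form. Take $\zeta_{3}\in k$ and $e=(e_{1},e_{2})\in k^{*}\times k^{*}$. Replacing $c$ by $cc'$ changes the class $(-ce_{1},-ce_{2})$ by $(c',-e_{1}e_{2})$, which is nonzero for suitable $e$ unless $c'$ is a square. Spot checks on special $e$ (scalars, norm-one elements) can at best constrain $\lambda$ and $c$; they cannot prove the formula.

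What is missing is an explicit, Galois-compatible identification of the eigenlines of $\operatorname{Sym}^{2}W$ with $E\oplus k$, together with an actual computation of the discriminant in those coordinates. The paper does exactly this. In the basis $(1,0,0),(0,1,1)$ of $W$ it defines $\nu([p:q])=[X:Y:Z]$ with $X=-\zeta_{3}p^{2}+2pq+2\zeta_{3}^{2}q^{2}$, $Y=-\zeta_{3}^{2}p^{2}+2pq+2\zeta_{3}q^{2}$ and $Z=-p^{2}+2pq+2q^{2}$. It then checks three things:
\begin{itemize}
\item $\nu$ is Galois-equivariant for $X=w(\zeta_{3})$, $Y=w(\zeta_{3}^{2})$, $Z=z$;
\item the image of $\nu$ is $X^{2}+Y^{2}+Z^{2}=0$, so $\lambda=c=1$;
\item with $J=M_{3}$ and $K=\zeta_{3}M_{1}M_{4}$, the class $[K]$ changes the sign of $X$ and $[JK]$ changes the sign of $Y$.
\end{itemize}
Using this linear lift, which is trivial on $Z$, instead of your $\mathrm{SO}_{3}$-lift through the norm also removes the $\rN_{E/k}(e)$ factor. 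The twist then gives $\tr_{E/k}(ew^{2})+z^{2}=0$ directly.
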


\begin{proof}
	Remember that $\fQ_{8} \subset \PGL_{3}$ is a twisted form of the quaternion group generated by $J = M_{3}$, $K = \zeta_{3}M_{1}M_{4}$ satisfying $J^{2} = K^{2} = (JK)^{2} = M_{2}$.
	
	Write $W = \{y = z\} \subset \AA^{3}$, it is the $-1$ eigenspace of $J^{2}=K^{2}=M_{2}$. Since $J$, $K$ commute with $M_{2}$, they map the eigenspace $W$ to itself. Furthermore, the vector $(0,1,-1)$ is an eigenvector of both $J$ and $K$ with eigenvalue $1$. If we write $\AA^{3} = W \oplus \langle (0,1,-1) \rangle$, this implies $\fQ_{8} \subset \SL(W) \subset \SL_{3}$.
	
	Using the basis $e_{0} = (1,0,0)$, $e_{1} = (0,1,1)$ of $W$, identify $\SL(W) \simeq \SL_{2}$. Write $\rho$ for the induced embedding $\fQ_{8} \hookrightarrow \SL_{2}$, we have
	\[\rho(J) = \frac{1}{\sqrt{-3}}
	\begin{pmatrix}
	1&2\\
	1&-1
	\end{pmatrix},
	\]
	\[ \rho(K)
	=
	\frac{1}{\sqrt{-3}}
	\begin{pmatrix}
	1&2\zeta_{3}^2\\
	\zeta_{3}&-1
	\end{pmatrix},
	\]
	\[ \rho(J^{2}) = \rho(K^{2}) = \rho((JK)^{2})
	=
	\begin{pmatrix}
	-1&0\\
	0&-1
	\end{pmatrix}.
	\]
	
	Projectivization gives an embedding $\fQ_{8}/C_{2} = \fKl \subset \PGL_{2}$, hence we get a commutative diagram of short exact sequences
	\[\begin{tikzcd}
		1\rar	&	C_{2}\rar\dar[equal]	&	\fQ_{8}\rar\dar		&	\fKl\rar\dar	&	1	\\
		1\rar	&	C_{2}\rar				&	\SL_{2}\rar			&	\PGL_{2}\rar	&	1.
	\end{tikzcd}\]
	Since the connecting map $\H^{1}(k,\PGL_{2}) \to \H^{2}(k,C_{2}) = \br(k)[2]$ maps a $\PGL_{2}$-torsor $\tau$ to the Brauer class of the twist of $\PP^{1}$ by $\tau$ \cite[Theorems 4.4.5 and 5.2.1]{gille-szamuely}, we get that the connecting map $\H^{1}(k, \fKl) \to \br(k)[2]$ does the same with $\fKl$-torsors. Let us compute the twist of a class $[e] \in \H^{1}(k, \fKl) = E^{*}/E^{*2}$ explicitly.
	
	Consider the base change $\PP(E \oplus k)_{\bar{k}}$. The natural identification $E \otimes \bar{k} \simeq \bar{k} \oplus \bar{k}$ maps $w \otimes a \mapsto (w(\zeta_{3})a, w(\zeta_{3}^{2})a)$ (recall that $E = k[s]/(s^{2} + s + 1)$). The action of $\gal(\bar{k}/k)$ on $E \otimes \bar{k}$ turns into the action on $\bar{k} \oplus \bar{k}$ which swaps the coordinates whenever the Galois automorphism swaps $\zeta_{3}$ and $\zeta_{3}^{2}$. If $X,Y,Z$ are the coordinates on $\bar{k} \oplus \bar{k} \oplus \bar{k}$, the coordinates $(w, z)$ on $E \oplus k$ map to $X = w(\zeta_{3})$, $Y = w(\zeta_{3}^{2})$, $Z = z$.
	
	Over $\bar{k}$, consider the non-standard Veronese embedding
	\[\nu:\PP(W)_{\bar{k}}\longrightarrow \PP(E \oplus k)_{\bar{k}} \simeq \PP(\bar{k} \oplus \bar{k} \oplus \bar{k}),\]
	given by
	\[\nu([p:q])=[X:Y:Z],\]
	where
	\[
	\begin{aligned}
	X = -\zeta_{3} p^2+2pq+2\zeta_{3}^2q^2,\\
	Y = -\zeta_{3}^2p^2+2pq+2\zeta_{3} q^2,\\
	Z = -p^2+2pq+2q^2.
	\end{aligned}
	\]
	Since $\gal(\bar{k}/k)$ swaps $X$ and $Y$ whenever $\zeta_{3} \mapsto \zeta_{3}^{2}$, the map $\nu$ is Galois equivariant and descends to a map $\PP(W) \to \PP(E \oplus k)$.
	
	One checks directly that $\nu$ identifies $\PP(W)$ with the conic $\rC_{0}$ with equation
	\[X^2+Y^2+Z^2=0.\]
	The action of $\fKl_{\bar{k}}$ on $\PP(E \oplus k)_{\bar{k}}$ defined by
	\[[K]:[X:Y:Z]\longmapsto[-X:Y:Z],\]
	\[[JK]:[X:Y:Z]\longmapsto[X:-Y:Z]\]
	is Galois equivariant, because $\gal(\bar{k})$ swaps both $K$, $JK$ and $X$, $Y$ whenever $\zeta_{3} \mapsto \zeta_{3}^{2}$. It descends to an action of $\fKl$ on $\PP(E \oplus k)$ such that $\nu$ is $\fKl$-equivariant.
	
	Now fix a class $[e] \in \H^{1}(k,\fKl) = E^{*}/E^{*2}$, we want to compute the twist of $\PP(W) \subset \PP(E \oplus k)$. There exist $\alpha, \beta \in \bar{k}^{*}$ such that $\alpha^{2} = e(\zeta_{3})$, $\beta^{2} = e(\zeta_{3}^{2})$, then $[e]$ corresponds to the cocycle $\chi(g) = \left(\frac{g(\alpha)}{\alpha}, \frac{g(\beta)}{\beta}\right) \in \{\pm1\}^{2}$ when $g(\zeta_{3}) = \zeta_{3}$, and $\chi(g) = \left(\frac{g(\alpha)}{\beta}, \frac{g(\beta)}{\alpha}\right) \in \{\pm1\}^{2}$ when $g(\zeta_{3}) = \zeta_{3}^{2}$.
	
	The twisted Galois action on $\PP(E \oplus k)_{\bar{k}}$ maps $[X:Y:Z]$ to $\left[\frac{g(\alpha)}{\alpha} g(X):\frac{g(\beta)}{\beta}g(Y):g(Z)\right]$ when $g(\zeta_{3}) = \zeta_{3}$, and to $\left[\frac{g(\beta)}{\alpha}g(Y):\frac{g(\alpha)}{\beta} g(X):g(Z)\right]$ otherwise.
	
	The new coordinates $X' = \frac{X}{\alpha}$, $Y' = \frac{Y}{\beta}$, $Z' = Z$ satisfy
	\[g \ast X' = g(X'), g \ast Y' = g(Y'), g \ast Z' = g(Z')\]
	when $g(\zeta_{3}) = \zeta_{3}$ and 
	\[g \ast X' = g(Y'), g \ast Y' = g(X'), g \ast Z' = g(Z')\]
	otherwise. Hence, $\PP(E \oplus k)_{\bar{k}}$ under the new Galois action descends to a \emph{new copy} of $\PP(E \oplus k)$. The twist of $\PP(W)$ by $e$ is the descent $\rC_{e}$ of $\rC_{0}$ in the new copy of $\PP(E \oplus k)$. In coordinates, $\rC_{0}$ is given by
	\[X^{2} + Y^{2} + Z^{2} = 0,\]
	that is
	\[(\alpha X')^{2} + (\beta Y')^{2} + Z'^{2} = 0,\]
	\[e(\zeta_{3})X'^{2} + e(\zeta_{3}^{2}) Y'^{2} + Z'^{2} = 0.\]
	In the new copy of $\PP(E \oplus k)$, we have $X' = w(\zeta_{3})$, $Y' = w(\zeta_{3}^{2})$, $Z' = z$, hence $\rC_{e}$ over $k$ is defined by
	\[e(\zeta_{3})w(\zeta_{3})^{2} + e(\zeta_{3}^{2})w(\zeta_{3}^{2})^{2} + z^{2} = 0,\]
	\[\tr_{E/k}(ew^2)+z^2=0\]
	in $\PP(E\oplus k)$.
\end{proof}

\begin{corollary}\label{cor:klq8}
	The map $\H^{1}(k, \fQ_{8} \times C_{2}) \to \H^{1}(k, \fKl)$ is surjective, where $\fQ_{8} \to \fKl$ is the projection and $C_{2} \to \fKl$ is the embedding given above.
	
	As a consequence, the images of $\H^{1}(k, \fQ_{8}) \to \H^{1}(k, C_{2})$ and $\H^{1}(k, \fKl) \to \H^{1}(k, C_{2})$ coincide.
\end{corollary}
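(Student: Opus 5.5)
Given a class $[e] \in \H^{1}(k, \fKl) = E^{*}/E^{*2}$, I will write it as a product of an element in the image of $\H^{1}(k, \fQ_{8}) \to \H^{1}(k, \fKl)$ and an element of $k^{*}/k^{*2}$ coming from $C_{2} \subset \fKl$. The map $\fQ_{8} \times C_{2} \to \fKl$, $(q,c) \mapsto \bar{q}c$, is a homomorphism because $\fKl$ is abelian. On $\H^{1}$, since the $C_{2}$ factor maps into the centre-like abelian target, the induced map is $([q],[c]) \mapsto [\bar q]\cdot[c]$ in the group $E^{*}/E^{*2}$. So surjectivity amounts to the following: for every $e \in E^{*}$ there is $c \in k^{*}$ such that $[ce]$ lies in the kernel of $\delta$, i.e. in the image of $\H^{1}(k,\fQ_{8})$. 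Exactness of non-abelian cohomology for $1 \to C_{2} \to \fQ_{8} \to \fKl \to 1$ gives exactly this characterization of the image.

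By Proposition~\ref{prop:q8}, $\delta([ce])$ is the class of the conic $c\,\tr_{E/k}(ew^{2}) + z^{2} = 0$. It therefore suffices to choose $c$ so that this conic has a rational point. Take any $w_{0} \in E$ with $t := \tr_{E/k}(ew_{0}^{2}) \neq 0$; this is possible since the quadratic form $w \mapsto \tr_{E/k}(ew^{2})$ is nondegenerate for $e \in E^{*}$ (the trace form of an étale algebra is nondegenerate in characteristic $0$), hence not identically zero. Then set $c = -t^{-1}$. With this choice $(w,z) = (w_{0}, 1)$ is a $k$-point, since $c\,t + 1 = 0$. So $\delta([ce]) = 0$, $[ce]$ lifts to $\H^{1}(k,\fQ_{8})$, and $[e] = [ce]\cdot[c^{-1}]$ is in the image. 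The only care needed is the identification of the image of $\H^{1}(k,C_{2}) \to \H^{1}(k,\fKl)$ with $k^{*}/k^{*2}$ via $k^{*} \subset E^{*}$, which is Lemma~\ref{lem:newklein}, and the fact that the group structure on $\H^{1}(k,\fKl)$ used here is the one given by that lemma. I expect the main (mild) obstacle to be checking that the map on $\H^{1}$ induced by the product homomorphism is indeed multiplication in $E^{*}/E^{*2}$. This follows from functoriality applied to the multiplication homomorphism $\fKl \times \fKl \to \fKl$ of the commutative group scheme $\fKl$.

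For the consequence, compose with $\rN : \H^{1}(k,\fKl) \to \H^{1}(k,C_{2})$ from Lemma~\ref{lem:newklein}. The composite $\H^{1}(k, C_{2}) \to \H^{1}(k,\fKl) \to \H^{1}(k,C_{2})$ is trivial, either by exactness or because $\rN_{E/k}(c) = c^{2}$. Hence, for $[e] = [\bar q]\cdot[c]$ we get $\rN([e]) = \rN([\bar q])$. Therefore the image of $\H^{1}(k,\fKl) \to \H^{1}(k,C_{2})$ is contained in the image of $\H^{1}(k,\fQ_{8}) \to \H^{1}(k,C_{2})$ (via $\fQ_{8} \to \fKl \to C_{2}$). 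The reverse inclusion is trivial since the latter map factors through $\H^{1}(k,\fKl)$.
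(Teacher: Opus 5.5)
Your proof is correct and follows the paper's argument: you rescale $e$ by a scalar in $k^{*}$ (the image of the central $C_{2}$) so that the conic of Proposition~\ref{prop:q8} acquires a $k$-point, and for the second part you use that $C_{2} \to \fKl \to C_{2}$ is trivial. The differences are cosmetic. Choosing $w_{0}$ with $\tr_{E/k}(ew_{0}^{2}) \neq 0$ lets you avoid the paper's separate case $\tr_{E/k}(e) = 0$, where the paper uses the point $(w,z) = (1,0)$. You also phrase the compatibility through the group law on $\H^{1}(k,\fKl)$, whereas the paper uses the action of $\H^{1}(k,C_{2})$.
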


\begin{proof}
	Write $E = k[s]/(s^{2} + s + 1)$ as in Lemma~\ref{lem:newklein}, and choose $[e] \in \H^{1}(k, \fKl) = E^{*}/E^{*2}$. Since $\{1\} \times C_{2}$ is central in both $\fQ_{8} \times C_{2}$ and $\fKl$, then $\H^{1}(k,\{1\} \times C_{2}) = k^{*}/k^{*2}$ acts on both $\H^{1}(k, \fQ_{8} \times C_{2})$, $\H^{1}(k,\fKl)$, and $\H^{1}(k, \fQ_{8} \times C_{2}) \to \H^{1}(k,\fKl)$ is equivariant. This means that, if $a \in k^{*}$, then $[e]$ lifts to $\fQ_{8} \times C_{2}$ if and only if $[ae]$ lifts to $\fQ_{8} \times C_{2}$. 
	
	By Proposition~\ref{prop:q8}, $[e]$ lifts to $\H^{1}(k,\fQ_{8})$ if and only if the conic $\tr_{E/k}(ew^{2}) + z^{2} = 0$ in $\PP(E \oplus k)$ is split. If $\tr_{E/k}(e) = 0$, then $(w, z) = (1, 0)$ is a $k$-point of the conic. Otherwise, by the above we may replace $e$ with $-e/\tr_{E/k}(e)$, and assume that $\tr_{E/k}(e) = -1$. In this case, $(w, z) = (1, 1)$ is a $k$-point of the conic. This concludes the first part of the proof.
	
	By the above, the image of $\H^{1}(k, \fKl) \to \H^{1}(k, C_{2})$ is equal to the image of the composition $\H^{1}(k, \fQ_{8} \times C_{2}) \to \H^{1}(k, \fKl) \to \H^{1}(k, C_{2})$. Given that the composition $\{1\} \times C_{2} \to \fKl \to C_{2}$ is trivial, this coincides with the image of $\H^{1}(k, \fQ_{8}) \to \H^{1}(k, C_{2})$.
\end{proof}

\begin{corollary}\label{cor:q8surj}
	If $\zeta_{3} \in k_{0}$, $\H^{1}(k,\fQ_{8}) \to \H^{1}(k, C_{2})$ is surjective for every extension $k/k_{0}$.
	
	If $\zeta_{3} \notin k_{0}$, $\H^{1}(k,\fKl) \to \H^{1}(k, C_{2})$ is not surjective for $k = k_{0}(t)$
\end{corollary}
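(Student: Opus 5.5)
The plan is to compute everything through Lemma~\ref{lem:newklein}, which identifies $\H^{1}(k,\fKl) = E^{*}/E^{*2}$ and identifies the map $\H^{1}(k,\fKl)\to \H^{1}(k,C_{2}) = k^{*}/k^{*2}$ with the norm $\rN_{E/k}$. For the first statement I will also use Corollary~\ref{cor:klq8}. It says that the images of $\H^{1}(k,\fQ_{8})$ and of $\H^{1}(k,\fKl)$ in $\H^{1}(k,C_{2})$ coincide, so it is enough to understand the image of the norm map $E^{*}/E^{*2}\to k^{*}/k^{*2}$.

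\emph{Case $\zeta_{3}\in k_{0}$.} Then $\zeta_{3}\in k$ for every extension $k/k_{0}$, and $s^{2}+s+1$ splits. This gives $E\simeq k\times k$ and $E^{*}/E^{*2} = (k^{*}/k^{*2})^{2}$, with the norm $(a,b)\mapsto ab$. Given any class $[a]\in k^{*}/k^{*2}$, the element $(a,1)\in E^{*}$ maps to $[a]$, so the norm map is surjective. By Corollary~\ref{cor:klq8}, the image of $\H^{1}(k,\fQ_{8})\to\H^{1}(k,C_{2})$ equals this image, which proves surjectivity.

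\emph{Case $\zeta_{3}\notin k_{0}$, $k = k_{0}(t)$.} Then $-3$ is not a square in $k_{0}$. Hence $E = k(\sqrt{-3}) = k_{0}(\zeta_{3})(t)$ is a quadratic field extension, and the norm is $\rN_{E/k}(x+y\sqrt{-3}) = x^{2}+3y^{2}$. I claim that the class of $t$ in $k^{*}/k^{*2}$ is not in the image. The approach is to use the $t$-adic valuation $v$ on $k_{0}(t)$, whose residue field is $k_{0}$, and show that $v(x^{2}+3y^{2})$ is even for all $(x,y)\neq(0,0)$.

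To see this, write $x = t^{r}x_{0}$ and $y = t^{r}y_{0}$ with $r=\min(v(x),v(y))$, so that $x_{0},y_{0}$ are $v$-integral and not both divisible by $t$. The residue of $x_{0}^{2}+3y_{0}^{2}$ in $k_{0}$ is $\bar{x}_{0}^{2}+3\bar{y}_{0}^{2}$. This is nonzero, because otherwise $-3$ would be a square in $k_{0}$. Hence $v(x^{2}+3y^{2}) = 2r$ is even. Consequently every element of $\rN_{E/k}(E^{*})\cdot k^{*2}$ has even valuation, while $v(t)=1$. So $[t]\in\H^{1}(k,C_{2})$ is not in the image of $\H^{1}(k,\fKl)$.

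The only step needing care is this valuation argument. The key input is exactly that $\zeta_{3}\notin k_{0}$, equivalently that $-3$ is not a square in $k_{0}$, so that the residue form $x^{2}+3y^{2}$ is anisotropic over $k_{0}$. Everything else is a direct reading of Lemma~\ref{lem:newklein} and Corollary~\ref{cor:klq8}.
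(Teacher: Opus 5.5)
Your proposal is correct and follows the paper's proof. The split case is handled identically via Corollary~\ref{cor:klq8} and Lemma~\ref{lem:newklein}, and the non-split case uses the same parity argument for the $t$-adic valuation. The only difference is in how evenness of $v$ on norms is shown: the paper writes $v(\rN(f)) = v(f\sigma(f)) = 2v(f)$, using Galois-invariance of the $t$-adic valuation on $k_{0}(\zeta_{3})(t)$, whereas you use the explicit norm form $x^{2}+3y^{2}$ and the fact that its residue form is anisotropic over $k_{0}$.
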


\begin{proof}
	Assume $\zeta_{3} \in k$, then $E = k[s]/(s^{2} + s + 1) \simeq k \oplus k$. By Corollary~\ref{cor:klq8}, it is enough to show that $\H^{1}(k,\fKl) \to \H^{1}(k, C_{2})$ is surjective. By Lemma~\ref{lem:newklein}, this map is just the multiplication $(k^{*}/k^{*2})^{2} \to k^{*}/k^{*2}$, which is obviously surjective.
	
	Assume now that $\zeta_{3} \notin k_{0}$ and write $k = k_{0}(t)$.	Let $v:k(\zeta_{3})^{*} \to \ZZ$ be the $t$-adic valuation, and $\sigma$ the only non-trivial Galois automorphism of $k(\zeta_{3})/k$. Clearly, $v(\sigma(f)) = v(f)$ for every $f \in k(\zeta_{3})^{*}$, hence $v(\rN_{k(\zeta_{3})/k}(f)) = v(f \sigma(f)) = v(f^{2}) = 2v(f)$ is even. On the other hand, if $g \in k^{*}$ is such that $[g] = [t]$, then $v(g)$ is odd, hence $[\rN_{k(\zeta_{3})/k}(f)] \neq [t]$ for every $f \in k(\zeta_{3})^{*}$.
\end{proof}

Let us now prove Theorem~\ref{thm:pgl3}. 

\subsubsection{$G$ of type $(a)$, $(b)$, $(c)$} Non-neutral $j$-gerbes are constructed over $\CC((s))((t))$ in \cite[\S 4.11, \S 4.12, \S 4.13]{bresciani-plane} respectively. The equivalence between type $(b)$ and the case treated in \cite[\S 4.12]{bresciani-plane} is proved in Lemma~\ref{lem:equiv-rep}. The field $\CC((s))((t))$ can be replaced by $K((s))((t))$ because the argument works for any algebraically closed field of characteristic $0$, hence non-neutral $j$-gerbes exist regardless of $k_{0}$.

\subsubsection{$G$ of type $(d)$}\label{sect:type-d} Up to conjugation, $G = H_{3}$. A non-neutral $H_{3}$-gerbe is constructed over $\RR$ in \cite[\S 4.14]{bresciani-plane}. If $\zeta_{12} \in k_{0}$, the fact that $H_{3}$ is $\rho$-neutral is proved in \cite[\S 4.6]{bresciani-plane}.

Let us strengthen these facts, and show that if $\zeta_{3} \in k_{0}$ then $H_{3}$ is $\rho$-neutral, whereas if $\zeta_{3} \notin k_{0}$ then $H_{3}$ is not neutral.

Assume first that $\zeta_{3} \in k_{0}$. Let $\tilde{H}_{3} = \left< \zeta_{3}\Id, M_{0}, M_{1}, M_{2}, M_{3}\right> \subset \GL_{3}(K)$ be the inverse image of $H_{3}$ in $\SL_{3}(K)$. Since $\zeta_{3} \in k_{0}$, $\tilde{H}_{3}$ has a natural structure of constant group scheme over $k_{0}$. The normalizer of $\tilde{H}_{3}$ is the inverse of $H_{4}$ in $\GL_{3}$, because $H_{4}$ is the normalizer of $H_{3}$. The inverse image $\tilde{H}_{4} = \left< \zeta_{3}\Id, M_{0}, M_{1}, M_{2}, M_{3}, M_{4}\right>$ of $H_{4}$ in $\SL_{3}$ is then a gate for $\tilde{H}_{3}$ thanks to Lemma~\ref{lem:gate-special}. Since $\tilde{H}_{3}$ is the inverse image of $H_{3}$ in $\tilde{H}_{4}$, we get that $\tilde{H}_{3}$ is saturated. Thanks to Proposition~\ref{prop:saturated-neut}, $H_{3}$ is $\rho$-neutral if and only if $\tilde{H}_{3}$ is neutral. 

By Theorem~\ref{thm:coh-crit}, $\tilde{H}_{3}$ is neutral if and only if $\H^{1}(k, \tilde{H}_{4}) \to \H^{1}(k, \tilde{H}_{4}/\tilde{H}_{3})$ is surjective for every extension $k$ of $k_{0}$. Since $\tilde{H}_{4}/\tilde{H}_{3} \simeq C_{2}$ and $\fQ_{8} \subset \tilde{H}_{4}$, this follows from Corollary~\ref{cor:q8surj}.

Assume now that $\zeta_{3} \notin k_{0}$. Since $\fH_{4}$ is the normalizer of $\fH_{3}$ and there is a factorization
\[\fH_{4} \to \fKl \to \fH_{4}/\fH_{3} = C_{2},\]
then $H_{3}$ is not neutral by Theorem~\ref{thm:norm} and Corollary~\ref{cor:q8surj}.

\subsubsection{$G$ of type $(e)$}

If $G$ is of type $(e)$, it is neutral by \cite[Theorem 4.1]{bresciani-plane}, but $\rho$-neutrality has not been studied yet. Let us do it.

\begin{lemma}\label{lem:perm}
	Let $P$ be a pro-finite group with a surjective, continuous homomorphism $P \to C_{3}$, and $M \subseteq \ZZ[C_{3}] = \ZZ^{3}$ be the submodule of vectors $(x, y, z)$ with $x + y + z \cong 0 \pmod{3}$. Then $M$ is not a direct summand of a permutation $P$-module.
\end{lemma}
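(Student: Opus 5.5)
The plan is to use a cohomological obstruction: the first cohomology of a direct summand of a permutation module vanishes, and I will show that $\H^{1}$ of $M$ does not.

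\emph{Step 1: reduce to a finite group.} Suppose, for contradiction, that $M \oplus M' \simeq Q$ as $P$-modules, where $Q$ is a permutation $P$-module. Modules over a pro-finite group are understood here to carry a continuous action, so $Q$ is a finitely generated lattice with a permuted basis. Hence $Q$ is a finite direct sum of modules $\ZZ[P/U_{i}]$ with $U_{i}$ open. Choose an open normal subgroup $U \subseteq P$ contained in every $U_{i}$ and in $\ker(P \to C_{3})$. Then $U$ acts trivially on $Q$, $M$ and $M'$. Writing $F = P/U$, a finite group surjecting onto $C_{3}$, we get that $M$ is a direct summand of the permutation $F$-module $Q = \bigoplus_{i}\ZZ[F/F_{i}]$.

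\emph{Step 2: $\H^{1}$ of a permutation module vanishes.} By Shapiro's lemma, $\H^{1}(F, \ZZ[F/F_{i}]) = \H^{1}(F_{i}, \ZZ) = \hom(F_{i}, \ZZ) = 0$, because $F_{i}$ is finite. Therefore $\H^{1}(F, Q) = 0$. Since $\H^{1}(F, M)$ is a direct summand of $\H^{1}(F, Q)$, it follows that $\H^{1}(F, M) = 0$.

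\emph{Step 3: $\H^{1}(F, M) \neq 0$.} Consider the short exact sequence of $F$-modules
\[0 \to M \to \ZZ[C_{3}] \xrightarrow{\ s\ } \ZZ/3 \to 0, \qquad s(x,y,z) = x+y+z \bmod 3,\]
where $F$ acts trivially on $\ZZ/3$. The long exact sequence gives
\[\ZZ[C_{3}]^{F} \to \ZZ/3 \to \H^{1}(F, M) \to \H^{1}(F, \ZZ[C_{3}]).\]
Since $F$ surjects onto $C_{3}$, the invariants $\ZZ[C_{3}]^{F}$ are $\ZZ \cdot (1,1,1)$. This element maps to $3 \equiv 0$, so the first map is zero. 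Consequently $\ZZ/3$ injects into $\H^{1}(F, M)$, and $\H^{1}(F,M) \neq 0$, contradicting Step 2. (For the conclusion only this injectivity is needed, not the value of the last term; in fact $\H^{1}(F,\ZZ[C_{3}])=0$ by Shapiro again.)

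The only step needing care is Step 1, namely making sure the permutation module and the complement are finitely generated and continuous, so that everything factors through a finite quotient. This follows from the standing convention on Galois lattices used in Theorem~\ref{thm:diag-neutral+}.
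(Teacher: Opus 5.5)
Your proof is correct and uses the same strategy as the paper: $\H^{1}$ vanishes on direct summands of permutation modules by Shapiro's lemma, while a long exact sequence built from $\ZZ[C_{3}]$ forces a copy of $\ZZ/3$ into $\H^{1}$ of $M$. The differences are cosmetic: the paper first splits $M = I \oplus \ZZ(1,1,1)$ and uses the augmentation sequence $0 \to I \to \ZZ[C_{3}] \to \ZZ \to 0$ directly over the profinite group $P$, whereas you use $0 \to M \to \ZZ[C_{3}] \to \ZZ/3 \to 0$ and first pass explicitly to a finite quotient of $P$.
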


\begin{proof}
	We have that $M = I \oplus \ZZ (1, 1, 1)$, where $I \subseteq \ZZ^{3}$ is the augmentation ideal defined by $x + y + z = 0$. Hence, it is sufficient to prove that $I$ is not a direct summand of a permutation module. By Shapiro's lemma, it is enough to prove that $\H^{1}(P, I)$ is not trivial.
	
	We have a long exact sequence in group cohomology
	\[\H^{0}(P, \ZZ[C_{3}]) \to \H^{0}(P, \ZZ) \to \H^{1}(P, I) \to \H^{1}(P, \ZZ[C_{3}])\]
	where $\H^{1}(P, \ZZ[C_{3}]) = 0$ by Shapiro's lemma again. It follows that $\H^{1}(P, I) = \ZZ/3$.
\end{proof}

\begin{proposition}\label{prop:type-e}
	Let $a,n,d$ positive integers with $d^{2} - d + 1 \cong 0 \pmod{n}$, $3 \nmid an$. The group
	\[ \bar{G} = \left< \diag(\zeta_{a},1,1), \diag(1,\zeta_{a},1), \diag(\zeta_{an},\zeta_{an}^{d},1) \right>\]
	is not $\rho$-neutral, regardless of $k_{0}$.
\end{proposition}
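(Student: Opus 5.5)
We will produce, for a suitable field $k$, a $\bar{j}$-gerbe $\cP \to \cG$ with $\cP(k) = \emptyset$. It is easier to do this by lifting to $\GL_{3}$ and applying Proposition~\ref{prop:saturated-neut}. Let $G \subseteq \SL_{3}(K)$ be the inverse image of $\bar{G}$ in $\SL_{3} = \SL_{3}^{(1)}$. By Corollary~\ref{cor:slnm}, $\SL_{3}$ is a gate for $G$. Since $\SL_{3} \to \PGL_{3}$ is surjective, its image $\PGL_{3}$ is trivially a gate for $\bar{G}$. Hence $G$ is saturated in the sense of Definition~\ref{def:saturated}. By Proposition~\ref{prop:saturated-neut}, $\bar{G}$ is $\rho$-neutral if and only if $G$ is neutral, so it suffices to show that $G$ is \emph{not} neutral.

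Since $3 \nmid an$, the group $G$ is diagonal, with $G \cap K^{*} = \mu_{3}$ and $G \simeq \mu_{3} \times G'$ for a lift $G'$ of $\bar{G}$ of order prime to $3$. I will check that the eigenspaces of $G$ are $1$-dimensional. Indeed, $\bar{G}$ already has three distinct characters on the coordinate lines, except in the degenerate case $n = 1$ with $a \le 2$, which I would handle separately or observe is excluded by the type $(e)$ setup. By Lemma~\ref{lem:permutation}, the normalizer is then $\fN = \GG_{m}^{3} \rtimes S$.

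The congruence $d^{2} - d + 1 \equiv 0 \pmod{n}$ says exactly that the cyclic permutation $(x,y,z) \mapsto (z,x,y)$ sends $\diag(\zeta_{an},\zeta_{an}^{d},1)$ into $G$ modulo the $C_{a}^{2}$ part and scalars. So $C_{3} \subseteq S$. By Theorem~\ref{thm:norm}, it suffices to find a field $k$ and a class in $\H^{1}(k, \fN/\fG)$ which does not lift to $\H^{1}(k, \fN)$. I will take $k$ to carry a cyclic cubic extension, for instance $k = k_{0}(\zeta_{3})(t)$ or a generic $C_{3}$-extension, and take $\tau \in \H^{1}(k, C_{3}) \subseteq \H^{1}(k,S)$ to be the class of that extension. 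The fibre of $\H^{1}(k, \fN/\fG)$ over $\tau$ is covered by $\H^{1}(k, T)$, where $T$ is the $\tau$-twist of $\GG_{m}^{3}/\fG$. The fibre of $\H^{1}(k,\fN)$ over $\tau$ comes from the $\tau$-twist of $\GG_{m}^{3}$, which is the Weil restriction $R_{k'/k}\GG_{m}$ and so has trivial $\H^{1}$ by Shapiro and Hilbert 90. Thus, by the same twisting argument as in the proof of Theorem~\ref{thm:diag-neutral+}, it is enough to find such a $k$ with $\H^{1}(k', T) \neq 0$ over some extension $k'/k$ on which $\tau$ stays nontrivial.

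Next I identify the character module $X(\GG_{m}^{3}/G)$ with its $C_{3}$-action. Since $G = \mu_{3} \times G'$ with $|G'|$ prime to $3$, the $3$-primary part of $\ZZ^{3}/X(\GG_{m}^{3}/G)$ comes only from the scalar $\mu_{3}$. Hence $X(\GG_{m}^{3}/G) \otimes \ZZ_{(3)} = M \otimes \ZZ_{(3)}$, where $M = \{x+y+z \equiv 0 \pmod 3\}$ is the module of Lemma~\ref{lem:perm}. By Lemma~\ref{lem:perm}, $M$ is not a direct summand of a permutation module, and the cohomological witness there, $\H^{1}(P,I) = \ZZ/3$, is $3$-primary. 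So the same computation shows that $X(T)$ is not invertible, i.e.\ not a direct summand of a permutation Galois module. Equivalently, one can compute directly that $\H^{1}$ of the twisted module is nonzero, using the localization at $3$.

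Finally, I convert non-invertibility into a nonzero torsor class. Here I invoke the Colliot-Thélène--Sansuc characterization: a torus with non-invertible character module is not special, so there is an extension $k'$ with $\H^{1}(k',T) \neq 0$; one can take the function field of the generic torsor, or use a flasque resolution of $T$. The resulting class maps to a class of $\H^{1}(k', \fN/\fG)$ that does not lift, so $G$ is not neutral and hence $\bar{G}$ is not $\rho$-neutral. I expect the main obstacle to be this step, making precise that a non-invertible module gives a nonzero $\H^{1}(k',T)$ over a field still realizing the $C_{3}$-twist, together with the $3$-local identification of $X(\GG_{m}^{3}/G)$ with $M$. If the general theorem is awkward to cite, I would instead compute $\H^{1}(k',T)$ via Tate--Nakayama over a local field such as $K((s))((t))$ with a totally ramified cubic extension.
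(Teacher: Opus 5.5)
\textbf{The gap: the case $a=n=1$.} The hypotheses allow $a=n=1$, since the congruence modulo $1$ is vacuous and $3\nmid 1$. Then $\bar G$ is trivial and $G=\mu_{3}\cdot\Id$. This case is \emph{not} excluded by the type $(e)$ setup: it is exactly the assertion that the trivial subgroup of $\PGL_{3}(K)$ is not $\rho$-neutral. Here the only eigenspace is $K^{3}$, so Lemma~\ref{lem:permutation} does not apply and the normalizer is all of $\GL_{3}$ rather than $\GG_{m}^{3}\rtimes S$. Your reduction through Theorem~\ref{thm:norm} to a fibre over $\tau$ therefore does not apply as written. Your description of the degenerate range is also off: for $a=2$, $n=1$ the three coordinate characters are already distinct, and only $an=1$ degenerates.

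The paper handles $an=1$ separately by exhibiting a non-split Brauer--Severi surface. Over $k=K((s))((t))$, the surjection $\hat{\ZZ}^{2}\to C_{3}^{2}$ does not lift to the non-abelian inverse image $\tilde{H}_{1}$ of $H_{1}$ in $\SL_{3}$. This gives a nonzero class in $\br(k)[3]$, represented by a Brauer--Severi surface over $k$ or over a cubic subextension. Alternatively, you can repair your proof uniformly: replace Theorem~\ref{thm:norm} by Lemma~\ref{lem:not-neutral} with $\Pi=\GG_{m}^{3}\rtimes\langle M_{1}\rangle$. This $\Pi$ normalizes $\fG$ for all $a,n$, including $\fG=\mu_{3}$, so you need no hypothesis on eigenspaces or on the full normalizer, and the rest of your argument goes through verbatim.

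\textbf{The case $an\neq 1$.} Here your route is essentially the paper's: saturation via $\SL_{3}$, a $C_{3}$-twist through $M_{1}$, Lemma~\ref{lem:perm}, the Colliot-Th\'el\`ene--Sansuc/Huruguen criterion, and $\H^{1}=0$ for the quasi-trivial twist of $\GG_{m}^{3}$. There are two variations worth noting.

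First, the paper applies the non-specialness criterion to the twist of $\GG_{m}^{3}/\mu_{3}$, whose character module is literally $M$. It then pushes a nonzero class injectively into $\H^{1}$ of the twist of $\GG_{m}^{3}/\fG$, using that the former $\H^{1}$ is $3$-torsion while $\H^{1}$ of the twist of $\fG/\mu_{3}$ has exponent prime to $3$. You instead show directly, by localizing at $3$, that the twisted $X(\GG_{m}^{3}/G)$ is not invertible. That is correct, but write it out: a direct summand of a permutation module has vanishing $\H^{1}(P,-)$, whereas $\H^{1}(P,X)\otimes\ZZ_{(3)}=\H^{1}(P,M)\otimes\ZZ_{(3)}\supseteq\ZZ/3$.

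Second, the twisting argument of Theorem~\ref{thm:diag-neutral+} only shows that $\H^{1}(k,T)$ surjects onto the fibre over $\tau$. To see that a nonzero class does not hit the unique liftable point, you also need the kernel of $\H^{1}(k,T)\to\H^{1}(k,{}_{\tau}(\fN/\fG))$ to be trivial. This holds because the extension of ${}_{\tau}S$ by $T$ splits over $k$, so ${}_{\tau}S(k)$ acts on $\H^{1}(k,T)$ by group automorphisms fixing $0$. The paper sidesteps this by twisting the whole configuration by $\tau$ (the twist of $\GL_{3}$ is again $\GL_{3}$) and working only with abelian groups.

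Finally, the obstacle you anticipate is not one. If $\tau_{k'}$ were trivial, $T_{k'}$ would be split and $\H^{1}(k',T)=0$, so a nonzero class automatically lives over a field where $\tau$ stays nontrivial. Your Tate--Nakayama fallback does not apply to $K((s))((t))$, whose residue field is not finite.
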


\begin{proof}
	If $an = 1$, then $\bar{G}$ is trivial. It is not $\rho$-neutral because $j$-gerbes correspond to Brauer--Severi surfaces, and non-trivial Brauer--Severi surfaces always exist over some extension of $k_{0}$. Let us show this. Over $k = K((s))((t))$, consider the inverse image $\tilde{H}_{1}$ of $H_{1}$ in $\SL_{3}$, it is a non-abelian group with an extension
	\[1 \to C_{3} \to \tilde{H}_{1} \to C_{3}^{2} \to 1.\]
	The natural surjective homomorphism $\gal(\bar{k}/k) = \hat{\ZZ}^{2} \to C_{3}^{2}$ does not lift to $\tilde{H}_{1}$ because it would give a surjective homomorphism $\hat{\ZZ}^{2} \to \tilde{H}_{1}$, which is impossible since $\tilde{H}_{1}$ is not abelian. Hence, we get a non-trivial class $\tau \in \H^{2}(k, C_{3}) = \br(k)[3]$. Let $k'/k$ be the field fixed by $3\hat{\ZZ}^{2} \subset \gal(\bar{k}/k)$, it splits $\tau$ because the composition $3\hat{\ZZ}^{2} \subset \hat{\ZZ}^{2} \to C_{3}^{2}$ is trivial. In particular, $\tau$ has index either $3$ or $9$. If the index is $3$, then $\tau$ is represented by a non-trivial Brauer--Severi surface. If the index is $9$, then $\tau_{k''}$ is represented by a non-trivial Brauer--Severi surface, where $k''/k$ is any subextension of $k'/k$ of degree $3$.
	
	Assume $an \neq 1$. By Proposition~\ref{prop:saturated-neut}, the statement is equivalent to the fact that the inverse image
	\[ G = \left< \diag(\zeta_{3a}^{2}, \zeta_{3a}^{-1}, \zeta_{3a}^{-1}),~\diag(\zeta_{3a}^{-1}, \zeta_{3a}^{2}, \zeta_{3a}^{-1}),~\diag(\zeta_{3an}^{2-d}, \zeta_{3an}^{2d-1}, \zeta_{3an}^{-d-1})\right>\]
	in $\SL_{3}$ is not neutral.
	
	Notice that $C_{3}$ normalizes $\bar{G}$, where $C_{3}$ is generated by the permutation matrix $M_{1}$ cycling the coordinates. In fact, if we call $x, y, z$ respectively the generators of $\bar{G}$, we have
	\[M_{1} \cdot x \cdot M_{1}^{-1} = y,~~ M_{1} \cdot y \cdot M_{1}^{-1} = x^{-1} \cdot y^{-1},~~ M_{1} \cdot z \cdot M_{1}^{-1} = z^{-d} \cdot x^{\alpha} \cdot y^{\beta} \]
	for some $\alpha$, $\beta$. It follows that $M_{1}$ normalizes $G$ as well, and hence the normalizer of $G$ contains $\GG_{m}^{3} \rtimes C_{3}$.
	
	Choose $k$ such that there exists a non-zero cohomology class $\tau \in \H^{1}(k, C_{3})$, i.e. a surjective homomorphism $\gal(\bar{k}/k) \to C_{3}$. For instance, we may take $k = K((t))$ so that $\H^{1}(K((t)), C_{3}) = \hom(\hat{\ZZ}, C_{3}) = C_{3}$. 
	
	Let $T$ be the twist of $\GG_{m}^{3}$ by $\tau$. Notice that the embedding $\mu_{3} \subseteq \GG_{m}^{3}$ is fixed by $C_{3}$, hence we have an induced embedding $\mu_{3} \subseteq T$.
	
	The character group $X(T) \simeq \ZZ[C_{3}]$ is a $\gal(\bar{k}/k)$-module where $\gal(\bar{k}/k)$ acts via the surjective homomorphism $\gal(\bar{k}/k) \to C_{3}$. By Lemma~\ref{lem:perm}, the character group $X(T/\mu_{3}) \subseteq X(T) \simeq \ZZ[C_{3}]$ of $T/\mu_{3}$ is not a direct summand of a permutation $\gal(\bar{k}/k)$-module. By \cite[Theorem 18]{huruguen}, $T/\mu_{3}$ is not special, hence up to enlarging $k$ we may assume that $\H^{1}(k, T/\mu_{3}) \neq 0$.
	
	Let $\fG'$ be the twist of $\fG$ by $\tau$, where $\fG$ is the standard group scheme structure on $G$. The twist of $\GL_{3}$ by $\tau$ is again $\GL_{3}$: in fact, this is equivalent to twisting by the image of $\tau$ in $\H^{1}(k, \GL_{3})$, which is trivial. Hence, we have an embedding $T \subseteq \GL_{3}$ which identifies $\fG'(\bar{k})$ with a conjugate of $G$.
	
	By Lemma~\ref{lem:not-neutral}, to show that $G$ is not neutral it is sufficient to show that $\H^{1}(k, T) \to \H^{1}(k, T/\fG')$ is not surjective. Notice that the character group $X(T)$ of $T$ is a permutation Galois module, because $T$ is obtained as a twist by a torsor which permutes the coordinates, hence $\H^{1}(k, T) = 0$ by \cite[Theorem 18]{huruguen}. 
	
	It is then enough to prove that $\H^{1}(k, T/\fG') \neq 0$. First, notice that $\H^{1}(k, T/\mu_{3})$ is $3$-torsion, because we have an exact sequence
	\[\H^{1}(k, T) = 0 \to \H^{1}(k, T/\mu_{3}) \to \H^{2}(k, \mu_{3}),\]
	and $\H^{2}(k, \mu_{3})$ is $3$-torsion. Secondly, $\fG' / \mu_{3}$ has order $a^{2}n$, which is prime with $3$, and hence $\H^{1}(k, \fG'/\mu_{3})$ is torsion of exponent prime with $3$. It follows that $\H^{1}(k, \fG'/\mu_{3}) \to \H^{1}(k, T/\mu_{3})$ is the $0$ map, and hence
	\[\H^{1}(k, T/\mu_{3}) \to \H^{1}(k, T/\fG')\]
	is injective. Since $\H^{1}(k, T/\mu_{3})$ is not trivial, the same holds for $\H^{1}(k, T/\fG')$, and we conclude.
\end{proof}

\subsubsection{$G$ not of type $(a) - (e)$} In \cite[Theorem 4.1]{bresciani-plane}, it is proved that $G$ is $\rho$-neutral if it is not of type $(a) - (e)$, and it is not conjugate to $H_{1}$. To conclude, it is enough to prove that $H_{1}$ is $\rho$-neutral as well.

Let $\tilde{H}_{1} \subseteq \GL_{3}(K)$ be the group generated by $M_{0}$, $M_{1}$ as in Definition~\ref{def:hessian}, it coincides with the inverse image of $H_{1}$ in $\SL_{3}$. By Proposition~\ref{prop:saturated-neut}, $\tilde{H}_{1}$ is neutral if and only if $H_{1}$ is $\rho$-neutral. We may then conclude the proof of Theorem~\ref{thm:pgl3} by proving the following.

\begin{proposition}\label{prop:27}
	The group $\tilde{H}_{1}$ is saturated and neutral.
\end{proposition}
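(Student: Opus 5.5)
The plan has two parts: saturation, which is formal, and neutrality, which needs a group-theoretic splitting.

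\textbf{Saturation.} Take $\Pi = \SL_{3} \subseteq \GL_{3}$. Its image in $\PGL_{3}$ is all of $\PGL_{3}$, which is trivially a gate for $H_{1}$. The kernel of $\SL_{3} \to \PGL_{3}$ is the étale group $\mu_{3}$ (we are in characteristic $0$), and $\zeta_{3}\Id = [M_{0}, M_{1}]^{\pm 1} \in \tilde{H}_{1}$. Hence $\pi^{-1}(H_{1}) \cap \SL_{3,K} = \mu_{3} \cdot \tilde{H}_{1} = \tilde{H}_{1}$ as group schemes, which is exactly Definition~\ref{def:saturated}.

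\textbf{Setting up neutrality.} First, $\tilde{H}_{1}$ is Galois invariant: $\bar{M}_{0} = M_{0}^{-1}$, $\bar{M}_{1} = M_{1}$, and $\zeta_{3}\Id \mapsto \zeta_{3}^{-1}\Id$. So it defines a finite group scheme $\tilde{\fH}_{1} \subseteq \SL_{3}$ over $k_{0}$, and Hypothesis~\ref{hyp:gal-invariant} holds. By Theorem~\ref{thm:norm}, neutrality is equivalent to surjectivity of
\[\H^{1}(k, \fN) \to \H^{1}(k, \fN/\tilde{\fH}_{1})\]
for all $k/k_{0}$, where $\fN$ is the normalizer of $\tilde{\fH}_{1}$ in $\GL_{3}$. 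Since $H_{4}$ is the normalizer of $H_{1}$ in $\PGL_{3}$, we get $\fN = \GG_{m} \cdot \tilde{\fH}_{4}$, where $\tilde{\fH}_{4}$ is the inverse image of $\fH_{4}$ in $\SL_{3}$ (order $648$). Thus $\fN/\tilde{\fH}_{1}$ is an extension of the finite twisted form $\fH_{4}/\fH_{1}$ of $\SL_{2}(\FF_{3})$ (order $24$) by $\GG_{m}/\mu_{3} \simeq \GG_{m}$.

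\textbf{Reduction to a splitting.} The strategy is to find a Galois-stable finite subgroup scheme $\fS \subseteq \fN$ that maps isomorphically onto $\fH_{4}/\fH_{1}$, i.e. a Galois-equivariant splitting of $1 \to \tilde{H}_{1} \to \tilde{H}_{4} \to \SL_{2}(\FF_{3}) \to 1$, after possibly multiplying elements by scalars. Such a splitting should exist: it is the Weil representation, since the Heisenberg extension splits for odd $p$, and the scalar factor $\GG_{m}$ absorbs any determinant character. Given $\fS$, every class in $\H^{1}(k, \fN/\tilde{\fH}_{1})$ is handled as follows. Its image in $\H^{1}(k, \fH_{4}/\fH_{1})$ lifts through $\fS$. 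The fibres of $\H^{1}(k, \fN/\tilde{\fH}_{1}) \to \H^{1}(k, \fH_{4}/\fH_{1})$ are images of $\H^{1}$ of twists of $\GG_{m}$ by a trivially acting twist, i.e. of $\GG_{m}$ again, and these vanish by Hilbert 90. So the map from $\H^{1}(k,\fN)$ is surjective, giving neutrality (compare Corollary~\ref{cor:special}).

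\textbf{Main obstacle.} The hard step is constructing $\fS$ explicitly and checking Galois stability over $\QQ$, since $\fH_{4}/\fH_{1}$ is a non-constant form. I would build $\fS$ from the already Galois-invariant $\fQ_{8} = \langle M_{3}, \zeta_{3}M_{1}M_{4}\rangle$, which is the $2$-Sylow of $\SL_{2}(\FF_{3})$, together with a scalar multiple $\lambda M$ of an order-$3$ lift $M$ normalizing $\fQ_{8}$. I would then check $(\lambda M)^{3} = \Id$ and Galois stability using the conjugation formulas $\bar{M}_{3} = M_{3}^{-1}$ and $\bar{M}_{4} = \zeta_{3}^{2}M_{4}M_{3}$.

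If no Galois-stable splitting exists, the fallback is different. Handle the $2$-part via $\fQ_{8}$ and Corollary~\ref{cor:klq8}, and the $3$-part via Lemmas~\ref{lem:index-primes} and \ref{lem:index-neutral} applied after the gate reduction.
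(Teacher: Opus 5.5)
Your route works and differs from the paper's, but the step you flag as the main obstacle is left as ``should exist''. Here is how it closes, together with one correction.

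In the paper's notation the normalizer of $H_{1}$ in $\PGL_{3}$ is $H_{5}$ (order $216$), not $H_{4}$ (order $72$, the normalizer of $H_{3}$). Your orders $648$ and $24$ are those of $\tilde{H}_{5}$ and $H_{5}/H_{1}$. This matters: $H_{4}/H_{1} \simeq Q_{8}$ has no element of order $3$, so the order-$3$ generator cannot be built from $M_{3}, M_{4}$ and their conjugation formulas; it must involve $M_{5}$. Take
\[T = \zeta_{9}^{-1}M_{0}M_{5}^{-1} = \diag(1,\zeta_{3},\zeta_{3}).\]
\begin{itemize}
\item It has order $3$ and lies in $\fN(K) = K^{*}\tilde{H}_{5}$.
\item Conjugation by $T$ agrees with conjugation by $M_{0}M_{5}^{-1}$, which commutes with $M_{2}$ and hence normalizes $\fQ_{8}$. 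This uses the fact, from the paper's proof, that the nine quaternion $2$-Sylow subgroups of $\tilde{H}_{5}$ are distinguished by their involutions.
\item Every $\sigma \in \gal(K/k_{0})$ sends $T$ to $T^{\chi(\sigma)}$, where $\sigma(\zeta_{3}) = \zeta_{3}^{\chi(\sigma)}$. So $\langle T \rangle \simeq \mu_{3}$ is Galois stable.
\end{itemize}
Hence $\mathfrak{S} = \fQ_{8} \rtimes \langle T \rangle$ is a subgroup scheme of $\fN$ of order $24$. Its image in $\fH_{5}/\fH_{1}$ has order divisible by $8$ and by $3$, since the image of $T$ in $\PGL_{3}$ is not in $H_{1}$. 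So $\mathfrak{S}$ maps isomorphically onto $\fH_{5}/\fH_{1}$, and your Hilbert~90 argument on the central kernel $\GG_{m}/\mu_{3}$ finishes the proof.

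Drop the fallback. It is unnecessary, and Lemma~\ref{lem:index-neutral} would not apply anyway: $\tilde{H}_{1}$ is the Heisenberg group of order $27$, whose $3$-Sylow is non-abelian and whose commutator subgroup equals its center.

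Comparison with the paper. Your saturation argument with $\Pi = \SL_{3}$ is simpler than the paper's, which uses $\tilde{\fH}_{5}$. It is also what the paper itself does for the other classes in Theorem~\ref{thm:gl3}.

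For neutrality, the paper works inside the finite gate $\tilde{\fH}_{5} = \fN \cap \SL_{3}$, where no complement exists. A lift of $\diag(1,\zeta_{3},\zeta_{3})$ to $\SL_{3}$ is $\lambda \diag(1,\zeta_{3},\zeta_{3})$ with $\lambda^{3} = \zeta_{3}$, which has order $9$; this is the determinant obstruction of the Weil representation you allude to. The paper therefore uses $\fQ_{8} \rtimes \mu_{9} \to \fQ_{8} \rtimes \mu_{3}$, with central kernel $\mu_{3}$. It kills the connecting map to $\H^{2}(k, \mu_{3})$ by factoring it through $\H^{1}(k,\mu_{3}) \to \H^{2}(k,\mu_{3})$, which vanishes because $\H^{1}(k,\mu_{9}) \to \H^{1}(k,\mu_{3})$ is surjective.

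You instead allow scalars. This removes the obstruction and gives an honest splitting $\fN/\tilde{\fH}_{1} \simeq (\GG_{m}/\mu_{3}) \times \mathfrak{S}$, at the cost of a torus whose $\H^{1}$ vanishes by Hilbert~90. The two arguments are equally short; yours is more transparent, while the paper's stays within finite group schemes.
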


\begin{proof}
	The matrix presentation of $\tilde{H}_{1}$ identifies it with the geometric points of a group scheme $\tilde{\fH}_{1} \subseteq \GL_{3}$, because $M_{1}$ is Galois invariant and the only Galois conjugate of $M_{0}$ is $M_{0}^{2}$.
	
	Let $\fN$ be the normalizer of $\tilde{\fH}_{1}$, and $\tilde{\fH}_{5} = \fN \cap \SL_{3}$. Since $\tilde{\fH}_{5}$ is the kernel of the determinant $\fN \to \GG_{m}$, we have that $\tilde{\fH}_{5}$ is a normalizing gate for $\tilde{\fH}_{1}$ thanks to Lemma~\ref{lem:gate-special} and Proposition~\ref{prop:gate-norm}. Write $\tilde{H}_{5} = \tilde{\fH}_{5}(K)$.
	
	The image of $\tilde{\fH}_{5}$ in $\PGL_{3}$ is the normalizer of $H_{1}$. By \cite[Lemma 3.4]{bresciani-plane}, the normalizer of $H_{1}$ is the group $H_{5}$ as in Definition~\ref{def:hessian}. If $\pi: \tilde{H}_{5} \to H_{5}$ is the projection, then $\pi^{-1}(\pi(\tilde{H}_{1})) = \pi^{-1}(H_{1}) = \tilde{H}_{1}$, hence $\tilde{H}_{1}$ is saturated.
	
	Recall from \S\ref{sect:h4-cohom} that we have a $2$-Sylow subgroup scheme $\fQ_{8} \subset \tilde{H}_{5}$ generated by $M_{3}$, $\zeta_{3}M_{1}M_{4}$. Let $\mu_{9} \subseteq \tilde{H}_{5}$ be the cyclic subgroup generated by the matrix $M_{0}M_{5}^{-1}$, one can check by direct computation that $M_{0}M_{5}^{-1}$ normalizes $\fQ_{8}$. More abstractly, $\tilde{H}_{5}$ has $9$ $2$-Sylow subgroups, each one isomorphic to a quaternion group and containing a unique, distinct element of order $2$ (the $9$ conjugates of $M_{2}$). Since $M_{0}M_{5}^{-1}$ normalizes $M_{2} \in \fQ_{8}$, it must normalize $\fQ_{8}$ as well.
	
	We get a subgroup scheme $\fQ_{8} \rtimes \mu_{9} \subseteq \tilde{\fH}_{5}$, and $\tilde{\fH}_{5}/\tilde{\fH}_{1}$ identifies naturally with its quotient $\fQ_{8} \rtimes \mu_{3}$.
	
	By Theorem~\ref{thm:coh-crit}, $\tilde{H}_{1}$ is neutral if and only if $\H^{1}(k, \tilde{\fH}_{5}) \to \H^{1}(k, \tilde{\fH}_{5}/\tilde{\fH}_{1})$ is surjective for every extension $k$ of $k_{0}$. To check this, it is enough to prove that $\H^{1}(k, \fQ_{8} \rtimes \mu_{9}) \to \H^{1}(k, \fQ_{8} \rtimes \mu_{3})$ is surjective.
	
	The kernel of $\fQ_{8} \rtimes \mu_{9} \to \fQ_{8} \rtimes \mu_{3}$ is central. By non-abelian cohomology, it is enough to show that the connecting map $\H^{1}(k,\fQ_{8} \rtimes \mu_{3}) \to \H^{2}(k, \mu_{3})$ is trivial. This factorizes as $\H^{1}(k,\fQ_{8} \rtimes \mu_{3}) \to \H^{1}(k,\mu_{3}) \to \H^{2}(k, \mu_{3})$, and $\H^{1}(k,\mu_{3}) \to \H^{2}(k, \mu_{3})$ is trivial because $\H^{1}(k,\mu_{9}) \to \H^{1}(k, \mu_{3})$ is surjective. This concludes the proof of Proposition~\ref{prop:27}.
\end{proof}	

\section{The classification of neutral subgroups of $\GL_{3}$}\label{sect:gl3}

Recall we have defined $\SL_{n}^{(m)} \subseteq \GL_{n}$ as the subgroup of matrices $M$ with $\det M^{m} = 1$. Let us extend this definition.

\begin{definition}\label{def:slnm}
	Let $\underline{n} = (n_{1}, \dots, n_{s}) \in \ZZ^{s}_{>0}$, $\underline{m} = (m_{1}, \dots, m_{s}) \in \ZZ^{s}$ be vectors of integers with $n_{i} > 0$, write $n = n_{1} + \dots + n_{s}$.
	
	We define
	\[\SL_{\underline{n}}^{\underline{m}} \subseteq \GL_{n_{1}} \times \dots \times \GL_{n_{s}} \subseteq \GL_{n}\]
	as the subgroup of block matrices, with blocks $M_{1}, \dots, M_{s}$ of dimensions $n_{1}, \dots, n_{s}$, such that $\det M_{1}^{m_{1}} \cdot \dots \cdot \det M_{s}^{m_{s}} = 1$.
	
	Equivalently, $\SL_{\underline{n}}^{\underline{m}}$ is the kernel of the composition
	\[\GL_{n_{1}} \times \dots \times \GL_{n_{s}} \xrightarrow{\det} \GG_{m}^{s} \to \GG_{m},\]
	where $\GG_{m}^{s} \to \GG_{m}$ is the character $(x_{1}, \dots, x_{s}) \mapsto x_{1}^{m_{1}} \cdots x_{s}^{m_{s}}$.
\end{definition}

Assume $\underline{m} \neq (0, \dots, 0)$. Notice that $\SL_{\underline{n}}^{\underline{m}} \to (\GL_{n_{1}} \times \dots \times \GL_{n_{s}} )/\GG_{m} \subseteq \PGL_{n}$ is surjective if and only if it has finite kernel (the dimensions are equal), if and only if $m_{1}n_{1} + \dots + m_{s}n_{s} \neq 0$.

We refer to Definition~\ref{def:hessian} for the definitions of $M_{i} \in \PGL_{3}(K)$, $H_{i} \subseteq \PGL_{3}(K)$. Notice that $H_{i}$ is Galois invariant for every $i$, and hence corresponds to a well defined sub-group scheme of $\PGL_{3}$ \cite[\S 3.3]{bresciani-plane}.

\begin{theorem}\label{thm:gl3}
	Assume $\cha k_{0} = 0$, and let $G \subseteq \GL_{3}(K)$ be a finite subgroup with image $\bar{G} \subseteq \PGL_{3}(K)$. The following are equivalent.
	\begin{enumerate}
		\item $G$ is not neutral.
		\item $G$ is saturated and $\bar{G}$ is not $\rho$-neutral.
		\item $G$ is conjugate to one of the following groups.
			\begin{itemize}
				\item Given a positive integer $c$, the inverse image in $\SL_{3}^{(c)}$ of
				\[\left< \diag(\zeta_{a},1,1), \diag(1,\zeta_{a},1), \diag(\zeta_{an},\zeta_{an}^{d},1) \right> \subseteq \PGL_{3}(K).\]
				for some positive integers $a,d,n$ with $d^{2} - d + 1 \cong 0 \pmod{n}$.

				\item 
				
				Given integers $c_{1},c_{2}$ with $2c_{1} + c_{2} \neq 0$, the inverse image in $\SL_{(2,1)}^{(c_{1},c_{2})}$ of
				 \[\left< \diag(\zeta_{2m}, \zeta_{2m}, 1), \diag(\zeta_{2n}, \zeta_{2n}^{-1}, 1)\right> \subseteq \PGL_{3}(K)\]
				for some positive integers $m, n$.
				
				\item Given a positive integer $c$, the inverse image in $\SL_{3}^{(c)}$ of $H_{2}$.
				
				\item If $\zeta_{3} \notin k_{0}$, given a positive integer $c$, the inverse image in $\SL_{3}^{(c)}$ of $H_{3}$.
			\end{itemize}
	\end{enumerate}
\end{theorem}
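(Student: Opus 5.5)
I would follow the template of the proof of Theorem~\ref{thm:gl2}: prove $(3)\Rightarrow(2)\Rightarrow(1)$ formally and put the real work into $(1)\Rightarrow(3)$.

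\emph{The easy implications.} For $(2)\Rightarrow(1)$ I would simply invoke Proposition~\ref{prop:saturated-neut}. For $(3)\Rightarrow(2)$, each listed $G$ is the full inverse image of $\bar G$ in some $\Pi=\SL_{3}^{(c)}$ or $\SL_{(2,1)}^{(c_{1},c_{2})}$. I would check that $\pi(\Pi)$ is a gate for $\bar G$:
\begin{itemize}
\item In the $\SL_{3}^{(c)}$ case, $\pi(\Pi)=\PGL_{3}$, so this is automatic.
\item In the block case, $\pi(\Pi)=(\GL_{2}\times\GL_{1})/\GG_{m}$, and surjectivity onto $\PGL_{3}$ is exactly the condition $2c_{1}+c_{2}\neq0$. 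It is a gate because it contains the normalizer of $\bar G$: the eigenspace decomposition $2+1$ is preserved. I would apply Proposition~\ref{prop:gate-norm} together with Corollary~\ref{cor:easy-gate}.
\end{itemize}
So $G$ is saturated. Next I would show $\bar G$ is not $\rho$-neutral using Theorem~\ref{thm:pgl3}:
\begin{itemize}
\item Types (a), (b), (c) are not neutral, so a fortiori not $\rho$-neutral.
\item Type (e) is not $\rho$-neutral by Proposition~\ref{prop:type-e}.
\item Type (d) is not neutral when $\zeta_{3}\notin k_{0}$.
\end{itemize}
The first bullet of $(3)$ covers both (a) and (e), since together they drop the condition on $3\mid an$.

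\emph{The hard direction $(1)\Rightarrow(3)$.} If $G$ is not neutral, Proposition~\ref{prop:rhoneut-neut} shows $\bar G$ is not $\rho$-neutral. By Theorem~\ref{thm:pgl3}, $\bar G$ is then of type (a)–(e), with type (d) only when $\zeta_{3}\notin k_{0}$. The task is to pin down $G$ inside the preimage of $\bar G$, whose kernel is $G\cap K^{*}=\mu_{m}$.

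\begin{itemize}
\item For (c) and (d), I would use Corollary~\ref{cor:slnm} with $c$ such that $\det(G)\subseteq\mu_{c}$. Since $\bar G$ has no nontrivial characters compatible with the central extension, I would argue $G$ is the full preimage in $\SL_{3}^{(c)}$. This uses that the normalizer in $\PGL_{3}$ is $H_{4}$, and that nonsaturated lifts can be handled with Theorem~\ref{thm:coh-crit} as in \S\ref{sect:type-d}.
\item For the abelian types (a), (b), (e), $G$ is diagonal. Whenever the eigenspaces are $1$-dimensional, I would apply Theorem~\ref{thm:diag-neutral} or Theorem~\ref{thm:diag-neutral+} to the lattice $X(\GG_{m}^{3}/G)$, with $S$ equal to $C_{3}$, $S_{3}$, $C_{2}$ or trivial. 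This shows that every $G$ other than the full preimage in some $\SL_{3}^{(c)}$ or $\SL_{(2,1)}^{(c_{1},c_{2})}$ is neutral. The key is to exhibit permutation bases, exactly as in the $n$ even, $a$ odd computation in Theorem~\ref{thm:gl2}.
\item For type (b) when $G$ has a $2$-dimensional eigenspace, I would use the block normalizer $\GL_{2}\times\GG_{m}$ as a gate.
\item If $\bar G$ is trivial, $G$ is scalar and hence neutral by Lemma~\ref{lem:line-neut}.
\end{itemize}

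In each of these cases I would write $G$ as the preimage of $\bar G$ under the character $\det M_{1}^{c_{1}}\det M_{2}^{c_{2}}$. I would argue this holds whenever the normalizer-induced lattice fails to be a summand of a permutation module. When no such character cuts out $G$, an index argument (Lemmas~\ref{lem:index-primes} and \ref{lem:index-neutral}, combined with Theorem~\ref{thm:diag-neutral+} for $S'$ a Sylow subgroup) should give neutrality.

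\emph{Main obstacle.} I expect the hardest step to be this lattice analysis for types (a) and (e) when $S\supseteq C_{3}$. There, Lemma~\ref{lem:perm} shows the permutation criterion can genuinely fail. One must prove that failure happens precisely when $G$ is the full preimage in $\SL_{3}^{(c)}$. I would first try to split $X(\GG_{m}^{3}/G)$ into a $C_{3}$-part and a prime-to-$3$ part, and apply Theorem~\ref{thm:diag-neutral+} with $S'=C_{3}$ and with $S'$ a $2$-subgroup separately.
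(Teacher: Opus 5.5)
Your $(3)\Rightarrow(2)\Rightarrow(1)$ is essentially the paper's argument. The direction $(1)\Rightarrow(3)$, however, contains two false steps.

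\textbf{Scalar groups.} Scalar groups are \emph{not} automatically neutral. Lemma~\ref{lem:line-neut} needs a faithful \emph{line} bundle, and the only one you have is $\det\cV$. On it, $\zeta\Id\in\mu_{c}\Id$ acts by $\zeta^{3}$, so it is faithful only when $3\nmid c$. For $3\mid c$, the group $\mu_{c}\Id$ is the inverse image of the trivial group in $\SL_{3}^{(c/3)}$, i.e.\ the first family of $(3)$ with $a=n=1$. It is not neutral: take the $\PGL_{3}$-torsor of a non-split degree-$3$ central simple algebra; the $\mu_{3}$-gerbe of its liftings to $\SL_{3}$ is a non-neutral $j$-gerbe. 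Your own appeal to Proposition~\ref{prop:type-e} in $(3)\Rightarrow(2)$ already covers this case, so the proposal is internally inconsistent here.

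\textbf{Types $(c)$ and $(d)$.} It is false that $\bar G$ has no nontrivial characters: $H_{2}^{\mathrm{ab}}\simeq C_{2}$ and $H_{3}^{\mathrm{ab}}\simeq C_{4}$. After normalizing generators one gets lifts $G=\langle \zeta_{3c}\Id, M_{0}, M_{1}, \zeta_{2^{e}}M_{2}\rangle$ (resp.\ with $\zeta_{2^{e}}M_{3}$) with $e\ge 1$ and $2^{e}\nmid c$. These are not full preimages and must be shown to be neutral. That is the real content of these cases, and ``handle them with Theorem~\ref{thm:coh-crit}'' does not say which surjectivity to prove or why it holds.
\begin{itemize}
\item The paper uses Lemma~\ref{lem:sl-neutral} when $G\cap\SL_{3}=\tilde H_{1}$, together with the nontrivial Proposition~\ref{prop:27} that $\tilde H_{1}$ is neutral.
\item In type $(d)$ with $e\ge1$ and $2^{e-1}\mid c$, the intersection $G\cap\SL_{3}$ is the non-neutral preimage of $H_{2}$, so that reduction fails. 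A separate argument is needed: $G$ is Galois-invariant, and the preimage $\Pi$ of $\fH_{4}$ in $\SL_{3}^{(2c)}$ is a gate with $\Pi/\fG\simeq\fKl$. Then $\H^{1}(k,\mu_{2^{e}}\times\fQ_{8})\to\H^{1}(k,\fKl)$ is surjective by Corollary~\ref{cor:klq8}, which rests on the conic computation of Proposition~\ref{prop:q8}.
\end{itemize}
Also, the normalizer of $H_{2}$ in $\PGL_{3}$ is $H_{5}$, not $H_{4}$.

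\textbf{Abelian types.} Your plan stops at what you call the main obstacle, which is the heart of the proof. Theorem~\ref{thm:diag-neutral+} only says that the index divides $\iota_{1}\iota_{2}$. To turn this into neutrality via Lemma~\ref{lem:index-neutral}, you need an a priori bound on the index of a non-neutral $\cG$. The paper supplies the missing steps as follows.
\begin{itemize}
\item It gets index exactly $3$ from the distinguished union of the three axes. This is a degree-$3$ zero-cycle on $\bP$ whose points lift to $\cG$ because $\PP^{2}/\bar G$ has singularities of type $\rR$.
\item In type $(b)$ it gets index $2$ from the distinguished line $\{z=0\}$.
\item The same distinguished-subvariety argument forces $N_{G}$ to act transitively on the axes (resp.\ $S=C_{2}$).
\item It splits $\cG$ along the Sylow decomposition of its band to reduce to $3$-groups (resp.\ $2$-groups; cf.\ Lemma~\ref{lem:3-sylow}).
\item It parametrizes $G$ by $(c,\alpha,\beta)$ and proves $3\mid c$.
\item Whenever $(\alpha,\beta)\neq(0,0)$ (resp.\ $\beta=1$), it writes down an explicit $C_{3}$-orbit (resp.\ $C_{2}$-basis) spanning a permutation sublattice of $X(\GG_{m}^{3}/G)$ of index prime to $3$ (resp.\ odd).
\item In type $(b)$ with a $2$-dimensional eigenspace, the argument is not a gate but the faithful line bundle given by the distinguished third coordinate.
\end{itemize}
None of these steps is in your proposal, so the abelian cases are not proved.
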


{$(3) \Rightarrow (2)$.} If $G$ is one of the groups listed in $(3)$, then $\bar{G}$ is not $\rho$-neutral by Theorem~\ref{thm:pgl3}. Let us show that $G$ is saturated as well. This is clear for the first, third and fourth class. For the second class, the image $\SL_{(2,1)}^{(c_{1},c_{2})}$ in $\PGL_{3}$ is $\GL_{2}$, which contains the normalizer of $\bar{G}$. Hence, $\GL_{2}$ is a gate for $\bar{G}$ by Proposition~\ref{prop:gate-norm}, and $G$ is saturated in this case too.

{$(2) \Rightarrow (1)$.} If $G$ is saturated and not $\rho$-neutral, then $G$ is not neutral by Proposition~\ref{prop:saturated-neut}.

{$(1) \Rightarrow (3)$.} Assume that $G$ is not neutral, we want to show that it is conjugate to one of the groups listed in $(3)$. Fix a non-neutral $j$-gerbe $\cV \to \cG$.

By Proposition~\ref{prop:rhoneut-neut} and Theorem~\ref{thm:pgl3}, we have that $\bar{G}$ is of type $(a) - (e)$ as in Definition~\ref{def:type-x}. Let us subdivide the analysis by cases.

\subsection{Type $(a)$ or $(e)$}

Assume that $\bar{G}$ is either of type $(a)$ or $(e)$, i.e. up to conjugacy
\[\bar{G} = \left< \diag(\zeta_{a},1,1), \diag(1,\zeta_{a},1), \diag(\zeta_{an},\zeta_{an}^{d},1)\right>\]
for integers $a,n,d$ with $a,n \ge 1$ and satisfying $d^{2}-d+1\cong 0\pmod{n}$. In particular, $G$ is diagonal, and we may consider it as a group scheme with the structure given in Lemma~\ref{lem:diag-hyp}.

The proof is quite long. We subdivide it in several short steps.

\subsubsection{The singularities of $\PP^{2}/\bar{G}$ are of type $\rR$}

The singularities of $\PP^{2}/\bar{G}$ are of type $\frac{1}{n}(1, d)$ with $d^{2} \cong d - 1 \pmod{n}$. If they are not of type $\rR$, by \cite[Theorem~4]{bresciani-sing} we also have $d^{2} \cong 1 \pmod{n}$, hence $d \cong 2 \pmod{n}$ and $n \mid 3$, which is absurd since by \cite[Theorem~4]{bresciani-sing} again we also have $2 \mid n$. It follows that the singularities of $\PP^{2}/\bar{G}$ are of type $\rR$. If $\bP$ is the coarse moduli space of $\PP(\cV)$, we get that $\bP(k') \neq \emptyset \Leftrightarrow \cG(k') \neq \emptyset$ for every extension $k'/k$ thanks to Lemma~\ref{lem:compression-neutral}.

\subsubsection{The index of $\cG$ is $3$ and $3 \mid |G|$}

Let us show that the index of $\cG$ is $3$ and that $3 \mid |G|$. If $an = 1$,  i.e. $G = \left< \zeta_{m}\Id \right>$ for some $m \ge 1$, then $\bP$ is a Brauer--Severi surface, hence there exists $k'/k$ with $[k':k] \mid 3$ such that $\bP(k') \neq \emptyset$. If $an \neq 1$, the three axes are $1$-dimensional eigenspaces of $G$, their union is distinguished and descends to a $0$-cycle of degree $3$ on $\bP$, hence there exists $k'/k$ with $[k':k] \mid 3$ such that $\bP(k') \neq \emptyset$ in this case as well. 

In any case, this implies $\cG(k') \neq \emptyset$ by Lemma~\ref{lem:compression-neutral}. Since $\cG(k) = \emptyset$ by assumption, then $[k':k] = 3$ and $\cG$ has index $3$ by Lemma~\ref{lem:index-neutral}. It follows that $3 \mid |G|$ by Lemma~\ref{lem:index-primes}.

\subsubsection{Completing the proof if $an = 1$}

If $an = 1$, then $G = \left< \zeta_{c}\Id \right>$ for some $c \ge 1$, and $3 \mid c = |G|$, so $G$ has the desired form.

\subsubsection{Structure of $G$ and of the normalizer if $an \neq 1$}\label{sect:an-not-1}

Assume $an \neq 1$, and let $N_{G} \rtimes \gal(K/k_{0})$ be the extended normalizer (see Remark~\ref{rmk:norm-hyp}). Since $an \neq 1$, the three axes are $1$-dimensional eigenspaces of $G$, and $N_{G}$ permutes them. If the action of $N_{G}$ on the axes is not transitive, then at least one of them is distinguished, hence it descends to a rational point on $\bP$ of $\PP(\cV)$. This implies that $\cG(k) \neq \emptyset$, which is absurd, hence the action of $N_{G}$ on the axes must be transitive.

By Lemma~\ref{lem:permutation}, the normalizer is $\GG_{m}^{3} \rtimes S$ with $S$ either $C_{3}$ or $S_{3}$. Let $M_{1} \in N_{G}$ be as in Definition~\ref{def:hessian}.

Let $\left< w = \zeta_{c}\Id \right>$ be the kernel of $G \to \bar{G}$. Given the presentation of $\bar{G}$ above, in $G$ we have elements $x = \lambda \cdot \diag(\zeta_{3a}^{2}, \zeta_{3a}^{-1}, \zeta_{3a}^{-1})$, $z = \lambda' \cdot \diag(\zeta_{3an}^{2-d}, \zeta_{3an}^{2d-1}, \zeta_{3an}^{-d-1})$, and $y = M_{1} \cdot x \cdot M_{1}^{-1} = \lambda \cdot \diag(\zeta_{3a}^{-1}, \zeta_{3a}^{2}, \zeta_{3a}^{-1})$.

Furthermore, we have
\[\lambda^{3}\Id = x \cdot (M_{1} \cdot x \cdot M_{1}^{-1}) \cdot (M_{1}^{2} \cdot x \cdot M_{1}^{-2}) \in \left< w = \zeta_{c}\Id \right>,\]
\[\lambda'^{3}\Id = z \cdot (M_{1} \cdot z \cdot M_{1}^{-1}) \cdot (M_{1}^{2} \cdot z \cdot M_{1}^{-2}) \in \left< w = \zeta_{c}\Id \right>,\]
hence we may write $\lambda = \zeta_{3c}^{\alpha}$, $\lambda'=\zeta_{3c}^{\beta}$.

To sum up, we have
\[G = \left< w = \zeta_{c}\Id,~ x = \zeta_{3c}^{\alpha} \cdot \diag\left(\zeta_{3a}^{2}, \zeta_{3a}^{-1}, \zeta_{3a}^{-1}\right), \right.\]
\[\left. y = \zeta_{3c}^{\alpha} \cdot \diag \left(\zeta_{3a}^{-1}, \zeta_{3a}^{2}, \zeta_{3a}^{-1}\right),~ z = \zeta_{3c}^{\beta} \cdot \diag\left(\zeta_{3an}^{2-d}, \zeta_{3an}^{2d-1}, \zeta_{3an}^{-d-1}\right) \right>\]
and up to multiplying $x, y, z$ by a suitably power of $w$, we may assume that $0 \le \alpha, \beta \le 2$. 

The statement is then equivalent to $3 \mid c$ and $\alpha = \beta = 0$, which in turn is equivalent to $3 \mid c$ and $G \subseteq \SL_{3}^{(c/3)}$.

\subsubsection{Completing the proof if $3 \nmid an$}
If $3 \nmid an$, then $3 \mid c$ since $3 \mid |G|$ but $3 \nmid |\bar{G}|$. Furthermore, $x^{3} \cdot w^{-\alpha}$, $y^{3} \cdot w^{-\alpha}$, $z^{3} \cdot w^{-\beta}$ are elements of $\SL_{3}^{(c/3)}$ mapping to generators of $\bar{G}$, hence $G = \pi^{-1}(\bar{G}) \cap \SL_{3}^{(c/3)}$.

\subsubsection{Proof of $3 \mid c$ if $3 \mid an$}

Assume $3 \mid an$. Let us show that $3 \mid c$.

If $3 \mid a$, then

\[x^{a} \cdot w^{-a\alpha/3} = \zeta_{3c}^{a\alpha} \cdot \zeta_{3}^{2} \cdot \zeta_{3c}^{-a\alpha} \Id = \zeta_{3}^{2} \Id \in \left< w = \zeta_{c}\Id \right>,\]
hence $3 \mid c$.

Assume $3 \nmid a$, $3 \mid n$, write $n = 3^{\gamma}n'$ with $3 \nmid n'$. If, by contradiction, $3 \nmid c$, the $3$-Sylow $G_{3} \subseteq G$ is generated by
\[x^{ca} = y^{ca} = \zeta_{3}^{a\alpha} \cdot \diag(\zeta_{3}^{2}, \zeta_{3}^{2}, \zeta_{3}^{2})^{c} = \zeta_{3}^{a\alpha + 2c}\Id\]
and
\[z^{acn'} = \zeta_{3}^{an'\beta} \diag \left(\zeta_{3^{\gamma + 1}}^{2 - d}, \zeta_{3^{\gamma + 1}}^{2d - 1}, \zeta_{3^{\gamma + 1}}^{-d - 1} \right)^{c}.\]
Since $3 \nmid c$ and the kernel of $G \to \bar{G}$ is generated by $\zeta_{c}\Id$, then $\zeta_{3}^{a\alpha + 2c} = 1$. Notice that $d^{2} - d + 1 \cong 0 \pmod{n}$ and $3 \mid n$ implies $d \cong 2 \pmod{3}$, write $d = 2 + 3d'$, hence
\[z^{acn'} = \zeta_{3}^{an'\beta} \diag \left(\zeta_{3^{\gamma}}^{-d'}, \zeta_{3^{\gamma}}^{1 + 2d'}, \zeta_{3^{\gamma}}^{-1 - d'} \right)^{c}\]
Notice that $-d'$, $1 + 2d'$, $-1 - d'$ are pairwise different modulo $3$, and since $\gamma \ge 1$ then exactly one eigenvalue of $z^{acn'}$ is a $3^{\gamma - 1}$th root of $1$. This is in contradiction with the fact that $G_{3} = \left<z^{acn'}\right>$ is normalized by $M_{1}$, hence we must have $3 \mid c$.

\subsubsection{Reduction to $3$-groups}\label{sect:3-group}

We have to prove that $G \subseteq \SL_{3}^{(c/3)}$. Let us show that it is enough to do so in the case in which $|G|$ is a power of $3$.

The band $\cA$ of $\cG$ is a twisted form of $G$ over $k$, write $\cA = \cA_{3} \times \cA_{3}'$ where $|\cA_{3}|$ is a power of $3$ and $|\cA_{3}'|$ is prime with $3$. By \cite[Lemma 2.3]{bresciani-toric}, $\cG = \cG_{3} \times \cG_{3}'$, where $\cG_{3}$ is the induced $\cA_{3}$-gerbe and $\cG_{3}'$ is the induced $\cA_{3}'$-gerbe. Since $\cG$ has index $3$, then $\cG_{3}'$ is neutral by Lemma~\ref{lem:index-neutral}, it follows that $\cG_{3}$ is not neutral and has index $3$. Furthermore, by choosing a $k$-section of $\cG_{3}'$, we may pull-back $\cV$ to $\cG_{3}$, giving $\cG_{3}$ the structure of a $G_{3}$-gerbe where $G_{3} \subseteq G$ is the $3$-Sylow subgroup. Hence, we may reduce to the case in which $|G|$ is a power of $3$ thanks to the following lemma.

\begin{lemma}\label{lem:3-sylow}
	Let $G_{3} \subseteq G$ be the $3$-Sylow, and $c'$ the largest power of $3$ dividing $c$. Then $G_{3} \subseteq \SL_{3}^{(c'/3)}$ if and only if $G_{3} \subseteq \SL_{3}^{(c/3)}$, if and only if $G \subseteq \SL_{3}^{(c/3)}$.
\end{lemma}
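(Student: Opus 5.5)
The proof is elementary group theory inside the diagonal torus, using only the description of $G$ from \S\ref{sect:an-not-1}. $G$ is a finite abelian group of diagonal matrices, so it is the direct product $G = G_{3} \times G_{3}'$ of its $3$-Sylow and its $3'$-part. Since $3 \mid c$, write $c = c' c''$ with $3 \nmid c''$. The condition $M \in \SL_{3}^{(e)}$ simply means $(\det M)^{e} = 1$. So each of the three conditions says that $\det$ restricted to the relevant group takes values in a prescribed group of roots of unity $\mu_{c'/3}$ or $\mu_{c/3}$. I will therefore compare the images $\det(G_{3})$ and $\det(G)$ inside $K^{*}$.

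\textbf{First equivalence.} $\det(G_{3})$ is a $3$-group, and $\mu_{c/3} = \mu_{c'/3} \times \mu_{c''}$ with $3 \nmid c''$. A $3$-group lies in $\mu_{c/3}$ if and only if it lies in its $3$-primary part $\mu_{c'/3}$. Hence $G_{3} \subseteq \SL_{3}^{(c'/3)}$ if and only if $G_{3} \subseteq \SL_{3}^{(c/3)}$.

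\textbf{Second equivalence.} One direction is trivial. For the other, I need $\det(G_{3}') \subseteq \mu_{c/3}$ for free, i.e.\ $(\det g)^{c/3}=1$ for every $g$ of order prime to $3$. Using the generators $w, x, y, z$ of \S\ref{sect:an-not-1}, $\det$ is $\zeta_{c}^{3}$ on $w$, $\zeta_{c}^{\alpha}$ on $x$ and $y$, and $\zeta_{c}^{\beta}$ on $z$. So $\det(G) \subseteq \mu_{c}$, i.e.\ $(\det g)^{c} = 1$ for all $g \in G$.

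Now take $g \in G_{3}'$. Its order is prime to $3$, so $\det g$ has order dividing $c$ and prime to $3$, hence dividing $c''$. Since $c'' \mid c/3$ (recall $3 \mid c$), we get $(\det g)^{c/3} = 1$. Thus $G = G_{3} G_{3}' \subseteq \SL_{3}^{(c/3)}$ as soon as $G_{3}$ is.

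\textbf{Main point to check.} The only real content is the claim $\det(G) \subseteq \mu_{c}$. This follows from the explicit generators after normalizing $\lambda = \zeta_{3c}^{\alpha}$ and $\lambda' = \zeta_{3c}^{\beta}$. It uses that the diagonal parts of $x$ and $z$ lie in $\SL_{3}$, since their exponents sum to $0$, so the determinant is $\lambda^{3}$ or $\lambda'^{3}$. I should verify that the normalization of $\alpha, \beta$ there does not affect this, which it does not, since multiplying by powers of $w$ multiplies the determinant by elements of $\mu_{c}$.

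Finally, to use the lemma for the reduction, note that the kernel of $G_{3} \to \bar{G}_{3}$ is generated by $\zeta_{c'}\Id$. So $c'$ plays for $G_{3}$ the role that $c$ plays for $G$, which is consistent with the statement.
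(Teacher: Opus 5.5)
Your proof is correct and follows the paper's argument essentially verbatim: the first equivalence holds because a $3$-power-order element killed by $c/3$ is killed by $c'/3$, and the second uses $G = G_{3} \times G_{3}'$ together with $G \subseteq \SL_{3}^{(c)}$, which forces the elements of order prime to $3$ into $\SL_{3}^{(c/3)}$. The only difference is that you verify $\det(G) \subseteq \mu_{c}$ explicitly from the generators $w,x,y,z$, whereas the paper simply takes $G \subseteq \SL_{3}^{(c)}$ as already known.
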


\begin{proof}
	The first equivalence follows from the fact that, if $g \in \SL_{3}^{(c/3)}$ is an element whose order is a power of $3$, then $g \in \SL_{3}^{(c'/3)}$ as well. Let us work on the second equivalence.
	
	The ``if'' part is obvious. Assume $G_{3} \subseteq \SL_{3}^{(c/3)}$, and let $G_{3}' \subseteq G$ be the subgroup of elements of order prime with $3$. We already know that $G \subseteq \SL_{3}^{(c)}$, so $G_{3}' \subseteq \SL_{3}^{(c)}$. If $g$ has order prime with $3$, then $\det g^{c} = 1 \Leftrightarrow \det g^{c/3} = 1$, hence $G_{3}' \subseteq \SL_{3}^{(c/3)}$ as well. Since $G = G_{3} \times G_{3}'$, we conclude that $G \subseteq \SL_{3}^{(c/3)}$.
\end{proof}

\subsubsection{Proof of $G \subseteq \SL_{3}^{(c/3)}$ for $3$-groups}

So we may assume that $|G|$ is a power of $3$. Equivalently, $c$, $a$, $n$ are powers of $3$. Let us prove that $G \subseteq \SL_{3}^{(c/3)}$.

We want to use Theorem~\ref{thm:diag-neutral+} to show that, if $(\alpha, \beta) \neq (0, 0)$, the index of $\cG$ is prime with $3$, giving a contradiction since we have already proved that the index of $\cG$ is $3$. The normalizer of $G$ is $\GG_{m}^{3} \rtimes S$ with $S$ either $C_{3}$ or $S_{3}$, hence it is sufficient to show that $(\alpha, \beta) \neq (0, 0)$ then $X(\GG_{m}^{3}/G)$ contains a permutation $C_{3}$-module of index prime with $3$.

The equation $d^{2} - d + 1 \cong {0}$ has no solutions modulo $9$, hence $n$ is either $1$ or $3$.

If $n = 1$ we may ignore $z$ and assume $\beta = 0$ since $w, x, y$ already generate $G$. The character group $X(\GG_{m}^{3}/G) \subseteq X(\GG_{m}^{3}) = \ZZ^{3}$ is the subgroup of vectors $(r, s, t) \in \ZZ^{3}$ satisfying
\[r + s + t \cong 0 \pmod{c}\]
\[a\alpha(r + s + t) + c(2r - s - t) \cong 0 \pmod{3ac}\]
\[a\alpha(r + s + t) + c(-r + 2s - t) \cong 0 \pmod{3ac}.\]

If $n = 3$, we may choose $d = -1$, hence we have the three equations above plus
\[a\beta(r + s + t) + c(r - s) \cong 0 \pmod{3ac}.\]

In any case, the vector
\[v = \left(\frac{c - a\alpha}{3}, \frac{c - a\alpha + 3a\beta}{3}, \frac{c + 2a\alpha - 3a\beta}{3} \right).\]
is in $X(\GG_{m}^{3}/G)$; it can be checked directly.

Notice that, if $n = 3$ and $\alpha \neq 0$, the other vectors in the $C_{3}$-orbit of $v$ do not satisfy the fourth equation: this implies that $G$ is not normalized by $C_{3}$, a case which we have already ruled out in \S\ref{sect:an-not-1}. Hence, if $n = 3$ we may assume $\alpha = 0$.

Assume by contradiction $(\alpha, \beta) \neq (0, 0)$. Since we are assuming $G$ to be normalized by $C_{3}$, the $C_{3}$-orbit of the vector is a basis of a permutation subgroup $N$ of $X(\GG_{m}^{3}/G)$.

The index of $N$ in $\ZZ^{3}$ is the determinant of the $3 \times 3$ matrix of the basis.

If $n = 1$, $\beta = 0$ and $\alpha \neq 0$, by explicit computation the determinant is $ca^{2}\alpha^{2}$ and $|G| = ca^{2}$, so $N$ has index $\alpha^{2}$ in $X(\GG_{m}^{3}/G)$. Since $3 \nmid \alpha$, by Theorem~\ref{thm:diag-neutral+} $\cG$ has index prime with $3$, which is absurd since $\cG$ has index $3$.

If $n = 3$, $\alpha = 0$ and $\beta \neq 0$, by explicit computation the determinant is $3ca^{2}\beta^{2}$ and $|G| = 3ca^{2}$, so $N$ has index $\beta^{2}$ in $X(\GG_{m}^{3}/G)$. Since $3 \nmid \beta$, by Theorem~\ref{thm:diag-neutral+} $\cG$ has index prime with $3$, which is absurd since $\cG$ has index $3$.

This concludes the proof in the case $G$ is of type $(a)$ or $(e)$.

\subsection{Type $(b)$}

Assume that $\bar{G}$ is of type $(b)$, i.e. up to conjugacy
\[\bar{G} = \left< \diag(\zeta_{2m}, \zeta_{2m}, 1), \diag(\zeta_{2n}, \zeta_{2n}^{-1}, 1) \right>\]
with $m,n$ positive integers. In particular, $G$ is diagonal, and we may consider it as a group scheme with the structure given in Lemma~\ref{lem:diag-hyp}.

\subsubsection{The index of $\cG$ is $2$ and $2 \mid |G|$}

The line $L = \{[x:y:0]\} \subseteq \PP^{2}$ is distinguished, because it is the only line fixed by $2m$ elements of $\bar{G}$, namely the multiples of $\diag(\zeta_{2m}, \zeta_{2m}, 1)$.

Hence, $L$ descends to a subvariety $\bL$ of the coarse moduli space $\bP$ of $\PP(\cV)$. Since $\bP$ is normal, then it is regular in codimension $1$. The fact that $\bL$ is a genus $0$ curve then implies that there exists a point $p \in \bL$, regular in $\bP$, such that $[k(p):k] \mid 2$. By Lemma~\ref{lem:compression-neutral}, $\cG(k(p)) \neq \emptyset$. Since $\cG(k) = \emptyset$ by assumption, then $[k(p):k] = 2$ and $\cG$ has index $2$ by Lemma~\ref{lem:index-neutral}. It follows that $2 \mid |G|$ by Lemma~\ref{lem:index-primes}.

\subsubsection{Structure of $G$ and of the normalizer}

Let $N_{G} \rtimes \gal(K/k_{0})$ be the extended normalizer (see Remark~\ref{rmk:norm-hyp}). If $n = 1$, then $N_{G} \simeq \GL_{2} \times \GG_{m}$. If $n \neq 1$, then $N_{G} \simeq \GG_{m}^{3} \rtimes S$ where $S \subseteq S_{3}$ permutes the coordinates.  We have already seen above that the line $\{[x:y:0]\} \subseteq \PP^{2}$ is distinguished, hence $S$ must fix the third coordinate, i.e. $S$ is either trivial or a cyclic group $C_{2}$ swapping the first two coordinates. Let us show that $S = C_{2}$.

If by contradiction $S$ is trivial, the point $[1:0:0]$ is distinguished and descends to a rational point $\bP$ with singularity of type $\frac{1}{2m'n'}(1, d-1)$, where $m',n',d$ are as in the proof of Lemma~\ref{lem:equiv-rep}. This singularity is of type $\rR$ because $d-1 \cong 0 \pmod{2}$ \cite[Theorem 4]{bresciani-sing}, hence we get that $\cG(k) \neq \emptyset$ thanks to Lemma~\ref{lem:compression-neutral}, which is absurd.

Let $\left< x = \zeta_{c}\Id \right>$ be the kernel of $G \to \bar{G}$. Given the presentation of $\bar{G}$ above, in $G$ we have elements
\[y = \lambda \cdot \diag(\zeta_{2m}, \zeta_{2m}, 1),\]
\[z = \eta \cdot \diag(\zeta_{2n}, \zeta_{2n}^{-1}, 1),\]
and $G = \left< x, y, z \right>$. 

We have that $y^{2m} = \lambda^{2m}\Id \in \left< x = \zeta_{c}\Id \right>$, hence $\lambda = \zeta_{2mc}^{\alpha}$ for some $\alpha$. Furthermore, since $C_{2}$ normalizes $G$, then $\eta \diag(\zeta_{2n}^{-1}, \zeta_{2n}, 1) \in G$, which implies that
\[\eta \diag(\zeta_{2n}, \zeta_{2n}^{-1}, 1) \cdot \eta \diag(\zeta_{2n}^{-1}, \zeta_{2n}, 1) = \eta^{2}\Id \in \left< x = \zeta_{c}\Id \right>,\]
so we may write $\eta = \zeta_{2c}^{\beta}$ for some $\beta$. To sum up,
\[G = \left< x = \zeta_{c}\Id, y = \zeta_{2mc}^{\alpha}\diag(\zeta_{2m}, \zeta_{2m}, 1), z = \zeta_{2c}^{\beta}\diag(\zeta_{2n}, \zeta_{2n}^{-1}, 1)\right>.\]
Up to multiplying $z$ by a suitable power of $x$, we may assume that $0 \le \beta \le 1$.
Up to multiplying $y$ by a suitable power of $x$, we may assume that $0 \le \alpha \le 2m - 1$.

Furthermore, notice that
\[y^{m}z^{-n} = \zeta_{2c}^{\alpha - \beta n} \Id \in \left< x = \zeta_{c}\Id \right>,\]
or equivalently $\alpha - \beta n$ is even.

\subsubsection{Reformulation of statement}

We claim that $G$ has the desired form if $\beta = 0$. Assume that $\beta = 0$, then $\alpha = \alpha - \beta n$ is even as well. Write $c_{1} = -\alpha/2$ and $c_{2} = c - 2c_{1} = c + \alpha$.

The inverse image of $\bar{G}$ in $\SL_{(2,1)}^{(c_{1},c_{2})}$ has order $2mnc = |G|$, so it is enough to check that $G \subseteq \SL_{(2,1)}^{(c_{1},c_{2})}$, or $x, y, z \in \SL_{(2,1)}^{(c_{1},c_{2})}$, which by definition amounts to checking that some products of determinants are $1$. 

For $x$, we have $(\zeta_{c}^{2})^{c_{1}}\zeta_{c}^{c_{2}} = \zeta_{c}^{c} = 1$. For $y$, we have $\zeta_{2mc}^{\alpha c} \cdot \zeta_{m}^{c_{1}} = \zeta_{2m}^{\alpha} \cdot \zeta_{m}^{c_{1}} = 1$. For $z$, we have $\zeta_{2c}^{\beta c} = 1$.

It is then enough to prove that $\beta = 0$.

\subsubsection{Reduction to $2$-groups}

Assume we know that $G$ has the desired form when $|G|$ is a power of $2$, let us prove the general case. Let $G_{2} \subseteq G$ be the subgroup of elements whose order is a power of $2$. Recall that we have fixed a $j$-gerbe $\cV \to \cG$ of index $2$. As in \S\ref{sect:3-group}, we may write $\cG = \cG_{2} \times \cG_{2}'$, where $\cG_{2}'$ is neutral and the restriction of $\cV$ to $\cG_{2}$ is a $G_{2}$-gerbe of index $2$. In particular, $G_{2}$ is not neutral.

Since we are assuming the case of $2$-groups, then $G_{2}$ is the inverse image of $\bar{G}_{2}$ in $\SL_{(2,1)}^{(c_{1},c_{2})}$ for some $c_{1},c_{2}$. The fact that the kernel of $G_{2} \to \bar{G}_{2}$ is a subgroup of $\left< \zeta_{c}\Id \right>$ implies that $2c_{1} + c_{2}$ divides $c$, write $c = c' \cdot (2c_{1} + c_{2})$.

Let $d$ be the largest odd number dividing the order of $z$. Since $z^{d} \in G_{2} \subseteq \SL_{(2,1)}^{(c_{1},c_{2})}$, then $\zeta_{2c}^{d \beta (2c_{1} + c_{2})} = \zeta_{2c'}^{d\beta} = 1$. Hence, the fact that $d$ is odd implies that $\beta$ is even. We conclude that $\beta = 0$ since $\beta$ is either $1$ or $0$.

\subsubsection{Finishing the proof for $2$-groups}

We may then assume that $G$ is a $2$-group, i.e. $m, n, c$ are powers of $2$.

Recall that we have two cases: $n = 1$, with normalizer $\GL_{2} \times \GG_{m}$, or $n \neq 1$, with normalizer $\GG_{m}^{3} \rtimes C_{2}$.

If $n = 1$ and $\beta = 1$, then $\alpha$ is odd and $G = \left< y = \zeta_{2mc}^{\alpha}\diag(\zeta_{2m}, \zeta_{2m}, 1) \right>$. In particular, the third coordinate is distinguished and defines a faithful line bundle, hence by Lemma~\ref{lem:line-neut} $\cG$ is neutral and we get a contradiction.

Assume $n \neq 1$. To conclude, we want to use Theorem~\ref{thm:diag-neutral+} to show that, if $\beta = 1$, then $\cG$ has odd index, which gives a contradiction since we have proved that $\cG$ has index $2$. 

The character group $X(\GG_{m}^{3}/G) \subseteq X(\GG_{m}^{3}) = \ZZ^{3}$ is the group of vectors $(r, s, t) \in \ZZ^{3}$ such that
\[r + s + t \cong 0 \pmod{c},\]
\[\alpha(r + s + t) + c(r + s) \cong 0 \pmod{2mc},\]
\[n(r + s + t) + c(r - s) \cong 0 \pmod{2nc},\]
where these three equations correspond to the three generators $x, y, z$ of $G$ respectively.

Recall that $0 \le \alpha \le 2m - 1$. Let us divide in two cases: either $\alpha = 0$, or $1 \le \alpha \le 2m - 1$.

If $\alpha = 0$, then $n$ is even since we know that $\alpha - n$ is even. The three vectors
\[\left(m + \frac{n}{2}, m - \frac{n}{2}, c - 2m \right), ~ \left(m - \frac{n}{2}, m + \frac{n}{2}, c - 2m \right), ~ \left(m, m, 2(c-m) \right)\]
are in $X(\GG_{m}^{3}/G)$, as can be checked directly, and form a permutation basis of a $C_{2}$-subgroup $N \subseteq X(\GG_{m}^{3}/G)$. The corresponding $3 \times 3$ matrix has determinant $2mnc = |G|$, hence $X(\GG_{m}^{3}/G) = N$ has a permutation basis. By Theorem~\ref{thm:diag-neutral}, $G$ is neutral, giving a contradiction.

Assume $1 \le \alpha \le 2m - 1$, and write $\gamma = m / \gcd(\alpha, m)$. Recall that $\alpha - n$ is even. The three vectors
\[\left( -\frac{n + \alpha}{2}, \frac{n - \alpha}{2}, c + \alpha \right), \left( \frac{n - \alpha}{2}, -\frac{n + \alpha}{2}, c + \alpha \right), \left(0, 0, 2 \gamma c \right)\]
are in $X(\GG_{m}^{3}/G)$, as can be checked directly, and form a permutation basis of a $C_{2}$-subgroup $N \subseteq X(\GG_{m}^{3}/G)$. The corresponding $3 \times 3$ matrix has determinant $2\alpha \gamma n c = |G| \cdot \frac{\alpha}{\gcd(m, \alpha)}$. Since $1 \le \alpha \le 2m - 1$ and $m$ is a power of $2$, then $\alpha/\gcd(m, \alpha)$ is odd. It follows that $N$ has odd index in $X(\GG_{m}^{3}/G)$. By Theorem~\ref{thm:diag-neutral+}, every $j$-gerbe has odd index, which gives a contradiction since $\cG$ has index $2$.

\subsection{Type $(c)$}

Assume that $\bar{G}$ is of type $(c)$, i.e. up to conjugacy $\bar{G} = H_{2}$, that is
\[\bar{G} = \left< M_{0} = \left( \begin{array}{ccc}
						1 & 0 &	0 \\
						0 & \zeta_{3} & 0 \\
						0 & 0 & \zeta_{3}^{2} \\
						\end{array} \right),~
						M_{1} = \left( \begin{array}{ccc}
						0 & 0 & 1 \\
						1 & 0 & 0 \\
						0 & 1 & 0
						\end{array} \right),~
						M_{2} = -\left( \begin{array}{ccc}
						1 & 0 & 0 \\
						0 & 0 & 1 \\
						0 & 1 & 0
						\end{array} \right) \right>.\]
						
Let $\left< \zeta_{c_{0}}\Id \right> \subseteq G$ be the kernel of $G \to \bar{G}$. We may write
\[G = \left< w = \zeta_{c_{0}}\Id, x = \lambda_{0} M_{0}, y = \lambda_{1} M_{1}, z = \lambda_{2} M_{2}\right>\]
for some $\lambda_{i} \in K$.

We have
\[x \cdot y \cdot x^{-1} \cdot y^{-1} = \zeta_{3} \Id \in G,\]
\[x^{3} \cdot z \cdot x^{-1} \cdot z^{-1} \cdot x^{-1} = \lambda_{0}\Id \in G,\]
\[y^{3} \cdot z \cdot y^{-1} \cdot z^{-1} \cdot y^{-1} = \lambda_{1}\Id \in G.\]
Hence, $c_{0} = 3c$ for some integer $c$ and, up to changing $x,y$, we may assume $\lambda_{0} = \lambda_{1} = 1$.

Notice that, if $d$ is any odd number, then $\left< w, x, y, z^{d} \right>$ has image equal to $\bar{G}$ in $\PGL_{3}$, and contains the kernel of $G \to \bar{G}$, hence $\left< w, x, y, z^{d} \right> = G$. This implies that, up to replacing $z$ with $z^{d}$ for some odd number $d$, we may assume $\lambda_{2} = \zeta_{2^{e}}$. Furthermore, we may assume that $2^{e}$ does not divide $c$ if $e \neq 0$: if $2^{e} \mid c$, then $M_{2} \in G$, and up to replacing $z$ with $M_{2}$ we may reduce to the case $e = 0$. However, $2^{e-1} \mid c$ if $e \neq 0$, because $z^{2} = \zeta_{2^{e-1}} \Id \in G$.

To sum up, we have
\[G = \left< w = \zeta_{3c}\Id, x = M_{0}, y = M_{1}, z = \zeta_{2^{e}} M_{2}\right>,\]
with $2^{e} \nmid c$ if $e \neq 0$.

\subsubsection{$e = 0$}

If $e = 0$, then $G$ is the inverse image of $H_{2}$ in $\SL_{3}^{(c)}(K)$. In fact, $G \subseteq \SL_{3}^{(c)}(K)$ can be easily checked, and the inverse image of $H_{2}$ has order $3c \cdot |H_{2}| = |G|$.

\subsubsection{$e \neq 0$}

If $e \neq 0$, we have that $2^{e} \nmid c$. In this case, the intersection $G \cap \SL_{3}(K)$ is the group $\tilde{H}_{1} = \left< M_{0}, M_{1} \right>$. This implies that $G$ is neutral by Proposition~\ref{prop:27} and the following Lemma~\ref{lem:sl-neutral}.

\begin{lemma}\label{lem:sl-neutral}
	Let $G \subseteq \GL_{n}(K)$ be a finite subgroup. If $G_{0} = G \cap \SL_{n}(K)$ is neutral, then $G$ is neutral as well.
\end{lemma}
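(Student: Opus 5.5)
Given a $j$-gerbe $\cV \to \cG$ for $G$ over $k$, the plan is to build from it a $j_{0}$-gerbe for $G_{0} = G \cap \SL_{n}(K)$, use neutrality of $G_{0}$ to find a rational point there, and map it back to $\cG$.

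The first step uses $\det \cV$, a line bundle on $\cG$, i.e. a morphism $\cG \to B\GG_{m}$. On automorphism groups of geometric points this is $\det : G \to K^{*}$, with kernel $G_{0}$ and image $\mu_{r}$ for $r = [G:G_{0}]$. Take the canonical factorization $\cG \to \cD \to B\GG_{m}$ (Proposition~\ref{prop:can-fact}). Then $\cD$ is a gerbe with a faithful line bundle, banded by $\mu_{r}$, so $\cD(k) \neq \emptyset$ by Lemma~\ref{lem:line-neut}.

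Next, choose $d \in \cD(k)$ and define $\cG_{0}$ by the $2$-fibre product $\cG \times_{\cD} \spec k$. Since $\cG \to \cD$ is locally full, a standard argument shows that $\cG_{0}$ is a gerbe over $k$. Its geometric automorphism groups are the kernels of $\underaut_{\cG} \to \underaut_{\cD}$, namely $G_{0}$ acting on $\cV$ by the restricted representation. More canonically, $\cG_{0}$ is the stack of pairs (object of $\cG$, trivialization of its image in $\cD$ relative to $d$). The composite $\cG_{0} \to \cG \to B\GL_{n}$ is faithful, so it is a $j_{0}$-gerbe for $G_{0} \subseteq \GL_{n}(K)$, up to conjugation. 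Over $\bar{k}$ one has $\cG_{\bar k} = BG$ and $\cD_{\bar k} = B\mu_{r}$, so $\cG_{0,\bar k} = BG_{0}$ with the given embedding. Hypothesis~\ref{hyp:quasi-gal-invariant} for $G_{0}$ should follow from that for $G$, since $G_{0}$ is determined by $G$; if not, Lemma~\ref{lem:minimal-field} handles the base field.

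By neutrality of $G_{0}$ we get $\cG_{0}(k) \neq \emptyset$, and composing with $\cG_{0} \to \cG$ gives $\cG(k) \neq \emptyset$. Hence $G$ is neutral.

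The main obstacle is verifying that $\cG_{0}$ is a gerbe, i.e. nonempty locally and locally connected. This reduces to the fact that $\cG \to \cD$ is locally full, hence an fppf-epimorphism on automorphism sheaves, so fibres over $k$-points are gerbes banded by the kernel. The second point needing care is that the type of $\cG_{0} \to B\GL_{n}$ is exactly the conjugacy class of $G_{0}$. Both checks can be done after base change to $\bar{k}$, where everything is explicit.
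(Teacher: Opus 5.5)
Your proposal is correct and is essentially the paper's proof: take the canonical factorization of $\cG \to B\GL_{n} \xrightarrow{\det} B\GG_{m}$, show the intermediate gerbe is neutral via Lemma~\ref{lem:line-neut}, and pull back along a rational point to get a $G_{0}$-gerbe mapping to $\cG$. The points you flag as needing care are details the paper leaves implicit: that the fibre is a gerbe whose geometric automorphism group is $G_{0}$, and that Hypothesis~\ref{hyp:quasi-gal-invariant} passes to $G_{0}$ because $\SL_{n}$ is normal and defined over $k_{0}$.
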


\begin{proof}
	Let $\cG \to B\GL_{n}$ be a $j$-gerbe, and $\GL_{n} \to \GG_{m}$ the determinant. Let $\cG \to \cH \to B\GG_{m}$ be the canonical factorization of the composition $\cG \to B\GL_{n} \to B\GG_{m}$. The faithful morphism $\cH \to B\GG_{m}$ defines a faithful line bundle on $\cH$, hence $\cH$ is neutral by Lemma~\ref{lem:line-neut}.
	
	Fix $\spec k \to \cH$ any section, and let $\cG_{0} = \cG \times_{\cH} \spec k$ be the fiber product. Then $\cG_{0}$ is a $G_{0}$-gerbe, hence it is neutral by hypothesis. It follows that $\cG$ is neutral as well, since we have a morphism $\cG_{0} \to \cG$.
\end{proof}

\subsection{Type $(d)$}

Assume that $\bar{G}$ is of type $(d)$, i.e. up to conjugacy $\bar{G} = H_{3}$, that is
\[\bar{G} = \left< M_{0} = \left( \begin{array}{ccc}
						1 & 0 &	0 \\
						0 & \zeta_{3} & 0 \\
						0 & 0 & \zeta_{3}^{2} \\
						\end{array} \right),~
						M_{1} = \left( \begin{array}{ccc}
						0 & 0 & 1 \\
						1 & 0 & 0 \\
						0 & 1 & 0
						\end{array} \right),~\right.
						\]
						
						\[
						\left.M_{2} = -\left( \begin{array}{ccc}
						1 & 0 & 0 \\
						0 & 0 & 1 \\
						0 & 1 & 0
						\end{array} \right),~
						M_{3}=\frac{-\zeta_{4}}{\sqrt{3}}\left( \begin{array}{ccc}
						1 & 1 & 1 \\
						1 & \zeta_{3} & \zeta_{3}^{2} \\
						1 & \zeta_{3}^{2} & \zeta_{3}
						\end{array} \right){}~ \right>.\]

If $\zeta_{3} \in k_{0}$, then $G$ is neutral by Proposition~\ref{prop:rhoneut-neut} and Theorem~\ref{thm:pgl3}. Assume $\zeta_{3} \notin k_{0}$, so that $H_{3}$ is not neutral by Theorem~\ref{thm:pgl3}.

Let $\left< \zeta_{c_{0}}\Id \right>$ be the kernel of $G \to \bar{G}$. Notice that $M_{3}^{2} = M_{2}$, so we may remove $M_{2}$ from the generators. We may then write
\[G = \left< w = \zeta_{c_{0}}\Id,~ x = \lambda_{0} M_{0},~ y = \lambda_{1}M_{1},~ z = \lambda_{3}M_{3} \right>\]
for some $\lambda_{i} \in K$. We have equalities
\[xyx^{-1}y^{-1} = \zeta_{3}\Id \in G,\]
\[x^{3}(z^{2}x^{-1}z^{-2})x^{-1} = \lambda_{0}\Id \in G,\]
\[y^{3}(z^{2}y^{-1}z^{-2})y^{-1} = \lambda_{1}\Id \in G,\]
\[z^{4} = \lambda_{3}^{4}\Id \in G,\]
so $c_{0} = 3c$ for some integer $c$ and, up to multiplying the generators by multiples of $w$, we may assume $\lambda_{0} = \lambda_{1} = 1$. Since $M_{3}^{4} = \Id$, we may replace $z$ with $z^{d}$ for some odd $d$ and assume that $\lambda_{3} = \zeta_{2^{e}}$. If $2^{e} \mid c$, we may multiply $z$ by a suitable power of $w$ and reduce to the case $e = 0$, hence we may assume $2^{e} \nmid c$ if $e \neq 0$. However, notice that $2^{e-2} \mid c$ if $e \ge 2$, because $\lambda_{3}^{4}\Id \in G$.

\[G = \left< w = \zeta_{3c}\Id,~ x = M_{0},~ y = M_{1},~ z = \zeta_{2^{e}}M_{3} \right>.\]

\subsubsection{$e = 0$}

If $e = 0$, then $G$ is the inverse image in $\SL_{3}^{(c)}$ of $\bar{G}$, it is saturated and not neutral by Proposition~\ref{prop:saturated-neut}.

\subsubsection{$e \neq 0$, $2^{e-1} \nmid c$}

If $e \neq 0$ and $2^{e-1} \nmid c$, then $G \cap \SL_{3} = \left< M_{0}, M_{1} \right> = \tilde{H}_{1}$ which is neutral by Proposition~\ref{prop:27}. We conclude that $G$ is neutral by Lemma~\ref{lem:sl-neutral}.

\subsubsection{$e \neq 0$, $2^{e-1} \mid c$}

Assume $e \neq 0$ and $2^{e-1} \mid c$, or equivalently $e \neq 0$ and $M_{2} = z^{2}w^{-3c/ 2^{e-1}} \in G$.

In this case, $G$ is Galois invariant. The Galois conjugates of $w$ are clearly in $G$. The same is true for $x$ and $y$, because the only non-trivial Galois automorphism of $k_{0}(\zeta_{3})/k_{0}$ maps $M_{i}$ either to itself or to $M_{i}^{-1}$ for every $i = 0, 1, 2, 3$. A Galois conjugate of $z$ has the form $\zeta_{2^{e}}^{d}M_{3}^{\pm1}$, with $d$ odd. If we multiply it by $z^{\pm 1}$, we get $\zeta_{2^{e-1}}^{(d \pm 1)/2}\Id$, which is in $G$ because $2^{e-1} \mid c$. 

In particular, $G = \fG(K)$ for a group scheme $\fG$ over $k_{0}$. Its image in $\PGL_{3}$ is $\fH_{3}$ and the normalizer of $\fH_{3}$ is $\fH_{4}$, where $\fH_{i}$ is the group scheme whose geometric points are the elements of $H_{i}$. Let $\Pi$ be the inverse image of $\fH_{4}$ in $\SL_{3}^{(2c)}$, that is $\Pi = \left< \zeta_{6c}\Id,~ M_{0},~ M_{1},~ M_{3},~ M_{4}\right>$. Notice that $\fG \subset \Pi$ because $2^{e} \mid 2c$.

Notice that $M_{4}$ normalizes $H_{1}$, hence it normalizes the inverse image of $H_{1}$ in $\SL_{3}^{(c)}$, which is $\left< w, x, y\right>$. Furthermore,
\[M_{4}zM_{4}^{-1} = M_{4}(\zeta_{2^{e}}M_{3})M_{4}^{-1} = M_{1}^{-1}(\zeta_{2^{e}}M_{3}^{-1})M_{1} = \]
\[= w^{3c \cdot 2^{1-e}} \cdot M_{1}^{-1}(\zeta_{2^{e}}M_{3})^{-1}M_{1} = w^{3c \cdot 2^{1-e}} \cdot M_{1}^{-1}z^{-1}M_{1},\]
hence $M_{4}$ normalizes $G$. Since $M_{0}, M_{1}, M_{3}$ clearly normalize $G$, we get that $\Pi$ normalizes $\fG$. If $\fN$ is the normalizer of $\fG$, then $\fN / \Pi \simeq \GG_{m}$, hence $\Pi$ is a gate for $\fG$ by Proposition~\ref{prop:gate-norm} and Lemma~\ref{lem:gate-special}.

Since $|G| = 2^{2} 3^{3} c$ and $|\Pi| = 6c |H_{4}| = 2^{4} 3^{3} c$, then $\fG$ has index $4$ in $\Pi$. The quotient $\Pi/\fG$ is generated by $M_{3}$ and $M_{4}$, and is a twisted form of $C_{2} \times C_{2}$. The base change to $k_{0}(\zeta_{3})$ of $\Pi/\fG$ is constant, because $M_{4}$, $M_{3}$ are defined over $k_{0}(\zeta_{3})$. The action of $\gal(k_{0}(\zeta_{3})/k_{0})$ maps $M_{3}$ to $M_{3}^{-1}$, and $M_{4}$ to $\zeta_{3}^{2}M_{3}M_{4}$, hence the Galois action on $\Pi/\fG(k_{0}(\zeta_{3})) \simeq C_{2} \times C_{2}$ fixes $(1, 0)$ and maps $(0,1)$ to $(1, 1)$. This implies that $\Pi/\fG$ is isomorphic to the twisted Klein group $\fKl \simeq \fH_{4}/\fH_{2}$ which we studied in \S\ref{sect:type-d}. 

We want to prove that $G$ is neutral. By Theorem~\ref{thm:coh-crit}, this is equivalent to showing that $\H^{1}(k, \Pi) \to \H^{1}(k, \fKl)$ is surjective for every extension $k$ of $k_{0}$. 

Let $c' = c/2^{e-1}$, it is odd because $2^{e} \nmid c$. In $\Pi$, we have a normal subgroup scheme $\fM = \left< \zeta_{3c'}\Id,~ M_{0},~ M_{1} \right>$ of odd order $3^{3}c'$ and index $2^{4+e-1} = 2^{e+3}$. Furthermore, we have a $2$-Sylow subgroup $\mu_{2^{e}} \times \fQ_{8} = \left< \zeta_{2^{e}}\Id,~ M_{3},~ \zeta_{3}M_{1}M_{4} \right>$ of order $2^{e+3}$, hence $\Pi/\fM \simeq \mu_{2^{e}} \times \fQ_{8}$. We get a factorization
\[\mu_{2^{e}} \times \fQ_{8} \subset \Pi \to \mu_{2^{e}} \times \fQ_{8} \to \fKl.\]

To show that $\H^{1}(k, \Pi) \to \H^{1}(k, \fKl)$ is surjective, it is then sufficient to show that $\H^{1}(k, \mu_{2^{e}} \times \fQ_{8}) \to \H^{1}(k, \fKl)$ is surjective. This is Corollary~\ref{cor:klq8}, plus the fact that $\H^{1}(k, \mu_{2^{e}}) \to \H^{1}(k, C_{2})$ is surjective.

\appendix
\section{The normalizer of a faithful morphism of gerbes}\label{sect:normalizer}

Let $\cC$ be a site, fix $\cH$ a gerbe on $\cC$. We want to study faithful morphisms $\cG \to \cH$, where $\cG$ is another gerbe. We think of $\cH$ as a fibered category $\cH \to \cC$, so that objects of $\cH$ correspond to pairs $(S,s)$ where $S \in \cC$ and $s: S \to \cH$ is a section.

\subsection{Locally equivalent morphisms}

\begin{definition}\label{def:loc-eq}
	Two faithful morphisms $f: \cG \to \cH$, $f': \cG' \to \cH$ with $\cG, \cG'$ gerbes are \emph{locally equivalent} if, for every object $S \in \cC$ and every $2$-commutative diagram
	\[\begin{tikzcd}
		S \rar["s"] \dar["s'"]		&	\cG \dar["f"]	\\
		\cG'\rar["f'"]				&	\cH
	\end{tikzcd}\]
	with $2$-isomorphism $\alpha: f \circ s \Rightarrow f' \circ s'$, the group sheaves $\underaut_{\cG}(s)$ and $\alpha^{-1}\underaut_{\cG'}(s')\alpha$ are locally conjugate in $\underaut_{\cH}(f \circ s)$.
\end{definition}

The following lemma justifies the name ``locally equivalent''.

\begin{lemma}\label{lem:easy-loc-eq}
	Assume that $\cC$ has a terminal object $T$. Two faithful morphisms of gerbes $\cG \to \cH$, $\cG' \to \cH$ are locally equivalent if and only if there exists a covering $\{U_{i} \to T\}$ such that, for every $i$, there exists an isomorphism $\cG|_{U_{i}} \simeq \cG'|_{U_{i}}$ making the obvious diagram $2$-commutative.
\end{lemma}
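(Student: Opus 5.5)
We prove the two implications separately, working directly from Definition~\ref{def:loc-eq} and the gerbe axioms (local existence of objects, local connectedness).

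For the ``if'' direction, suppose given a covering $\{U_{i} \to T\}$ and isomorphisms $\phi_{i}: \cG|_{U_{i}} \to \cG'|_{U_{i}}$ with $2$-isomorphisms $\beta_{i}: f'\circ\phi_{i} \Rightarrow f$. Take a square as in Definition~\ref{def:loc-eq}, with $s: S \to \cG$, $s': S \to \cG'$ and $\alpha: f\circ s \Rightarrow f'\circ s'$. Pull back along the covering $\{S \times U_{i} \to S\}$; since $T$ is terminal, every $S$ maps to $T$ and this is a covering of $S$. On $S_{i} = S\times U_{i}$ we get two objects $\phi_{i}(s|_{S_{i}})$ and $s'|_{S_{i}}$ of $\cG'$. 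Because $\cG'$ is a gerbe, after refining the covering of $S_{i}$ they become isomorphic via some $\gamma$. Transporting along $\phi_{i}$ gives $\underaut_{\cG}(s)=\beta_{i}\,\underaut_{\cG'}(\phi_{i}s)\,\beta_{i}^{-1}$ inside $\underaut_{\cH}(f\circ s)$, using faithfulness to regard the automorphism sheaves as subsheaves. Conjugating by $f'(\gamma)$ identifies $\underaut_{\cG'}(\phi_{i}s)$ with $\underaut_{\cG'}(s')$. Altogether, $\underaut_{\cG}(s)$ and $\alpha^{-1}\underaut_{\cG'}(s')\alpha$ are conjugate by the local section $\alpha^{-1} f'(\gamma)\beta_{i}^{-1}$ of $\underaut_{\cH}(f\circ s)$, which is exactly local conjugacy.

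For the ``only if'' direction, first choose a covering $\{U_{i}\to T\}$ on which $\cG$ and $\cG'$ both have objects $s_{i}$, $s'_{i}$. Since $\cH$ is a gerbe, after refining we may take an isomorphism $\alpha_{i}: f(s_{i}) \to f'(s'_{i})$. Local equivalence then says that, after a further refinement, there is $h\in\underaut_{\cH}(f s_{i})$ with $h\,\underaut_{\cG}(s_{i})\,h^{-1}=\alpha_{i}^{-1}\underaut_{\cG'}(s'_{i})\alpha_{i}$. Replacing $\alpha_{i}$ by $\alpha_{i}h$, we may assume $\alpha_{i}$ carries $\underaut_{\cG}(s_{i})$ exactly onto $\underaut_{\cG'}(s'_{i})$.

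Over $U_{i}$, the gerbe $\cG$ is then equivalent to $B\underaut_{\cG}(s_{i})$, and $f$ becomes the morphism $B\underaut_{\cG}(s_{i})\to B\underaut_{\cH}(fs_{i})$ induced by the inclusion; the same holds for $\cG'$. Conjugation by $\alpha_{i}$ therefore produces an equivalence $\cG|_{U_{i}}\simeq\cG'|_{U_{i}}$. The required $2$-commutativity is witnessed by $\alpha_{i}$ itself.

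The main obstacle is justifying the identification of a gerbe with a section with $B$ of its automorphism sheaf, compatibly with the faithful map to $\cH$, on a general site. The cleanest route is to build the functor directly: send an object $t$ of $\cG$ over $V\to U_{i}$ to $s'_{i}$ twisted by the torsor $\underisom(s_{i},t)$, using $\alpha_{i}$ to match the groups. Then check full faithfulness and local essential surjectivity, and conclude by the stack property. Alternatively, one can realize the equivalence as the unique morphism between gerbes inducing the given isomorphism on bands at $s_{i}$.
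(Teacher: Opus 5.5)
Your proposal is correct and follows essentially the same route as the paper. For ``only if'', the paper likewise picks local objects and an isomorphism in $\cH$, uses local conjugacy to match the automorphism sheaves, and identifies $\cG|_{U_i}\simeq B\underaut_{\cG}(s_i)\simeq B\underaut_{\cG'}(s'_i)\simeq\cG'|_{U_i}$; for ``if'', it restricts to the $U_i$ and uses the given compatible equivalence. You spell out details the paper calls ``obvious'' or takes for granted, namely the explicit conjugating section $\alpha^{-1}f'(\gamma)\beta_i^{-1}$ and the identification of a gerbe with a section with $B$ of its automorphism sheaf, compatibly with the map to $\cH$.
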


\begin{proof}
	Assume that the morphisms are locally equivalent. By definition of gerbe, there exists a covering $\{U_{i} \to T\}$ such that $\cG(U_{i})$ and $\cG'(U_{i})$ are non-empty. Since $\cH$ is a gerbe, up to refining the covering we may assume that for every $i$ there are morphisms $u:U_{i} \to \cG$, $u':U_{i} \to \cG'$ and  $\alpha: f \circ u \Rightarrow f' \circ u'$ making the diagram of Definition~\ref{def:loc-eq} $2$-commutative. Hence, $\alpha\underaut_{\cG}(u)\alpha^{-1}$ and $\underaut_{\cG'}(u')$ are locally conjugate in $\underaut_{\cH}(f' \circ u')$. After further refinement of the covering, this gives us the desired isomorphism $\cG|_{U_{i}} = B\underaut_{\cG}(u) \simeq B\underaut_{\cG'}(u') \simeq \cG'|_{U_{i}}$ which $2$-commutes with the morphisms .
	
	On the other hand, assume that there exists a covering $\{U_{i} \to T\}$ with isomorphisms $\cG|_{U_{i}} \simeq \cG'|_{U_{i}}$ as above. Assume we have a $2$-commutative diagram as in Definition~\ref{def:loc-eq} for some object $S \in \cC$, with $\alpha: f \circ s \Rightarrow f' \circ s'$. We want to check that the group sheaves $\alpha\underaut_{\cG}(s)\alpha^{-1}$ and $\underaut_{\cG'}(s')$ are locally conjugate in $\underaut_{\cH}(f' \circ s')$, and we may do so after replacing $T$ with $U_{i}$. If $T = U_{i}$ this is obvious, because we have an isomorphism $\cG \simeq \cG'$ which $2$-commutes with the morphisms to $\cH$.
\end{proof}

\subsection{Types}

We want now to make the notion of local equivalence more flexible, in order to compare gerbes defined on different sub-sites. We are going to define the \emph{type} of a faithful morphism $\cG \to \cH$. 

\begin{definition}\label{def:sample}
	A \emph{sample} of $\cC$ is a fibered category $\cS$ such that, for every $S \in \cC$, there exists a cover $\{U_{i} \to S\}$ with $\cS(U_{i}) \neq \emptyset$ for every $i$.
\end{definition}

\begin{remark}\label{rmk:samples-over-fields}
	If $\cC$ is the fppf site over a field, a sample is just a fibered category $\cS$ such that $\cS(k') \neq \emptyset$ for some finite extension $k'/k$.
	
	In fact, if $\cS(k') \neq \emptyset$, then $\cS(S_{k'}) \neq \emptyset$ for every scheme $S$ over $k$. On the other hand, if $\cS(S) \neq \emptyset$ for some scheme $S$ locally of finite presentation over $k$, then $S(k') \neq \emptyset$ for some finite extension $k'/k$, hence $\cS(k') \neq \emptyset$ as well.
\end{remark}

Recall that the inertia stack $\rI_{\cH}$ is defined by the cartesian diagram
\[\begin{tikzcd}
	\rI_{\cH} \rar \dar		&	\cH \dar	\\
	\cH \rar				&	\cH \times \cH,
\end{tikzcd}\]
and it is a relative group sheaf over $\cH$. Equivalently, the inertia is the stack of pairs $(h, \alpha)$ where $h \in \cH(S)$ and $\alpha$ is an automorphism of $h$.

Let $\rP_{\cH} \to \cH$ be the fibered category of parts of the inertia, i.e. an object of $\rP_{\cH}(S)$ is a morphism $s: S \to \cH$ together with a subsheaf of sets of $\underaut_{\cH}(s)$. 

\begin{definition}\label{def:type}
	A \emph{pre-type} is a functor $\phi: \cS \to \rP_{\cH}$ from a sample $\cS$, with composition $f:\cS \to \rP_{\cH} \to \cH$, such that either of the following equivalent conditions holds.
	
	\begin{itemize}
		\item If $s,s' \in \cS(S)$ are objects and $\alpha: f(s) \Rightarrow f(s')$ is an isomorphism in $\cH$, the subsheaf $\alpha^{-1} \phi(s') \alpha$ is locally conjugate to $\phi(s)$ in $\underaut_{\cH}(s)$. 
		\item If $s,s' \in \cS(S)$ are objects, there exists a cover $\{U_{i} \to S\}$ and isomorphisms $\alpha_{i}:f(s_{U_{i}}) \to f(s'_{U_{i}})$ such that $\alpha_{i}\phi(s_{U_{i}})\alpha_{i}^{-1} = \phi(s'_{U_{i}})$.
	\end{itemize}
	
	Two pre-types $\phi : \cS \to \rP_{\cH}$, $\phi' : \cS' \to \rP_{\cH}$ are equivalent if 
	\[\phi \sqcup \phi': \cS \sqcup \cS' \to \rP_{\cH}\]
	is a pre-type as well.
	
	A \emph{type} is an equivalence class of pre-types.
\end{definition}

\begin{definition}
	Let $\Theta$ be a type, and $X$ a fibered category over $\cC$.
	
	A functor $\psi : X \to \rP_{\cH}$ is of type $\Theta$ if $\psi \sqcup \phi : X \sqcup \cS \to \rP_{\cH}$ is a pre-type in $\Theta$, where $\phi: \cS \to \rP_{\cH}$ is any pre-type in $\Theta$.
\end{definition}

Given a faithful morphism $f:\cG \to \cH$ with $\cG$ a gerbe, we have a natural functor $\underaut_{\cG}:\cG \to \rP_{\cH}$, $s \mapsto \underaut_{\cG}(s) \subseteq \underaut_{\cH}(f(s))$.

\begin{lemma}\label{lem:gerbe-type}
	The pair $\underaut_{\cG} : \cG \to \rP_{\cH}$ is a pre-type.
\end{lemma}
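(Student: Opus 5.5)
We need to check two things: that $\cG$ is a sample, and that $\underaut_{\cG}$ satisfies the second (equivalently the first) condition of Definition~\ref{def:type}.

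That $\cG$ is a sample is immediate, because it is a gerbe. Indeed, for every $S \in \cC$ there is a cover $\{U_{i} \to S\}$ with $\cG(U_{i}) \neq \emptyset$. The functor $s \mapsto \underaut_{\cG}(s) \subseteq \underaut_{\cH}(f(s))$ is well defined since $f$ is faithful, so $\underaut_{\cG}(s)$ is a subsheaf of $\underaut_{\cH}(f(s))$. It is compatible with pullbacks because automorphism sheaves are.

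For the pre-type condition, I will verify the second formulation. Fix $s, s' \in \cG(S)$. Since $\cG$ is a gerbe, any two objects over $S$ are locally isomorphic. So there is a cover $\{U_{i} \to S\}$ and isomorphisms $\beta_{i}: s_{U_{i}} \to s'_{U_{i}}$ in $\cG(U_{i})$. Set $\alpha_{i} = f(\beta_{i}) : f(s_{U_{i}}) \to f(s'_{U_{i}})$. Conjugation by $\beta_{i}$ identifies $\underaut_{\cG}(s_{U_{i}})$ with $\underaut_{\cG}(s'_{U_{i}})$, and applying $f$ (a morphism of fibered categories, hence functorial on arrows) gives $\alpha_{i}\,\underaut_{\cG}(s_{U_{i}})\,\alpha_{i}^{-1} = \underaut_{\cG}(s'_{U_{i}})$ inside $\underaut_{\cH}(f(s'_{U_{i}}))$. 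This is exactly the second condition.

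If one prefers the first formulation, or needs to justify the stated equivalence, the only subtle point is to pass from the particular isomorphisms $f(\beta_{i})$ to an arbitrary $\alpha: f(s) \Rightarrow f(s')$. One writes $\alpha^{-1} f(\beta_{i}) \in \underaut_{\cH}(f(s_{U_{i}}))$. Then
\[\alpha^{-1}\underaut_{\cG}(s')\alpha = (\alpha^{-1}f(\beta_{i}))\,\underaut_{\cG}(s)\,(\alpha^{-1}f(\beta_{i}))^{-1}\]
locally on $U_{i}$, which is local conjugacy in $\underaut_{\cH}(f(s))$. This bookkeeping of $2$-isomorphisms is the only step requiring care; otherwise the proof is formal.
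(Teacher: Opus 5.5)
Your proof is correct and is essentially the paper's argument. $\cG$ is a sample because it is a gerbe, and after passing to a cover on which $s \simeq s'$ via some $\beta$, conjugation by $f(\beta)$ carries $\underaut_{\cG}(s)$ onto $\underaut_{\cG}(s')$, which is the second condition of Definition~\ref{def:type}; your additional check of the first formulation is also correct, though the paper does not need it since the two conditions are stated as equivalent.
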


\begin{proof}	
	Clearly, $\cG$ is a sample, let us check that $\underaut_{\cG}$ satisfies the second condition of Definition~\ref{def:type}. Consider $s, s' \in \cG(S)$ objects. Up to passing to a cover of $S$, we may assume that there is an isomorphism $\alpha: s \Rightarrow s'$ in $\cG$. Then $f_{*}\alpha \underaut_{\cG}(s)f_{*}\alpha^{-1} = \underaut_{\cG}(s')$ in $\underaut_{\cH}(f(s'))$.
\end{proof}

\begin{lemma}\label{lem:equiv-type}
	Two faithful morphisms $\cG \to \cH$, $\cG' \to \cH$ are locally equivalent if and only if $\underaut_{\cG}$, $\underaut_{\cG'}$ define the same type.
\end{lemma}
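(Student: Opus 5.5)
We compare the two definitions directly, unwinding what it means for $\underaut_{\cG} \sqcup \underaut_{\cG'} : \cG \sqcup \cG' \to \rP_{\cH}$ to be a pre-type. By Lemma~\ref{lem:gerbe-type}, each of $\underaut_{\cG}$ and $\underaut_{\cG'}$ is separately a pre-type. The disjoint union $\cG \sqcup \cG'$ is a sample because $\cG$ already is one. Hence the union is a pre-type exactly when the first condition of Definition~\ref{def:type} holds for pairs of objects lying in different components: one object $s \in \cG(S)$ and one object $s' \in \cG'(S)$. Pairs within a single component are already covered by Lemma~\ref{lem:gerbe-type}.

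For such a mixed pair, the first condition of Definition~\ref{def:type} reads as follows: for every isomorphism $\alpha : f(s) \Rightarrow f'(s')$ in $\cH$, the subsheaf $\alpha^{-1}\underaut_{\cG'}(s')\alpha$ is locally conjugate to $\underaut_{\cG}(s)$ inside $\underaut_{\cH}(f(s))$. Such data, namely $S$, $s$, $s'$ and $\alpha$, is precisely a $2$-commutative square as in Definition~\ref{def:loc-eq}. The required conclusion is precisely the local conjugacy demanded there.

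So both notions assert the same condition over the same class of diagrams, and the two implications follow by rereading. The one point to watch is that the two definitions put the conjugation by $\alpha$ on opposite sides. Definition~\ref{def:loc-eq} conjugates $\underaut_{\cG'}(s')$ by $\alpha$ into $\underaut_{\cH}(f\circ s)$. The pre-type condition, applied with the roles of $s$ and $s'$ possibly swapped, conjugates by $\alpha^{-1}$. Local conjugacy is symmetric: if $\underaut_{\cG}(s)$ is locally conjugate to $\alpha^{-1}\underaut_{\cG'}(s')\alpha$, then conjugating by $\alpha$ shows that $\alpha\underaut_{\cG}(s)\alpha^{-1}$ is locally conjugate to $\underaut_{\cG'}(s')$ in $\underaut_{\cH}(f'(s'))$. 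Therefore it does not matter which ordering of the pair we use.

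I also need the equivalence of the two conditions in Definition~\ref{def:type}, which the paper asserts; I will use only the first. One more check: the subsheaves appearing must be subsheaves of the automorphism sheaf of the same object of $\cH$, and since $\alpha$ identifies $\underaut_{\cH}(f(s))$ with $\underaut_{\cH}(f'(s'))$, this holds. The main, if mild, obstacle is this bookkeeping of where the conjugations live. Otherwise the proof is a direct translation of one definition into the other.
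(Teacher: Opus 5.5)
Your proposal is correct and matches the paper, whose entire proof is the remark that the lemma ``follows directly from definitions''; your argument is exactly that unwinding. The mixed pairs $s \in \cG(S)$, $s' \in \cG'(S)$ with $\alpha : f(s) \Rightarrow f'(s')$ in the pre-type condition for $\underaut_{\cG} \sqcup \underaut_{\cG'}$ are precisely the squares of Definition~\ref{def:loc-eq}; same-component pairs are handled by Lemma~\ref{lem:gerbe-type}, and the order of a mixed pair is absorbed by the invariance of local conjugacy under conjugation by $\alpha$.
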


\begin{proof}
	This follows directly from definitions.
\end{proof}

\begin{definition}\label{def:covering-sample}
	Assume that $\cC$ has a terminal object $T$, and let $E = \{T_{i} \to T\}_{i}$ be a covering. The covering $E$ defines an associated sample $\cS_{E}$ with a morphism $\cS_{E} \to B\Gamma$ as follows. An object of $\cS_{E}(S)$ is a pair $(i, S \to T_{i})$. 
	
	Consider two objects $S' \to T_{i}$, $S \to T_{j}$. If $i \neq j$, there are no morphisms in $\cS_{E}$ between them. If $i = j$, a morphism in $\cS_{E}$ is a morphism $S' \to S$ in $\cC$ which commutes with the projection to $T_{i}$.
	
	The morphism $\cS_{E} \to B\Gamma$ simply maps every element to the tautological section of $B\Gamma$, and every morphism to the identity.
\end{definition}

\begin{lemma}\label{lem:loc-type}
	Assume that $\cC$ has a terminal object, and let $E = \{T_{i} \to T\}_{i}$ be a covering with associated sample $\cS_{E}$.
	
	Let $\Gamma$ be a group sheaf on $\cC$. Assume that, for every $i$, we have a subsheaf $\Phi_{i} \subseteq \Gamma|_{T_{i}}$ such that, for every pair of morphisms $S \to T_{i}$, $S \to T_{j}$, the subsheaves $\Phi_{i}|_{S}$ and $\Phi_{j}|_{S}$ are locally conjugate in $\Gamma|_{S}$. The functor $\phi: \cS_{E} \to \rP_{\cH}$ sending $(i, S \to T_{i})$ to $\Phi_{i}|_{S} \subset \Gamma|_{S}$ is a pre-type.
\end{lemma}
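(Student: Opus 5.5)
The plan is to verify the two defining conditions of a pre-type directly from the definitions. Here $\cH = B\Gamma$, and the morphism $\cS_{E} \to B\Gamma$ sends every object to the tautological section. First, $\cS_{E}$ is a sample. For any $S \in \cC$, the fibered product covering $\{S \times_{T} T_{i} \to S\}$ is a covering of $S$ (it is the pullback of the covering $E$ along $S \to T$), and each $S \times_{T} T_{i}$ comes with its projection to $T_{i}$. This projection gives an object $(i, S\times_{T} T_{i} \to T_{i})$ of $\cS_{E}(S \times_{T} T_{i})$, so each member of the cover has non-empty fiber.

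Next, check that $\phi$ is a functor to $\rP_{\cH}$. A morphism $(i, S' \to T_{i}) \to (i, S \to T_{i})$ is a map $S' \to S$ over $T_{i}$. Under it the subsheaf $\Phi_{i}|_{S}$ pulls back to $\Phi_{i}|_{S'}$, and the corresponding morphism in $B\Gamma$ is the identity of the tautological section. Hence $\phi$ is compatible with pullbacks, i.e. it is a morphism of fibered categories over $\cC$.

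The main point is the pre-type condition, and I would verify the second form of it. Let $s = (i, S \to T_{i})$ and $s' = (j, S \to T_{j})$ be objects over the same $S$. Both map to the tautological section, so $f(s) = f(s')$ and $\underaut_{\cH}(f(s)) = \Gamma|_{S}$. By hypothesis, applied to the pair $S \to T_{i}$, $S \to T_{j}$, the subsheaves $\Phi_{i}|_{S}$ and $\Phi_{j}|_{S}$ are locally conjugate in $\Gamma|_{S}$. That means there is a cover $\{U_{k} \to S\}$ and sections $\gamma_{k} \in \Gamma(U_{k})$ with $\gamma_{k}\Phi_{i}|_{U_{k}}\gamma_{k}^{-1} = \Phi_{j}|_{U_{k}}$. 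Each $\gamma_{k}$ is an automorphism of the tautological section over $U_{k}$, and hence an isomorphism $\alpha_{k}: f(s_{U_{k}}) \to f(s'_{U_{k}})$ in $B\Gamma$ satisfying $\alpha_{k}\phi(s_{U_{k}})\alpha_{k}^{-1} = \phi(s'_{U_{k}})$. This is exactly the second condition of Definition~\ref{def:type}.

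The step I expect to need the most care is the bookkeeping here rather than any real difficulty. One must check that the objects $s_{U_{k}}$ are the restrictions $(i, U_{k} \to S \to T_{i})$, so that $\phi(s_{U_{k}}) = \Phi_{i}|_{U_{k}}$. One must also make sure that "locally conjugate" is read in the sense of a cover with conjugating sections, matching the way the definition is phrased. The case $i = j$ is included and is trivial.
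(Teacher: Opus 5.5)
Your proof is correct and is essentially the paper's argument: both unwind the definitions using the fact that every object of $\cS_{E}$ maps to the tautological section of $B\Gamma$, so that isomorphisms in $B\Gamma$ are just sections of $\Gamma$. The differences are minor. The paper checks the first condition of Definition~\ref{def:type}, absorbing an arbitrary $\alpha \in \Gamma(S)$ into the local conjugacy, whereas you check the second condition, which is literally the hypothesis. You also verify that $\cS_{E}$ is a sample and that $\phi$ is a functor, which the paper takes for granted.
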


\begin{proof}
	Let us check the first condition of Definition~\ref{def:type}. Assume we have objects $S \to T_{i}$, $S \to T_{j}$ of $\cS_{E}(S)$ and $\alpha$ an isomorphism between their images in $B\Gamma$. This simply means that $\alpha \in \Gamma(S)$, because the composition $\cS_{E} \to B\Gamma$ maps everything to the tautological section. By hypothesis, $\Phi_{i}|_{S}$ and $\Phi_{j}|_{S}$ are locally conjugate, hence $\alpha \Phi_{i}\alpha^{-1}$, $\Phi_{j}$ are locally conjugate as well.
\end{proof}

A typical situation where we may apply Lemma~\ref{lem:loc-type} is the following. Let $\cC$ be the fppf site on a field $k$, $\Gamma$ an algebraic group and $k'/k$ a finite Galois extension with a subset $M \subseteq \Gamma(k')$ such that, if $M'$ is a Galois conjugates of $M$, up to extending $k'$ there exists $\gamma \in \Gamma(k')$ with $\gamma M \gamma^{-1} = M'$. Then $M$ defines a type for $B\Gamma$.

\subsection{The normalizer of a type}

\begin{theorem}\label{thm:type-normalizer}
	Let $\cC$ be a small site, $\cH$ a gerbe on $\cC$ and $\Theta$ a type for $\cH$. There exists a gerbe $\cN$ on $\cC$ with a morphism $\nu:\cN \to \cH$, called the \emph{normalizer of $\Theta$}, with the following properties.
	
	\begin{enumerate}
		\item	$\nu$ is faithful.
		\item	For every functor $\psi : X \to \rP_{\cH}$, $f: X \to \cH$ of type $\Theta$, there exists a factorization
					\[X \xrightarrow{\mu} \cN \xrightarrow{\nu} \cH\]
				of $f$ with the following property.
				
				For every $x,y \in X(S)$, the sheaf $\underisom_{\cN}(\mu(x), \mu(y))$ is the subsheaf of isomorphisms $\alpha \in \underisom_{\cH}(f(x),f(y))$ such that $\alpha^{-1} \psi(y) \alpha = \psi(x) \subset \underaut_{\cH}(f(x))$.
				
				In particular, $\underaut_{\cN}(\mu(x))$ is the normalizer of $\underaut_{\cG}(x)$ in $\underaut_{\cH}(f(x))$.
				
		\item	The normalizer is unique in the following sense. If $\cN'$ is a gerbe with a faithful morphism $\cN' \to \cH$ and $\cS \to \rP_{\cH}$ is a pre-type in $\Theta$ with a factorization $\cS \to \cN' \to \cH$ as in $(2)$, there exists an isomorphism $\cN \to \cN'$ making the diagram
	\[\begin{tikzcd}[row sep = tiny]
								&	\cN \ar[dr] \ar[dd]		&				\\
		\cS \ar[ur]\ar[dr]		&							&	\cH		\\
								&	\cN' \ar[ur]
	\end{tikzcd}\]
	$2$-commutative.
	\end{enumerate}
\end{theorem}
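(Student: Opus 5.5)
We construct $\cN$ directly as a stack of ``local normalizer data'' and then verify the three properties. Fix a pre-type $\phi:\cS \to \rP_{\cH}$ in $\Theta$, with composition $f:\cS \to \cH$. First define a prestack $\cN^{\rm pre}$ whose objects over $S$ are pairs $(h, \Phi)$, where $h \in \cH(S)$ and $\Phi \subseteq \underaut_{\cH}(h)$ is a subsheaf of sets that is \emph{of type $\Theta$}. This means that there is a cover $\{U_{i} \to S\}$, objects $s_{i} \in \cS(U_{i})$ and isomorphisms $\alpha_{i}: f(s_{i}) \to h|_{U_{i}}$ with $\alpha_{i}\phi(s_{i})\alpha_{i}^{-1} = \Phi|_{U_{i}}$. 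Morphisms $(h',\Phi') \to (h,\Phi)$ over $S' \to S$ are morphisms $\beta: h' \to h$ in $\cH$ with $\beta\Phi'\beta^{-1} = \Phi|_{S'}$. Then $\cN$ is the substack of the fibered category of pairs $(h,\Phi)$, which is already a stack because $\cH$ is a stack and subsheaves glue. Smallness of $\cC$ is only needed to ensure that $\cN$ is a legitimate fibered category. The projection $\nu(h,\Phi) = h$ is faithful by construction, since morphisms are a subset of morphisms in $\cH$. This gives (1).

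Next we check that $\cN$ is a gerbe. Local nonemptiness follows because $\cS$ is a sample and $s \mapsto (f(s),\phi(s))$ lands in $\cN$. Local connectedness is exactly the pre-type condition. Given $(h,\Phi), (h',\Phi')$ over $S$, we refine to a cover where both are conjugate to values $\phi(s),\phi(s')$. By the second condition of Definition~\ref{def:type} we refine further so that some $\alpha_{i}: f(s)\to f(s')$ conjugates $\phi(s)$ to $\phi(s')$. Composing gives local isomorphisms in $\cN$.

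For (2), take $\psi: X \to \rP_{\cH}$ of type $\Theta$ and set $\mu(x) = (f(x),\psi(x))$. This lies in $\cN$ because $\psi \sqcup \phi$ is a pre-type, so $\psi(x)$ is locally conjugate to values of $\phi$. The description of $\underisom_{\cN}(\mu(x),\mu(y))$ is then literally the definition of morphisms. When $X = \cG$ and $\psi = \underaut_{\cG}$, we obtain that $\underaut_{\cN}(\mu(x))$ is the normalizer. Independence of the chosen pre-type in $\Theta$ follows since equivalent pre-types give the same notion of ``of type $\Theta$''.

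For (3), let $\cS \to \cN' \to \cH$ be as stated. We define $\cN \to \cN'$ by descent: locally an object $(h,\Phi)$ of $\cN$ is isomorphic to $\mu(s)$ for some $s \in \cS(U)$, and we send it to the image of $s$ in $\cN'$. The key point is that the Isom sheaves agree. Property (2) for $\cN'$ identifies $\underisom_{\cN'}$ between images of $s,s'$ with the same subsheaf of $\underisom_{\cH}$ as $\underisom_{\cN}(\mu(s),\mu(s'))$. So we get a fully faithful morphism defined on the full subcategory of objects locally in the image of $\cS$, compatible with the projections to $\cH$. Since both sides are stacks and gerbes in which every object is locally in the image of $\cS$, this extends uniquely to an equivalence, and it commutes with the maps to $\cH$ up to a canonical $2$-isomorphism.

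I expect the main obstacle to be this last gluing step, namely making the extension from the image of $\cS$ to all of $\cN$ rigorous (a stackification or descent argument) while keeping track of the $2$-commutativity data. I would handle it by invoking the standard fact that a morphism of stacks defined on a ``dense'' full subfibered category extends uniquely, applied to the fibered category image of $\cS$ in $\cN$.
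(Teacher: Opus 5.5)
Your proof is correct, but it constructs $\cN$ differently from the paper.

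\textbf{The paper's route.} It fixes a pre-type $\phi:\cS\to\rP_{\cH}$ and forms the prestack $\cN_{0}$. The objects of $\cN_{0}$ are those of $\cS$, and its morphisms are the isomorphisms in $\cH$ that conjugate $\phi(x)$ to $\phi(y)$. It then takes $\cN$ to be the stackification of $\cN_{0}$; this is where smallness of $\cC$ is used. Property (3) comes from the universal property of stackification. Property (2) for an arbitrary $\psi:X\to\rP_{\cH}$ of type $\Theta$ is obtained indirectly: the paper reruns the construction for the pre-type $\psi\sqcup\phi$ and then uses (3) to identify the resulting gerbe with $\cN$.

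\textbf{Your route.} You write down an explicit model of that stackification: the stack of pairs $(h,\Phi)$ with $\Phi\subseteq\underaut_{\cH}(h)$ locally conjugate to a value of $\phi$, with conjugating morphisms. This has two advantages.
\begin{itemize}
\item Existence needs no stackification, since this fibered category is already a stack.
\item Property (2) holds for every $X$ at once. The functor $\mu=(f,\psi)$ lands in $\cN$ directly by the definition of ``of type $\Theta$'', and the description of the Isom sheaves is tautological.
\end{itemize}

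\textbf{Where the routes meet.} The cost appears in (3). There you must check that $s\mapsto(f(s),\phi(s))$ identifies your $\cN$ with the stackification of $\cN_{0}$. This holds because the functor is fully faithful and every object of $\cN$ is locally in its image. That is exactly the ``dense full subcategory'' extension fact you plan to invoke, and from that point your argument coincides with the paper's.
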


\begin{proof}
	Fix $\phi : \cS \to \rP_{\cH}$ a pre-type in $\Theta$.
	
	Define a fibered category $\cN_{0}$ as follows. For every $S \in \cC$, the objects of $\cN_{0}(S)$ are the objects of $\cS(S)$. For every $x, y \in \cN_{0}(S)$, a morphism $x \Rightarrow y$ in $\cN_{0}(S)$ is a morphism $\alpha : f(x) \Rightarrow f(y)$ in $\cH(S)$ such that $\alpha^{-1} \phi(y) \alpha = \psi(x)$. Notice that $\cN_{0}$ is a prestack, because the $\hom$ functors are sheaves.
	
	Since $\cC$ is small, we may define $\cN$ as the stackification of $\cN_{0}$ \cite[\href{https://stacks.math.columbia.edu/tag/02ZN}{Tag 02ZN}]{stacks-project}. Since $\cH$ is a stack we have an induced morphism
	\[\cN \to \cH\]
	which is faithful by construction. Notice that property $(2)$ is satisfied for $X = \cS \to \cH$. Let us show that $\cN$ is a gerbe.
	
	First, notice that for every $S \in \cC$, there exists a covering $\{U_{i} \to S\}$ such that $\cN(U_{i}) \neq \emptyset$, because the same holds for $\cS$.
		
	By definition of stackification, every object of $\cN$ lifts locally to $\cS$, so to prove that $\cN$ is a gerbe it is enough to show that any two objects $x, y \in \cS(S)$ are locally isomorphic in $\cN$. Up to passing to a covering of $S$, we may assume that there is an isomorphism $\alpha: f(x) \Rightarrow f(y)$. By definition of pre-type, up to passing to a cover of $S$ there exists $\beta \in \underaut_{\cH}(f(x))$ such that $\beta^{-1}\alpha^{-1}\phi(y)\alpha\beta = \phi(x) \subset \underaut_{\cH}(f(x))$. This means that $\alpha\beta$ is a local isomorphism $x \Rightarrow y$, hence $\cN$ is a gerbe.
	
	Let $\cS \to \cN' \to \cH$ as in $(2)$, with $\cN' \to \cH$ faithful. We can define a morphism $\cN_{0} \to \cN'$ as follows. Since the underlying sets of $\cS(S)$, $\cN_{0}(S)$ coincide, we just use $\cS \to \cN'$ to define $\cN_{0} \to \cN'$ on objects. By definition, a morphism $\alpha : x \Rightarrow y$ in $\cN_{0}(S)$ is a morphism $\alpha : f(x) \Rightarrow f(y)$ in $\cH(S)$ such that $\alpha^{-1} \phi(y) \alpha = \phi(x) \subset \underaut_{\cH}(f(x))$. Since $\cN'$ satisfies $(2)$, we have that $\alpha$ lifts to a morphism in $\cN'$.
	
	Since both $\cN$ and $\cN'$ satisfy property $(2)$ with respect to the given morphisms $\cS \to \cN \to \cH$, $\cS \to \cN' \to \cH$, we get that $\cN \to \cN'$ is a morphism of gerbes inducing an isomorphism on automorphism groups, hence it is an equivalence.
	
	Hence, we have all three properties with respect to the given pre-type $\cS \to \rP_{\cH}$. Let $X$ be a fibered category with a functor $\psi: X \to \rP_{\cH}$ of type $\Theta$. Since $\psi \sqcup \phi : X \sqcup \cS \to \rP_{\cH}$ is a pre-type in $\Theta$, we may repeat all the above with respect to this new pre-type and obtain a gerbe $\cN''$ with a morphism $\cN'' \to \cH$ which respects the three properties for the pre-type $\psi \sqcup \phi$. In particular, $\cN'' \to \cH$ respects property $(3)$ for the pre-type $\phi: \cS \to \rP_{\cH}$ as well, hence we get an equivalence $\cN \simeq \cN''$. 
	
	It follows that $\cN \to \cH$ respects property $(2)$ for $X \to \cN \to \cH$ as well, and if $X$ is another sample we get property $(3)$ too. 
\end{proof}

As a consequence, we obtain the following.

\begin{theorem}\label{thm:gerbe-normalizer}
	Let $f : \cG \to \cH$ be a faithful morphism of gerbes on a small site $\cC$. There exists a gerbe $\cN$ and a factorization
	\[\cG \xrightarrow{\phi} \cN \xrightarrow{\psi} \cH\]
	of $f$ called the \emph{normalizer} of $f$, having the following properties.
	
	\begin{enumerate}
		\item	$\phi$, $\psi$ are faithful.
		\item	For every $S \in \cC$ and every object $s: S \to \cG$, the group sheaf $\underaut_{\cN}(\phi(s))$ is the normalizer of $\underaut_{\cG}(s)$ in $\underaut_{\cH}(\psi \circ \phi (s))$.
		\item	The normalizer is unique in the following sense. If $\cG \to \cN' \to \cH$ is another factorization having properties $(1)$ and $(2)$, there exists an isomorphism $\cN \to \cN'$ making the diagram
	\[\begin{tikzcd}[row sep = tiny]
								&	\cN \ar[dr] \ar[dd]		&				\\
		\cG \ar[ur]\ar[dr]		&							&	\cH		\\
								&	\cN' \ar[ur]
	\end{tikzcd}\]
	$2$-commutative.
		\item  If $\cG \to \cH$, $\cG' \to \cH$ are locally equivalent, the normalizers of $\cG \to \cH$ and $\cG' \to \cH$ coincide. More precisely, if $\cG \to \cN \to \cH$ is the normalizer of $\cG \to \cH$, there exists a faithful morphism $\cG' \to \cN$ such that the composition $\cG' \to \cN \to \cH$ is the normalizer of $\cG' \to \cH$.
	\end{enumerate}
\end{theorem}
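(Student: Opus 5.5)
We deduce Theorem~\ref{thm:gerbe-normalizer} from Theorem~\ref{thm:type-normalizer}, applied to the type $\Theta$ determined by the pre-type $\underaut_{\cG} : \cG \to \rP_{\cH}$ of Lemma~\ref{lem:gerbe-type}. Theorem~\ref{thm:type-normalizer} gives a gerbe $\cN$ with a faithful morphism $\psi = \nu : \cN \to \cH$. Property $(2)$ of that theorem, applied with $X = \cG$ and the functor $\underaut_{\cG}$, gives a factorization $\cG \xrightarrow{\phi} \cN \xrightarrow{\psi} \cH$ of $f$. For $x,y \in \cG(S)$, the Isom sheaf $\underisom_{\cN}(\phi(x),\phi(y))$ consists of those $\alpha \in \underisom_{\cH}(f(x),f(y))$ that conjugate $\underaut_{\cG}(y)$ onto $\underaut_{\cG}(x)$.

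\textbf{Properties (1) and (2).} Taking $x = y$, the sheaf $\underaut_{\cN}(\phi(s))$ is exactly the normalizer of $\underaut_{\cG}(s)$ in $\underaut_{\cH}(f(s))$, which is property $(2)$. Faithfulness of $\psi$ is part of Theorem~\ref{thm:type-normalizer}. For $\phi$, the map $\underaut_{\cG}(s) \to \underaut_{\cN}(\phi(s))$ composed with $\psi$ is the injective map $\underaut_{\cG}(s) \to \underaut_{\cH}(f(s))$, because $f$ is faithful. Hence $\phi$ is injective on automorphism sheaves, which is faithfulness.

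\textbf{Property (3).} Let $\cG \to \cN' \to \cH$ satisfy $(1)$ and $(2)$. To invoke the uniqueness clause of Theorem~\ref{thm:type-normalizer} with the sample $\cG$, we must check that the factorization through $\cN'$ has the Isom-description of property $(2)$ there, and not merely the Aut-description. This is the main obstacle. Fix $x,y \in \cG(S)$. Since $\cG$ is a gerbe, after passing to a cover of $S$ there is an isomorphism $\beta : x \to y$ in $\cG$. Its image in $\cN'$ lies in $\underisom_{\cN'}(x,y)$. Its image in $\cH$ conjugates $\underaut_{\cG}(y)$ onto $\underaut_{\cG}(x)$, so it also lies in the subsheaf $\Sigma \subseteq \underisom_{\cH}(f(x),f(y))$ of isomorphisms with this conjugation property. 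Both $\underisom_{\cN'}(x,y)$ and $\Sigma$ are then right torsors under the same group. For the first this is $\underaut_{\cN'}(x)$, using faithfulness of $\cN' \to \cH$; for the second it is the normalizer of $\underaut_{\cG}(x)$, by direct computation. These groups coincide by $(2)$ for $\cN'$. Two torsors under the same group, one contained in the other, are equal. Equality is local, so it holds over $S$ itself. The uniqueness part of Theorem~\ref{thm:type-normalizer} now gives the isomorphism $\cN \to \cN'$.

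\textbf{Property (4).} Suppose $\cG \to \cH$ and $\cG' \to \cH$ are locally equivalent. By Lemma~\ref{lem:equiv-type}, $\underaut_{\cG'}$ is of type $\Theta$. Property $(2)$ of Theorem~\ref{thm:type-normalizer}, applied with $X = \cG'$, gives a factorization $\cG' \to \cN \to \cH$ with the correct Isom-description. Exactly as above, $\cG' \to \cN$ is faithful and $\underaut_{\cN}$ of each object is the normalizer of $\underaut_{\cG'}(s')$. So $\cG' \to \cN \to \cH$ satisfies $(1)$ and $(2)$, and by $(3)$, applied to $\cG'$, it is the normalizer of $\cG' \to \cH$.
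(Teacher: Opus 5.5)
Your proposal is correct and follows essentially the same route as the paper. The factorization comes from Theorem~\ref{thm:type-normalizer} applied to the pre-type $\underaut_{\cG}$, and uniqueness is obtained by upgrading the Aut-description to the Isom-description via a local isomorphism $\beta : x \to y$. Your torsor formulation, where both sheaves equal $\beta$ composed with the normalizer of $\underaut_{\cG}(x)$, is a repackaging of the paper's chain of equivalences through $\beta^{-1}\alpha$, and property (4) follows from Lemma~\ref{lem:equiv-type} as in the paper; you additionally spell out faithfulness of $\phi$ and the details of (4), which the paper leaves implicit.
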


\begin{proof}
	By Lemma~\ref{lem:gerbe-type}, $\cG$ defines a type, hence by Theorem~\ref{thm:type-normalizer} we get a factorization $\cG \to \cN \to \cH$. Let us show that it enjoys properties \ref{thm:gerbe-normalizer}.(1)-(4).
	
	\ref{thm:gerbe-normalizer}.(1) and \ref{thm:gerbe-normalizer}.(2) are direct consequences of \ref{thm:type-normalizer}.(1) and \ref{thm:type-normalizer}.(2).
	
	The uniqueness statement \ref{thm:gerbe-normalizer}.(3) is more subtle, because \ref{thm:gerbe-normalizer}.(2) is a priori weaker than \ref{thm:type-normalizer}.(2). Let us show that, if $\cG \to \cN' \to \cH$ satisfies \ref{thm:gerbe-normalizer}.(1)-(2), then it satisfies \ref{thm:type-normalizer}.(2) as well.
	
	Let $x,y \in \cG(S)$ and $\alpha:f'(x) \Rightarrow f'(y)$ be an isomorphism in $\cH$. We want to show that $\alpha^{-1}\underaut_{\cG}(y)\alpha = \underaut_{\cG}(x) \subset \underaut_{\cH}(f'(x))$ if and only if $\alpha$ lifts to $\cN'$. This is a local statement, hence up to passing to a cover of $S$ we might assume the existence of an isomorphism $\beta:x \Rightarrow y$. In particular, $\underaut_{\cG}(y) = \beta\underaut_{\cG}(x)\beta^{-1}$. Hence, $\alpha^{-1}\underaut_{\cG}(y)\alpha = \underaut_{\cG}(x)$ if and only if $\beta^{-1}\alpha$ normalizes $\underaut_{\cG}(x)$, if and only if $\beta^{-1}\alpha$ lifts to $\cN'$ (by \ref{thm:gerbe-normalizer}.(2)), if and only if $\alpha$ lifts to $\cN'$.
	
	This proves that $\cG \to \cN' \to \cH$ satisfies \ref{thm:type-normalizer}.(2). We may then apply \ref{thm:type-normalizer}.(3), and get the desired isomorphism $\cN \to \cN'$. 
	
	Finally, \ref{thm:gerbe-normalizer}.(4) is a direct consequence of Lemma~\ref{lem:equiv-type}.
\end{proof}

\begin{remark}
	There are at least two ways in which Theorem~\ref{thm:type-normalizer} generalizes Theorem~\ref{thm:gerbe-normalizer}.
	
	The first is obvious: we are not restricted to normalizers of subgroups, but we are free to work with normalizers of arbitrary subsets. For instance, the normalizer of a single element of a group is the centralizer of the element.
	
	The second is more subtle. Assume that $\cC$ is the fppf site on a field, and that $\cH$ is the classifying stack $B \Gamma$ of an algebraic group $\Gamma$. Consider a finite Galois extension $k'/k$ and a finite subgroup $G \subset \Gamma(k')$. If $G$ is Galois invariant, then it coincides with the geometric point of a group scheme $\fG$, and if $\fN \subseteq \Gamma$ is the normalizer of $\fG$ then $B\fN$ is the normalizer of $B\fG \to B\Gamma$.
	
	However, to define the normalizer at the level of gerbes, it is sufficient that the Galois conjugates of $G$ are conjugate to $G$ as subgroups of $\Gamma(k')$, up to enlarging $k'$: we may then use Lemma~\ref{lem:loc-type} to define a type and Theorem~\ref{thm:type-normalizer} to define the normalizer.
\end{remark}

\begin{example}\label{example:C2S3}
	Consider the symmetric group $S_{3}$ and a cyclic subgroup $C_{2} \subset S_{3}$ of order $2$. Fix a base field $k$. Since $C_{2}$ is its own normalizer in $S_{3}$, the normalizer of $B C_{2} \to B S_{3}$ is again $B C_{2}$. Theorem~\ref{thm:gerbe-normalizer} then implies that there are no non-trivial morphisms locally equivalent to $B C_{2} \to B S_{3}$.
	
	This can also be checked more directly. Assume that $\cG \to B S_{3}$ is locally equivalent to $BC_{2} \to BS_{3}$. The tautological section $\spec k \to BS_{3}$ is a representable, degree $6$ cover, so its pullback $E \to \cG$ is a representable, degree $6$ cover as well. In particular, $E$ is a scheme étale over $k$ of degree $3$. The gerbe $\cG$ corresponds to a cohomology class $\H^{2}(k, C_{2})$ which is split by $E$; in particular, its order divides $3$. Since $\H^{2}(k, C_{2})$ is $2$-torsion, this implies that the cohomology class is trivial, i.e. $\cG$ is neutral.
\end{example}

\begin{example}\label{example:normalizers-over-fields}
	Let $\Gamma$ be an algebraic group over a perfect field $k$, and $R \subseteq \Gamma(\bar{k})$ a sub-variety such that the $\gal(\bar{k}/k)$-conjugates of $R$ are $\Gamma(\bar{k})$-conjugates of $R$ as well.
	
	Then by Lemma~\ref{lem:loc-type} and Theorem~\ref{thm:type-normalizer} there exists a gerbe $\cN$ over $k$ with a faithful morphism $\cN \to B\Gamma$ associated with the normalizer of $R$, in the following sense. If $X$ is a fibered category over $k$ with a morphism $\phi:X \to \rP_{B\Gamma}$ such that $\phi(x)$ is conjugate to $R$ for every $x \in X$, and $X(k') \neq \emptyset$ for some finite extension $k'/k$, then $X \to B\Gamma$ factorizes through $\cN$.
	
	If $R$ is a subgroup of $\Gamma(\bar{k})$, then $\cN$ corresponds to the normalizer of $R$, which is the most interesting case for the purposes of the paper. However, the construction is completely general. For instance, if $R =\{\gamma \} \subset \Gamma(\bar{k})$, then $\cN$ corresponds to the centralizer of $\gamma$.
\end{example}

\bibliographystyle{amsalpha}
\bibliography{main}

\end{document}